\numberwithin{equation}{section}%
\newcommand{\Z}{\mathbb{Z}}
\newcommand{\C}{\mathbb{C}}
\newcommand{\R}{\mathbb{R}}
\DeclareMathOperator{\E}{\mathbb{E}}
\renewcommand{\i}{\mathbf{i}}
\newcommand{\al}{\alpha}
\newcommand{\la}{\lambda}
\newcommand{\La}{\Lambda}
\newcommand{\be}{\beta}
\newcommand{\ga}{\gamma}
\newcommand{\Om}{\boldsymbol\phi}
\DeclareMathOperator{\prob}{\mathrm{Prob}}
\let\oldphi\phi \let\phi\varphi \let\varphi\oldphi%better \phi, for transition probabilities of q,mu,nu Boson
\newcommand{\GT}{\mathbb{GT}^+}
\newcommand{\MP}{\mathscr{M}}
\newcommand{\MM}{\mathscr{MM}}
\newcommand{\xx}{\mathbf{x}}
\newcommand{\alb}{\boldsymbol\al}
\newcommand{\beb}{\boldsymbol\be}
\newcommand{\lab}{\boldsymbol\la}
\newcommand{\nub}{\boldsymbol\nu}
\newcommand{\Sym}{\mathsf{Sym}}
\newcommand{\prech}{\prec_{\mathsf{h}}}
\newcommand{\succh}{\succ_{\mathsf{h}}}
\newcommand{\precv}{\prec_{\mathsf{v}}}
\newcommand{\gap}{\mathsf{gap}}
\newcommand{\p}{\mathsf{p}}
\newcommand{\Qspec}[1]{\mathscr{Q}[{#1}]}
\newcommand{\Pspec}[1]{\mathscr{P}[{#1}]}
\newcommand{\Qapb}{\mathscr{Q}^{q=0}_{\textnormal{pb}}[\al]}
\newcommand{\Qbpb}{\mathscr{Q}^{q=0}_{\textnormal{pb}}[\hat\be]}
\newcommand{\Qarow}{\mathscr{Q}^{q=0}_{\textnormal{row}}[\al]}
\newcommand{\Qacol}{\mathscr{Q}^{q=0}_{\textnormal{col}}[\al]}
\newcommand{\Qbrow}{\mathscr{Q}^{q=0}_{\textnormal{row}}[\hat\be]}
\newcommand{\Qbcol}{\mathscr{Q}^{q=0}_{\textnormal{col}}[\hat\be]}
\newcommand{\Qqarow}{\mathscr{Q}^{q}_{\textnormal{row}}[\al]}
\newcommand{\Qqacol}{\mathscr{Q}^{q}_{\textnormal{col}}[\al]}
\newcommand{\Qqbrow}{\mathscr{Q}^{q}_{\textnormal{row}}[\hat\be]}
\newcommand{\Qqbcol}{\mathscr{Q}^{q}_{\textnormal{col}}[\hat\be]}
\newcommand{\muq}{\xi}
\newcommand{\GRV}[1]{\mathop{\mathrm{Gamma}}(#1)}
\newcommand{\GRVI}[1]{\mathop{\mathrm{Gamma}^{-1}}(#1)}
\newcommand{\LL}{\mathsf{L}}
\newcommand{\RR}{\mathsf{R}}
\newcommand{\xxi}{\chi}
\newcounter{la1}
\newcounter{la2}
\newcounter{la3}
\newcounter{la4}
\newcounter{la5}
\newcounter{la6}
\newcounter{lab1}
\newcounter{lab2}
\newcounter{lab3}
\newcounter{lab4}
\newcounter{lab5}
\newcommand{\proofstep}[1]{%
  \par% ensure starting on a new paragraph
  \addvspace{\medskipamount}% some vertical space
  \noindent\textbf{#1\@addpunct{.}}\enspace\ignorespaces
}
\newcommand{\proofa}[1]{%
  \par% ensure starting on a new paragraph
  \addvspace{\medskipamount}% some vertical space
  \noindent\textbf{#1\@addpunct{)}}\enspace\ignorespaces
}
\newtheorem{proposition}{Proposition}[section]
\newtheorem{lemma}[proposition]{Lemma}
\newtheorem{corollary}[proposition]{Corollary}
\newtheorem{theorem}[proposition]{Theorem}
\theoremstyle{definition}
\newtheorem{definition}[proposition]{Definition}
\newtheorem{remark}[proposition]{Remark}
\begin{document}

\title[$q$-randomized RSK correspondences and random polymers]
{$q$-randomized Robinson--Schensted--Knuth correspondences and random polymers}

\author[K. Matveev]{Konstantin Matveev}
\address{K. Matveev,
Harvard University, Department of Mathematics,
1 Oxford Street, Cambridge, MA 02138, USA}
\email{kosmatveev@gmail.com}

\author[L. Petrov]{Leonid Petrov}
\address{L. Petrov, University of Virginia, Department of Mathematics,
141 Cabell Drive, Kerchof Hall,
P.O. Box 400137,
Charlottesville, VA 22904, USA,
and Institute for Information Transmission Problems, Bolshoy Karetny per. 19, Moscow, 127994, Russia}
\email{lenia.petrov@gmail.com}

\date{}

\begin{abstract}
	We introduce and study $q$-randomized 
	Robinson--Schensted--Knuth (RSK) correspondences
	which interpolate between the classical
	($q=0$) and geometric ($q\nearrow1$)
	RSK correspondences (the latter ones 
	are sometimes also called tropical).

	For $0<q<1$
	our correspondences are randomized, i.e., 
	the result of an insertion is a certain
	probability distribution on semistandard Young tableaux.
	Because of this randomness, we use the language
	of discrete time Markov dynamics on two-dimensional
	interlacing particle arrays 
	(these arrays are in a natural bijection with semistandard tableaux).
	Our dynamics act nicely on a certain class of 
	probability measures on arrays, 
	namely, on $q$-Whittaker processes
	(which are $t=0$ versions of Macdonald processes
	of Borodin--Corwin \cite{BorodinCorwin2011Macdonald}).
	We present four Markov dynamics	which for $q=0$
	reduce to the classical 
	row or column RSK correspondences 
	applied to a random 
	input matrix with independent
	geometric or Bernoulli entries.
	
	Our new two-dimensional discrete time dynamics
	generalize and extend several known constructions:
	(1)~The discrete time $q$-TASEPs studied by Borodin--Corwin
	\cite{BorodinCorwin2013discrete}
	arise as one-dimen\-sional marginals of our ``column'' dynamics.
	In a similar way, our ``row'' dynamics lead to discrete time
	$q$-PushTASEPs --- new
	integrable particle systems in the 
	Kardar--Parisi--Zhang universality class.
	We employ these new one-dimen\-sional discrete time systems to 
	establish a Fredholm determinantal formula
	for the two-sided continuous time $q$-PushASEP
	conjectured by Corwin--Petrov \cite{CorwinPetrov2013}.
	(2)~In a certain Poisson-type limit (from discrete to continuous time),
	our two-dimensional 
	dynamics reduce to the $q$-randomized column and row Robinson--Schensted
	correspondences introduced by O'Connell--Pei \cite{OConnellPei2012}
	and Borodin--Petrov \cite{BorodinPetrov2013NN}, respectively.
	(3)~In a scaling limit as $q\nearrow1$, 
	two of our four dynamics on interlacing arrays
	turn into the geometric RSK correspondences	
	associated with log-Gamma
	(introduced by Sepp\"al\"ainen \cite{Seppalainen2012}) 
	or strict-weak (introduced
	independently
	by O'Connell--Ortmann
	\cite{OConnellOrtmann2014}
	and
	Corwin--Sepp\"al\"ainen--Shen
	\cite{CorwinSeppalainenShen2014}) 
	directed random lattice polymers.
\end{abstract}

\maketitle

\setcounter{tocdepth}{1}
\tableofcontents
\setcounter{tocdepth}{3}

\newpage
\section{Introduction} % (fold)
\label{sec:introduction}

\subsection{Overview} % (fold)
\label{sub:overview}

The classical Robinson--Schensted--Knuth (RSK) correspondence
associates to an integer matrix a pair of semistandard Young tableaux
of the same shape 
\cite{Knuth1970},
\cite{fulton1997young}, 
\cite{Stanley1999},
\cite{sagan2001symmetric}.
It is informative to view 
an integer matrix $M=(M_{ij})$ 
as a configuration
of points (``balls'') in cells of the lattice $\Z_{\ge0}^{2}$,
with $M_{ij}$ balls in the $(i,j)$-th cell (see Fig.~\ref{fig:RSK}, 
left).\begin{figure}[htbp]
	\begin{tabular}{ccc}
	\begin{adjustbox}{max height=.3\textwidth}
	\begin{tikzpicture}
		[scale=.7, thick]
		\draw[->, thick] (0,0) -- (4,0) node[right] {$j$};
		\draw[->, thick] (0,0) -- (0,4) node[above] {$i$};
		\foreach \h in {1,2,3}
		{
			\draw (0,\h)--++(3.5,0);
			\draw (\h,0)--++(0,3.5);
			\node[left] at (0,\h-1/2) {$\h$};
			\node[below] at (\h-1/2,0) {$\h$};
		}
		\draw[fill] (2.5,2.5) circle (4pt);
		\draw[fill] (2.25,1.5) circle (4pt);
		\draw[fill] (2.75,1.5) circle (4pt);
		\draw[fill] (.75,2.5) circle (4pt);
		\draw[fill] (.25,2.5) circle (4pt);
		\draw[fill] (.5,.5) circle (4pt);
		\draw[fill] (1.25,1.25) circle (4pt);
		\draw[fill] (1.75,1.75) circle (4pt);
		\draw[fill] (1.75,1.25) circle (4pt);
		\draw[fill] (1.25,1.75) circle (4pt);
		\node[anchor=east] at (5,4.5) 
		{\scriptsize$M=\begin{pmatrix}
			1&0&0\\
			0&4&2\\
			2&0&1
		\end{pmatrix}$};
	\end{tikzpicture}
	\end{adjustbox}
	&
	\begin{adjustbox}{max height=.3\textwidth}
	\begin{tikzpicture}
		[scale=.7, thick]
		\node[anchor=east] at (5,4.5) {$w=(12313)$};
		\draw[->, thick] (0,0) -- (4,0) node[right] {$j$ cont.};
		\draw[->, thick] (0,0) -- (0,4) node[above] {$i$};
		\foreach \h in {1,2,3}
		{
			\draw (0,\h)--++(3.5,0);
			% \draw (\h,0)--++(0,3.5);
			\node[left] at (0,\h-1/2) {$\h$};
			% \node[below] at (\h-1/2,0) {$\h$};
		}
		\draw[fill] (2.5,2.5) circle (4pt);
		\draw[fill] (.25,.5) circle (4pt);
		\draw[fill] (1.2,1.5) circle (4pt);
		\draw[fill] (1.4,2.5) circle (4pt);
		\draw[fill] (1.8,.5) circle (4pt);
		\draw[dotted] (2.5,0)--++(0,3);
		\draw[dotted] (.25,0)--++(0,3);
		\draw[dotted] (1.2,0)--++(0,3);
		\draw[dotted] (1.4,0)--++(0,3);
		\draw[dotted] (1.8,0)--++(0,3);
		\node[below] at (1-1/2,0) {\phantom{$1$}};
	\end{tikzpicture}
	\end{adjustbox}
	&
	\begin{adjustbox}{max height=.3\textwidth}
	\begin{tikzpicture}
		[scale=.7, thick]
		\node[below] at (1-1/2,0) {\phantom{$1$}};
		\node[anchor=east] at (5,4.5) {$\sigma=(514362)$};
		\draw[->, thick] (0,0) -- (4,0) node[right] {$j$ cont.};
		\draw[->, thick] (0,0) -- (0,4) node[above] {$i$ cont.};
		\draw[fill] (1.2,1.5) circle (4pt);
		\draw[fill] (1,.3) circle (4pt);
		\draw[fill] (2.2,2.5) circle (4pt);
		\draw[fill] (1.8,1.1) circle (4pt);
		\draw[fill] (.3,2.1) circle (4pt);
		\draw[fill] (2.9,.6) circle (4pt);
		\draw[dotted] (2.9,0)--++(0,3);
		\draw[dotted] (.3,0)--++(0,3);
		\draw[dotted] (1.8,0)--++(0,3);
		\draw[dotted] (2.2,0)--++(0,3);
		\draw[dotted] (1,0)--++(0,3);
		\draw[dotted] (1.2,0)--++(0,3);
		\draw[dotted] (0,1.5)--++(3,0);
		\draw[dotted] (0,.3)--++(3,0);
		\draw[dotted] (0,2.5)--++(3,0);
		\draw[dotted] (0,1.1)--++(3,0);
		\draw[dotted] (0,2.1)--++(3,0);
		\draw[dotted] (0,.6)--++(3,0);
	\end{tikzpicture}
	\end{adjustbox}
	\end{tabular}
	\caption{Left: an integer matrix 
	as an input to the RSK. 
	Center: an integer word as an input into the RS viewed as a matrix with the continuous $j$ coordinate 
	(at most one ball at a given horizontal position is allowed; the word encodes vertical positions of
	consecutive balls).
	Right: a permutation viewed as a matrix with both continuous coordinates
	(at most one ball at a given horizontal or vertical position is allowed;
	balls represent the graph of the permutation).}
	\label{fig:RSK}
\end{figure}
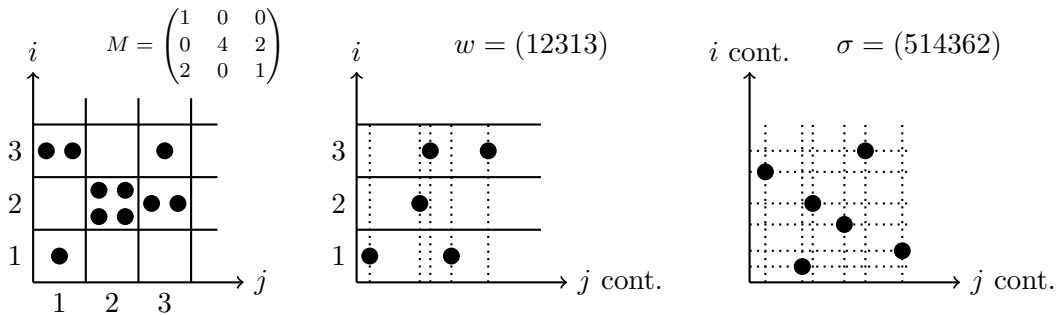
There are also simpler 
correspondences obtained from the RSK 
if one makes one or both dimensions
of the input \emph{continuous}, see
Fig.~\ref{fig:RSK}, center and right.
In particular, the Robinson--Schensted (RS)
correspondence maps integer words into pairs of Young tableaux
of the same shape, but now one 
of them is standard.

The idea of applying the RSK correspondence to a \emph{random} input can be traced back to
\cite{Vershik1986} where it was used in connection with the asymptotic theory 
of characters of the infinite symmetric group (see also \cite{BufetovCLT}). 
Together with combinatorial properties
of the RSK this idea has been extensively employed in studying various stochastic systems,
e.g., TASEP (=~totally asymmetric simple exclusion process), the
last-passage percolation \cite{johansson2000shape}, or
longest increasing subsequences of random permutations
\cite{baik1999distribution}, \cite{Baik1999second}.

Reading the random input matrix column by column adds a dynamical
perspective to random systems (with $j$ in all three cases on Fig.~\ref{fig:RSK} playing the role of time).
This direction has been substantially developed in, e.g., 
\cite{OConnell2003Trans}, \cite{OConnell2003}, 
\cite{BBO2004}.

The geometric version 
(also sometimes called ``\emph{tropical}'')
of the RS and the RSK 
correspondences\footnote{The geometric RSK maps 
arrays of positive real numbers into 
other such arrays in a birational way,
and is obtained from the classical RSK by a 
certain ``detropicalization'', see
\cite{Kirillov2000_Tropical},
\cite{NoumiYamada2004}.}
has also been
employed in the study of stochastic systems
\cite{Oconnell2009_Toda},
\cite{COSZ2011}, \cite{OSZ2012}, \cite{OConnell2013geomToda}. 
The systems one obtains at this level are related to directed
random polymers in random media, in particular, to the 
O'Connell--Yor,
log-Gamma,
and strict-weak 
random polymers
introduced in \cite{OConnellYor2001},
\cite{Seppalainen2012}, and 
\cite{OConnellOrtmann2014},
\cite{CorwinSeppalainenShen2014}, respectively.
Each such polymer model 
can be viewed as a 
positive temperature version 
of a certain last-passage percolation-like model. 

In the stochastic systems mentioned above, the RSK and related constructions 
provide a way to observe and understand their \emph{integrability}. 
The integrability property refers to the
presence of concise and exact formulas describing observables,
which allows to study the asymptotic behavior of such systems, 
and also gives access to exact descriptions of limiting universal
distributions, such as the Tracy-Widom distributions
which are features of the Kardar--Parisi--Zhang (KPZ) universality class
\cite{CorwinKPZ}, 
\cite{BorodinGorinSPB12}, \cite{BorodinPetrov2013Lect}
\cite{QuastelSpohnKPZ2015}.

\medskip

The classical RSK is deeply connected to Schur symmetric functions
\cite[Ch. I]{Macdonald1995}, while the geometric RSK
is relevant to the $\mathfrak{gl}_{n}$ Whittaker functions \cite{Kostant1977Whitt}, 
\cite{Etingof1999}.
Both families of functions are degenerations of more general
Macdonald symmetric functions depending on two parameters
$(q,t)$ \cite[Ch. VI]{Macdonald1995}:
the Schur functions correspond to $q=t$, and the Whittaker functions
arise in the limit as $t=0$ and $q\nearrow1$, \cite{GerasimovLebedevOblezin2011}.

In the recent years, there has been 
a progress in understanding analogues of the RS
correspondences at other levels of the Macdonald hierarchy: 
$q$-Whittaker ($t=0$ and $0<q<1$)
\cite{OConnellPei2012}, \cite{Pei2013Symmetry},
\cite{BorodinPetrov2013NN}
and Hall--Littlewood ($q=0$ and $0<t<1$)
\cite{BufetovPetrov2014}. 
At these levels, the correspondences become \emph{randomized},
that is, the image of a deterministic word (as on Fig.~\ref{fig:RSK}, center)
is no longer a fixed pair of Young tableaux, but rather a 
\emph{random} such pair. Because of this randomness, 
an appropriate language for describing the correspondences 
seems to be that of \emph{Markov dynamics} on
two-dimensional
interlacing integer arrays (these arrays are in a natural bijection with semistandard tableaux,
see Remark \ref{rmk:SSYT} below for 
more detail).
The dynamics which are analogues of the RS correspondences
evolve in continuous time according to the $j$ axis 
on Fig.~\ref{fig:RSK}, center.
These dynamics act nicely on certain families of probability distributions
on interlacing arrays, namely, the Macdonald processes \cite{BorodinCorwin2011Macdonald}.
% (with $t=0$ for $q$-Whittaker or $q=0$ for Hall--Littlewood randomized correspondences, respectively).

The $q$-Whittaker level is relevant to
integrable one-dimensional
particle systems such as 
(continuous time) $q$-TASEP and the stochastic $q$-Boson
system \cite{SasamotoWadati1998}, \cite{BorodinCorwin2011Macdonald},
\cite{BorodinCorwinSasamoto2012},
\cite{BorodinCorwinPetrovSasamoto2013}, \cite{FerrariVeto2013},
and (continuous time) $q$-PushTASEP (=~$q$-deformed pushing TASEP) 
\cite{CorwinPetrov2013}.\footnote{These 
systems are in fact quantum integrable in the sense of the coordinate Bethe ansatz
\cite{Bethe1931}, \cite{Lieb67}, \cite{baxter2007exactly}, \cite{BorodinCorwinPetrovSasamoto2013}.}
In particular, continuous time Markov dynamics
on interlacing arrays constructed in 
\cite{OConnellPei2012} and
\cite{BorodinPetrov2013NN}
are two-dimensional extensions of, respectively, 
the $q$-TASEP and the $q$-PushTASEP. That is,
the latter one-dimensional processes
are Markovian marginals of the dynamics on two-dimensional interlacing 
arrays.\footnote{The two-dimensional dynamics at the Hall--Littlewood 
level \cite{BufetovPetrov2014}, however, 
do not seem to lead to any new one-dimensional
integrable particle systems.}

\medskip

In the present paper we advance further at the $q$-Whittaker level, and 
introduce four $q$-randomized RSK correspondences,
or, in other words,
four discrete time Markov dynamics on interlacing
arrays which act nicely on $q$-Whittaker processes (these are Macdonald
processes with $t=0$).
These dynamics unify, generalize and extend all of the above RSK-type constructions:

\smallskip

$\bullet$ When $q=0$, our four $q$-randomized correspondences become
	usual or dual, row or column classical RSKs (four classical correspondences in total). 
	The input matrix $M$ in the usual RSKs has $M_{ij}\in\{0,1,2,\ldots\}$,
	and in the dual RSKs one has $M_{ij}\in\{0,1\}$. When one takes 
	$M_{ij}$ to be independent geometric (for usual) or Bernoulli (for dual)
	random variables and applies a suitable classical RSK, the 
	shape of the resulting random Young diagram
	is distributed according to the Schur measure
	\cite{okounkov2001infinite}.\footnote{In the present paper, the word ``geometric''
	is attached to two separate concepts --- the geometric RSKs, and 
	the geometric and $q$-geometric random variables. 
	To avoid confusion where it can occur, we will call the correspondences
	the geometric (tropical) RSKs.
	See also Remark \ref{rmk:names_notation}.} Similarly, 
	our $q$-randomized correspondences applied to $q$-geometric or Bernoulli
	random inputs 
	(note that the Bernoulli input needs not to be $q$-deformed)
	give rise to $q$-Whittaker distributed random Young diagrams.
	The latter property is an instance of ``acting nicely'' on $q$-Whittaker processes (see also \eqref{adding_spec} in \S \ref{sec:macdonald_processes_and_markov_dynamics}
	for 
	more detail).

$\bullet$ 
	In a limit from discrete to continuous time, 
	our $q$-randomized RSKs turn into the (simpler)
	$q$-randomized RS correspondences 
	introduced and studied in \cite{OConnellPei2012},
	\cite{BorodinPetrov2013NN}.

$\bullet$ 
	The two discrete time $q$-TASEPs (associated with $q$-geometric
	or Bernoulli random variables) studied by Borodin--Corwin
	\cite{BorodinCorwin2013discrete}
	arise as one-dimensional marginals of our two
	``column'' dynamics on interlacing arrays.
	In a similar way, our two ``row'' dynamics lead to discrete time
	$q$-PushTASEPs --- new
	integrable particle systems in the 
	KPZ universality class.
	% We also employ these new one-dimensional discrete time systems to 
	% establish a Fredholm determinantal formula
	% for the two-sided continuous time $q$-PushASEP
	% conjectured by Corwin--Petrov \cite{CorwinPetrov2013}.

$\bullet$
	In a scaling limit as $q\nearrow1$, the dynamics on interlacing
	arrays associated with the $q$-geometric random input
	(these are two out of our four $q$-randomized RSK correspondences)
	converge to geometric (tropical) RSK correspondences.
	The latter correspondences (which are deterministic birational maps 
	between arrays of positive reals)
	are relevant to the 
	log-Gamma \cite{Seppalainen2012}, \cite{COSZ2011}, \cite{OSZ2012}
	and strict-weak 
	\cite{CorwinSeppalainenShen2014}
	random lattice polymers.
	
\smallskip

In \S \ref{sub:_q_randomized_row_insertion_with_q_geometric_input} below
we describe one of our four dynamics in detail, and in \S \ref{sub:other_results}
we briefly discuss other dynamics and results.

% subsection overview (end)

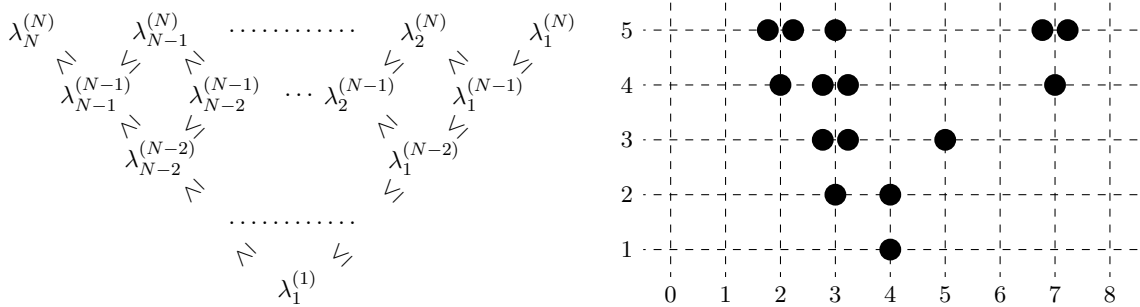
\begin{figure}[htbp]
	\begin{tabular}{cc}
	\begin{adjustbox}{max width=.48\textwidth}
		\begin{tikzpicture}[scale=1]
			\def\h{1}
			\def\x{1.9}
			\node at (-5,5) {$\la_N^{(N)}$};
			\node at (-3,5) {$\la_{N-1}^{(N)}$};
			\node at (0-\x/2,5) {$\ldots\ldots\ldots\ldots$};
			\node at (3-\x,5) {$\la_{2}^{(N)}$};
			\node at (5-\x,5) {$\la_{1}^{(N)}$};
			\node at (-4,5-\h) {$\la_{N-1}^{(N-1)}$};
			\node at (-2,5-\h) {$\la_{N-2}^{(N-1)}$};
			\node at (0-\x/2+.1,5-\h) {$\ldots$};
			\node at (2-\x,5-\h) {$\la_{2}^{(N-1)}$};
			\node at (4-\x,5-\h) {$\la_{1}^{(N-1)}$};
			\node at (-3,5-2*\h) {$\la_{N-2}^{(N-2)}$};
			\node at (3-\x,5-2*\h) {$\la_{1}^{(N-2)}$};
			\foreach \LePoint in {(4.5-\x,5-\h/2),(2.5-\x,5-\h/2),(-3.5,5-\h/2),
			(-2.5,5-3*\h/2),(3.5-\x,5-3*\h/2),(2.5-\x,5-5*\h/2),(1.7-\x,5-7*\h/2)} {
				\node [rotate=45] at \LePoint {$\le$};
			};
			\foreach \GePoint in {(3.5-\x,5-\h/2),(-4.5,5-\h/2),(-2.5,5-\h/2),
			(-3.5,5-3*\h/2),(-2.5,5-5*\h/2),(.5,5-3*\h/2),(-1.7,5-7*\h/2)} {
				\node [rotate=135] at \GePoint {$\ge$};
			};
			\node at (0-\x/2,5-3*\h) {$\ldots\ldots\ldots\ldots$};
			\node at (0-\x/2+.08,5-4*\h) {$\la_1^{(1)}$};
		\end{tikzpicture}
	\end{adjustbox}
	&
	\begin{tikzpicture}[scale = .73]
			\tikzset{every node/.style={
				font=\footnotesize
			}}
			\foreach \yy in {1,2,3,4,5}
			{
				\draw[dashed] (8.5, \yy) -- (-0.5,\yy) node[left] {\yy};
			};
			\foreach \xxy in {0,1,2,3,4,5,6,7,8}
			{
				\draw[dashed]  (\xxy,5.5) -- (\xxy,.5) node[below] {\xxy};
			};
			\def\x{0.16}
			\def\y{0.23}
			\def\ys{-1}
			\def\xs{7}
			\def\xss{-3}
			\draw[ultra thick, fill] (4,1) circle(\x);
			\draw[ultra thick, fill] (3,2) circle(\x);
			\draw[ultra thick, fill] (4,2) circle(\x);
			\draw[ultra thick, fill] (3-\y,3) circle(\x);
			\draw[ultra thick, fill] (3+\y,3) circle(\x);
			\draw[ultra thick, fill] (5,3) circle(\x);
			\draw[ultra thick, fill] (2,4) circle(\x);
			\draw[ultra thick, fill] (3-\y,4) circle(\x);
			\draw[ultra thick, fill] (3+\y,4) circle(\x);
			\draw[ultra thick, fill] (7,4) circle(\x);
			\draw[ultra thick, fill] (2-\y,5) circle(\x);
			\draw[ultra thick, fill] (2+\y,5) circle(\x);
			\draw[ultra thick, fill] (3,5) circle(\x);
			\draw[ultra thick, fill] (7-\y,5) circle(\x);
			\draw[ultra thick, fill] (7+\y,5) circle(\x);
		\end{tikzpicture}
	\end{tabular}
	\caption{Left: An interlacing array $\lab$; 
	we require that $\la^{(j)}_{i}\in\Z_{\ge0}$.
	Right:
	A configuration of particles corresponding to
	an interlacing array of depth $N=5$ (right).}
	\label{fig:array}
\end{figure}

\subsection{$q$-randomized row insertion with $q$-geometric input} % (fold)
\label{sub:_q_randomized_row_insertion_with_q_geometric_input}

Discrete time Markov dynamics 
(i.e., the $q$-randomized
RSK correspondences)
which we construct in the present paper
live on the space of integer arrays $\lab$ (see Fig.~\ref{fig:array}).
Neighboring levels of the array satisfy certain inequalities
which we call the \emph{interlacing property} (see \eqref{interlace} for the definition).
Each level $\la^{(j)}=(\la^{(j)}_{1}\ge \ldots\ge\la^{(j)}_{j})$
of an array can be viewed as a partition 
(equivalently, a Young diagram \cite[I.1]{Macdonald1995}),
so $\lab$ is a sequence of interlacing Young diagrams.
\begin{figure}[htbp]
	\begin{adjustbox}{max height=.2\textwidth}
	\begin{tikzpicture}[scale = .5, thick]
		\draw (0,0) --++ (7,0);
		\draw (0,-1) --++ (7,0);
		\draw (0,-2) --++ (7,0);
		\draw (0,-3) --++ (3,0);
		\draw (0,-4) --++ (2,0);
		\draw (0,-5) --++ (2,0);
		\draw (0,0) --++ (0,-5);
		\draw (1,0) --++ (0,-5);
		\draw (2,0) --++ (0,-5);
		\draw (3,0) --++ (0,-3);
		\draw (4,0) --++ (0,-2);
		\draw (5,0) --++ (0,-2);
		\draw (6,0) --++ (0,-2);
		\draw (7,0) --++ (0,-2);
		\def\xxxx{.05}
		\node[anchor=west] at (\xxxx,-.5) {1};
		\node[anchor=west] at (\xxxx+1,-.5) {1};
		\node[anchor=west] at (\xxxx+2,-.5) {1};
		\node[anchor=west] at (\xxxx+3,-.5) {1};
		\node[anchor=west] at (\xxxx+4,-.5) {3};
		\node[anchor=west] at (\xxxx+5,-.5) {4};
		\node[anchor=west] at (\xxxx+6,-.5) {4};
		\node[anchor=west] at (\xxxx,-1.5) {2};
		\node[anchor=west] at (\xxxx+1,-1.5) {2};
		\node[anchor=west] at (\xxxx+2,-1.5) {2};
		\node[anchor=west] at (\xxxx+3,-1.5) {5};
		\node[anchor=west] at (\xxxx+4,-1.5) {5};
		\node[anchor=west] at (\xxxx+5,-1.5) {5};
		\node[anchor=west] at (\xxxx+6,-1.5) {5};
		\node[anchor=west] at (\xxxx,-2.5) {3};
		\node[anchor=west] at (\xxxx+1,-2.5) {3};
		\node[anchor=west] at (\xxxx+2,-2.5) {3};
		\node[anchor=west] at (\xxxx,-3.5) {4};
		\node[anchor=west] at (\xxxx+1,-3.5) {4};
		\node[anchor=west] at (\xxxx,-4.5) {5};
		\node[anchor=west] at (\xxxx+1,-4.5) {5};
	\end{tikzpicture}
	\end{adjustbox}
	\caption{A semistandard Young tableau corresponding
	to the array on Fig.~\ref{fig:array}, right.}
	\label{fig:SSYT}
\end{figure}
\begin{remark}\label{rmk:SSYT}
	Each $\lab$ can be also viewed as a \emph{semistandard Young tableau}
	of shape $\la^{(N)}$ filled
	with numbers from $1$ to $N$. Here ``semistandard'' means that 
	in this filling, numbers increase weakly along rows, and strictly down columns.
	Then each $\la^{(j)}$ is the portion of the semistandard tableau filled
	with numbers from 1 to $j$, see Fig.~\ref{fig:SSYT}.
\end{remark}

Let us now define the ($q$-randomized) operation of 
inserting a word $w=(1^{m_1}2^{m_2}\ldots N^{m_N})$
(i.e., the word has $m_1$ ones, $m_2$ twos, etc.)
into an array $\lab$. The result is a new, random 
array $\nub$. At the first level we have
$\nu^{(1)}_{1}=\la^{(1)}_{1}+m_1$. 
Then, sequentially at all levels $j=2,\ldots,N$, 
given the existing change $\la^{(j-1)}\to\nu^{(j-1)}$
at the previous level and the old state $\la^{(j)}$
at the current level, construct the new state $\nu^{(j)}$
as follows. Each move $\nu^{(j-1)}_{i}-\la^{(j-1)}_{i}$,
$i=1,\ldots,j-1$, is \emph{randomly}
split into two pieces $r_i^{(j-1)}+\ell_i^{(j-1)}$, and the piece
$r_i^{(j-1)}$ is added to the new move of the upper right neighbor $\la^{(j)}_{i}$, 
while the piece $\ell_i^{(j-1)}$
is added to the new move of the upper left neighbor $\la^{(j)}_{i+1}$.
Moreover, $\la^{(j)}_{1}$ receives an additional move of size $m_j$.
All these splittings and moves at level $j$ happen in parallel.
That is (here and below $\mathbf{1}_{\cdots}$ stands for the indicator),
\begin{align*}
	\nu^{(j)}_{i}-\la^{(j)}_{i}=m_j\mathbf{1}_{i=1}
	+
	r_i^{(j-1)}\mathbf{1}_{i<j}+\ell_{i-1}^{(j-1)}\mathbf{1}_{i>1},\qquad
	i=1,\ldots,j
\end{align*}
(see Fig.~\ref{fig:moves_intro}).
To 
complete the definition, it now remains to describe the distribution of the splitting 
of the move
$\nu^{(j-1)}_{i}-\la^{(j-1)}_{i}=r_i^{(j-1)}+\ell_i^{(j-1)}$.
This is a certain $q$-deformed version of the Beta-binomial distribution, namely, 
$r_i^{(j-1)}$ is randomly chosen to be equal to
$r\in\{0,1,2,\ldots,\nu^{(j-1)}_{i}-\la^{(j-1)}_{i}\}$ with probability
\begin{align}\label{Om_intro}
	\Om_{q^{-1},q^{a},q^{b}}(r\mid c):=
	q^{ar}\frac{(q^{b-a};q^{-1})_{r}}{(q^{-1};q^{-1})_r}
	\frac{(q^{a};q^{-1})_{c-r}}{(q^{-1};q^{-1})_{c-r}}
	\frac{(q^{-1};q^{-1})_{c}}{(q^{b};q^{-1})_{c}},
\end{align}
where
\begin{align*}
	a=\la^{(j)}_{i}-\la^{(j-1)}_{i},\qquad
	b=\la^{(j-1)}_{i-1}-\la^{(j-1)}_{i},
	\qquad
	c=\nu^{(j-1)}_{i}-\la^{(j-1)}_{i},
\end{align*}
$(u;q)_{m}=(1-u)(1-uq)\ldots(1-uq^{m-1})$ are the $q$-Pochhammer symbols,
and we adopt the convention $\la^{j-1}_{0}=+\infty$.
The quantity $\ell^{(j-1)}_{i}$ is simply equal to $\nu^{(j-1)}_{i}-\la^{(j-1)}_{i}-r^{(j-1)}_{i}$.

The quantities \eqref{Om_intro} define a probability distribution in $r$
for $a\le b$, $c\le b$
(these conditions follow from the interlacing).
Moreover, this distribution is supported on $\{0,1,\ldots,c\}\cap\{c-a,c-a+1,\ldots,b-a-1,b-a\}$,
which in fact ensures that the new array $\nub$ is also interlacing (see lemma  \ref{lemma:interlacing_after} for details).

\begin{figure}[htbp]
	\begin{adjustbox}{max width=\textwidth}
	\begin{tikzpicture}[scale = 1, thick]
		\draw[fill] (0,0) circle (3pt) node[below] {$\la^{(j-1)}_{i}$};
		\draw[fill] (5,0) circle (3pt) node [below] {$\nu^{(j-1)}_{i}$};
		\draw[fill] (-3,2) circle (3pt) node [above] {$\la^{(j)}_{i+1}$};
		\draw[fill] (6,2) circle (3pt) node[above] {$\la^{(j)}_{i}$};
		\draw[fill] (2,2) circle (3pt) node[above] {$\nu^{(j)}_{i+1}$};
		\draw[fill] (11,2) circle (3pt) node[above] {$\nu^{(j)}_{i}$};
		\node (lab) at (0,0) {};
		\node (nub) at (5,0) {};
		\node (lal) at (-3,2) {};
		\node (lar) at (6,2) {};
		\node (nul) at (2,2) {};
		\node (nur) at (11,2) {};
		\draw[->, very thick] (lab)--(nub);
		\def\xxxx{.15}
		\draw[ultra thick] (3,-\xxxx)--(3,\xxxx);
		\draw[ultra thick] (-1,2-\xxxx)--(-1,2+\xxxx);
		\draw[ultra thick] (8,2-\xxxx)--(8,2+\xxxx);
		\node[above] at (1.5,0) {$\ell_i^{(j-1)}$};
		\node[above] at (4.5,0) {$r_i^{(j-1)}$};
		\node[below] at (-2.2,2) {$r_{i+1}^{(j-1)}$};
		\node[below] at (9.8,2) {$\ell_{i-1}^{(j-1)}$};
		\draw[dotted] (lab) -- (-1,2);
		\draw[dotted] (3,0) -- (lar);
		\draw[dotted] (3,0) -- (nul);
		\draw[dotted] (nub) -- (8,2);
		\draw[dotted] (lal) -- (-3.5,1.5);
		\draw[dotted] (-1,2) -- (-1.5,1.5);
		\draw[dotted] (11,2) -- (11.5,1.5);
		\draw[dotted] (8,2) -- (8.5,1.5);
		\draw[densely dashed, very thick, ->] (lal) -- (nul);
		\draw[densely dashed, very thick, ->] (lar) -- (nur);
	\end{tikzpicture}
	\end{adjustbox}
	\caption{Splitting of the move at level $j-1$ and 
	its propagation to the level $j$. Here we are using 
	the particle interpretation of interlacing arrays as on 
	Fig.~\ref{fig:array}, right.}
	\label{fig:moves_intro}
\end{figure}
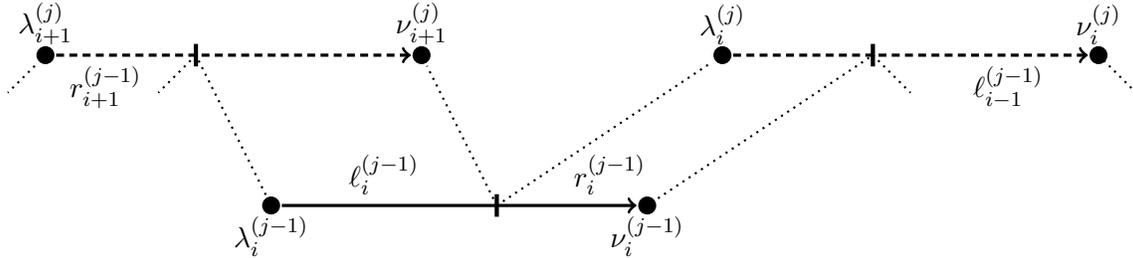

\begin{remark}
	The interlacing array $\lab$ plays the role of the \emph{insertion tableau}
	in our $q$-randomized RSK correspondence (cf. Remark \ref{rmk:SSYT}).
	One can readily define an accompanying \emph{recording tableau}
	in the same way as 
	it is done for the classical RSK correspondences.
	In the present paper we will not focus on recording tableaux.
\end{remark}

Now, let us take the insertion word 
$w=(1^{m_1}2^{m_2}\ldots N^{m_N})$ to be random itself. 
More precisely, let $m_j$, $j=1,\ldots,N$, 
be independent $q$-geometric random variables:
\begin{align}\label{qgeom_intro}
	\prob(m_j=k) = \frac{\al^{k}}{(q;q)_{k}}(\al;q)_{\infty},\qquad
	k=0,1,2,\ldots,\qquad 0<\al<1.
\end{align}
Inserting this random word $w$ into an array $\lab$ defines 
one step of a discrete time Markov chain on interlacing arrays.
We denote this Markov chain by $\Qqarow$.
\begin{theorem}\label{thm:sampling_intro}
	Start the Markov dynamics $\Qqarow$ from the interlacing array with all
	$\la^{(j)}_{i}(0)=0$. Then the distribution of the array $\lab(T)$ after $T$
	steps of this dynamics is given by the $q$-Whittaker process:
	\begin{align*}
		\prob(\lab(T)=\lab)=
		(\al;q)^{TN}_{\infty}{P_{\la^{(1)}}(1)
		P_{\la^{(2)}/\la^{(1)}}(1)\ldots P_{\la^{(N)}/\la^{(N-1)}}(1)
		Q_{\la^{(N)}}(\underbrace{\al,\al,\ldots,\al}_{T})}.
	\end{align*}
\end{theorem}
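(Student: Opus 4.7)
The plan is to prove the theorem by induction on $T$. The base case $T=0$ is immediate: the dynamics starts deterministically at the empty array, and the right-hand side vanishes unless all $\la^{(j)}$ are empty, since the factors $P_{\la^{(j)}/\la^{(j-1)}}(1)$ (one-variable skew $q$-Whittaker polynomials) force $\la^{(j)}=\la^{(j-1)}$ when $Q_{\la^{(N)}}(\varnothing)=\mathbf 1_{\la^{(N)}=\varnothing}$. For the inductive step it suffices to show that one step of $\Qqarow$ maps the $q$-Whittaker process with $Q$-specialization $(\al,\dots,\al)$ of length $T-1$ into the $q$-Whittaker process with $T$ copies of $\al$. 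This is precisely the assertion that $\Qqarow$ ``acts nicely'' on $q$-Whittaker processes in the sense of \eqref{adding_spec}, and iterating $T$ times starting from the $\delta$-measure at the empty array (which is the $q$-Whittaker process with trivial $Q$-specialization) yields the theorem.

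To verify this one-step intertwining, I would decompose the Markov kernel of $\Qqarow$ sequentially across levels $j=1,2,\dots,N$. At level $1$, the update $\nu^{(1)}_1=\la^{(1)}_1+m_1$ with $m_1$ distributed as in \eqref{qgeom_intro} is, up to the normalizing factor $(\al;q)_\infty$, exactly the skew Cauchy weight $Q_{\nu^{(1)}/\la^{(1)}}(\al)$. At each subsequent level $j\ge2$, the new state $\nu^{(j)}$ is determined by the independent splittings $r^{(j-1)}_i\sim\Om_{q^{-1},q^{a},q^{b}}(\cdot\mid c)$ together with the $q$-geometric increment $m_j$. The key step is to show that after marginalizing over these splittings, the conditional distribution of $\nu^{(j)}$ given $\la^{(j)},\la^{(j-1)},\nu^{(j-1)}$ is exactly the ratio of skew $q$-Whittaker weights that converts the ``old'' factor $P_{\la^{(j)}/\la^{(j-1)}}(1)$ together with the (already-summed) $Q$-weight ending in $\la^{(j)}$ into the ``new'' factor $P_{\nu^{(j)}/\nu^{(j-1)}}(1)$ together with an additional $Q_{\nu^{(j)}/\la^{(j)}}(\al)$. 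The contributions from adjacent levels telescope, and the product of single-cell normalizations collects into the prefactor $(\al;q)^{TN}_\infty$ from the Cauchy identity.

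The main obstacle is the local identity at each cell $(i,j)$: one must check that the explicit $\Om$-weights in \eqref{Om_intro} are precisely calibrated so that summing products of $\Om$ over the splittings reproduces the correct ratio of $q$-binomial coefficients coming from the one-variable skew $q$-Whittaker polynomials $P_{\nu^{(j)}/\nu^{(j-1)}}(1)/P_{\la^{(j)}/\la^{(j-1)}}(1)$ weighted by the right power of $\al$. This identity is essentially a form of the $q$-Chu--Vandermonde (or equivalently Heine) summation, and its verification demands careful bookkeeping of the support constraints that guarantee $\nub$ remains interlacing (cf.\ Lemma \ref{lemma:interlacing_after}) and that $r_i^{(j-1)}$ lies in the admissible window $\{0,\dots,c\}\cap\{c-a,\dots,b-a\}$. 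Once this local identity is proved, the outer induction on $T$ combined with the level-by-level compatibility completes the argument.
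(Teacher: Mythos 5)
Your global strategy is the same as the paper's at the top level: reduce to the one-step intertwining $\MP_{\mathbf{A}}^{\vec a}\,\Qspec{\al}=\MP_{\mathbf{A}\cup(\al)}^{\vec a}$, iterate $T$ times starting from the delta measure at the zero array, and verify the one-step intertwining level-by-level using the sequential structure of the update. This is precisely how the paper deduces the statement from Theorem~\ref{thm:QqrRSK_alpha}, which is itself the verification of the ``main equations'' \eqref{alpha_main_equation} of Theorem~\ref{thm:main_eq}.

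However, the ``key step'' you identify is not the right statement and, as written, is false. You assert that ``the conditional distribution of $\nu^{(j)}$ given $\la^{(j)},\la^{(j-1)},\nu^{(j-1)}$ is exactly the ratio of skew $q$-Whittaker weights.'' That is a claim about the individual transition weight $\mathscr{U}_j(\la^{(j)}\to\nu^{(j)}\mid \la^{(j-1)}\to\nu^{(j-1)})$, and a clean ratio form for it is the defining feature of the \emph{push-block} dynamics $\Qapb$ (cf.~\eqref{alpha_push_block_solution}), not of $\Qqarow$. The $\Qqarow$ weights are products of $q$-deformed Beta-binomial terms \eqref{Om_intro} and depend genuinely on $\bar\la=\la^{(j-1)}$ in a way that does not collapse to a ratio of skew $q$-Whittaker branching coefficients. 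The correct intertwining condition at level $j$ is the \emph{average} over $\bar\la$ in equation \eqref{alpha_main_equation}:
\begin{align*}
\sum_{\bar\la\in\GT_{j-1}}\mathscr{U}_j(\la\to\nu\mid\bar\la\to\bar\nu)\,
(\al a_j)^{|\la|-|\nu|-(|\bar\la|-|\bar\nu|)}\,
\psi_{\la/\bar\la}\,\varphi_{\bar\nu/\bar\la}
=(\al a_j;q)_{\infty}\,\psi_{\nu/\bar\nu}\,\varphi_{\nu/\la},
\end{align*}
and this cannot be checked ``locally at each cell $(i,j)$'': the branching coefficients $\psi_{\la/\bar\la}$ and $\varphi_{\bar\nu/\bar\la}$ in \eqref{P_one_variable}--\eqref{Q_one_variable} couple \emph{adjacent} entries $\bar\la_i,\bar\la_{i+1}$, and so does the interlacing constraint on $\bar\la$, so the sum over $\bar\la$ does not factorize over positions.

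The paper's proof of Theorem~\ref{thm:QqrRSK_alpha} handles this by a second induction, on the level $j$: it peels off the leftmost coordinate of $\bar\la$, extracts a single $\Om_{q^{-1},\cdot,\cdot}$ factor in the variable $t=\bar\nu_{j-1}-\bar\la_{j-1}$, applies the inductive hypothesis (the main equation at level $j-1$) to the inner sum over the truncated signature $\bar\la^{-}$, and only then uses the $q$-Chu--Vandermonde normalization \eqref{Om_qmunu_sum} for the final sum over $t$. You are right that $q$-Chu--Vandermonde is the ultimate input, but the argument requires this second induction and a nontrivial recombination of $q$-binomial and $q$-Pochhammer factors into $\Om$'s; the factorized cell-by-cell identity you describe is not available for $\Qqarow$. (It \emph{is} close to how the simpler $\Qqbrow$ proof in Theorem~\ref{thm:QqrRSK_beta} proceeds, via decomposition into non-interacting ``islands,'' but that structure relies on particles moving by at most one.)
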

Here $P_{\la/\mu}$ and $Q_\la$ are the 
$q$-Whittaker polynomials, see \S \ref{sec:macdonald_processes_and_markov_dynamics}
for more detail. Theorem \ref{thm:sampling_intro} 
follows from Theorem \ref{thm:QqrRSK_alpha} which we prove in 
\S \ref{sub:row_insertion_dynamics_q__hatal_q-rrsk_}.
\begin{remark}
	In fact, we can (and will) consider a more general situation 
	when the parameters $\al$
	in \eqref{qgeom_intro} may depend on $j$
	and on the time step as $\al_t a_j$. Then the 
	$q$-Whittaker process above takes the form
	\begin{align*}
		\bigg(\prod_{j=1}^{N}\prod_{t=1}^{T}
		(\al_t a_j;q)_{\infty}\bigg)
		{P_{\la^{(1)}}(a_1)
		P_{\la^{(2)}/\la^{(1)}}(a_2)\ldots P_{\la^{(N)}/\la^{(N-1)}}(a_N)
		Q_{\la^{(N)}}(\al_1,\al_2,\ldots,\al_T)}.
	\end{align*}
	We omit the dependence on $j$ and $t$ in Introduction.	
\end{remark}

Let us now describe three degenerations of the dynamics $\Qqarow$:
\smallskip

$\bullet$ For $q=0$, the splitting distributions \eqref{Om_intro}
become supported at a single $r\in\{0,1,\ldots,c\}$, so the randomness in the 
insertion disappears, and the insertion itself
turns into the classical RSK row insertion (we recall its
definition in \S \ref{sub:rsk_type_dynamics_schur}). The $q$-geometric random variables $m_j$
\eqref{qgeom_intro}
become geometric, and the $q$-Whittaker polynomials in Theorem \ref{thm:sampling_intro}
turn into the Schur polynomials. 
This justifies our treatment of the dynamics 
$\Qqarow$ as the $q$-randomized row RSK correspondences.

$\bullet$ Fix $0<q<1$. When
$\al\searrow0$ in \eqref{qgeom_intro} and one rescales time from discrete to continuous time 
(see \S\ref{sub:small_al_continuous_time_limit} for details on this scaling), 
the random input matrix turns into $N$ independent Poisson processes
running in parallel (i.e., we are passing from the left to the center
situation on Fig.~\ref{fig:RSK}). Then in the splitting distributions one has
$c=0$ or $1$, and the dynamics $\Qqarow$ turns into a continuous
time dynamics on $q$-Whittaker processes 
which was introduced in \cite{BorodinPetrov2013NN}. The latter continuous time dynamics
should be viewed as a $q$-randomized row RS correspondence. 

$\bullet$ Let $q=e^{-\epsilon}$ and $\al=e^{-\theta\epsilon}$ with $\epsilon\searrow0$ and $\theta>0$.
Define the positive random variables $\hat R^{j}_{k}(t,\epsilon)$ via the following scaling:
\begin{align*}
	\la^{(j)}_{k}(t)=(t+j-2k+1)\epsilon^{-1}\log\epsilon^{-1}
	+\epsilon^{-1}\log\big(\hat R^{j}_{k}(t,\epsilon)\big).
\end{align*}
If the quantities $\la^{(j)}_{k}$ evolve under the dynamics $\Qqarow$
started from all $\la^{(j)}_{k}(0)=0$, then 
the rescaled quantities $\hat R^{j}_{k}(t,\epsilon)$ converge
to certain ratios of partition functions in the log-Gamma lattice polymer model
(see \S \ref{sub:polymer_partition_functions} and Theorem \ref{thm:TR} in particular for details).
Moreover, under this scaling the randomness 
in the splitting \eqref{Om_intro}
disappears, and the $q$-randomized insertion described above turns into the 
geometric RSK insertion.

\begin{remark}
	It is worth noting that there is also a strong
	connection between the geometric 
	(tropical) RSK and representation theory,
	cf. \cite{BBO2004}, 
	\cite{Chhaibi2013}.
	At the $q$-randomized level this connection does not (yet)
	seem to be present.
\end{remark}

When restricted to the rightmost particles $\la^{(j)}_{1}$, $j=1,\ldots,N$,
of the interlacing array, the dynamics $\Qqarow$
induces a marginally Markovian evolution which we call the 
(discrete time) geometric $q$-PushTASEP. 
This is a new 
integrable particle system in the 
KPZ universality class.
In the shifted coordinates
$x_i(t):=-\la_1^{(i)}(t)-i$ (so $x_N<\ldots<x_1$), 
the evolution of this system during time step $t\to t+1$ looks as follows.
Sequentially for $i=1,2,\ldots,N$, each particle $x_i$
jumps to the left by $m_i+r^{(i-1)}_{1}$, where $m_i$ is an independent
$q$-geometric random variable \eqref{qgeom_intro}, and 
$r^{(i-1)}_{1}$ is a random variable with distribution 
\begin{align*}
	\Om_{q^{-1},q^{a},0}(r\mid c)=
	q^{ar}(q^{a};q^{-1})_{c-r}
	\frac{(q^{-1};q^{-1})_{c}}{(q^{-1};q^{-1})_{r}(q^{-1};q^{-1})_{c-r}},
	\qquad
	\begin{array}{l}
		a=x_{i-1}(t)-x_i(t)-1,
		\\
		c=x_{i-1}(t+1)-x_{i-1}(t)	
	\end{array}
\end{align*}
(this is simply the splitting distribution \eqref{Om_intro} with $b=+\infty$).
Note that if $c>a$, then $r^{(i-1)}_{1}$ chosen according to the above distribution
will be at least $c-a$. See Fig.~\ref{fig:qpush_alpha}.
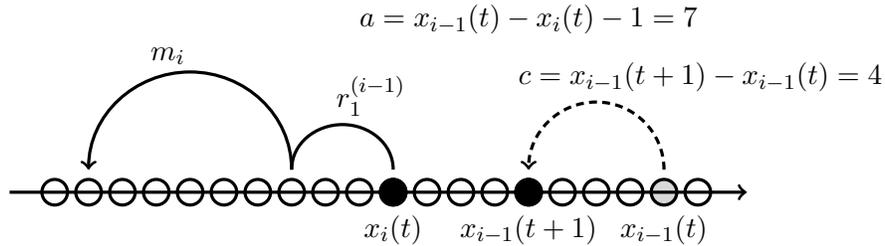
\begin{figure}[htbp]
	\begin{tikzpicture}
		[scale=1,very thick]
		\def\pt{.17}
		\def\ee{.1}
		\def\h{.45}
		\draw[fill, color=gray!30!white] (12*\h,0) circle(\pt);
		\draw[->] (-3.3,0) -- (6.5,0);
		\foreach \ii in {
		(-6*\h,0),(-5*\h,0),(-4*\h,0),(-3*\h,0),(-2*\h,0),(-1*\h,0),
		(0*\h,0), (1*\h,0), (2*\h,0), (3*\h,0), (4*\h,0), (5*\h,0), 
		(6*\h,0), (7*\h,0), (8*\h,0), (9*\h,0), (10*\h,0), (11*\h,0),(12*\h,0), (13*\h,0)}
		{
			\draw \ii circle(\pt);
		}
		\foreach \ii in {(8*\h,0),
		(4*\h,0)}
		{
			\draw[fill] \ii circle(\pt);
		}
		\node at (8*\h,-5*\ee) {$x_{i-1}(t+1)$};
		\node at (12*\h,-5*\ee) {$x_{i-1}(t)$};
		\node at (4*\h,-5*\ee) {$x_{i}(t)$};
		\node at (8*\h,23*\ee) {$a=x_{i-1}(t)-x_i(t)-1=7$};
	    \draw[->, very thick, densely dashed] (12*\h,.3) to [in=0, out=90] (10*\h,1.2)
	    to [in=90, out=180] (8*\h,.3)
	    node [xshift=65,yshift=35] {$c=x_{i-1}(t+1)-x_{i-1}(t)=4$};
	    \draw[very thick] (4*\h,.3) to [in=0, out=90] (2.5*\h,0.9)
	    to [in=90, out=180] (1*\h,.3) node [xshift=30,yshift=28] {$r^{(i-1)}_{1}$};
	    \draw[->, very thick] (1*\h,.3) to [in=0, out=90] (-2*\h,1.6)
	    to [in=90, out=180] (-5*\h,.3) node [xshift=30,yshift=43] {$m_i$};
	\end{tikzpicture}
	\caption{The discrete time geometric $q$-PushTASEP.}
	\label{fig:qpush_alpha}
\end{figure}

In a continuous time limit as $\al\searrow0$, the geometric $q$-PushTASEP 
turns into the continuous time $q$-PushTASEP
of \cite{BorodinPetrov2013NN}, \cite{CorwinPetrov2013}.
The $q$-moments of the form 
$\E q^{k(x_n(t)+n)}$ (and more general such moments)
of both $q$-PushTASEPs are given in terms of nested 
contour integrals.
For the geometric $q$-PushTASEP only finitely many such moments exist (i.e.,
the expectation is infinite for sufficiently large $k$), and for the 
continuous time $q$-PushTASEP the moments grow too fast and
also do not determine the distribution of $x_n(t)$. 
However, it is still possible to write down a 
Fredholm determinantal formula for the distribution of $x_n(t)$
for both processes
(started from the step initial configuration $x_i(0)=-i$)
using the theory of Macdonald processes 
\cite{BorodinCorwin2011Macdonald}, see \cite[Theorem 3.3]{BorodinCorwinFerrariVeto2013}.
We refer to \S \ref{sec:moments_and_fredholm_determinants_for_bernoulli_} for
further details.

% subsection _q_randomized_row_insertion_with_q_geometric_input (end)

\subsection{Other dynamics and results} % (fold)
\label{sub:other_results}

Besides the dynamics $\Qqarow$ discussed in \S \ref{sub:_q_randomized_row_insertion_with_q_geometric_input}
above, we introduce three other dynamics on $q$-Whittaker processes:

\smallskip

$\bullet$ $\Qqacol$ (\S \ref{sub:col_insertion_dynamics_q__hatal_q-rrsk_}
and Theorem \ref{thm:QqcRSK_alpha}). At $q=0$ this dynamics 
becomes the classical RSK column insertion applied to a geometric random input 
$\Qacol$ (\S \ref{sub:rsk_type_dynamics_schur}). In a scaling limit as 
$q\nearrow1$, $\Qqacol$
turns into a geometric (tropical) RSK associated with the strict-weak lattice polymer introduced
in 
\cite{CorwinSeppalainenShen2014} (Theorem \ref{thm:TL}). 
In a continuous time limit, $\Qqacol$ turns into the $q$-randomized column RS correspondence
introduced in \cite{OConnellPei2012}.
Under $\Qqacol$,
the leftmost particles $\la^{(j)}_{j}$ of the interlacing array
evolve according to the discrete time geometric $q$-TASEP of \cite{BorodinCorwin2013discrete}.

$\bullet$ $\Qqbrow$ (\S \ref{sub:row_insertion_dynamics_q__hatbe_q-rrsk_}
and Theorem \ref{thm:QqrRSK_beta}). At $q=0$ this dynamics 
becomes the dual RSK row insertion applied to a Bernoulli random input
$\Qbrow$ (\S \ref{sub:rsk_type_dynamics_schur}). 
In a continuous time limit, $\Qqbrow$ turns into the $q$-randomized row RS correspondence
\cite{BorodinPetrov2013NN}.
Under $\Qqbrow$,
the rightmost particles $\la^{(j)}_{1}$ of the interlacing array
evolve according to a new particle system,
the discrete time Bernoulli $q$-PushTASEP (Definition \ref{def:Bernoulli_qpush}).

$\bullet$
$\Qqbcol$ (\S \ref{sub:column_insertion_dynamics_q__hatbe_q-rrsk_}
and Theorem \ref{thm:QqcRSK_beta}). At $q=0$ this dynamics 
becomes the dual RSK column insertion applied to a Bernoulli random input
$\Qbcol$ (\S \ref{sub:rsk_type_dynamics_schur}). 
In a continuous time limit, $\Qqbrow$ turns into the $q$-randomized column RS correspondence
of \cite{OConnellPei2012}.
Under $\Qqbrow$,
the leftmost particles $\la^{(j)}_{j}$ of the array
evolve according to
the discrete time Bernoulli $q$-TASEP 
\cite{BorodinCorwin2013discrete}.\footnote{In contrast with 
$\Qqarow$ and $\Qqacol$,
there is 
(yet) no known polymer-like 
limits of $\Qqbrow$ or $\Qqbcol$.}

\begin{remark}
	We do not attempt a full classification of $q$-randomized RSK correspondences
	as it was done for the RS correspondences 
	by solving certain linear equations for transition
	probabilities in 
	\cite{BorodinPetrov2013NN}.
	Similar equations 
	for the discrete time situation
	seem to be much more involved,
	and in this paper we demonstrate particular solutions
	to these equations which lead to 
	discrete time Markov dynamics
	(see also \S \ref{ssub:classification} for further discussion).

	We however believe that the four dynamics 
	we construct are the most ``natural''
	discrete time dynamics on $q$-Whittaker processes 
	having all the desired properties and prescribed degenerations:
	\begin{itemize}
		\item The update in the dynamics is sequential,
		from lower to upper levels of the interlacing array.
		\item The dynamics act nicely on $q$-Whittaker
		measures and
		processes.
		\item The continuous time 
		limits ($\al$ or $\be\to0$) of the dynamics
		coincide with continuous time RS 
		dynamics of \cite{OConnellPei2012} or 
		\cite{BorodinPetrov2013NN}.
		\item For $q=0$, the dynamics degenerate to the ones 
		related to the classical RSK correspondences.
		\item In the $q\nearrow1$
		limit, the $(\al)$ dynamics  
		converge to the ones related to the geometric (tropical) RSKs.
	\end{itemize}
\end{remark}

The dynamics
$\Qqbrow$ and $\Qqbcol$
are related to each other via a straightforward procedure
we call complementation (\S \ref{sub:Complementation})
which shortens the proofs for $\Qqbcol$.
Moreover, one can say that this procedure provides a direct link
between the column and the row $q$-randomized RS correspondences
of \cite{OConnellPei2012} and \cite{BorodinPetrov2013NN}
(which are continuous time limits of $\Qqbcol$ and $\Qqbrow$, respectively).
This also provides a direct coupling between
the Bernoulli $q$-TASEP and $q$-PushTASEP (Proposition
\ref{prop:TASEP_coupling}).

We employ the discrete time Bernoulli processes
to obtain a Fredholm determinantal formula for the continuous time $q$-PushTASEP
and for its two-sided extension, 
the continuous time $q$-PushASEP (the latter formula was 
conjectured in \cite{CorwinPetrov2013}), see Theorem \ref{claim:Fredholm}. See also a related discussion 
in the end of 
\S \ref{sub:_q_randomized_row_insertion_with_q_geometric_input}.

% \notec{discuss possible future directions: hahn, HL for sampling}

% subsection other_results (end)

\subsection{Outline} % (fold)
\label{sub:outline}

In \S \ref{sec:macdonald_processes_and_markov_dynamics}
we recall the necessary background
on Macdonald and $q$-Whittaker symmetric functions and
$q$-Whittaker processes,
and also write down and discuss main linear equations
which must be satisfied by our Markov dynamics on interlacing arrays.
In \S \ref{sec:push_block_and_rsk_type_dynamics} we
discuss two particular types of 
Markov dynamics, namely, push-block and RSK-type dynamics, and 
explain the differences between them.
In \S \ref{sec:schur_degeneration} we 
illustrate our main definitions
and concepts in the $q=0$ situation,
when the $q$-Whittaker polynomials
reduce to the simpler Schur polynomials, 
and the dynamics on interlacing arrays are relevant to the classical RSK correspondences.
In \S \ref{sec:bernoulli_} and \S \ref{sec:geometric_q_rsks}
we explain in detail the constructions of 
four discrete time RSK-type dynamics on interlacing arrays,
and prove that these dynamics act on the $q$-Whittaker processes
in desired ways. In \S \ref{sec:moments_and_fredholm_determinants_for_bernoulli_}
we discuss moment and Fredholm determinantal formulas
for our one-dimensional interacting particle systems.
In \S \ref{sec:polymer_limit}
we consider
scaling limits as $q\nearrow1$
of our two $(\al)$ dynamics on interlacing arrays,
and show that they
turn into the geometric RSK correspondences	
associated with log-Gamma or strict-weak directed random lattice polymers.

% subsection outline (end)

\subsection{Acknowledgments} % (fold)
\label{sub:acknowledgments}

We are grateful to Alexei Borodin, Vadim Gorin, and Ivan Corwin
for numerous discussions which were extremely helpful.
LP would like to thank
Sergey Fomin, Greta Panova, and
Guillaume Barraquand for useful remarks.
We are also grateful to Columbia University and to Ivan Corwin
for the warm hospitality during our short visit at a final stage of this project.
We are indebted to Christian Krattenthaler
for providing us with proofs of certain $q$-binomial identities
(Propositions \ref{prop:Kr1} and \ref{prop:Kr2})
which are crucial ingredients for
the construction of one of our four dynamics.
We would also like to thank the anonymous referee for valuable comments which 
has lead to many improvements.

KM was partially supported by the Natural Sciences and Engineering Research Council of Canada through the PGS D Scholarship. LP was partially supported by the University of Virginia through the EDF Fellowship.

% subsection acknowledgements (end)

% subsection some_points_worth_remembering (end)

% section introduction (end)

\section{Macdonald processes and Markov dynamics} % (fold)
\label{sec:macdonald_processes_and_markov_dynamics}

In this section
we collect main notation and definitions related to Macdonald
processes
used throughout the paper,
and also
write down and discuss linear equations 
satisfied by Markov dynamics
on $q$-Whittaker processes which we aim to construct.

\subsection{Preliminaries} % (fold)
\label{sub:preliminaries}

A \emph{signature}\footnote{These objects 
are also sometimes called \emph{highest weights}, 
cf. \cite{Weyl1946}, as they are the highest weights
of irreducible representations of the unitary group
$U(N)$.} of length $N\ge1$
is a nonincreasing collection of integers 
$\la=(\la_1\ge \ldots\ge\la_N)\in\Z^{N}$.
We will work with signatures which have only nonnegative 
parts, i.e., $\la_N\ge0$ (in which case they are also called \emph{partitions}).
Denote the set of all such objects by $\GT_N$.
Let also $\GT:=\bigcup_{N=1}^{\infty}\GT_N$,
with the understanding that we identify
$\la\cup0=(\la_1,\ldots,\la_N,0,0,\ldots,0)\in\GT_{N+M}$
($M$ zeros)
with $\la\in\GT_N$
for any $M\ge1$.

We will use two ways to depict signatures (see Fig.~\ref{fig:YD}):
\begin{enumerate}
	\item Any signature $\la\in\GT_N$ 
	can be identified with a \emph{Young diagram}
	(having at most $N$ rows)
	as in \cite[I.1]{Macdonald1995}.
	\item A signature $\la\in\GT_N$ can also be represented
	as a configuration of $N$ particles on $\Z_{\ge0}$
	(with the understanding that there can be more than one particle
	at a given location).
\end{enumerate}
We denote by $|\la|:=\sum_{i=1}^{N}\la_i$ the number of 
boxes in the corresponding Young diagram,
and by $\ell(\la)$ the number of nonzero parts in $\la$ 
(which is finite for all $\la\in\GT$).
For $\mu,\la\in\GT$ we
will write $\mu\subseteq\la$
if (after possibly appending $\mu$ and $\la$ by zeros)
we have $\mu_i\le\la_i$ for all $i \in \Z_{>0}$.
In this case, the set difference 
of Young diagrams $\la$ and $\mu$
is denoted by $\la/\mu$ and is called a \emph{skew Young diagram}.

Two signatures $\mu,\la\in\GT$ are said to \emph{interlace}
if one can append them by zeros such that
$\mu\in\GT_{N-1}$ and $\la\in\GT_N$
for some $N$, and
\begin{align}\label{interlace}
	\la_1\ge\mu_1\ge\la_2\ge\mu_2 \ge\ldots\ge
	\la_{N-1}\ge\mu_{N-1}\ge
	\la_{N}.
\end{align}
In terms of Young diagrams, this means that
$\la$ is obtained from $\mu$
by adding a \emph{horizontal strip} (or, equivalently, that
\emph{the skew diagram $\la/\mu$ is a horizontal strip} which is,
by definition, a skew Young diagram having at most one box in each 
vertical column),
and we denote this by $\mu\prech\la$.

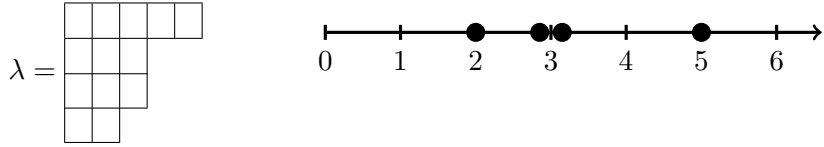
\begin{figure}[htbp]
\begin{equation*}
	\la=\begin{array}{|c|c|c|c|c|}
	    \hline
	    \ &\ &\ &\ &\ \\
	    \hline 
	    \ &\ &\ \\
	    \cline{1-3}
	    \ &\ &\ \\
		\cline{1-3}
		\ &\ \\
		\cline{1-2}
	\end{array}
	\hspace{40pt}
	\begin{tikzpicture}
		[scale=1, very thick]
		\draw[->] (0,0) -- (6.6,0);
		\foreach \h in {0,1,2,3,4,5,6}
		{
			\draw (\h,.1)--++(0,-.2) node [below] {$\h$};
		}
		\draw[fill] (2,0) circle (3pt);
		\draw[fill] (3-.15,0) circle (3pt);
		\draw[fill] (3+.15,0) circle (3pt);
		\draw[fill] (5,0) circle (3pt);
	\end{tikzpicture}
\end{equation*}
\caption{Young diagram $\la=(5,3,3,2)\in\GT_4$,
and the corresponding particle configuration.
Note that there are two particles at location 3.}
\label{fig:YD}
\end{figure}

Let $\la'$ denote the transposition of the 
Young diagram $\la$. For the diagram on
Fig.~\ref{fig:YD}, we have $\la'=(4,4,3,1,1)$.
If $\la/\mu$ is a horizontal strip, then $\la'/\mu'$
is called a \emph{vertical strip}.
We will denote the corresponding relation by $\mu' \precv\la'$.

% subsection preliminaries (end)

\subsection{Macdonald polynomials} % (fold)
\label{sub:macdonald_polynomials}

Probability measures and Markov dynamics studied in the 
present paper are 
based on Macdonald polynomials.
Here let us briefly recall their definition and properties which
are essential for us. An excellent exposition 
and much more details
may be found in \cite[Ch. VI]{Macdonald1995}. 
We also refer to \cite[\S2]{BorodinCorwin2011Macdonald}
for a discussion of specializations of Macdonald polynomials.

\begin{definition}
	Let $q,t\in[0,1)$ be two parameters.
	Consider the first order $q$-difference operator
	acting on functions in $N$ variables:
	\begin{align*}
		(\mathscr{D}^{(1)}f)(x_1,\ldots,x_N)
		:=\sum_{i=1}^{N}\prod_{j\ne i}
		\frac{tx_i-x_j}{x_i-x_j}f(x_1,\ldots,x_{i-1},qx_i,
		x_{i+1},\ldots,x_N).
	\end{align*}
	This operator preserves the space 
	$\mathbb{Q}(q,t)[x_1,\ldots,x_N]^{S(N)}$
	of symmetric
	polynomials with coefficients which are rational
	functions in $q$ and $t$.

	Eigenfunctions of $\mathscr{D}^{(1)}$ are given by the 
	\emph{Macdonald symmetric polynomials} 
	$P_\la(x_1,\ldots,x_N\mid q,t)$ indexed by $\la\in\GT_N$,
	with eigenvalues
	\begin{align*}
		\mathscr{D}^{(1)}P_\la=(q^{\la_1}t^{N-1}+
		q^{\la_2}t^{N-2}+\ldots+q^{\la_{N-1}}t+q^{\la_N})P_\la
	\end{align*}
	(which are pairwise distinct for generic $q,t$).
	The polynomials $P_\la$ are homogeneous, and 
	form a linear basis for $\mathbb{Q}(q,t)[x_1,\ldots,x_N]^{S(N)}$.
\end{definition}
The Macdonald polynomials are \emph{stable} in the sense that
for any $\la\in\GT_N$,
\begin{align*}
	P_{\la\cup0}(x_1,\ldots,x_N,0\mid q,t)=
	P_\la(x_1,\ldots,x_N\mid q,t).
\end{align*}
Therefore, one may speak about \emph{Macdonald symmetric functions}
$P_\la(x_1,x_2,\ldots\mid q,t)$ 
in infinitely many variables,
indexed by arbitrary $\la\in\GT$.
These are elements of the algebra of symmetric
functions, which may be
viewed as a free unital algebra 
$\Sym=\mathbb{Q}(q,t)[p_1,p_2,\ldots]$
generated by the Newton power sums
$p_k(x_1,x_2,\ldots)=\sum_{j=1}^{\infty}x_j^{k}$.
In other words, symmetric functions can be viewed as 
usual polynomials in $p_1,p_2,\ldots$. 
Note that
$P_\la(x_1,\ldots,x_N)\equiv 0$ if $\ell(\la)>N$.
The Macdonald symmetric functions admit an equivalent alternative definition:
\begin{definition}
	Let $(\cdot,\cdot)_{q,t}$ be the scalar product on 
	$\Sym$ defined on products of 
	power sums $p_\la=p_{\la_1}p_{\la_2}\ldots$
	as
	\begin{align*}
		(p_\la,p_\mu)_{q,t}=\mathbf{1}_{\la=\mu}z_\la(q,t),
		\qquad z_\la(q,t):=\bigg(\prod_{i\ge1}i^{m_i}(m_i)!\bigg)
		\cdot
		\bigg(\prod_{i=1}^{\ell(\la)}
		\frac{1-q^{\la_i}}{1-t^{\la_i}}\bigg),
	\end{align*}
	where $\la=(1^{m_1}2^{m_2}\ldots)$ means that $\la$ has $m_1$ parts equal to 1, $m_2$ parts equal to 2, etc.

	The $P_\la$'s
	form a unique family of 
	homogeneous symmetric functions such that:
	\begin{enumerate}
		\item They are pairwise 
		orthogonal with respect to 
		the scalar product $(\cdot,\cdot)_{q,t}$.
		\item For every $\la$, we have
		\begin{align*}
			P_\la(x_1,x_2,\ldots\mid q,t)=
			x_1^{\la_1}x_2^{\la_2}\ldots 
			x_{{\ell(\la)}}^{\la_{\ell(\la)}}{}+
			{}\mbox{lower monomials in lexicographic order}.
		\end{align*}
		The dependence on the parameters $(q,t)$ 
		is in coefficients of the 
		lexicographically 
		lower monomials.\footnote{Lexicographic order 
		means that, for example, $x_1^{2}$ is higher 
		than $\mathrm{const}\cdot x_1x_2$ 
		which is in turn higher 
		than $\mathrm{const}\cdot x_2^{2}$.}
	\end{enumerate}
\end{definition}
Set $b_\la(q,t):=1/(P_\la,P_\la)_{q,t}$; 
this is an explicit quantity determined via the
shape of the 
Young diagram $\la$. Then the  
symmetric functions
$Q_\la(\cdot\mid q,t):=b_\la(q,t)P_\la(\cdot\mid q,t)$
are biorthonormal with the $P_\la$'s: 
$(P_\la,Q_\mu)_{q,t}=\mathbf{1}_{\la=\mu}$.

\begin{definition}
	The skew Macdonald symmetric functions $Q_{\la/\mu}$,
	$\mu,\la,\in\GT$, are defined as the only symmetric
	functions for which 
	$(Q_{\la/\mu},P_\nu)_{q,t}=(Q_\la,P_\mu P_\nu)_{q,t}$
	for any $\nu\in\GT$. The ``$P$'' versions are 
	given by 
	$P_{\la/\mu}=(b_\mu(q,t)/b_\la(q,t)) Q_{\la/\mu}$.
	These skew functions 
	are identically zero
	unless $\mu\subseteq\la$.
\end{definition}
The 
skew Macdonald symmetric functions enter the following
recurrence relations:
\begin{align}
	P_\la(x_1,\ldots,x_N)=\sum_{\mu\in\GT_{N-K}}
	P_{\la/\mu}(x_1,\ldots,x_K)
	P_\mu(x_{K+1},\ldots,x_N),\qquad
	\la\in\GT_N,\qquad K\ge1
	\label{Macdonald_recurrence}
\end{align}
(and similarly for the $Q_{\la}$'s). 
This may be viewed as an alternative definition of the 
skew Macdonald polynomials $P_{\la/\mu}$
in finitely many variables.
If $K=1$ in \eqref{Macdonald_recurrence}, 
then the summation is over 
the interlacing signatures $\mu\prech\la$.
In this case $P_{\la/\mu}(x_1)$ is proportional to $x_1^{|\la|-|\mu|}$
by homogeneity
(cf. \eqref{P_one_variable} below),
and \eqref{Macdonald_recurrence} is also sometimes referred to 
as the branching rule for the Macdonald polynomials. 

\medskip

From now on let us set the second Macdonald parameter
$t$ to zero. Then $P_\la(\cdot\mid q,0)$
are known as the \emph{$q$-Whittaker functions},
i.e., the $q$-deformed $\mathfrak{gl}_{n}$ 
Whittaker functions, cf. \cite{GerasimovLebedevOblezin2011}
and \cite[\S3]{BorodinCorwin2011Macdonald}. 
\begin{remark}
	Other notable degenerations of the Macdonald
	polynomials include the Hall--Littlewood
	polynomials (for $q=0$, $t>0$), 
	and the Schur polynomials (for $q=t$).
	We refer to \cite{Macdonald1995}
	and \cite{Kerov-book}
	for details.
\end{remark}
We will use $q$-binomial coefficients and 
$q$-Pochhammer symbols
\begin{align}\label{q_notation}
	\binom nk_{q}:=\frac{(q;q)_{n}}{(q;q)_{k}(q;q)_{n-k}},
	\qquad
	(a;q)_{m}:=
	\begin{cases}
		(1-a)(1-aq)\ldots(1-aq^{m-1}),&m>0;\\
		1,&m=0;\\
		(1-aq^{-1})^{-1}(1-aq^{-2})^{-1}\ldots(1-aq^{m})^{-1},&m<0
	\end{cases}
\end{align}
to record certain explicit $q$-dependent quantities 
related to $q$-Whittaker functions.\footnote{In the $q$-Pochhammer symbol, 
$m$ may be $+\infty$ since 
$0\le q<1$. Note also that in all cases,
$(a;q)_{m}={(a;q)_{\infty}}/{(aq^{m};q)_{\infty}}$}
We have
\begin{align}
	P_{\la/\mu}(x_1\mid q,0)&=
	\psi_{\la/\mu}x_1^{|\la|-|\mu|},
	\qquad 
	\psi_{\la/\mu}=\psi_{\la/\mu}(q):=\mathbf{1}_{\mu\prech\la}
	\prod_{i=1}^{\ell(\mu)}\binom{\la_i-\la_{i+1}}{\la_i-\mu_i}_{q};
	\label{P_one_variable}
	\\
	\label{Q_one_variable}
	Q_{\la/\mu}(x_1\mid q,0)&=
	\varphi_{\la/\mu}x_1^{|\la|-|\mu|},
	\qquad 
	\varphi_{\la/\mu}=\varphi_{\la/\mu}(q):=\mathbf{1}_{\mu\prech\la}
	\frac{1}{(q;q)_{\la_1-\mu_1}}
	\prod_{i=1}^{\ell(\la)}
	\binom{\mu_i-\mu_{i+1}}{\mu_i-\la_{i+1}}_{q}.
\end{align}

\begin{definition}\label{def:spec}
	A \emph{specialization} 
	of the algebra of symmetric functions
	$\Sym$
	is an algebra morphism $\Sym\to\C$.
	This is a generalization of the operation
	of taking the value of a symmetric function
	at a point.
	We will deal with specializations
	$\mathbf{A}=(\alb;\beb;\ga)$,
	where $\alb=(\al_1\ge\al_2\ge \ldots\ge0)$
	$\beb=(\be_1\ge\be_2\ge \ldots\ge0)$,
	$\ga\ge0$, and $\sum_{i}(\al_i+\be_i)<\infty$,
	which may be defined via the generating function
	corresponding to signatures $(n)\in\GT_1$:
	\begin{align}\label{Pi_definition}
		\sum_{n=0}^{\infty}Q_{(n)}(\mathbf{A})\cdot u^n=
		e^{\ga u}\prod_{i=1}^{\infty}
		\frac{1+\be_i u}{(\al_i u;q)_{\infty}}
		:=\Pi(u;\mathbf{A}).
	\end{align}
	Under these specializations, 
	we have $P_{\la/\mu}(\mathbf{A})\ge0$ for
	any $\mu,\la\in\GT$ (\emph{nonnegativity}). 
	The \emph{Kerov's conjecture} (see \cite[\S2.9.3]{Kerov-book})
	states that 
	the specializations 
	of the form $\mathbf{A}=(\alb;\beb;\ga)$
	exhaust all nonnegative specializations.
\end{definition}

\begin{remark}
The specialization with all $\be_{i}=0$ and $\gamma=0$ is the same as 
assigning values to the formal variables,
$x_j=\al_j$. We will refer to this as the pure $(\al)$ specialization, and to the parameters $\al_j$ as the \emph{usual parameters}. 

If we go back to the case of the nonzero $t$ parameter, then
the corresponding 
specialization with all $\al_{j}=0$ 
and $\gamma=0$ would send $P_{\la/\mu}(\cdot\mid q,t)$ to $Q_{\la'/\mu'}(\be_1,\be_2,\ldots\mid t,q)$, the value of the usual specialization into $(\be_1,\be_2,\ldots)$ 
with $q$ and $t$ swapped. (Formula \eqref{Pi_definition} for nonzero $t$
contains an additional factor
$\prod_{i=1}^{\infty}(t\al_iu;q)_{\infty}$.)
Hence we will refer to such specializations as
pure $(\hat \be)$ specializations, 
and to the parameters $\be_i$ as the \emph{dual parameters} (though
setting $t=0$ as we do in the rest of the paper 
eliminates the ``full'' dual nature of these parameters). 

Finally, 
$\ga$ will be called the \emph{Plancherel parameter}, and the corresponding
specialization can be defined as a limit of the specializations with, 
e.g., $\be_i=\ga/L$, $i=1,\ldots,L$ (and all other parameters zero), as $L\to\infty$.

The present paper mostly deals with 
specializations with 
$\ga=0$.
A treatment of the case $\alb=\beb=0$, $\ga>0$
may be found in \cite{BorodinPetrov2013NN},
\cite{BufetovPetrov2014}. 
\end{remark}

Let $\mathbf{A}\cup\mathbf{B}$ denote the union of 
specializations (a generalization of concatenating the sets 
of variables). Formally it is defined as
$p_k(\mathbf{A}\cup\mathbf{B})=
p_k(\mathbf{A})+
p_k(\mathbf{B})$, $k\ge1$. 
An obvious generalization of the
recurrence
relation \eqref{Macdonald_recurrence}
allows to express $P_\la(\mathbf{A}\cup\mathbf{B})$
through $P_{\la/\mu}(\mathbf{A})$ and $P_\mu(\mathbf{B})$.
Thus, we can equivalently say that the 
specialization into usual parameters
is \emph{completely determined} by  
\eqref{P_one_variable} (or \eqref{Q_one_variable}) 
and \eqref{Macdonald_recurrence}.
Similarly, the specialization into dual parameters
is determined by 
the same recurrence \eqref{Macdonald_recurrence}, 
but with a different one-parameter formula:
\begin{align}
		Q_{\la/\mu}(\be_1\mid q,0)=
		\psi'_{\la/\mu}
		\be_1^{|\la|-|\mu|},
		\qquad
		\psi'_{\la/\mu}
		=\psi'_{\la/\mu}(q):= \mathbf{1}_{\mu\precv\la}\prod_{i\ge 1\colon \la_i=\mu_i,\;
		\la_{i+1}=\mu_{i+1}+1}(1-q^{\mu_i-\mu_{i+1}}).
	\label{Q_one_dual_spec}
\end{align}

We will also need Cauchy identities for
$q$-Whittaker symmetric functions recorded below.
Similar identities
(involving $t$) also exist for the general Macdonald
symmetric functions.
\begin{align}
	\label{Cauchy}
	\sum_{\la\in\GT}P_\la(a_1,\ldots,a_N)Q_\la(\mathbf{A})
	&=\Pi(a_1;\mathbf{A})\ldots
	\Pi(a_N;\mathbf{A});
	\\
	\label{skew_Cauchy}
	\sum_{\varkappa\in\GT}
	P_{\varkappa/\la}(\mathbf{A})
	Q_{\varkappa/\nu}(\mathbf{B})
	&=\Pi(\mathbf{A};\mathbf{B})
	\sum_{\mu\in\GT}
	Q_{\la/\mu}(\mathbf{B})
	P_{\nu/\mu}(\mathbf{A}).
\end{align}
In \eqref{skew_Cauchy}, 
$\Pi(\mathbf{A};\mathbf{B})$ is given by
\begin{align}\label{general_Pi}
	\Pi(\mathbf{A};\mathbf{B})=
	\exp\bigg(
	\sum_{n=1}^{\infty}
	\frac{1}{n}\frac{1}{1-q^{n}}\,
	p_n(\mathbf{A})p_n(\mathbf{B})
	\bigg).
\end{align}
For the proofs see \cite[VI.(2.6) and VI.7, Example 6]{Macdonald1995}.
This definition agrees with \eqref{Pi_definition}
when one of the specializations is into a 
single usual parameter. Note also that
$\Pi(\mathbf{A}\cup\mathbf{B};\mathbf{C})
=\Pi(\mathbf{A};\mathbf{C})
\Pi(\mathbf{B};\mathbf{C})$.

Finally, we will need the Pieri rules: For any $r\ge1$,
\begin{align}\label{Pieri}
	P_{(1^{r})} P_\mu=\sum_{\la\colon\text{$\la/\mu$ is a vertical $r$-strip}}\psi'_{\la/\mu}P_\la,
	\qquad \qquad
	Q_{(r)} P_\mu=
	\sum_{\la\colon\text{$\la/\mu$ is a horizontal $r$-strip}}\varphi_{\la/\mu}P_\la
\end{align}
(an $r$-strip means a strip consisting of $r$ boxes).
Here $P_{(1^{r})}=e_r$ is in fact equal to the $r$-th elementary symmetric
function $e_r(x_1,x_2,\ldots)=
\sum_{i_1<\ldots<i_r}x_{i_1}\ldots x_{i_r}$
(note that $e_1=p_1$),
and the $Q_{(r)}$'s are the quantities entering the generating
function \eqref{Pi_definition}.

% subsection macdonald_polynomials (end)

\subsection{$q$-Whittaker processes} % (fold)
\label{sub:qwhit_proc}

The (depth $N$) \emph{$q$-Whittaker processes}
are probability measures on 
sequences of interlacing  
signatures
$\lab=(\la^{(1)}\prech\la^{(2)}\prech \ldots\prech \la^{(N)})$,
where $\la^{(j)}\in\GT_j$. 
Such sequences are sometimes referred to as
\emph{Gelfand--Tsetlin schemes}, or \emph{patterns},
they first appeared in connection with 
representation theory of unitary groups
\cite{gelfand1950finite}.\footnote{This justifies the notation
``$\mathbb{GT}$'' we are using.}
We will depict sequences $\lab$ as interlacing integer arrays,
and also associate to them configurations of 
particles 
$\{(\la^{(k)}_j,k)\colon k=1,\ldots,N,\; j=1,\ldots,k\}$
on $N$ horizontal copies of $\Z$. See Fig.~\ref{fig:array}.
Let us denote the set of all interlacing
arrays $\lab$ of depth $N$ by $\mathbb{GT}^{(N)}$.

The $q$-Whittaker process
$\MP_{\mathbf{A}}^{\vec a}$ 
depends on a nonnegative specialization\footnote{In 
the rest of the paper, we will speak only about nonnegative
specializations, and omit the word ``nonnegative''.}
$\mathbf{A}=(\alb;\beb;\ga)$ (Definition~\ref{def:spec})
and on additional parameters
$\vec a=(a_1,\ldots,a_N)$
with $a_j>0$,
satisfying 
$\al_ia_j<1$ for all possible $i$ and $j$
(this ensures the finiteness of the normalizing
constant $\Pi(\vec a;\mathbf{A})$ in \eqref{MP_weights} below). 
The probability weights 
$\MP_{\mathbf{A}}^{\vec a}(\lab)$ of interlacing arrays $\lab$
may be defined via the generating function\footnote{In \eqref{MP_genfunc}, $\Pi(\vec u;\mathbf{A})=
\Pi(u_1;\mathbf{A})\ldots \Pi(u_N;\mathbf{A})$, and similarly
for the denominator (cf. \eqref{Pi_definition}, \eqref{general_Pi}).
Here the $a_j$'s are regarded as constants, and 
the $u_j$'s as variables.}
\begin{align}\label{MP_genfunc}
	\sum_{\lab=(\la^{(1)}\prech \ldots
	\prech\la^{(N)})}
	\MP_{\mathbf{A}}^{\vec a}(\lab)
	\left(\frac{u_1}{a_1}\right)^{|\la^{(1)}|}
	\left(\frac{u_2}{a_2}\right)^{|\la^{(2)}|-|\la^{(1)}|}
	\ldots
	\left(\frac{u_N}{a_N}\right)^{|\la^{(N)}|-|\la^{(N-1)}|}
	=
	\frac{\Pi(\vec u;\mathbf{A})}
	{\Pi(\vec a;\mathbf{A})},
\end{align}
plus a certain \emph{$q$-Gibbs property} requiring that 
the quantities
\begin{align}\label{qGibbs}
	\frac{\MP_{\mathbf{A}}^{\vec a}(\lab)}{P_{\la^{(1)}}(a_1)
	P_{\la^{(2)}/\la^{(1)}}(a_2)\ldots P_{\la^{(N)}/\la^{(N-1)}}(a_N)}
\end{align}
depend only on the top row $\la^{(N)}$,
and not on $\la^{(1)},\ldots,\la^{(N-1)}$.
Note that setting $\vec u=\vec a$
turns \eqref{MP_genfunc} into 
an identity stating that the sum of all probability
weights is $1$.

\begin{remark}\label{rmk:q_Gibbs}
	It is natural to call the property 
	involving quantities \eqref{qGibbs}
	``$q$-Gibbs''
	because for $q=0$ and $a_1=\ldots=a_N=1$ 
	it reduces to the following 
	\emph{Gibbs property}: 
	The conditional distribution of the interlacing
	array $\lab$ under $\MP_{\mathbf{A}}^{\vec a}(\lab)\vert_{q=0,\,a_j\equiv 1}$
	obtained by fixing the top row $\la^{(N)}\in\GT_N$
	is the \emph{uniform distribution} 
	on the set of all interlacing arrays
	$\lab\in\mathbb{GT}^{(N)}$ with fixed top row $\la^{(N)}$
	(note that the latter set is finite).
	For general $q$ and $\vec a$, 
	the conditional distribution will not be uniform, 
	but instead each interlacing array will
	have the conditional weight proportional to 
	$P_{\la^{(1)}}(a_1)
	P_{\la^{(2)}/\la^{(1)}}(a_2)\ldots P_{\la^{(N)}/\la^{(N-1)}}(a_N)$.
\end{remark}

By the Cauchy identity
\eqref{Cauchy} and the fact that the $q$-Whittaker polynomials
form a linear basis, definition
\eqref{MP_genfunc}--\eqref{qGibbs} is equivalent to 
\begin{align}\label{MP_weights}
	\MP_{\mathbf{A}}^{\vec a}(\lab)=
	\frac{1}{\Pi(\vec a;\mathbf{A})}{P_{\la^{(1)}}(a_1)
	P_{\la^{(2)}/\la^{(1)}}(a_2)\ldots P_{\la^{(N)}/\la^{(N-1)}}(a_N)
	Q_{\la^{(N)}}(\mathbf{A})},
\end{align}
which is a more traditional definition of the measure
(first given in \cite{BorodinCorwin2011Macdonald}, and earlier 
in \cite{okounkov2003correlation} in the Schur case).
To see this, one also has to note that
$\frac
{P_{\la^{(1)}}(u_1)\ldots P_{\la^{(N)}/\la^{(N-1)}}(u_N)}
{P_{\la^{(1)}}(a_1)\ldots P_{\la^{(N)}/\la^{(N-1)}}(a_N)}$
is equal to the product of $(u_j/a_j)^{|\la^{(j)}|-|\la^{(j-1)}|}$
in the left-hand side of \eqref{MP_genfunc} (provided that the
$\la^{(j)}$'s satisfy the interlacing constraints).

The marginal distribution of the top row $\la^{(N)}$
under $\MP_{\mathbf{A}}^{\vec a}$
is the \emph{$q$-Whittaker measure} $\MM_{\mathbf{A}}^{\vec a}$
which is defined by either of the following equivalent ways:
\begin{align}\label{MP_measure}
	\sum_{\la\in\GT_N}\MM_{\mathbf{A}}^{\vec a}(\la)
	\frac{P_{\la}(\vec u)}
	{P_{\la}(\vec a)}&=\frac{\Pi(\vec u;\mathbf{A})}
	{\Pi(\vec a;\mathbf{A})},
	\\
	\label{MM_measure}
	\MM_{\mathbf{A}}^{\vec a}(\la)&=\frac{P_\la(\vec a)Q_\la(\mathbf{A})}
	{\Pi(\vec a;\mathbf{A})}.
\end{align}

% subsection qwhit_proc (end)

\subsection{Markov dynamics} % (fold)
\label{sub:markov_dynamics}

One of the main goals of the present paper is the construction of
\emph{Markov dynamics}
preserving the family of $q$-Whittaker processes.
More precisely, we will deal with infinite matrices
$\Qspec{\mathbf{B}}$ (with rows and columns indexed by 
interlacing arrays) such that 
\begin{align}\label{adding_spec}
	\MP_{\mathbf{A}}^{\vec a}\Qspec{\mathbf{B}}
	=\MP_{\mathbf{A}\cup \mathbf{B}}^{\vec a},
	\qquad
	\sum_{\lab}
	\MP_{\mathbf{A}}^{\vec a}(\lab)\Qspec{\mathbf{B}}
	(\lab\to\nub)
	=\MP_{\mathbf{A}\cup \mathbf{B}}^{\vec a}(\nub),
	\qquad \nub\in\mathbb{GT}^{(N)}
\end{align}
(the second formula is simply an expansion of the matrix
notation in the first formula).
It suffices to consider three elementary cases
for the specialization $\mathbf{B}$ which is added by the dynamics:
\begin{align}\label{3_cases}
\parbox{.9\textwidth}{\begin{enumerate}
	\item $\mathbf{B}=(\al)$ is a specialization
	into one usual parameter $\al$.
	\item $\mathbf{B}=(\hat\be)$ is a specialization
	into one dual parameter $\be$.
	\item $\mathbf{B}$ is a specialization
	with $\alb=\beb\equiv0$ and $\ga>0$. 
\end{enumerate}}
\end{align}
Indeed, applying a sequence of the above elementary steps
one can get a general specialization $\mathbf{B}$
(if the number of parameters $\al_i$ or $\be_j$
is infinite, this also requires a relatively straightforward
limit transition).

\begin{remark}
	Note that setting all parameters
	in a specialization to zero
	leads to an \emph{empty specialization} $\varnothing$.
	The corresponding $q$-Whittaker process
	$\MP_{\varnothing}^{\vec a}$
	is simply a delta measure on the zero configuration
	$\la^{(k)}_{j}= 0$ for all $k,j$.
	Note also that
	$\Qspec{\varnothing}$
	is the identity matrix.
\end{remark}

The third case in \eqref{3_cases} leads to continuous time Markov
dynamics, in which the parameter $\ga$
plays the role of time. These continuous time
dynamics were studied in detail in \cite{BorodinPetrov2013NN}
(see also %\cite{BorodinCorwin2011Macdonald},
\cite{OConnellPei2012}).
They are simpler than the discrete time processes
(corresponding to the fist two cases in \eqref{3_cases})
considered in the present paper. 

We will thus not focus on continuous time dynamics,
and will deal with construction of 
matrices $\Qspec{\al}$
and $\Qspec{\hat \be}$
whose elements $\Qspec{\al}(\lab\to\nub)$
and $\Qspec{\hat\be}(\lab\to\nub)$
are transition probabilities from $\lab$ to $\nub$
(where $\lab,\nub\in\mathbb{GT}^{(N)}$)
in one step of the discrete time. These matrix elements are nonnegative, and
$\sum_{\nub}\Qspec{\al}(\lab\to\nub)=1$
for all $\lab$ (and similarly for the second matrix). 
It is also helpful to view $\Qspec{\al}$
and $\Qspec{\hat \be}$
as (\emph{Markov}) \emph{operators} acting on 
functions in the spatial variables $\lab$
(e.g., these operators act in the 
space of bounded functions).

Adding a specialization $\mathbf{B}=(\al)$ 
or $(\hat \be)$ to $\mathbf{A}$
as in \eqref{adding_spec}
corresponds to multiplying the right-hand side of 
\eqref{MP_genfunc} by 
\begin{align}\label{MP_multiplication_operators}
	\prod_{j=1}^{N}\frac{(\al a_j;q)_{\infty}}
	{(\al u_j;q)_{\infty}}
	\qquad\text{or}\qquad
	\prod_{j=1}^{N}\frac{1+\be u_j}
	{1+\be a_j},
\end{align}
respectively, since $\Pi(u; \mathbf{A})\Pi(u; \al) = \Pi(u, \mathbf{A} \cup (\al))$ and $\Pi(u; \mathbf{A})\Pi(u; \hat{\be}) = \Pi(u, \mathbf{A} \cup (\hat{\be}))$. (Factors containing $a_j$
correspond to normalization, and it is the dependence on $u_j$
in these expressions
which is crucial.)
The problem of finding Markov operators 
$\Qspec{\al}$
and $\Qspec{\hat \be}$
can thus be informally restated as the problem of \emph{turning} 
(by virtue of \eqref{MP_genfunc}) the
\emph{multiplication operators} in the variables $\vec u$
\eqref{MP_multiplication_operators}
into operators \emph{acting in the spatial variables} $\lab$.

\medskip

A similar problem of turning multiplication operators
\eqref{MP_multiplication_operators}
into operators acting in the spatial variables
$\la\in\GT_N$
may be posed for the generating function
for the $q$-Whittaker measures
\eqref{MP_measure}, \eqref{MM_measure}. 
In this case, the problem 
of finding the corresponding
matrices
$\Pspec{\al}$
and $\Pspec{\hat\be}$
(with rows and columns indexed by signatures $\la\in\GT_N$)
has a \emph{unique} solution:
\begin{proposition}\label{prop:univariate}
	There exist unique transition matrices 
	$\Pspec{\al}$ and $\Pspec{\hat\be}$
	which add specializations
	$(\al)$ or $(\hat\be)$, respectively,
	to the $q$-Whittaker measure $\MM_{\mathbf{A}}^{\vec a}$ for 
	an arbitrary nonnegative specialization $\mathbf{A}$,
	in the sense similar to \eqref{adding_spec}:
	\begin{align*}
		\MM_{\mathbf{A}}^{\vec a}\Pspec{\al}
		=
		\MM_{\mathbf{A}\cup(\al)}^{\vec a},\qquad
		\MM_{\mathbf{A}}^{\vec a}\Pspec{\hat\be}
		=
		\MM_{\mathbf{A}\cup(\hat\be)}^{\vec a}.
	\end{align*}
	Their matrix elements are given by
	\begin{align}
		\Pspec{\al}(\la\to\nu)&=
		\prod_{j=1}^{N}(\al a_j;q)_{\infty}\frac{P_\nu(\vec a)}{P_\la(\vec a)}\,
		\varphi_{\nu/\la}\al^{|\nu|-|\la|}
		\mathbf{1}_{\la\prech\nu};
		\label{univariate_operators1}
		\\
		\Pspec{\hat\be}(\la\to\nu)&=
		\prod_{j=1}^{N}
		\frac{1}{1+\be a_j}\,
		\frac{P_\nu(\vec a)}{P_\la(\vec a)}
		\psi'_{\nu/\la}\be^{|\nu|-|\la|}
		\mathbf{1}_{\la\precv\nu},
		\label{univariate_operators2}
	\end{align}
	where $\varphi_{\nu/\la}$
	and $\psi'_{\nu/\la}$ are explicit quantities
	given in 
	\eqref{Q_one_variable} and 
	\eqref{Q_one_dual_spec},
	respectively. 
\end{proposition}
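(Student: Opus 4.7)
The plan is to verify existence by a direct computation built on the skew Cauchy identity \eqref{skew_Cauchy} together with the branching rule for $Q_\nu$, and to handle uniqueness by linear independence of the skew $Q$'s as symmetric functions.

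For existence in the $(\al)$ case, I take \eqref{univariate_operators1} as the definition and check three properties. Nonnegativity is immediate from $\al a_j<1$, $\varphi_{\nu/\la}\ge 0$, and positivity of $P_\la(\vec a)$. For stochasticity I apply \eqref{skew_Cauchy} with $\mathbf{A}=\vec a$, $\mathbf{B}=(\al)$, and the parameter ``$\la$'' there set to the empty signature (so $Q_{\emptyset/\mu}=\mathbf{1}_{\mu=\emptyset}$), which reduces to
\begin{equation*}
\sum_{\nu\in\GT}P_\nu(\vec a)\,Q_{\nu/\la}(\al)=\Pi(\vec a;(\al))\,P_\la(\vec a).
\end{equation*}
Using $Q_{\nu/\la}(\al)=\varphi_{\nu/\la}\al^{|\nu|-|\la|}\mathbf{1}_{\la\prech\nu}$ from \eqref{Q_one_variable} and $\Pi(\vec a;(\al))=\prod_{j=1}^N(\al a_j;q)_\infty^{-1}$ from \eqref{Pi_definition}, this yields $\sum_\nu\Pspec{\al}(\la\to\nu)=1$. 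For the measure-update relation $\MM_{\mathbf{A}}^{\vec a}\Pspec{\al}=\MM_{\mathbf{A}\cup(\al)}^{\vec a}$, I substitute \eqref{MM_measure} on both sides, expand $Q_\nu(\mathbf{A}\cup(\al))=\sum_\la Q_{\nu/\la}(\al)Q_\la(\mathbf{A})$ (a consequence of the branching rule \eqref{Macdonald_recurrence} applied to $Q$), and factor $\Pi(\vec a;\mathbf{A}\cup(\al))=\Pi(\vec a;\mathbf{A})\,\Pi(\vec a;(\al))$; the equality then reduces to the definition \eqref{univariate_operators1}. The $(\hat\be)$ case is entirely parallel after replacing \eqref{Q_one_variable} by the dual one-variable formula \eqref{Q_one_dual_spec} and using $\Pi(\vec a;(\hat\be))=\prod_j(1+\be a_j)$.

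For uniqueness, let $\widetilde P$ be any stochastic matrix satisfying $\MM_\mathbf{A}^{\vec a}\widetilde P=\MM_{\mathbf{A}\cup(\al)}^{\vec a}$ for every nonnegative specialization $\mathbf{A}$. Substituting \eqref{MM_measure} on both sides, applying the branching rule for $Q_\nu(\mathbf{A}\cup(\al))$, and clearing the $\mathbf{A}$-dependent factor $\Pi(\vec a;\mathbf{A})^{-1}$ gives, for each fixed $\nu$,
\begin{equation*}
\sum_{\la\in\GT}\bigl[P_\la(\vec a)\,\widetilde P(\la\to\nu)\bigr]\,Q_\la(\mathbf{A})=\frac{P_\nu(\vec a)}{\Pi(\vec a;(\al))}\sum_{\la\in\GT}\varphi_{\nu/\la}\al^{|\nu|-|\la|}\,Q_\la(\mathbf{A}),
\end{equation*}
valid for all nonnegative $\mathbf{A}$. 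Matching coefficients of $Q_\la(\mathbf{A})$ forces exactly \eqref{univariate_operators1} on every row with $P_\la(\vec a)\ne 0$; rows with $P_\la(\vec a)=0$ never appear under $\MM_\mathbf{A}^{\vec a}$ and are hence irrelevant. The argument for $\widetilde P[\hat\be]$ is identical. \emph{The main obstacle}, modest as it is, is justifying the coefficient-matching step: this reduces to the fact that the $\{Q_\la\}_{\la\in\GT}$ are linearly independent in $\Sym$ and that pure $(\alb)$ specializations into sufficiently many usual variables separate them, so the family of evaluations $\{Q_\la(\mathbf{A})\}$ ranging over $\mathbf{A}$ is linearly independent.
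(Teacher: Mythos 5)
Your proposal is correct, and it arrives at the same conclusion via a route that is organizationally different from the paper's although built from the same algebraic ingredients. The paper \emph{derives} the formula: it multiplies the generating-function identity \eqref{MP_measure} by $\prod_j\frac{1+\be u_j}{1+\be a_j}$ and uses the Pieri rule \eqref{Pieri} to expand $P_\la(\vec u)\prod_j(1+\be u_j)$, then matches coefficients of the linearly independent family $P_\nu(\vec u)$. You instead take the explicit kernel as given, verify the defining identity by substituting \eqref{MM_measure} and the $Q$-branching rule $Q_\nu(\mathbf{A}\cup(\al))=\sum_\la Q_{\nu/\la}(\al)\,Q_\la(\mathbf{A})$, and separately obtain stochasticity from the specialized skew Cauchy identity. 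Since the Pieri rule and the branching rule for $Q$ are dual forms of the same structure, the two routes are essentially reparametrizations of one another; your variant has the advantage of being a direct verification that never leaves the space of weights (no generating-function bookkeeping), at the cost of not ``discovering'' the formula. Your uniqueness argument matches the paper's: both reduce to linear independence of $\{Q_\la\}$ under pure $(\alb)$ specializations into sufficiently many variables. One small point worth tightening: since an arbitrary competing kernel $\widetilde P$ need not have finite row support, the row sums defining $\sum_\la P_\la(\vec a)\widetilde P(\la\to\nu)\,Q_\la(\al_1,\ldots,\al_K)$ are a priori infinite; one should note that they converge absolutely for $\al_i a_j<1$ (being dominated by $\Pi(\vec a;\mathbf{A})$ times a bounded quantity) and that the coefficient of any fixed monomial involves only the finitely many $\la$ with $|\la|$ equal to its degree, which is what makes coefficient extraction legitimate. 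This gap is harmless and is only implicitly addressed in the paper as well.
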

Transition operators
$\Pspec{\al}$ and $\Pspec{\hat\be}$ 
were introduced in 
\cite{BorodinCorwin2011Macdonald}, 
see also \cite{Borodin2010Schur}
for a similar construction for the 
Schur measures (cf. \S \ref{sub:univariate_dynamics} below). 
\begin{proof}
	Let us consider only the case of $(\hat \be)$,
	the case of $(\al)$ is analogous.

	Multiply both sides of \eqref{MP_measure}
	by 
	$\prod_{j=1}^{N}\frac{1+\be u_j}
	{1+\be a_j}$.
	By the very definition of the $q$-Whittaker measures, 
	the right-hand side can be rewritten as 
	\begin{align*}
		\frac{\Pi(\vec u;\mathbf{A} \cup (\hat \be))}
	{\Pi(\vec a;\mathbf{A} \cup (\hat \be))} = \sum_{\nu\in\GT_N}\MM_{\mathbf{A}\cup(\hat\be)}^{\vec a}(\nu)
		\frac{P_{\nu}(\vec u)}
		{P_{\nu}(\vec a)}.
	\end{align*}
	In the left-hand side, 
	use the well-known property 
	$	
	\prod_{j=1}^{N}({1+\be u_j})
	=\sum_{r=0}^{N}e_r(u_1,\ldots,u_N)\be^{r}
	$ 
	of the elementary
	symmetric functions \cite[I.(2.2)]{Macdonald1995}
	together with the first Pieri rule \eqref{Pieri} to write 
	\begin{align*}
		P_\la(\vec u)\prod_{j=1}^{N}({1+\be u_j})
		=\sum_{\nu\colon\la\precv\nu}
		P_\nu(\vec u)\psi'_{\nu/\la}\be^{|\nu|-|\la|}.
	\end{align*}
	(In the
	$(\al)$ case, one needs to use the generating
	function \eqref{Pi_definition} and the second Pieri rule.)
	Then the left hand side of \eqref{MP_measure}
	multiplied by 
	$\prod_{j=1}^{N}\frac{1+\be u_j}
	{1+\be a_j}$
	becomes 
	\begin{align*}
		\prod_{j=1}^{N}
		\frac{1}{1+\be a_j} \sum_{\la\in\GT_N} \sum_{\nu\colon\la\precv\nu}\MM_{\mathbf{A}}^{\vec a}(\la)
		\frac{P_{\nu}(\vec u)\psi'_{\nu/\la}\be^{|\nu|-|\la|}}
		{P_{\la}(\vec a)}.
	\end{align*}
	Collecting the coefficients
	by $P_\nu(\vec u)/P_\nu(\vec a)$,
	one can rewrite this as
	\begin{align*}
		\sum_{\nu\in\GT_N}\frac{P_{\nu}(\vec u)}
		{P_{\nu}(\vec a)}
		\sum_{\la\colon\la\precv\nu}
		\MM_{\mathbf{A}}^{\vec a}(\la)
		\Pspec{\hat\be}(\la\to\nu),
	\end{align*}
	where
	the operator $\Pspec{\hat\be}$
	is
	given by 
	\eqref{univariate_operators2}. 

	Since $P_\nu(\vec u)/P_\nu(\vec a)$ are linearly independent as polynomials in $\vec{u}$,
	\begin{align*}
	\MM_{\mathbf{A}\cup(\hat\be)}^{\vec a}(\nu) = \sum_{\la\colon\la\precv\nu}\MM_{\mathbf{A}}^{\vec a}(\la)
		\Pspec{\hat\be}(\la\to\nu) = \sum_{\la}\MM_{\mathbf{A}}^{\vec a}(\la)
		\Pspec{\hat\be}(\la\to\nu)
	\end{align*}
	for all $\nu \in \GT_N$. To show uniqueness suppose there is another operator
	$\Pspec{\hat\be}'$ that satisfies 
	\begin{align*}
	\MM_{\mathbf{A}}^{\vec a}\Pspec{\hat\be}'
		=
		\MM_{\mathbf{A}\cup(\hat\be)}^{\vec a}.
	\end{align*}
	Pick $\la_{0}$ and $\nu_{0}$, such that $\Pspec{\hat\be}(\la_{0}\to\nu_{0}) \neq \Pspec{\hat\be}'(\la_{0}\to\nu_{0})$. For any specialization $\mathbf{A}$,
	\begin{align*}
	\sum_{\la \in \GT_N}\MM_{\mathbf{A}}^{\vec a}(\la)\left(\Pspec{\hat\be}(\la \to\nu_{0}) - \Pspec{\hat\be}'(\la\to\nu_{0})\right) = 0.
	\end{align*}
	Take $\mathbf{A}$ to be a pure specialization into usual parameters $(\al_{1}, \ldots, \al_{N})$ and multiply both sides by $\Pi(\vec a;\mathbf{A})$ to get 
	\begin{align*}
	\sum_{\la \in \GT_N}P_\la(\vec a)Q_\la(\al_{1}, \ldots, \al_{N})\left(\Pspec{\hat\be}(\la \to\nu_{0}) - \Pspec{\hat\be}'(\la\to\nu_{0})\right) = 0
	\end{align*}
	for any $\al_{1}, \ldots, \al_{N} \ge 0$
	(in fact, this sum is only over $\la\precv\nu_0$ 
	and thus is finite), which contradicts the fact that $Q_\la(\al_{1}, \ldots, \al_{N})$ are linearly independent as polynomials in $\al_{1}, \ldots, \al_{N}$.
\end{proof}

It follows from \eqref{skew_Cauchy} that both operators $\Pspec{\hat\be}$ and $\Pspec{\al}$ are stochastic, i.e. for any $\la \in \GT_N$
\begin{align}
\label{stochasticity}
\sum_{\nu \in \GT_N}\Pspec{\hat\be}(\la \to\nu) = \sum_{\nu \in \GT_N}\Pspec{\al}(\la \to\nu) = 1.
\end{align}
\begin{remark}\label{rmk:one_level_dynamics}
	If $N=1$ in Proposition \ref{prop:univariate}, 
	then both dynamics $\Pspec{\al}$
	and $\Pspec{\hat\be}$ 
	(living on $\Z_{\ge0}=\GT_{1}$)
	are rather simple.
	Namely, under both dynamics, at each discrete time step 
	the only particle $\la^{(1)}_{1}\in\Z_{\ge0}=\GT_1$
	jumps to the right according to 
	\begin{enumerate}
		\item the \emph{$q$-geometric
		distribution} with parameter $\al a_1$,
		i.e.,
		$\p_{\al a_1}(n):=(\al a_1;q)_{\infty}\frac{(\al a_1)^{n}}{(q;q)_{n}}$,
		$n=0,1,2,\ldots$,\footnote{The fact that 
		this is indeed a probability distribution
		follows from the $q$-binomial theorem.}
		in the case of dynamics $\Pspec{\al}$,
		or
		\item
		the \emph{Bernoulli distribution}
		with parameter $\be a_1$
		in the case of dynamics $\Pspec{\hat\be}$:
		the particle jumps to the 
		right by one with probability $\be a_1/(1+\be a_1)$,
		and stays put with the complementary probability
		$1/(1+\be a_1)$.\footnote{This parametrization of Bernoulli
		random variables will be used throughout the paper.}
	\end{enumerate}
	
	More generally, one can show that 
	under the dynamics on $\GT_N$,
	the quantities $|\la^{(N)}|$
	evolve as follows. For $\Pspec{\al}$,
	at each discrete time step $|\la^{(N)}|$
	is increased by the sum of $N$ independent 
	$q$-geometric random variables with parameters
	$\al a_1,\ldots,\al a_N$.
	For $\Pspec{\hat\be}$,
	at each discrete time step $|\la^{(N)}|$
	is increased by the sum of $N$ independent 
	Bernoulli random variables with parameters
	$\be a_1,\ldots,\be a_N$. To see this, use \eqref{stochasticity} to write 
	\begin{align*}
		\sum_{\nu \in \GT_N}\frac{P_\nu(\vec a)}{P_\la(\vec a)}\,
		\varphi_{\nu/\la}\al^{|\nu|-|\la|}
		\mathbf{1}_{\la\prech\nu} = \prod_{j=1}^{N}\frac{1}{(\al a_j;q)_{\infty}},
		\\
		\sum_{\nu \in \GT_N}
		\frac{P_\nu(\vec a)}{P_\la(\vec a)}
		\psi'_{\nu/\la}\be^{|\nu|-|\la|}
		\mathbf{1}_{\la\precv\nu} = \prod_{j=1}^{N}
		(1+\be a_j)
	\end{align*}
	for any $\la \in \GT_N$. Substituting $\al u$ instead of $\al$ (or $\be u$ instead of $\be$) in these equalities leads to
	\begin{align*}
		\sum_{\nu \in \GT_N}\Pspec{\al}(\la \to \nu)u^{|\nu|-|\la|}
		 = \prod_{j=1}^{N}\frac{(\al  a_{j} ;q)_{\infty}}{(\al  a_{j} u;q)_{\infty}},
		\\
		\sum_{\nu \in \GT_N}\Pspec{\hat \be}(\la \to \nu)u^{|\nu|-|\la|} = \prod_{j=1}^{N}
		\frac{1+\be a_{j}u}{1+\be a_{j}}.
	\end{align*}
	The observation follows, since both left hand sides are probability generating functions of $|\nu| - |\la|$ in the formal variable $u$, and the right-hand sides expand as probability generating functions 
	of sums of independent $q$-geometric or Bernoulli random variables.
	%To see this, one should set $u_i=u a_i$, $i=1,\ldots,N$, in the generating function \eqref{MP_measure}(where $u$ is a new parameter),and use the fact that the $q$-Whittaker polynomials are homogeneous.
\end{remark}

We will call the dynamics
$\Pspec{\al}$ and $\Pspec{\hat\be}$
the \emph{univariate dynamics},
and the corresponding 
dynamics on interlacing arrays
$\Qspec{\al}$ and $\Qspec{\hat\be}$
(which we aim to construct)
the \emph{multivariate dynamics}.
In a way, multivariate dynamics
on arrays $\lab=(\la^{(1)}\prech \ldots\prech\la^{(N)})$
\emph{stitch together} univariate
dynamics on all levels $\la^{(j)}$,
$j=1,\ldots,N$: Namely, started from a $q$-Gibbs distribution,
the multivariate evolution
of the array $\lab$
reduces to the corresponding univariate dynamics
on each of the levels $\la^{(j)}$, $j=1,\ldots,N$.
This fact follows from the proof of Theorem
\ref{thm:main_eq} below,
see also \cite[\S2.2]{BorodinPetrov2013NN}
for a related discussion.

Instead of the case of univariate dynamics
(driven by identity
\eqref{MP_measure}),
the problem of constructing
multivariate dynamics
(involving identity \eqref{MP_genfunc})
has a \emph{whole family of solutions}.
This phenomenon was known in the Schur ($q=0$)
case for some time, with the presence of 
the RSK-type (e.g., see \cite{OConnell2003Trans},
\cite{OConnell2003}) and the push-block 
\cite{BorFerr2008DF}
dynamics
(see \S \ref{sec:schur_degeneration}
below for more detail).
A similar phenomenon was investigated in 
\cite{BorodinPetrov2013NN}
for continuous time dynamics increasing the parameter $\ga$
in the $q$-Whittaker processes.
In that simpler continuous time 
setting, a classification result
was established in the latter paper.

\begin{remark}\label{rmk:univariate_hard}
	Since the $q$-Whittaker polynomials
	$P_\la(\vec a)$ entering \eqref{univariate_operators1}
	and \eqref{univariate_operators2} are not given by an especially nice formula, 
	transition probabilities of the univariate
	dynamics 
	are harder to analyze. On the other hand, 
	RSK-type multivariate dynamics
	which we construct in the present paper
	turn out to have simpler transition probabilities.
	Note also that multivariate dynamics 
	on $q$-Gibbs distributions
	can be 
	used to ``simulate'' the univariate ones, 
	cf. the above discussion about ``stitching''.
\end{remark}

% subsection markov_dynamics (end)

\subsection{Main equations} % (fold)
\label{sub:main_equations}

Here we write down linear equations whose solutions correspond
to multivariate discrete time Markov dynamics 
on $q$-Whittaker processes.
Let us first narrow down the 
class of dynamics $\mathscr{Q}$ on interlacing arrays which we 
consider.

\begin{definition}\label{def:sequential_update}
	A dynamics $\mathscr{Q}$ on interlacing arrays
	will be called a
	\emph{sequential update dynamics}
	if its one-step transition probabilities from $\lab$ to $\nub$,
	$\lab,\nub\in\mathbb{GT}^{(N)}$,
	have a product form
	\begin{align}
		\mathscr{Q}(\lab\to\nub)=
		\mathscr{U}_1(\la^{(1)}\to\nu^{(1)})
		\mathscr{U}_2(\la^{(2)}\to\nu^{(2)}\mid \la^{(1)}\to\nu^{(1)})
		\ldots
		\mathscr{U}_N(\la^{(N)}\to\nu^{(N)}
		\mid \la^{(N-1)}\to\nu^{(N-1)}),
		\label{Q_sequential_update}
	\end{align}
	where $\mathscr{U}_j$'s are conditional probabilities
	of transitions at levels $j=1,\ldots,N$
	satisfying\footnote{By agreement,
	for $j=1$ 
	we mean
	$\mathscr{U}_j(\la^{(j)}\to\nu^{(j)}\mid 
	\la^{(j-1)}\to\nu^{(j-1)})\equiv
	\mathscr{U}_1(\la^{(1)}\to\nu^{(1)})$.}
	\begin{align}
		\mathscr{U}_j(\la^{(j)}\to\nu^{(j)}\mid 
		\la^{(j-1)}\to\nu^{(j-1)})\ge0,
		\qquad
		\sum_{\nu^{(j)}\in\GT_j}
		\mathscr{U}_j(\la^{(j)}\to\nu^{(j)}\mid 
		\la^{(j-1)}\to\nu^{(j-1)})=1.
		\label{U_properties}
	\end{align}
	In words, the transition $\lab\to\nub$ looks as follows.
	First, update $\la^{(1)}\to\nu^{(1)}$ at the
	bottom level $\GT_1$ according to the distribution
	$\mathscr{U}_1$.
	Then for each $j=2,\ldots,N$,
	given the transition $\la^{(j-1)}\to\nu^{(j-1)}$ at the previous level,
	update $\la^{(j)}\to\nu^{(j)}$
	at level $\GT_j$
	according to the conditional distribution~$\mathscr{U}_j$.
	We see that the evolution of several first levels
	$\la^{(1)},\ldots,\la^{(k)}$ of the interlacing array
	does not depend on what is happening at the upper levels
	$\la^{(k+1)},\ldots,\la^{(N)}$.
\end{definition}
This setting of sequential update dynamics
is not too restrictive as 
it covers all previously known examples of dynamics on 
Macdonald (in particular, on $q$-Whittaker and Schur)
processes,~cf.~\cite{BorodinPetrov2013NN}.
For a sequential update dynamics
it suffices to
describe the evolution
at any two consecutive levels $j-1$ and $j$.

\begin{theorem}\label{thm:main_eq}
	A sequential update dynamics
	$\mathscr{Q}$ defined via \eqref{Q_sequential_update}--\eqref{U_properties}
	preserves the class of $q$-Whittaker processes
	$\MP_{\mathbf{A}}^{\vec a}$
	and adds a new usual parameter $\al$
	to the specialization $\mathbf{A}$ if and only if
	\begin{align}\label{alpha_main_equation}
		\sum_{\bar\la\in\GT_{j-1}}
		\mathscr{U}_j
		(\la\to\nu\mid \bar\la\to\bar\nu)
		(\al a_j)^{|\la|-|\nu|-(|\bar\la|-|\bar\nu|)}
		\psi_{\la/\bar\la}\varphi_{\bar\nu/\bar\la}
		=
		(\al a_j;q)_{\infty}
		\psi_{\nu/\bar\nu}\varphi_{\nu/\la}
	\end{align}
	for any $j=1, 2,\ldots,N$ and
	any $\la,\nu\in\GT_j$, $\bar\nu\in\GT_{j-1}$,
	such that the four signatures
	$\bar\la,\bar\nu,\la,\nu$ are related 
	as on Fig.~\ref{fig:square}, left (in particular, 
	the above summation is taken only over $\bar\la$ satisfying
	$\bar\la\prech\bar\nu$, $\bar\la\prech\la$). 
	For $j=1$ we take $\bar \la = \bar \nu = \varnothing$ in this equation and it becomes equivalent to $\mathscr{U}_1=\Pspec{\al}$ at level $\GT_1$ (as in Remark \ref{rmk:one_level_dynamics}).

	Similarly, a dynamics $\mathscr{Q}$ 
	preserves the class of $q$-Whittaker processes
	and adds a new dual parameter $\be$
	to the specialization $\mathbf{A}$ if and only if
	\begin{align}\label{beta_main_equation}
		\sum_{\bar\la\in\GT_{j-1}}
		\mathscr{U}_j
		(\la\to\nu\mid \bar\la\to\bar\nu)
		(\be a_j)^{|\la|-|\nu|-(|\bar\la|-|\bar\nu|)}
		\psi_{\la/\bar\la}\psi'_{\bar\nu/\bar\la}
		=
		\frac{1}{1+\be a_j}
		\psi_{\nu/\bar\nu}\psi'_{\nu/\la}
	\end{align}
	for any $j=1, 2,\ldots,N$ and
	any $\la,\nu\in\GT_j$, $\bar\nu\in\GT_{j-1}$,
	such that the four signatures
	$\bar\la,\bar\nu,\la,\nu$ are related 
	as on Fig.~\ref{fig:square}, right (in particular, 
	the above summation is taken only over $\bar\la$ satisfying
	$\bar\la\precv\bar\nu$, $\bar\la\prech\la$).
	For $j=1$ we take $\bar \la = \bar \nu = \varnothing$ in this equation and it becomes equivalent to $\mathscr{U}_1=\Pspec{\hat \be}$ at level $\GT_1$ (as in Remark \ref{rmk:one_level_dynamics}).
\end{theorem}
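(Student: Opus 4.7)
The plan is to prove both directions of the iff simultaneously by induction on $N$. Using the sequential-update form \eqref{Q_sequential_update} of $\mathscr{Q}$, the product form \eqref{MP_weights} of the $q$-Whittaker process, the Cauchy expansion
\[ Q_{\nu^{(N)}}(\mathbf{A}\cup(\al))=\sum_{\mu\in\GT_N}Q_{\nu^{(N)}/\mu}(\al)\,Q_\mu(\mathbf{A}), \]
and the factorization $\Pi(\vec a;\mathbf{A}\cup(\al))=\Pi(\vec a;\mathbf{A})\prod_{j=1}^N(\al a_j;q)_\infty^{-1}$, the preservation condition $\MP^{\vec a}_{\mathbf{A}}\Qspec{\al}=\MP^{\vec a}_{\mathbf{A}\cup(\al)}$ becomes a polynomial identity in the parameters of a pure usual specialization $\mathbf{A}=(\al_1,\ldots,\al_M)$. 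Matching coefficients of the linearly independent $Q_\mu(\mathbf{A})$, this reduces to verifying that for every $\nub\in\mathbb{GT}^{(N)}$ and every $\mu\in\GT_N$,
\begin{equation*}
\sum_{\la^{(1)},\ldots,\la^{(N-1)}}\prod_{k=1}^{N}P_{\la^{(k)}/\la^{(k-1)}}(a_k)\,\mathscr{U}_k(\la^{(k)}\to\nu^{(k)}\mid\la^{(k-1)}\to\nu^{(k-1)})=\prod_{j=1}^{N}(\al a_j;q)_\infty\prod_{k=1}^{N}P_{\nu^{(k)}/\nu^{(k-1)}}(a_k)\,Q_{\nu^{(N)}/\mu}(\al),
\end{equation*}
where the top level is held fixed at $\la^{(N)}=\mu$ on the left.

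The base case $N=1$ reduces this identity to $P_\mu(a_1)\mathscr{U}_1(\mu\to\nu^{(1)})=(\al a_1;q)_\infty P_{\nu^{(1)}}(a_1)\,Q_{\nu^{(1)}/\mu}(\al)$, and the single-variable formulas give $\mathscr{U}_1(\mu\to\nu^{(1)})=(\al a_1;q)_\infty(\al a_1)^{|\nu^{(1)}|-|\mu|}\varphi_{\nu^{(1)}/\mu}$, which matches both $\Pspec{\al}$ at $N=1$ (Proposition~\ref{prop:univariate}) and \eqref{alpha_main_equation} with $\bar\la=\bar\nu=\varnothing$. For the inductive step, first perform the inner summation $\sum_{\la^{(1)},\ldots,\la^{(N-2)}}$ with $\la^{(N-1)}$ held fixed. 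By the induction hypothesis applied with $\la^{(N-1)}$ playing the role of ``$\mu$'' at depth $N-1$, this inner sum equals $\prod_{j<N}(\al a_j;q)_\infty\prod_{k<N}P_{\nu^{(k)}/\nu^{(k-1)}}(a_k)\,Q_{\nu^{(N-1)}/\la^{(N-1)}}(\al)$. After canceling factors independent of $\la^{(N-1)}$, what remains is
\begin{equation*}
\sum_{\bar\la}P_{\la/\bar\la}(a_N)\,Q_{\bar\nu/\bar\la}(\al)\,\mathscr{U}_N(\la\to\nu\mid\bar\la\to\bar\nu)=(\al a_N;q)_\infty\,P_{\nu/\bar\nu}(a_N)\,Q_{\nu/\la}(\al),
\end{equation*}
where $\bar\la=\la^{(N-1)}$, $\bar\nu=\nu^{(N-1)}$, $\la=\la^{(N)}=\mu$, $\nu=\nu^{(N)}$. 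Substituting the one-variable formulas \eqref{P_one_variable} and \eqref{Q_one_variable} and observing that the exponents of $a_N$ and $\al$ coincide to the common value $(|\la|-|\nu|)-(|\bar\la|-|\bar\nu|)$, the equation becomes precisely \eqref{alpha_main_equation} at level $j=N$; since every step is an equivalence, both directions of the iff follow.

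The case \eqref{beta_main_equation} runs along the same lines after three substitutions: the factor $\prod_j(\al a_j;q)_\infty^{-1}$ in $\Pi(\vec a;\mathbf{A}\cup(\hat\be))$ becomes $\prod_j(1+\be a_j)$, the $Q$-expansion uses the first Pieri rule from \eqref{Pieri} producing vertical strips, and \eqref{Q_one_variable} is replaced by the dual one-variable formula \eqref{Q_one_dual_spec}; consequently $\varphi$ is replaced by $\psi'$, and the interlacing $\bar\la\prech\bar\nu$ is replaced by $\bar\la\precv\bar\nu$. The main obstacle is the bookkeeping: carefully tracking the $(\al a_j;q)_\infty$ (respectively $(1+\be a_j)^{-1}$) factors, the exponents of $a_j$ and $\al$ or $\be$, the combinatorial coefficients $\psi,\varphi,\psi'$, and the interlacing constraints $\prech$ versus $\precv$ across adjacent levels. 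The genuine algebraic input is minimal---essentially one application of the skew Cauchy identity per level, combined with the linear independence of Macdonald $Q$-polynomials in the parameters of $\mathbf{A}$.
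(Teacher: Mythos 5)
Your forward direction (the main equations \eqref{alpha_main_equation} or \eqref{beta_main_equation} at every level imply preservation) is essentially the paper's own argument: an induction along the depth of the array, reducing the depth-$N$ preservation identity to the depth-$(N-1)$ one plus the level-$N$ relation. The reduction of preservation to a summation identity via linear independence of $Q_\mu(\mathbf{A})$, the exponent bookkeeping showing that the $a_N$ and $\al$ powers both come to $(|\la|-|\nu|)-(|\bar\la|-|\bar\nu|)$, and the list of changes for the $(\hat\be)$ case are all correct.

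However, the closing sentence ``since every step is an equivalence, both directions of the iff follow'' conceals a genuine gap in the reverse direction. In your inductive step, computing the inner sum $\sum_{\la^{(1)},\ldots,\la^{(N-2)}}$ ``by the induction hypothesis'' requires that the depth-$(N-1)$ summation identity
\[
\sum_{\la^{(1)},\ldots,\la^{(N-2)}}\prod_{k<N}P_{\la^{(k)}/\la^{(k-1)}}(a_k)\,\mathscr{U}_k(\cdots)
=\prod_{j<N}(\al a_j;q)_\infty\prod_{k<N}P_{\nu^{(k)}/\nu^{(k-1)}}(a_k)\,Q_{\nu^{(N-1)}/\la^{(N-1)}}(\al)
\]
already holds for every fixed $\la^{(N-1)}$. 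In the forward direction this is supplied by the induction hypothesis via the level-$j$ equations for $j<N$. In the reverse direction you assume only the depth-$N$ identity, which is a single compound sum over $\la^{(1)},\ldots,\la^{(N-1)}$; there is no a priori reason that the inner partial sum must individually equal its expected value for each fixed $\la^{(N-1)}$ merely because the full composed sum does. The step ``depth-$N$ identity $\Longleftrightarrow$ depth-$(N-1)$ identity together with \eqref{alpha_main_equation} at level $N$'' is not an equivalence without further input.

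The paper fills exactly this hole by first summing the preservation identity over the top output levels $\nu^{(k+1)},\ldots,\nu^{(N)}$, using stochasticity $\sum_{\nu^{(j)}}\mathscr{U}_j=1$ together with the skew Cauchy identity \eqref{skew_Cauchy}, to establish depth-$k$ preservation for every $k\le N$ directly from depth-$N$ preservation; the level-$k$ equation then drops out by comparing the depth-$k$ and depth-$(k-1)$ identities. You can repair your version by prefixing the reverse direction with a marginalization argument: a sequential-update dynamics restricts (by Definition~\ref{def:sequential_update}) to a sequential-update dynamics on levels $1,\ldots,N-1$, and the marginal of $\MP_{\mathbf{A}}^{\vec a}$ on those levels is $\MP_{\mathbf{A}}^{(a_1,\ldots,a_{N-1})}$, so depth-$N$ preservation forces depth-$(N-1)$ preservation; then your induction hypothesis applies and the rest of the argument goes through. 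But this is a real extra idea that must be stated, not a bookkeeping step.
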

\begin{figure}[htbp]
	\begin{tabular}{cccc}	
	\raisebox{70pt}{$(\al)$}\hspace*{10pt}&\begin{tikzpicture}
		[scale=1.5, very thick]
		\node at (1,0) {$j-1$};
		\node at (1,1) {$j$};
		\node at (2,1) {\framebox{$\la$\rule{0pt}{10pt}}};
		\node at (3,1) {\framebox{$\nu${}\rule{0pt}{10pt}}};
		\node at (2,0) {\framebox{$\bar{\la}${}\rule{0pt}{10pt}}};
		\node at (3,0) {\framebox{$\bar{\nu}${}\rule{0pt}{10pt}}};
		\node[rotate=90] at (3,.5) {$\prech$};
		% \node at (3.11,.48) {\scriptsize{$\mathsf{h}$}};
		\node[rotate=90] at (2,.5) {$\prech$};
		% \node at (2.11,.48) {\scriptsize{$\mathsf{h}$}};
		\node at (2.5,0) {$\prech$};
		\node at (2.5,1) {$\prech$};
		\node at (2.5,-.5) 
		{${\xrightarrow[\text{time}]{\hspace*{70pt}}}$};
	\end{tikzpicture}
	&\hspace{70pt}
	\raisebox{70pt}{$(\hat \be)$}\hspace*{10pt}&
	\begin{tikzpicture}
		[scale=1.5, very thick]
		\node at (4,0) {$j-1$};
		\node at (4,1) {$j$};
		\node at (2,1) {\framebox{$\la$\rule{0pt}{10pt}}};
		\node at (3,1) {\framebox{$\nu${}\rule{0pt}{10pt}}};
		\node at (2,0) {\framebox{$\bar{\la}${}\rule{0pt}{10pt}}};
		\node at (3,0) {\framebox{$\bar{\nu}${}\rule{0pt}{10pt}}};
		\node[rotate=90] at (3,.5) {$\prech$};
		% \node at (3.11,.48) {\scriptsize{$\mathsf{h}$}};
		\node[rotate=90] at (2,.5) {$\prech$};
		% \node at (2.11,.48) {\scriptsize{$\mathsf{h}$}};
		\node at (2.5,0) {$\precv$};
		\node at (2.5,1) {$\precv$};
		\node at (2.5,-.5) 
		{${\xrightarrow[\text{time}]{\hspace*{70pt}}}$};
	\end{tikzpicture}
	\end{tabular}
\caption{Squares of four signatures 
on two consecutive levels 
relevant to conditional transition $\la\to\nu$
on the upper level
given the transition $\bar\la\to\bar\nu$
on the lower level,
under dynamics
$\Qspec{\al}$ (left) and $\Qspec{\hat\be}$ (right).
Note the similarity to blocks in Fomin's 
growth diagrams (about the latter,
see
\cite{fomin1979thesis}, 
\cite{Fomin1986},
\cite{fomin1994duality},
\cite{fomin1995schensted}).}
\label{fig:square}
\end{figure}
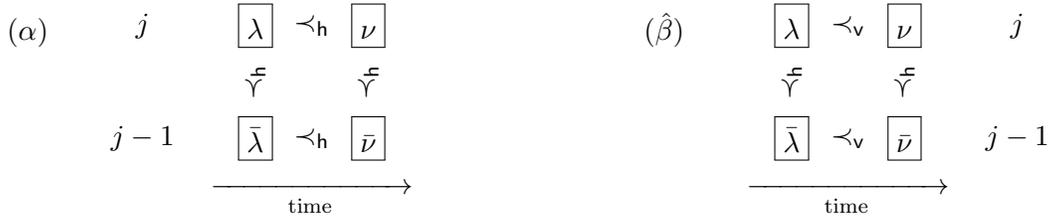
The proof of
	these equations was already established
	in \cite[\S2.2]{BorodinPetrov2013NN} 
	using a more general framework of Gibbs-like measures. However, for the sake of completeness, we reproduce it here in our particular setting of the
	$q$-Whittaker processes.
\begin{proof}
    Let us consider only the case of adding $(\al)$, as the case of $(\hat \be)$ is analogous.
    
    The fact that a sequential update dynamics
	$\mathscr{Q}$ defined via \eqref{Q_sequential_update}--\eqref{U_properties}
	preserves the class of $q$-Whittaker processes
	$\MP_{\mathbf{A}}^{\vec a}$
	and adds a new usual parameter $\al$
	to the specialization $\mathbf{A}$ means that  
	\begin{multline}
	\label{qWhittaker_preservation1}
	\sum_{\lab}\frac{1}{\Pi(\vec a;\mathbf{A})}{P_{\la^{(1)}}(a_1)
	P_{\la^{(2)}/\la^{(1)}}(a_2)\ldots P_{\la^{(N)}/\la^{(N-1)}}(a_N)
	Q_{\la^{(N)}}(\mathbf{A})}\\ \mathscr{U}_1(\la^{(1)}\to\nu^{(1)})
		\mathscr{U}_2(\la^{(2)}\to\nu^{(2)}\mid \la^{(1)}\to\nu^{(1)})
		\ldots
		\mathscr{U}_N(\la^{(N)}\to\nu^{(N)}
		\mid \la^{(N-1)}\to\nu^{(N-1)}) = \\ = \frac{1}{\Pi(\vec a;\mathbf{A} \cup (\al))}{P_{\nu^{(1)}}(a_1)
	P_{\nu^{(2)}/\nu^{(1)}}(a_2)\ldots P_{\nu^{(N)}/\nu^{(N-1)}}(a_N)
	Q_{\nu^{(N)}}(\mathbf{A} \cup (\al))} \quad \text{for every} \ \nub \ \text{and} \ \mathbf{A}. 
	\end{multline}
	Using \eqref{Macdonald_recurrence}, we can rewrite  \eqref{qWhittaker_preservation1} as 
	\begin{align*}
	\sum_{\lab} \left(\prod_{j=1}^{N}\mathscr{U}_j(\la^{(j)}\to\nu^{(j)}
		\mid \la^{(j-1)}\to\nu^{(j-1)})a_{j}^{(|\la^{(j)}|-|\nu^{(j)}|) - (|\la^{(j-1)}|-|\nu^{(j-1)}|)} \psi_{\la^{(j)}/\la^{(j-1)}}\right) 
	Q_{\la^{(N)}}(\mathbf{A})  = \\ =
		\left(\prod_{j=1}^{N}(\al a_{j};q)_{\infty} \psi_{\nu^{(j)}/\nu^{(j-1)}}\right) \sum_{\nu \in \GT_N}Q_{\nu}(\mathbf{A})\al^{|\nu^{(N)}|-|\nu|}\varphi_{\nu^{(N)}/\nu} \quad \text{for every} \ \nub \ \text{and} \ \mathbf{A}. 
	\end{align*}
    Since $Q_{\la}(\mathbf{A})$ are linearly independent as polynomials in $u_{1}, \ldots, u_{N}$ for a specialization $\mathbf{A}$ into usual variables $(u_{1}, \ldots, u_{N})$, this is equivalent to saying that     
    \begin{multline}
    \label{qWhittaker_preservation}
	\sum_{\lab: \la^{(N)} = \la} \prod_{j=1}^{N}\mathscr{U}_j(\la^{(j)}\to\nu^{(j)}
		\mid \la^{(j-1)}\to\nu^{(j-1)})(\al a_{j})^{(|\la^{(j)}|-|\nu^{(j)}|) - (|\la^{(j-1)}|-|\nu^{(j-1)}|)} \psi_{\la^{(j)}/\la^{(j-1)}} 
	  = \\ =
		\varphi_{\nu^{(N)}/\la^{(N)}} \prod_{j=1}^{N}(\al a_{j};q)_{\infty} \psi_{\nu^{(j)}/\nu^{(j-1)}} \quad \text{for all} \ \nub \ \text{and} \ \lab. 
	\end{multline}
	
	For the proof in one direction, suppose that $\mathscr{U}_1=\Pspec{\al}$ at level $\GT_1$, and $\mathscr{U}_j(\la^{(j)}\to\nu^{(j)}
		\mid \la^{(j-1)}\to\nu^{(j-1)})$ satisfy \eqref{alpha_main_equation} for $2 \le j \le N$. Then we can show by induction on $k$, that 
	 \begin{multline}
	 \label{qWhittaker_preservation2}
	\sum_{\lab: \la^{(k)} = \la} \prod_{j=1}^{k}\mathscr{U}_j(\la^{(j)}\to\nu^{(j)}
		\mid \la^{(j-1)}\to\nu^{(j-1)})(\al a_{j})^{(|\la^{(j)}|-|\nu^{(j)}|) - (|\la^{(j-1)}|-|\nu^{(j-1)}|)} \psi_{\la^{(j)}/\la^{(j-1)}} 
	  = \\ =
		\varphi_{\nu^{(k)}/\la^{(k)}} \prod_{j=1}^{k}(\al a_{j};q)_{\infty} \psi_{\nu^{(j)}/\nu^{(j-1)}}
		\ \text{for all} \ 1 \le k \le N, \nub = (\nu^{(1)}\prech\nu^{(2)}\prech \ldots\prech \nu^{(k)}), \la \in \GT_k.
	\end{multline}
	Base for $k=1$ follows from the fact that $\mathscr{U}_1=\Pspec{\al}$ at level $\GT_1$, while the inductive step follows from \eqref{alpha_main_equation}. So \eqref{qWhittaker_preservation} holds. 

	For the other direction, suppose that \eqref{qWhittaker_preservation} (and hence \eqref{qWhittaker_preservation1}) holds. For $1 \le k \le N$ by summing \eqref{qWhittaker_preservation1} over $\nu^{(k+1)}, \ldots, \nu^{(N)}$ and applying \eqref{skew_Cauchy} we get
	\begin{multline*}
	\sum_{\lab}\frac{1}{\Pi(a_{1}, \ldots, a_{k};\mathbf{A})}{P_{\la^{(1)}}(a_1)
	P_{\la^{(2)}/\la^{(1)}}(a_2)\ldots P_{\la^{(k)}/\la^{(k-1)}}(a_k)
	Q_{\la^{(k)}}(\mathbf{A})}\\ \mathscr{U}_1(\la^{(1)}\to\nu^{(1)})
		\mathscr{U}_2(\la^{(2)}\to\nu^{(2)}\mid \la^{(1)}\to\nu^{(1)})
		\ldots
		\mathscr{U}_k(\la^{(k)}\to\nu^{(k)}
		\mid \la^{(k-1)}\to\nu^{(k-1)}) = \\ = \frac{1}{\Pi(a_{1}, \ldots, a_{k};\mathbf{A} \cup (\al))}{P_{\nu^{(1)}}(a_1)
	P_{\nu^{(2)}/\nu^{(1)}}(a_2)\ldots P_{\nu^{(k)}/\nu^{(k-1)}}(a_k)
	Q_{\nu^{(k)}}(\mathbf{A} \cup (\al))} \ \text{for every} \ \nub \ \text{and} \ \mathbf{A}, 
	\end{multline*}
	which implies \eqref{qWhittaker_preservation2}. For $k=1$ it means that $\mathscr{U}_1=\Pspec{\al}$ at level $\GT_1$, while for $k \ge 2$ using  \eqref{qWhittaker_preservation2} for both $k$ and $k-1$ implies \eqref{alpha_main_equation}.
	%The statements about $\mathscr{U}_1$ readily follow from Proposition \ref{prop:univariate}.Equations \eqref{alpha_main_equation} and\eqref{beta_main_equation} follow from the definition of $q$-Whittaker processes \eqref{MP_genfunc}--\eqref{qGibbs} in a way similar to the proof of Proposition \ref{prop:univariate}.Namely, to obtain equations for $\mathscr{U}_j$ one would need to compare generating functions \eqref{MP_genfunc} with $u_i=a_i$, $i\ge j$, and with $u_i=a_i$, $i\ge j+1$, while also taking into account the change in the right-hand side resulting from multiplication by~\eqref{MP_multiplication_operators}.
\end{proof}

In a continuous time setting, there also exist linear equations
governing multivariate dynamics, cf. \cite[\S2.4]{BorodinPetrov2013NN}. In fact, the latter equations arise as small $\al$
or small $\be$ limits of \eqref{alpha_main_equation}
or \eqref{beta_main_equation}, respectively.
Markov dynamics
on $q$-Whittaker processes 
corresponding to solutions to these continuous time
equations
were constructed in 
\cite{OConnellPei2012}, \cite{BorodinPetrov2013NN},
\cite{BufetovPetrov2014}.

% subsection main_equations (end)

\subsection{Discussion of main equations} % (fold)
\label{sub:discussion_of_main_equations}

Let us make a number of general remarks about the main equations 
of Theorem \ref{thm:main_eq}.

\subsubsection{}\label{ssub:classification}

The paper \cite{BorodinPetrov2013NN} contains a 
classification result in continuous time setting, 
which was achieved by further restricting the 
class of dynamics by imposing certain 
\emph{nearest neighbor interaction} constraints.
Under these constraints,
putting together continuous time linear equations
(which look similarly to \eqref{alpha_main_equation} and \eqref{beta_main_equation})
with fixed $\la$ and $\bar\nu$ in a generic position, 
at level $j$
one arrives at a system of
$j$ linear equations
with $3j-2$ variables.
Solutions of such a system admit a reasonable classification.

It remains unclear how to impose (preferably, natural) constraints
on solutions of discrete time equations
\eqref{alpha_main_equation} or \eqref{beta_main_equation}
so that the family of all solutions would admit 
a reasonable description. 
Indeed, for example, in the case of a usual parameter \eqref{alpha_main_equation},
the number of variables is infinite while the number
of available equations is finite.
Therefore, in \S \ref{sec:bernoulli_} and 
\S \ref{sec:geometric_q_rsks} below we devote our attention to constructing
certain particular
multivariate discrete time dynamics
satisfying
equations \eqref{beta_main_equation} 
and \eqref{alpha_main_equation}, respectively.

\subsubsection{}\label{ssub:refined_Cauchy}

Note that summing \eqref{alpha_main_equation}
or \eqref{beta_main_equation} over $\nu\in\GT_j$
leads to the skew Cauchy identity
with both specializations being into one parameter
(cf. \eqref{skew_Cauchy}):
\begin{align}
	\sum_{\bar\la\in\GT}P_{\la/\bar\la}(a_j)
	Q_{\bar\nu/\bar\la}(\mathbf{B})=
	\frac1{\Pi(a_j;\mathbf{B})}
	\sum_{\nu\in\GT}
	P_{\nu/\bar\nu}(a_j)Q_{\nu/\la}(\mathbf{B}),
	\qquad
	\text{$\mathbf{B}=(\al)$ or $(\hat\be)$}.
	\label{skew_Cauchy_concrete}
\end{align}

Identity \eqref{skew_Cauchy_concrete} 
may also be interpreted as 
a certain commutation relation between the univariate
Markov operators $\Pspec{\al}$
or $\Pspec{\hat\be}$ (of Proposition \ref{prop:univariate})
and Markov projection operators 
(or \emph{links})\footnote{These links in fact
determine the $q$-Gibbs property \eqref{qGibbs}; e.g.,
see \cite[\S2]{BorodinPetrov2013NN} for more detail.}
\begin{align*}
	\La^{j}_{j-1}(\la,\bar\la):=
	\frac{P_{\bar\la}(a_1,\ldots,a_{j-1})}
	{P_\la(a_1,\ldots,a_j)}P_{\la/\bar\la}(a_j),
	\qquad
	\la\in\GT_j,\quad 
	\bar\la\in\GT_{j-1},
\end{align*}
in the sense that
\begin{align}
	\Pspec{\al}^{(j)}\La^{j}_{j-1}
	=
	\La^{j}_{j-1}
	\Pspec{\al}^{(j-1)},
	\label{comm_rel}
\end{align}
and similarly for $\Pspec{\hat\be}$. 
Indices $j$ and $j-1$
in $\Pspec{\al}$ above mean the 
level of the interlacing array at which 
the transition operator of the univariate dynamics acts.

One can thus say that each
solution to the main equations \eqref{alpha_main_equation}
or \eqref{beta_main_equation}
(and, therefore, each discrete time Markov dynamics
on $q$-Whittaker processes)
corresponds to a \emph{refinement} of the skew
Cauchy identity \eqref{skew_Cauchy_concrete}
(or of the commutation relation \eqref{comm_rel}).

\begin{remark}\label{rmk:Fomin}
	When $\mathbf{B}=(\al)$ is a usual specialization,
	one may check that all quantities entering both sides of
	\eqref{skew_Cauchy_concrete}
	can be viewed as generating series in $q$, $\al$, and $a_j$
	with nonnegative integer coefficients. It would be very interesting 
	to understand whether there is a bijective mechanism behind identity
	\eqref{skew_Cauchy_concrete}
	similar to the one existing in the classical ($q=0$) case
	(see also the discussion after Theorem \ref{thm:schur_RSK}).
	We are very grateful to Sergey Fomin for this comment.
\end{remark}

\subsubsection{}

The parameters $a_1,\ldots,a_{j-1}$ (but not $a_j$)
essentially do not contribute to the main equations
\eqref{alpha_main_equation}, \eqref{beta_main_equation}:
they enter
the equations only as a requirement
that $\bar\nu\in\GT_{j-1}$
and $\la,\nu\in\GT_{j}$.
Thus, equations 
\eqref{alpha_main_equation}, \eqref{beta_main_equation} 
essentially depend on two specializations:
a specialization into one usual parameter
$\boldsymbol{\Lambda}=(a_j)$
which corresponds to increasing the level number, 
and a specialization $\mathbf{B}=(\al)$
or $(\hat\be)$ which corresponds 
to time evolution.
This allows to think of diagrams as on
Fig.~\ref{fig:square},
as well as of main equations,
for \emph{any} specializations
$\boldsymbol{\Lambda}$ and $\mathbf{B}$
(see 
Fig.~\ref{fig:square_general}).\begin{figure}[htbp]
	\begin{tabular}{cc}
	\begin{tikzpicture}
		[scale=1.7, very thick]
		\node at (2,1) {\framebox{$\la$\rule{0pt}{10pt}}};
		\node at (3,1) {\framebox{$\nu${}\rule{0pt}{10pt}}};
		\node at (2,0) {\framebox{$\bar{\la}${}\rule{0pt}{10pt}}};
		\node at (3,0) {\framebox{$\bar{\nu}${}\rule{0pt}{10pt}}};
		\node[rotate=90] at (3,.5) {$\prec_{\boldsymbol{\Lambda}}$};
		% \node at (3.11,.48) {\scriptsize{$\mathsf{h}$}};
		\node[rotate=90] at (2,.5) {$\prec_{\boldsymbol{\Lambda}}$};
		% \node at (2.11,.48) {\scriptsize{$\mathsf{h}$}};
		\node at (2.5,0) {$\prec_{\mathbf{B}}$};
		\node at (2.5,1) {$\prec_{\mathbf{B}}$};
		\node at (2.5,-.5) 
		{$\xrightarrow[\text{time}]{\hspace*{70pt}}$};
		\node[rotate=90] at (1.6,.5) 
		{$\xrightarrow[\hspace*{65pt}]{\text{level}}$};
	\end{tikzpicture}
	\end{tabular}
\caption{A square of four signatures corresponding
to arbitrary specializations $\boldsymbol{\Lambda}$ 
and $\mathbf{B}$. Notation $\bar\la\prec_{\boldsymbol{\Lambda}}\la$
means that $P_{\la/\bar\la}(\boldsymbol{\Lambda})>0$,
and similarly for $\prec_{\mathbf{B}}$.
When the specialization $\boldsymbol{\Lambda}$
is into a single usual or dual parameter,
$\prec_{\boldsymbol{\Lambda}}$ reduces to 
$\prech$ or $\precv$, respectively.}
\label{fig:square_general}
\end{figure}
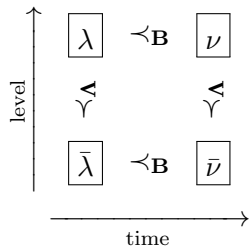
It suffices
to consider three elementary cases 
for $\boldsymbol{\Lambda}$ and $\mathbf{B}$
as in \eqref{3_cases}.
This yields 9 possible systems of equations for 
dynamics.
If one of the specializations is pure Plancherel
(case (3) in \eqref{3_cases}),
then the corresponding Markov dynamics
on $q$-Whittaker processes
were essentially
constructed in \cite{BorodinPetrov2013NN},
\cite{BufetovPetrov2014}.
This leaves four systems of equations in which
both $\boldsymbol{\Lambda}$ and $\mathbf{B}$ 
are specializations into a single usual or dual parameter.
In this paper we address two of these four cases
corresponding to $\boldsymbol{\Lambda}=(a_j)$,
which in particular give rise to two new discrete 
time $q$-PushTASEPs (as marginally Markovian
projections of dynamics on interlacing arrays, 
see \S \ref{sub:bernoulli_q_pushtasep} and \S \ref{sub:geometric_q_pushtasep}).

\subsubsection{}

In fact, one can 
define the quantities $\psi_{\la/\mu}(q,t)$,
$\varphi_{\la/\mu}(q,t)$,
$\psi'_{\la/\mu}(q,t)$
for the general Macdonald parameters $(q,t)$
(see \cite[Ch. VI]{Macdonald1995}),
and thus write down the corresponding 
main linear equations for any specializations 
$\boldsymbol{\Lambda}$ and $\mathbf{B}$. (In particular, for $t\ne0$
the right-hand side of the identity \eqref{Pi_definition} 
defining a specialization
should be multiplied by $\prod_{i=1}^{\infty}(t\al_i u;q)_{\infty}$.)
It is not known whether there exist other solutions
to the main equations for general $(q,t)$
yielding honest 
Markov dynamics (i.e., having \emph{nonnegative}
transition probabilities)
except the \emph{push-block solution} 
(see \S \ref{sec:push_block_and_rsk_type_dynamics}
below for the definition). We do not address this question in the present paper.

There is a rather simple transformation 
of the main equations for general $(q,t)$
(related to transposition
of Young diagrams)
which interchanges $q\leftrightarrow t$
and swaps usual and dual parameters
in both specializations $\boldsymbol{\Lambda}$
and $\mathbf{B}$
\cite{BufetovPetrov2014}.
This transformation relates the 
$q$-Whittaker ($t=0$)
and the Hall--Littlewood ($q=0$) settings. 

The remaining two cases of the ($q$-Whittaker) 
main equations mentioned above
(corresponding to 
$\boldsymbol{\Lambda}=(\hat b)$, a specialization into a dual parameter)
should thus be thought of as 
discrete time versions
of the
continuous time equations of \cite{BufetovPetrov2014}
(relevant to the Hall--Littlewood setting). As such, 
(conjectural) solutions to the former equations
leading to discrete time
dynamics on interlacing arrays 
are unlikely to produce new 
marginally Markovian
TASEP-like particle systems in one space dimension
(see also discussion in \cite[\S8.3]{BorodinPetrov2013NN}).
In the present paper, we do not address these two remaining cases
corresponding
to the Hall--Littlewood setting.

% subsection discussion_of_main_equations (end)

% section macdonald_processes_and_markov_dynamics (end)

\section{Push-block and RSK-type dynamics} % (fold)
\label{sec:push_block_and_rsk_type_dynamics}

\subsection{Push-block dynamics} % (fold)
\label{sub:push_block_dynamics}

There is a rather straightforward general construction 
(dating back to an idea of \cite{DiaconisFill1990})
leading to certain particular multivariate dynamics.
Namely, 
assume that the conditional probabilities
$\mathscr{U}_j(\la\to\nu\mid \bar\la\to\bar\nu)$
entering the main equations (Theorem \ref{thm:main_eq})
\emph{do not depend} on $\bar\la$.
Then each equation (corresponding to 
fixed $\la,\nu\in\GT_j$, and $\bar\nu\in\GT_{j-1}$) contains only one 
unknown $\mathscr{U}_j(\la\to\nu\mid \bar\nu)$.
With this restriction the main equations admit a unique solution.
Let us consider the case of a usual parameter $\al$ \eqref{alpha_main_equation}.
Observe that
the left-hand side
of \eqref{alpha_main_equation} takes the following form
(where signatures satisfy conditions
on Fig.~\ref{fig:square}, left):
\begin{align*}
	&
	\mathscr{U}_j
	(\la\to\nu\mid \bar\nu)
	\sum_{\bar\la\in\GT_{j-1}}
	(\al a_j)^{|\la|-|\nu|-(|\bar\la|-|\bar\nu|)}
	\psi_{\la/\bar\la}\varphi_{\bar\nu/\bar\la}
	\\&\hspace{20pt}=
	\mathscr{U}_j
	(\la\to\nu\mid \bar\nu)\,
	\al^{|\la|-|\nu|}
	a_j^{-|\nu|+|\bar\nu|}
	\sum_{\bar\la\in\GT_{j-1}}
	P_{\la/\bar\la}(a_j)Q_{\bar\nu/\bar\la}(\al)
	\\&\hspace{20pt}=
	\mathscr{U}_j
	(\la\to\nu\mid \bar\nu)\,
	(\al a_j;q)_{\infty}
	\sum_{\varkappa\in\GT_j}
	(\al a_j)^{|\varkappa|-|\nu|}
	\psi_{\varkappa/\bar\nu}\varphi_{\varkappa/\la},
\end{align*}
where we have used the skew Cauchy identity 
\eqref{skew_Cauchy_concrete}.
Then \eqref{alpha_main_equation} yields the solution
\begin{align}\label{alpha_push_block_solution}
	\mathscr{U}_j
	(\la\to\nu\mid \bar\nu)=
	\frac{(\al a_j)^{|\nu|}
	\psi_{\nu/\bar\nu}\varphi_{\nu/\la}}
	{\sum_{\varkappa\in\GT_j}
	(\al a_j)^{|\varkappa|}
	\psi_{\varkappa/\bar\nu}\varphi_{\varkappa/\la}}.
\end{align}
In \eqref{alpha_push_block_solution}
as well as in the above computation,
it should be 
$\la\prech\nu$,
$\bar\nu\prech\nu$
and
$\la\prech\varkappa$,
$\bar\nu\prech\varkappa$, see Fig.~\ref{fig:square}, left.

Similarly, the solution of
\eqref{beta_main_equation} 
not depending on $\bar\la$
looks as
\begin{align}\label{beta_push_block_solution}
	\mathscr{U}_j
	(\la\to\nu\mid \bar\nu)=
	\frac{(\be a_j)^{|\nu|}
	\psi_{\nu/\bar\nu}\psi'_{\nu/\la}}
	{\sum_{\varkappa\in\GT_j}
	(\be a_j)^{|\varkappa|}
	\psi_{\varkappa/\bar\nu}\psi'_{\varkappa/\la}}.
\end{align}
The signatures
have to be related as on Fig.~\ref{fig:square}, right,
i.e., $\la\precv\nu$, $\bar\nu\prech\nu$,
and
$\la\precv\varkappa$, $\bar\nu\prech\varkappa$.

\begin{definition}
	\label{def:push_block}
	We will call
	the multivariate dynamics
	defined by \eqref{alpha_push_block_solution}
	or \eqref{beta_push_block_solution} 
	the (\emph{discrete time})
	\emph{push-block dynamics}
	on $q$-Whittaker processes
	adding a specialization $(\al)$
	or $(\hat\be)$, respectively. 
	About the 
	name see \S \ref{sub:push_block_dynamics_schur}
	below.
	We denote these dynamics by 
	$\Qapb$ and $\Qbpb$.
\end{definition}
The construction of push-block
dynamics can be equivalently described 
as follows. Recall the commutation relation
between the univariate dynamics $\mathscr{P}$
and the stochastic links $\La^{j}_{j-1}$
\eqref{comm_rel}. 
Then one can say that
the multivariate 
dynamics chooses $\nu$ at random according
to the distribution
of the middle signature
in a chain of Markov operators
\begin{align*}
	\la\xrightarrow[\hspace{30pt}]{\mathscr{P}^{(j)}_{\phantom{j-1}}}
	\nu\xrightarrow[\hspace{30pt}]{\La^{j}_{j-1}}
	\bar\nu,
\end{align*}
conditioned on 
the first signature $\la$
and the last signature $\bar\nu$.
Denominators in formulas \eqref{alpha_push_block_solution}
and \eqref{beta_push_block_solution} reflect this 
conditioning.

The push-block dynamics
(in the Schur case)
first appeared in \cite{BorFerr2008DF},
see also \S \ref{sub:push_block_dynamics_schur} below. 
For analogous dynamics in continuous time and space
in which the univariate dynamics is the Dyson's Brownian motion
see \cite{warren2005dyson}. 
As shown in \cite{GorinShkolnikov2012},
the latter dynamics is a diffusion limit of the discrete-space dynamics from \cite{BorFerr2008DF}.

% subsection push_block_dynamics (end)

\subsection{RSK-type dynamics} % (fold)
\label{sub:rsk_type_dynamics}

Let us now define an important subclass
of multivariate dynamics which is 
central to the present paper.

\begin{definition}\label{def:rsk}
	A multivariate sequential update dynamics $\mathscr{Q}$
	(which corresponds to conditional
	probabilities $\mathscr{U}_j(\la\to\nu\mid\bar\la\to\bar\nu)$
	satisfying \eqref{U_properties}
	and the main equations \eqref{alpha_main_equation} or
	\eqref{beta_main_equation})
	is called \emph{RSK-type} if
	\begin{align*}
		\mathscr{U}_j(\la\to\nu\mid \bar\la\to\bar\nu)
		=0
		\quad
		\text{unless $|\nu|-|\la|\ge |\bar\nu|-|\bar\la|$},
		\qquad \text{for all}\ \la,\nu\in\GT_j,\
		\bar\la,\bar\nu\in\GT_{j-1}.
	\end{align*}
\end{definition}
In the above definition, $|\bar\nu|-|\bar\la|$
is the total distance traveled by particles at level
$j-1$, and similarly $|\nu|-|\la|$
is the total distance traveled by particles at level $j$.
Informally, under an RSK-type dynamics all movement
at level $j-1$ \emph{must propagate further} to level $j$
(and, consequently, to all upper levels of the array).

By Remark \ref{rmk:one_level_dynamics},
under an RSK-type dynamics
the quantity $|\la^{(j)}|-|\la^{(j-1)}|$
(for any $j=1,\ldots,N$)
at each step of the discrete time
is increased by adding a $q$-geometric
random variable with parameter $\al a_j$
(in the case of $\Qspec{\al}$),
or a Bernoulli random variable
with parameter $\be a_j$
(in the case of $\Qspec{\hat\be}$).

\begin{remark}\label{rmk:push_block_RSK_different}
	This feature of
	RSK-type dynamics 
	separates them from the push-block dynamics of \S \ref{sub:push_block_dynamics}.
	Indeed, under a push-block dynamics
	movements at level $j-1$ generically
	\emph{do not propagate upwards}
	because
	the 
	quantities
	$\mathscr{U}_j(\la\to\nu\mid \bar\la\to\bar\nu)$
	do not depend on $\bar\la$. More precisely, the 
	only steps at level $j-1$ that can propagate to level $j$
	correspond to the situation $\bar\nu\not\prech\la$.
	Then a part of the movement $\la\to\nu$
	is \emph{mandatory}, as it is dictated by the 
	need to immediately 
	(i.e., during the same time step of the 
	multivariate dynamics)
	restore the interlacing between the levels $j-1$ and~$j$.
\end{remark}

RSK-type dynamics 
on $q$-Whittaker processes that we construct
in 
\S \ref{sec:bernoulli_}
and
\S \ref{sec:geometric_q_rsks}
give rise to discrete time $q$-TASEPs
and $q$-PushTASEPs as their Markovian marginals. 
On the other hand,
discrete time push-block dynamics
do not seem to produce any
TASEP-like processes.\footnote{The continuous 
time push-block dynamics
on $q$-Whittaker processes
has lead to the discovery of the 
continuous time 
$q$-TASEP in
\cite{BorodinCorwin2011Macdonald}.
A continuous time RSK-type dynamics on $q$-Whittaker processes
was later employed in \cite{BorodinPetrov2013NN}
to discover the continuous time $q$-PushTASEP,
a close relative of the $q$-TASEP 
(see also \S \ref{sub:small_be_continuous_time_limit} below). In
fact, $q$-PushTASEP and $q$-TASEP can be unified
to produce another nice particle system 
on $\Z$, namely, the \emph{$q$-PushASEP} (see \S \ref{sub:fredholm_determinants} below),
which also extends to a certain dynamics
on interlacing arrays \cite{CorwinPetrov2013}.}
Note also that in general the denominator in \eqref{alpha_push_block_solution}
or \eqref{beta_push_block_solution}
does not seem to be given by an explicit formula,
so the discrete
time push-block dynamics are not easy to work with
(cf.~Remark~\ref{rmk:univariate_hard}).
This provides an additional motivation for
constructing and studying RSK-type dynamics.

% subsection rsk_type_dynamics (end)

% section push_block_and_rsk_type_dynamics (end)

\section{Schur case} % (fold)
\label{sec:schur_degeneration}

In this section we discuss
the Schur ($q=0$) case,
and explain how in this case the RSK-type
multivariate dynamics are related to
the classical Robinson--Schensted--Knuth correspondences.

\subsection{Univariate dynamics in the Schur case} % (fold)
\label{sub:univariate_dynamics}

When $q=0$, 
the $q$-Whittaker polynomials $P_\la$ and $Q_\la$
reduce to the simpler Schur polynomials $s_\la$.
In particular,
we have $\psi_{\la/\mu}=\varphi_{\la/\mu}= \mathbf{1}_{\mu \prech \la}$ 
and $\psi'_{\la/\mu}=\mathbf{1}_{\mu \precv \la}$.
Univariate discrete time 
dynamics on the first level $\GT_{1}=\Z_{\ge0}$
look as in Remark \ref{rmk:one_level_dynamics}
with the understanding that 
the $q$-geometric
distribution in the case of $\Pspec{\al}$
has to be replaced 
by the usual geometric distribution
$\p_{\al a_1}(n)\vert_{q=0}=(1-\al a_1)(\al a_1)^{n}$,
$n=0,1,2,\ldots$. 
\begin{remark}
	The continuous time dynamics on $\GT_1$ 
	increasing the parameter $\ga$
	of the specialization is the usual Poisson process
	which can be obtained from either of
	the discrete time dynamics
	$\Pspec{\al}$
	or 
	$\Pspec{\hat\be}$
	in a small $\al$ or small $\be$ limit,
	respectively. In fact, this observation is also
	true in the general $q>0$ case.
\end{remark}

The univariate dynamics
$\Pspec{\al}$
and
$\Pspec{\hat\be}$
at any higher level $\GT_N$, $N=2,3,\ldots$
(described in a $q=0$ version of Proposition \ref{prop:univariate}),
can be obtained from the $N=1$ dynamics
via the \emph{Doob's $h$-transform} procedure. 
Informally, to get the dynamics
of $N$ distinct particles $(x_1>\ldots>x_N)$ on $\Z_{\ge0}$
(this state space is the same as $\GT_N$
up to a shift $x_i=\la_i+N-i$),
one should consider the 
dynamics of $N$ \emph{independent} particles $x_j$
each of which evolves according to the corresponding
$N=1$ dynamics, and then impose the \emph{condition} that 
the particles \emph{never collide}
and have relative asymptotic speeds $a_1,\ldots,a_N$,
respectively.
This conditioning gives rise to the 
presence of the factors 
$s_\nu(\vec a)/s_\la(\vec a)$
in transition probabilities (cf. Proposition \ref{prop:univariate}).
We refer to, e.g., 
\cite{konig2002non},
\cite{Konig2005}, \cite{OConnell2003}, \cite{OConnellYor2002}
for details on noncolliding dynamics.

It is worth noting that the Dyson's Brownian motion
coming from $N\times N$ GUE random matrices
\cite{dyson1962brownian}
arises via a similar procedure
by considering noncolliding Brownian particles.
One may thus think that the
univariate dynamics $\Pspec{\al}$
and
$\Pspec{\hat\be}$
on $\GT_N$
are certain discrete analogues of the Dyson's Brownian
motion.

% subsection univariate_dynamics (end)

\subsection{Push-block dynamics in the Schur case} % (fold)
\label{sub:push_block_dynamics_schur}

Setting $q=0$ greatly simplifies 
formulas \eqref{alpha_push_block_solution} and
\eqref{beta_push_block_solution} thus
leading to nice push-block multivariate dynamics on 
interlacing arrays. They were introduced and 
studied in \cite{BorFerr2008DF}. 

Due to the sequential nature of 
multivariate dynamics \eqref{Q_sequential_update},
we will consider evolution at 
consecutive levels $j-1$ and $j$.
Assuming that the movement $\bar\la\to\bar\nu$
at level $j-1$ and the old configuration $\la$
at level $j$
are given, we will describe the probability
distribution of $\nu\in\GT_j$ corresponding to 
$\mathscr{U}_j(\la\to\nu\mid \bar\la\to\bar\nu)$.

\begin{figure}[htbp]
	\begin{adjustbox}{max height=.17\textwidth}
	\begin{tikzpicture}
		[scale=.7, thick]
		\def\cir{.2}
		\def\ysh{6}
		\def\x{1.8}
		\def\y{2.2}
		\draw[fill] (0,0) circle(\cir) node [below,yshift=-\ysh] {$1$};
		\draw[fill] (2*\x,0) circle(\cir) node [below,yshift=-\ysh] {$1$};
		\draw[fill] (4*\x,0) circle(\cir) node [below,yshift=-\ysh] {$2+{\color{blue}1}$};
		\draw[fill] (6*\x,0) circle(\cir) node [below,yshift=-\ysh] {$4+{\color{blue}1}$};
		\draw[fill] (-1*\x,1*\y) circle(\cir) node [above,yshift=\ysh] {$0+{\color{blue}Y_3}$};
		\draw[fill] (1*\x,1*\y) circle(\cir) node [above,yshift=\ysh] {$1+{\color{blue}0}$};
		\draw[fill] (3*\x,1*\y) circle(\cir) node [above,yshift=\ysh] {$2+{\color{blue}Y_2}$};
		\draw[fill] (5*\x,1*\y) circle(\cir) node [above,yshift=\ysh] {$2+{\color{blue}1}$};
		\draw[fill] (7*\x,1*\y) circle(\cir) node [above,yshift=\ysh] {$7+{\color{blue}Y_1}$};
		\node at (9.5*\x,0) {$\bar\la+({\color{blue}\bar\nu-\bar\la})$};
		\node at (9.5*\x,\y) {$\la+({\color{blue}\nu-\la})$};
		\node (jb) at (4*\x,0) {};
		\node (pa) at (5*\x,\y) {};
		\node (sb) at (2*\x,0) {};
		\node (ba) at (1*\x,\y) {};
		\draw[->,very thick] (jb) -- (pa)
		node[rectangle,draw=black,fill=gray!20!white] [midway] {$+1$};
		\draw[very thick, dotted] (sb) -- (ba)
		node[rectangle,draw=black,fill=gray!20!white] [midway] {block};
	\end{tikzpicture}
	\end{adjustbox}
	\caption{An example of a step of 
	$\Qbpb$
	at levels $4$ and $5$. Here
	$\la=(7,2,2,1,0)$,
	$\bar\la=(4,2,1,1)$, and
	$\bar\nu=(5,3,1,1)$.
	The move $\la_2=2\to\nu_2=2+1$ on the 
	upper level is dictated by the corresponding
	move $\bar\la_2=2\to\bar\nu_2=2+1$
	on the lower level (due to the short-range pushing mechanism),
	so no further move of $\nu_2$ is possible.
	The particle $\la_4=1$ cannot move 
	because it is blocked by $\bar\nu_3=\la_4$.
	All other particles are free to move (including $\la_3$ which was
	blocked before the movement at the lower level), and their
	jumps 
	$Y_1,Y_2,Y_3$ are independent identically distributed 
	Bernoulli random variables 
	with $P(Y_1=0)=1/(1+\be a_j)$.}
	\label{fig:beta_schur}
\end{figure}
Let us first focus on the case of
$\Qbpb$ 
(see Fig.~\ref{fig:beta_schur}).\footnote{To simplify
pictures, here and below we will display 
interlacing arrays of integers (cf. Fig.~\ref{fig:array}), 
but will still 
speak about particles jumping to the right.} In this case \eqref{beta_push_block_solution} simplifies to 
\begin{align*}
\mathscr{U}_j
	(\la\to\nu\mid \bar\nu)=
	\frac{(\be a_j)^{|\nu|}
	\mathbf{1}_{\bar\nu \prech \nu}\mathbf{1}_{\la \precv \nu}}
	{\sum_{\varkappa\in\GT_j}
	(\be a_j)^{|\varkappa|}
	\mathbf{1}_{\bar\nu \prech \varkappa}\mathbf{1}_{\la \precv \varkappa}}, 
\end{align*}
i.e. for any $\nu', \nu'' \in \GT_j$, such that $\bar\nu \prech \nu', \la \precv \nu'$ and $\bar\nu \prech \nu'', \la \precv \nu''$, 
\begin{align*}
\frac{\mathscr{U}_j
	(\la\to\nu'\mid \bar\nu)}{\mathscr{U}_j
	(\la\to\nu''\mid \bar\nu)}= (\be a_j)^{|\nu'|-|\nu''|}.
\end{align*}
It is clear that the only dynamics with such property fits the following description.
During one step of the dynamics, each particle
$\la_i$, $1\le i\le j$,
can either stay, or jump to the right by one, according to the rules:
\begin{enumerate}
	\item (\emph{short-range pushing}) If $\bar\nu_i=\la_i+1$, then the move $\la_i\to\nu_i=\la_i+1$
	is mandatory to restore the interlacing 
	(which was broken by the move $\bar\la\to\bar\nu$)
	during the 
	same step of the discrete time.
	\item (\emph{blocking}) If $\la_i=\bar\nu_{i-1}$, then the 
	particle $\la_i$ is blocked and must stay, i.e., 
	$\nu_i$ is forced to be equal to $\la_i$.
	\item (\emph{independent jumps})
	All other particles
	$\la_i$ which are neither pushed nor blocked,
	jump to the right by $0$ or $1$ according
	to an independent Bernoulli random variable
	with probability of staying $1/(1+\be a_j)$.
\end{enumerate}

\begin{figure}[htbp]
	\begin{adjustbox}{max height=.17\textwidth}
	\begin{tikzpicture}
		[scale=.7, thick]
		\def\cir{.2}
		\def\ysh{6}
		\def\x{1.8}
		\def\y{2.2}
		\draw[fill] (0,0) circle(\cir) node [below,yshift=-\ysh] {$1$};
		\draw[fill] (2*\x,0) circle(\cir) node [below,yshift=-\ysh] {$1$};
		\draw[fill] (4*\x,0) circle(\cir) node [below,yshift=-\ysh] {$2+{\color{blue}2}$};
		\draw[fill] (6*\x,0) circle(\cir) node [below,yshift=-\ysh] {$4+{\color{blue}4}$};
		\draw[fill] (-1*\x,1*\y) circle(\cir) node [above,yshift=\ysh] {$0+{\color{blue}Y_4}$};
		\draw[fill] (1*\x,1*\y) circle(\cir) node [above,yshift=\ysh] {$1+{\color{blue}0}$};
		\draw[fill] (3*\x,1*\y) circle(\cir) node [above,yshift=\ysh] {$2+{\color{blue}Y_3}$};
		\draw[fill] (5*\x,1*\y) circle(\cir) node [above,yshift=\ysh-1.9] {$2+{\color{blue}2+Y_2}$};
		\draw[fill] (7*\x,1*\y) circle(\cir) node [above,yshift=\ysh-1.9] {$7+{\color{blue}1+Y_1}$};
		\node at (9.5*\x,0) {$\bar\la+({\color{blue}\bar\nu-\bar\la})$};
		\node at (9.5*\x,\y) {$\la+({\color{blue}\nu-\la})$};
		\node (jb) at (4*\x,0) {};
		\node (pa) at (5*\x,\y) {};
		\node (jb1) at (6*\x,0) {};
		\node (pa1) at (7*\x,\y) {};
		\node (sb) at (2*\x,0) {};
		\node (ba) at (1*\x,\y) {};
		\draw[->,very thick] (jb) -- (pa)
		node[rectangle,draw=black,fill=gray!20!white] [midway] {$+2$};
		\draw[->,very thick] (jb1) -- (pa1)
		node[rectangle,draw=black,fill=gray!20!white] [midway] {$+1$};
		\draw[very thick, dotted] (sb) -- (ba)
		node[rectangle,draw=black,fill=gray!20!white] [midway] {block};
	\end{tikzpicture}
	\end{adjustbox}
	\caption{An example of a step of 
	$\Qapb$ at levels 4 and 5.
	The move $\bar\la_1=4\to\bar\nu_1=4+4$ forces $\la_1$ to move 
	to the right by 1, and similarly the move
	$\bar\la_2=2\to\bar\nu_2=2+2$ forces $\la_2$
	to move to the right by 2 (short-range pushing); note that these
	forced moves do not exhaust all possible
	distance traveled
	by $\la_1$ or $\la_2$.
	The particle $\la_4=1$ is blocked by $\bar\nu_3=\la_4$
	and thus cannot move. All other parts of the movement $\la\to\nu$
	are determined by independent identically distributed
	geometric random variables $Y_i$, $1\le i\le 4$
	with parameter $\al a_j$, where each variable 
	is conditioned
	to stay in the maximal interval  
	not breaking the interlacing: $Y_1\ge0$ (i.e., no conditioning), 
	$0\le Y_2\le 4$, $0\le Y_3\le 2$, $0\le Y_4\le 1$.}
	\label{fig:alpha_schur}
\end{figure}

By the same explanation the dynamics
$\Qapb$ 
at two consecutive levels
looks as follows (see Fig.~\ref{fig:alpha_schur}). Each 
particle $\la_i$, $1\le i\le j$, independently
jumps to the 
right by a random distance which 
has the geometric distribution with parameter $\al a_j$
conditioned to stay in the interval from $(\bar\nu_i-\la_i)_{+}:=
\max\{0,\bar\nu_i-\la_i\}$ to $\bar\nu_{i-1}-\la_i$
(with the agreement that $\la_0=+\infty$).\footnote{Due to the 
memorylessness of the geometric distribution, 
this description is equivalent
to what is illustrated on 
Fig.~\ref{fig:alpha_schur}.}
This conditioning corresponds to
the denominator in \eqref{alpha_push_block_solution}.

%% REMARK: 
%% Maybe some of the jumping q-distributions will be 
%% conditioned q-geometric distributions?

% subsection push_block_dynamics (end)

\subsection{RSK-type dynamics in the Schur case} % (fold)
\label{sub:rsk_type_dynamics_schur}

Let us now discuss four discrete time multivariate
RSK-type dynamics
$\Qarow$, $\Qbrow$,
$\Qacol$, $\Qbcol$
on Schur processes. 
The former two dynamics arise from the 
\emph{row RSK algorithm}\footnote{The row RSK 
is the most classical version of the Robinson--Schensted--Knuth algorithm.}
applied to geometric or Bernoulli random input,
respectively (cf. Remark \ref{rmk:random_input} below).
Similarly, the latter two dynamics
correspond to the \emph{column RSK algorithm}
applied to the same random inputs.
We refer to 
\cite{Knuth1970},
\cite{fulton1997young}, 
\cite{Stanley1999}
for relevant background and details on RSK correspondences
(though
descriptions of dynamics below in this subsection
may serve as equivalent
definitions of RSK algorithms).
See also \cite[\S7]{BorodinPetrov2013NN}
for a ``dictionary'' between interlacing arrays
and semistandard Young tableaux viewpoints.

Let us first recall two elementary operations
of deterministic long-range pulling and pushing
from \cite{BorodinPetrov2013NN}
(in the language 
of semistandard Young tableaux
they
correspond to row and column bumping, respectively).

\begin{definition}(Deterministic long-range pulling, Fig.~\ref{fig:pulling})
	\label{def:pull}
	Let $j=2,\ldots,N$, 
	and signatures 
	$\bar\la,\bar\nu\in\GT_{j-1}$, 
	$\la\in\GT_{j}$ satisfy $\bar\la\prech \la$, 
	$\bar\nu=\bar\la+\bar{\mathrm{e}}_{i}$,
	where $\bar{\mathrm{e}}_{i}=(0,0,\ldots,0,1,0,\ldots,0)$ (for some $i=1,\ldots,j-1$) 
	is the $i$th basis vector of length $j-1$.
	Define $\nu\in\GT_j$ to be
	\begin{align*}
		\nu=\mathsf{pull}(\la\mid \bar\la\to\bar\nu):=
		\begin{cases}
			\la+\mathrm{e}_{i},&\text{if $\bar\la_{i}=\la_i$};\\
			\la+\mathrm{e}_{i+1},&\text{otherwise}.
		\end{cases}
	\end{align*}
	Here $\mathrm{e}_{i}$ and $\mathrm{e}_{i+1}$
	are basis vectors of length $j$.

	In words, the particle $\bar\la_i$
	at level $j-1$  which moved to the right by one
	generically pulls its upper left neighbor $\la_{i+1}$, or 
	pushes it upper right neighbor $\la_i$
	if the latter operation is needed to preserve the interlacing.
	Note that the long-range pulling mechanism
	does not encounter any blocking issues.
\end{definition}
\begin{figure}[htbp]
	\begin{tabular}{cc}
	\framebox{\begin{adjustbox}{max height=.16\textwidth}\begin{tikzpicture}
		[scale=.7, thick]
		\def\cir{.2}
		\def\ysh{6}
		\def\x{1.8}
		\def\y{2.2}
		\draw[fill] (4*\x,0) circle(\cir) node [below,yshift=-\ysh] {$2+{\color{blue}1}$};
		\draw[fill] (6*\x,0) circle(\cir) node [below,yshift=-\ysh] {$4$};
		\draw[fill] (3*\x,1*\y) circle(\cir) node [above,yshift=\ysh] {$1$};
		\draw[fill] (5*\x,1*\y) circle(\cir) node [above,yshift=\ysh] {$2+{\color{blue}1}$};
		\draw[fill] (7*\x,1*\y) circle(\cir) node [above,yshift=\ysh] {$7$};
		\node (jb) at (4*\x,0) {};
		\node (pa) at (5*\x,\y) {};
		\draw[->,very thick] (jb) -- (pa)
		node[rectangle,draw=black,fill=gray!20!white] [midway] {$+1$};
	\end{tikzpicture}
	\end{adjustbox}}
	\hspace{70pt}
	&
	\framebox{\begin{adjustbox}{max height=.16\textwidth}\begin{tikzpicture}
		[scale=.7, thick]
		\def\cir{.2}
		\def\ysh{6}
		\def\x{1.8}
		\def\y{2.2}
		\draw[fill] (4*\x,0) circle(\cir) node [below,yshift=-\ysh] {$2+{\color{blue}1}$};
		\draw[fill] (6*\x,0) circle(\cir) node [below,yshift=-\ysh] {$4$};
		\draw[fill] (3*\x,1*\y) circle(\cir) node [above,yshift=\ysh] {$1+{\color{blue}1}$};
		\draw[fill] (5*\x,1*\y) circle(\cir) node [above,yshift=\ysh] 
		{$3$};
		\draw[fill] (7*\x,1*\y) circle(\cir) node [above,yshift=\ysh] {$7$};
		\node (jb) at (4*\x,0) {};
		\node (pa) at (3*\x,\y) {};
		\draw[->,very thick] (jb) -- (pa)
		node[rectangle,draw=black,fill=gray!20!white] [midway] {$+1$};
	\end{tikzpicture}\end{adjustbox}}
	\end{tabular}
	\caption{An example of pulling mechanism
	for $i=2$
	at levels 2 and 3 (i.e., $j=3$).
	Left: $\bar\la_2=\la_2$,
	which forces the pushing of the upper right neighbor.
	Right: in the generic
	situation $\bar\la_2<\la_2$ the upper left neighbor is pulled.}
	\label{fig:pulling}
\end{figure}
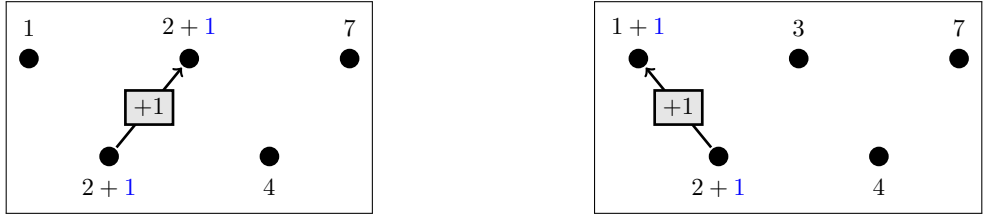

\begin{definition}(Deterministic long-range pushing, Fig.~\ref{fig:pushing})
	\label{def:push}
	As in the previous definition, 
	let
	$j=2,\ldots,N$, 
	$\bar\la,\bar\nu\in\GT_{j-1}$, 
	$\la\in\GT_{j}$ be such that $\bar\la\prech \la$
	and
	$\bar\nu=\bar\la+\bar{\mathrm{e}}_{i}$.
	Define $\nu\in\GT_j$ to be
	\begin{align*}
		\nu=\mathsf{push}(\la\mid \bar\la\to\bar\nu):=
		\la+\mathrm{e}_{m},
		\qquad
		\text{where
		$m=\max\{p\colon 
		1\le p\le i\text{ and }\la_{p}<\bar\la_{p-1}\}$}.
	\end{align*}

	In words, the particle $\bar\la_i$ 
	at level $j-1$
	which moved to the right by one,
	pushes its first upper right neighbor $\la_m$
	which is not blocked (and therefore is free to move
	without violating the interlacing).
	Generically (when all particles are sufficiently
	far apart) $\la_m=\la_i$, so the immediate upper right
	neighbor is pushed.
\end{definition}
\begin{remark}[Move donation]\label{rmk:move_donation}
	It is useful to equivalently 
	interpret the mechanism of Definition
	\ref{def:push} in a slightly different way.
	Namely, let us say that 
	when the particle $\bar\la_i$
	at level $j-1$ moves, it gives the 
	particle $\la_i$ at level $j$ a \emph{moving impulse}.
	If $\la_i$ is blocked (i.e., if $\la_i=\bar\la_{i-1}$),
	this moving impulse is \emph{donated} to the next
	particle $\la_{i-1}$ to the right of $\la_i$. If $\la_{i-1}$
	is blocked, too, then the impulse is donated further, and so on.
	Note that the particle $\la_{1}$ cannot be blocked, so
	this moving impulse will always result in an actual move.
\end{remark}

\begin{figure}[htbp]
	\begin{adjustbox}{max height=.17\textwidth}
	\begin{tikzpicture}
		[scale=.7, thick]
		\def\cir{.2}
		\def\ysh{6}
		\def\x{1.8}
		\def\y{2.2}
		\draw[fill] (0,0) circle(\cir) node [below,yshift=-\ysh] {$1$};
		\draw[fill] (2*\x,0) circle(\cir) node [below,yshift=-\ysh] {$2+
		{\color{blue}1}$};
		\draw[fill] (4*\x,0) circle(\cir) node [below,yshift=-\ysh] {$4$};
		\draw[fill] (6*\x,0) circle(\cir) node [below,yshift=-\ysh] {$6$};
		\draw[fill] (-1*\x,1*\y) circle(\cir) node [above,yshift=\ysh] 
		{$0$};
		\draw[fill] (1*\x,1*\y) circle(\cir) node [above,yshift=\ysh] 
		{$1$};
		\draw[fill] (3*\x,1*\y) circle(\cir) node [above,yshift=\ysh] 
		{$4$};
		\draw[fill] (5*\x,1*\y) circle(\cir) node [above,yshift=\ysh-1.9] {$6$};
		\draw[fill] (7*\x,1*\y) circle(\cir) node [above,yshift=\ysh-1.9] {$7+{\color{blue}1}$};
		\node at (10*\x,0) {$\bar\la+({\color{blue}\bar\nu-\bar\la})$};
		\node at (10*\x,\y) {$\la+({\color{blue}\nu-\la})$};
		\node (sb) at (4*\x,0) {};
		\node (ba) at (3*\x,\y) {};
		\node (sb1) at (6*\x,0) {};
		\node (ba1) at (5*\x,\y) {};
		\node (la1) at (7*\x,\y) {};
		\node (lab3) at (2*\x,0) {};
		\draw[->,very thick] (lab3) .. controls 
		(4*\x,-\y/3)
		and
		(5*\x,4/3*\y)
		.. (la1) 
		node[rectangle,draw=black,fill=gray!20!white] [midway] {$+1$};
		\draw[very thick,dotted] (sb) -- (ba)
		node[rectangle,draw=black,fill=gray!20!white] [midway] {block};
		\draw[very thick,dotted] (sb1) -- (ba1)
		node[rectangle,draw=black,fill=gray!20!white] [midway] {block};
	\end{tikzpicture}
	\end{adjustbox}
	\caption{An example of pushing mechanism
	for $i=3$ at levels 4 and 5 
	(i.e., $j=5$). Since the particles
	$\la_3=\bar\la_2$ and $\la_2=\bar\la_1$
	are blocked, the first particle 
	which can be pushed is $\la_1$.}
	\label{fig:pushing}
\end{figure}
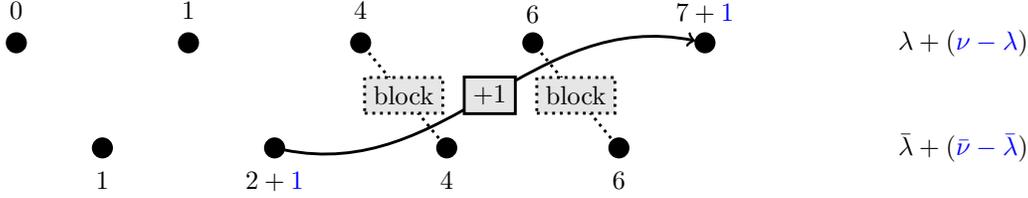

Let us now describe four RSK-type dynamics
on Schur processes. Under each of the dynamics
the interlacing array $\lab$
is updated sequentially (cf. \eqref{Q_sequential_update})
at each level $j=1,\ldots,N$. 
At each step of the discrete time corresponding to 
an update $\lab\to\nub$,
new randomness is introduced
via $N$ independent random variables $V_1,\ldots,V_N$,
which are either
geometric random variables (belonging to $\Z_{\ge0}$)
with parameters $\al a_1,\ldots, \al a_N$
in the case of $\Qarow$ and
$\Qacol$,
or
Bernoulli random variables~$\in\{0,1\}$
with parameters $\be a_1,\ldots,\be a_N$
in the case of 
$\Qbrow$ and
$\Qbcol$.
These random variables are resampled during each time step. 

\begin{remark}\label{rmk:random_input}
	We see that 
	all randomness in each of the four RSK-type dynamics
	can be organized into a matrix $(V^{(t)}_{j})_{1\le j\le N,\; t=1,2,\ldots}$
	(with appropriate distribution of the $V^{(t)}_{j}$'s).
	Such matrices containing nonnegative integers 
	are usually thought of as inputs 
	for classical Robinson--Schensted--Knuth correspondences.
\end{remark}

Under each of the four dynamics, the particle
at the first level of the array
is updated as $\nu^{(1)}_{1}=\la^{(1)}_{1}+V_1$.
Then, for each 
$j=2,\ldots,N$,
assume that we are given signatures
$\bar\la,\bar\nu\in\GT_{j-1}$, 
$\la\in\GT_{j}$
satisfying relations as on Fig.~\ref{fig:square}
(note that these relations depend on the type ($\al$) or $(\hat\be)$ of the dynamics). 
Let us represent the movement $\bar\la\to\bar\nu$
at level $j-1$ as
\begin{align*}
	\bar\nu-\bar\la=\sum_{i=1}^{j-1}c_i \bar{\mathrm{e}}_{i},
	\qquad
	\begin{cases}
		c_i\in\Z_{\ge0}&\text{in the case of $\Qarow$ and
		$\Qacol$};\\
		c_i\in\{0,1\}
		&
		\text{in the case of 
		$\Qbrow$ and
		$\Qbcol$}
	\end{cases}
\end{align*}
(recall that $\bar{\mathrm{e}}_{i}$ is the $i$th basis vector of length $j-1$).
Also denote $|c|:=\sum_{i=1}^{j-1}c_i$.

Depending on the dynamics, we will construct the signature $\nu\in\GT_j$  
(which also fits into relations on Fig.~\ref{fig:square})
as follows:
\medskip

$\bullet$ ($\Qarow$, Fig.~\ref{fig:QrRSK_alpha})
	First, do $|c|$ operations $\mathsf{pull}$ (Definition \ref{def:pull})
	in order \emph{from left to right},
	starting from position $j-1$ all the way up to position $1$.
	In more detail, let $\mu(j-1,0):=\la$ and
	for $p=1,\ldots,c_{j-1}$ let
	\begin{align*}
		\mu(j-1,p):=\mathsf{pull}(\mu(j-1,p-1)\mid
		\bar\la+(p-1)\,\bar{\mathrm{e}}_{j-1}
		\to\bar\la+p\,\bar{\mathrm{e}}_{j-1}),
	\end{align*}
	then let $\mu(j-2,0):=\mu(j-1,c_{j-1})$ and 
	for $p=1,\ldots,c_{j-2}$ let
	\begin{align*}
		\mu(j-2,p):=\mathsf{pull}(\mu(j-2,p-1)\mid
		\bar\la+c_{j-1}\bar{\mathrm{e}}_{j-1}+
		(p-1)\,\bar{\mathrm{e}}_{j-2}
		\to\bar\la+c_{j-1}\bar{\mathrm{e}}_{j-1}
		+p\,\bar{\mathrm{e}}_{j-2}),
	\end{align*}
	etc., 
	all the way up to 
	$\mu(1,c_1):=
	\mathsf{pull}(\mu(1,c_1-1)\mid \bar\nu-\bar{\mathrm{e}}_{1}\to\bar\nu)$.
	(Clearly, if some $c_i=0$, then 
	the steps corresponding to $\mu(i,\cdot)$ should be omitted.)
	
	After these $|c|$ operations, 
	define $\nu:=\mu(1,c_1)+V_j\mathrm{e}_{1}$.
	That is, let the rightmost particle at level $j$
	jump to the right by $V_j$ (which is a geometric random
	variable with parameter~$\al a_j$).

\begin{figure}[htbp]
	\begin{adjustbox}{max height=.16\textwidth}
	\begin{tikzpicture}
		[scale=.7, thick]
		\def\cir{.2}
		\def\ysh{6}
		\def\x{1.8}
		\def\y{2.2}
		\draw[fill] (0,0) circle(\cir) node [below,yshift=-\ysh] {$3+{\color{blue}2}$};
		\draw[fill] (2*\x,0) circle(\cir) node [below,yshift=-\ysh] {$6$};
		\draw[fill] (4*\x,0) circle(\cir) node [below,yshift=-\ysh] {$6+{\color{blue}1}$};
		\draw[fill] (6*\x,0) circle(\cir) node [below,yshift=-\ysh] {$7+{\color{blue}3}$};
		\draw[fill] (-1*\x,1*\y) circle(\cir) node [above,yshift=\ysh] {$1+{\color{blue}1}$};
		\draw[fill] (1*\x,1*\y) circle(\cir) node [above,yshift=\ysh] {$4+{\color{blue}1}$};
		\draw[fill] (3*\x,1*\y) circle(\cir) node [above,yshift=\ysh] {$6$};
		\draw[fill] (5*\x,1*\y) circle(\cir) node [above,yshift=\ysh] {$6+{\color{blue}3}$};
		\draw[fill] (7*\x,1*\y) circle(\cir) node [above,yshift=-2.2+\ysh] {$9+{\color{blue}1+V_j}$};
		\node at (9.5*\x,0) {$\bar\la+({\color{blue}\bar\nu-\bar\la})$};
		\node at (9.5*\x,\y) {$\la+({\color{blue}\nu-\la})$};
		\node (lab4) at (0*\x,0) {};
		\node (lab3) at (2*\x,0) {};
		\node (lab2) at (4*\x,0) {};
		\node (lab1) at (6*\x,0) {};
		\node (la5) at (-1*\x,\y) {};
		\node (la4) at (1*\x,\y) {};
		\node (la3) at (3*\x,\y) {};
		\node (la2) at (5*\x,\y) {};
		\node (la1) at (7*\x,\y) {};
		\draw[->,very thick] (lab4) -- (la5) node[rectangle,draw=black,fill=gray!20!white] [midway] {$+1$};
		\draw[->,very thick] (lab4) -- (la4) node[rectangle,draw=black,fill=gray!20!white] [midway] {$+1$};
		\draw[->,very thick] (lab2) -- (la2) node[rectangle,draw=black,fill=gray!20!white] [midway] {$+1$};
		\draw[->,very thick] (lab1) -- (la2) node[rectangle,draw=black,fill=gray!20!white] [midway] {$+2$};
		\draw[->,very thick] (lab1) -- (la1) node[rectangle,draw=black,fill=gray!20!white] [midway] {$+1$};
		% \draw[very thick, dotted] (sb) -- (ba)
		% node[rectangle,draw=black,fill=gray!20!white] [midway] {block};
	\end{tikzpicture}
	\end{adjustbox}
	\caption{An example of a step of $\Qarow$
	at levels 4 and 5.
	Propagation steps (represented by numbers on arrows)
	are performed from left to right, according
	to $\mathsf{pull}$ operation. After that,
	the rightmost particle at level $j$
	jumps to the right by $V_j$.}
	\label{fig:QrRSK_alpha}
\end{figure}
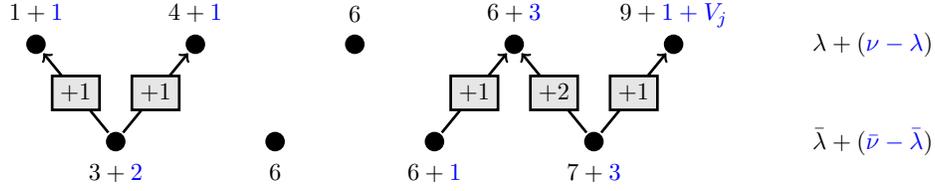

$\bullet$ ($\Qbrow$, Fig.~\ref{fig:QrRSK_beta})
	First, define $\mu(1,0):=\la+ V_j \mathrm{e}_1$.
	That is, let the rightmost particle 
	at level $j$ jump to the right by $V_j$
	(which is a Bernoulli random
	variable with parameter~$\be a_j$).

	After that,
	perform $|c|$ operations $\mathsf{pull}$
	(Definition \ref{def:pull})
	in order \emph{from right to left},
	starting from position $1$ all the way up to position $j-1$
	(details are analogous to the above 
	dynamics $\Qarow$).
	Then set $\nu:=\mu(j-1,c_{j-1})$.

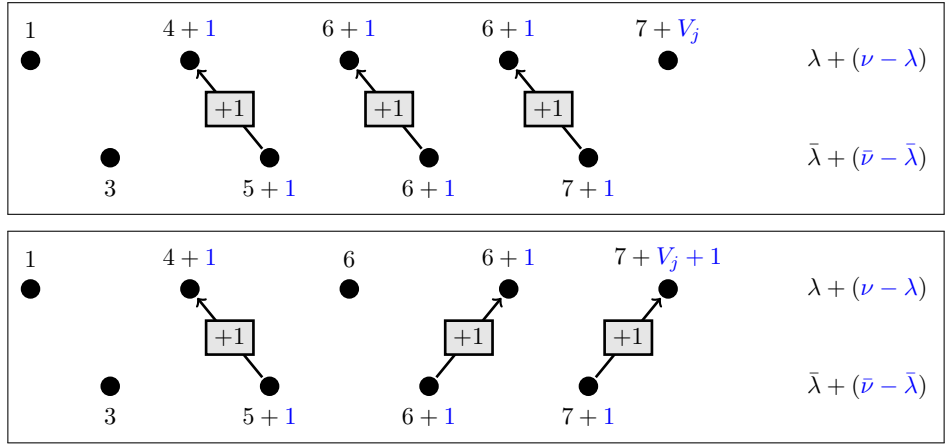
\begin{figure}[htbp]
	\framebox{\begin{adjustbox}{max height=.16\textwidth}
	\begin{tikzpicture}
		[scale=.7, thick]
		\def\cir{.2}
		\def\ysh{6}
		\def\x{1.8}
		\def\y{2.2}
		\draw[fill] (0,0) circle(\cir) node [below,yshift=-\ysh] {$3$};
		\draw[fill] (2*\x,0) circle(\cir) node [below,yshift=-\ysh] {$5+{\color{blue}1}$};
		\draw[fill] (4*\x,0) circle(\cir) node [below,yshift=-\ysh] {$6+{\color{blue}1}$};
		\draw[fill] (6*\x,0) circle(\cir) node [below,yshift=-\ysh] {$7+{\color{blue}1}$};
		\draw[fill] (-1*\x,1*\y) circle(\cir) node [above,yshift=\ysh] {$1$};
		\draw[fill] (1*\x,1*\y) circle(\cir) node [above,yshift=\ysh] {$4+{\color{blue}1}$};
		\draw[fill] (3*\x,1*\y) circle(\cir) node [above,yshift=\ysh] {$6+{\color{blue}1}$};
		\draw[fill] (5*\x,1*\y) circle(\cir) node [above,yshift=\ysh] {$6+{\color{blue}1}$};
		\draw[fill] (7*\x,1*\y) circle(\cir) node [above,yshift=-2.2+\ysh] {$7+{\color{blue}V_j}$};
		\node at (9.5*\x,0) {$\bar\la+({\color{blue}\bar\nu-\bar\la})$};
		\node at (9.5*\x,\y) {$\la+({\color{blue}\nu-\la})$};
		\node (lab4) at (0*\x,0) {};
		\node (lab3) at (2*\x,0) {};
		\node (lab2) at (4*\x,0) {};
		\node (lab1) at (6*\x,0) {};
		\node (la5) at (-1*\x,\y) {};
		\node (la4) at (1*\x,\y) {};
		\node (la3) at (3*\x,\y) {};
		\node (la2) at (5*\x,\y) {};
		\node (la1) at (7*\x,\y) {};
		\draw[->,very thick] (lab1) -- (la2) node[rectangle,draw=black,fill=gray!20!white] [midway] {$+1$};
		\draw[->,very thick] (lab2) -- (la3) node[rectangle,draw=black,fill=gray!20!white] [midway] {$+1$};
		\draw[->,very thick] (lab3) -- (la4) node[rectangle,draw=black,fill=gray!20!white] [midway] {$+1$};
		% \draw[very thick, dotted] (sb) -- (ba)
		% node[rectangle,draw=black,fill=gray!20!white] [midway] {block};
	\end{tikzpicture}
	\end{adjustbox}}
	\\
	\vspace*{5pt}
	\framebox{\begin{adjustbox}{max height=.16\textwidth}
	\begin{tikzpicture}
		[scale=.7, thick]
		\def\cir{.2}
		\def\ysh{6}
		\def\x{1.8}
		\def\y{2.2}
		\draw[fill] (0,0) circle(\cir) node [below,yshift=-\ysh] {$3$};
		\draw[fill] (2*\x,0) circle(\cir) node [below,yshift=-\ysh] {$5+{\color{blue}1}$};
		\draw[fill] (4*\x,0) circle(\cir) node [below,yshift=-\ysh] {$6+{\color{blue}1}$};
		\draw[fill] (6*\x,0) circle(\cir) node [below,yshift=-\ysh] {$7+{\color{blue}1}$};
		\draw[fill] (-1*\x,1*\y) circle(\cir) node [above,yshift=\ysh] {$1$};
		\draw[fill] (1*\x,1*\y) circle(\cir) node [above,yshift=\ysh] {$4+{\color{blue}1}$};
		\draw[fill] (3*\x,1*\y) circle(\cir) node [above,yshift=\ysh] {$6$};
		\draw[fill] (5*\x,1*\y) circle(\cir) node [above,yshift=\ysh] {$6+{\color{blue}1}$};
		\draw[fill] (7*\x,1*\y) circle(\cir) node [above,yshift=-2.2+\ysh] {$7+{\color{blue}V_j+1}$};
		\node at (9.5*\x,0) {$\bar\la+({\color{blue}\bar\nu-\bar\la})$};
		\node at (9.5*\x,\y) {$\la+({\color{blue}\nu-\la})$};
		\node (lab4) at (0*\x,0) {};
		\node (lab3) at (2*\x,0) {};
		\node (lab2) at (4*\x,0) {};
		\node (lab1) at (6*\x,0) {};
		\node (la5) at (-1*\x,\y) {};
		\node (la4) at (1*\x,\y) {};
		\node (la3) at (3*\x,\y) {};
		\node (la2) at (5*\x,\y) {};
		\node (la1) at (7*\x,\y) {};
		\draw[->,very thick] (lab1) -- (la1) node[rectangle,draw=black,fill=gray!20!white] [midway] {$+1$};
		\draw[->,very thick] (lab2) -- (la2) node[rectangle,draw=black,fill=gray!20!white] [midway] {$+1$};
		\draw[->,very thick] (lab3) -- (la4) node[rectangle,draw=black,fill=gray!20!white] [midway] {$+1$};
		% \draw[very thick, dotted] (sb) -- (ba)
		% node[rectangle,draw=black,fill=gray!20!white] [midway] {block};
	\end{tikzpicture}
	\end{adjustbox}}
	\caption{An example of a step of 
	$\Qbrow$ at levels 4 and 5.
	Propagation steps
	are performed from right to left, according
	to $\mathsf{pull}$ operation. 
	Above: $V_j=1$, below: $V_j=0$.}
	\label{fig:QrRSK_beta}
\end{figure}

$\bullet$ ($\Qacol$, Fig.~\ref{fig:QcRSK_alpha})
	First, the leftmost particle $\la_j$ at level $j$
	receives $V_j$ moving impulses (here $V_j$
	is a geometric random variable with parameter $\al a_j$).
	Each moving impulse means that
	$\la_j$ tries to jump to the right by one, and
	if it is blocked (i.e., if $\la_{j}=\bar\la_{j-1}$),
	then the moving impulse is donated to $\la_{j-1}$, etc.
	(see Remark
	\ref{rmk:move_donation}).
	Denote 
	the signature at level $j$ 
	arising after these $V_j$ moving impulses
	by $\mu(j-1,0)$.

	After that, perform $|c|$ operations $\mathsf{push}$
	(Definition \ref{def:push}),
	in order \emph{from left to right},
	starting from position $j-1$
	all the way up to position $1$
	(details are analogous to the above).
	Then we set $\nu:=\mu(1,c_1)$.

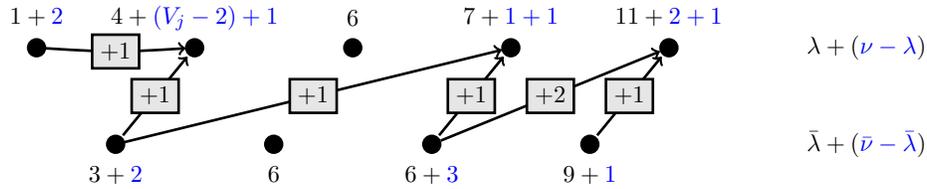
\begin{figure}[htbp]
	\begin{adjustbox}{max height=.16\textwidth}
	\begin{tikzpicture}
		[scale=.7, thick]
		\def\cir{.2}
		\def\ysh{6}
		\def\x{1.8}
		\def\y{2.2}
		\draw[fill] (0,0) circle(\cir) node [below,yshift=-\ysh] {$3+{\color{blue}2}$};
		\draw[fill] (2*\x,0) circle(\cir) node [below,yshift=-\ysh] {$6$};
		\draw[fill] (4*\x,0) circle(\cir) node [below,yshift=-\ysh] {$6+{\color{blue}3}$};
		\draw[fill] (6*\x,0) circle(\cir) node [below,yshift=-\ysh] {$9+{\color{blue}1}$};
		\draw[fill] (-1*\x,1*\y) circle(\cir) node [above,yshift=\ysh] {$1+{\color{blue}2}$};
		\draw[fill] (1*\x,1*\y) circle(\cir) node [above,yshift=-2.2+\ysh] {$4+{\color{blue}(V_j-2)+1}$};
		\draw[fill] (3*\x,1*\y) circle(\cir) node [above,yshift=\ysh] {$6$};
		\draw[fill] (5*\x,1*\y) circle(\cir) node [above,yshift=\ysh] {$7+{\color{blue}1+1}$};
		\draw[fill] (7*\x,1*\y) circle(\cir) node [above,yshift=\ysh] {$11+{\color{blue}2+1}$};
		\node at (9.5*\x,0) {$\bar\la+({\color{blue}\bar\nu-\bar\la})$};
		\node at (9.5*\x,\y) {$\la+({\color{blue}\nu-\la})$};
		\node (lab4) at (0*\x,0) {};
		\node (lab3) at (2*\x,0) {};
		\node (lab2) at (4*\x,0) {};
		\node (lab1) at (6*\x,0) {};
		\node (la5) at (-1*\x,\y) {};
		\node (la4) at (1*\x,\y) {};
		\node (la3) at (3*\x,\y) {};
		\node (la2) at (5*\x,\y) {};
		\node (la1) at (7*\x,\y) {};
		\draw[->,very thick] (la5) .. controls 
		(0*\x,.95*\y)
		.. (la4) node[rectangle,draw=black,fill=gray!20!white] [midway] {$+1$};
		\draw[->,very thick] (lab4) -- (la4) node[rectangle,draw=black,fill=gray!20!white] [midway] {$+1$};
		\draw[->,very thick] (lab4) -- (la2) node[rectangle,draw=black,fill=gray!20!white] [midway] {$+1$};
		\draw[->,very thick] (lab2) -- (la2) node[rectangle,draw=black,fill=gray!20!white] [midway] {$+1$};
		\draw[->,very thick] (lab2) -- (la1) node[rectangle,draw=black,fill=gray!20!white] [midway] {$+2$};
		\draw[->,very thick] (lab1) -- (la1) node[rectangle,draw=black,fill=gray!20!white] [midway] {$+1$};
		% \draw[very thick, dotted] (sb) -- (ba)
		% node[rectangle,draw=black,fill=gray!20!white] [midway] {block};
	\end{tikzpicture}
	\end{adjustbox}
	\caption{An example of a step of $\Qacol$ at levels 4 and 5.
	We have $V_j=3$, which means that initially
	the particle $\la_5$ jumps to the right by 2 and the 
	particle $\la_4$ jumps by 1 (because of
	move donation).
	After that, propagation steps
	are performed from left to right, according
	to $\mathsf{push}$ operation.}
	\label{fig:QcRSK_alpha}
\end{figure}

$\bullet$ ($\Qbcol$, Fig.~\ref{fig:QcRSK_beta})
	First, perform $|c|$ operations
	$\mathsf{push}$
	(Definition \ref{def:push}),
	in order \emph{from right to left},
	starting from position $1$
	all the way up to position $j-1$
	(details are analogous to what is done above).
	Let $\mu(j-1,c_{j-1})$ be the signature at
	level $j$ arising after these $|c|$ operations.

	After that, let the leftmost particle
	at level $j$ receives $V_j$ moving impulses
	(here $V_j$ is a Bernoulli random variable
	with parameter $\be a_j$). That is, if $V_j=0$,
	then set $\nu:=\mu(j-1,c_{j-1})$.
	Otherwise, if $V_j=1$, 
	the $j$th particle at level $j$ tries to 
	jump to the right by one. If it is blocked,
	the impulse is donated to the $(j-1)$th
	particle at level $j$, etc. 
	In this case, denote by $\nu$
	the signature at level $j$
	arising after this moving impulse.

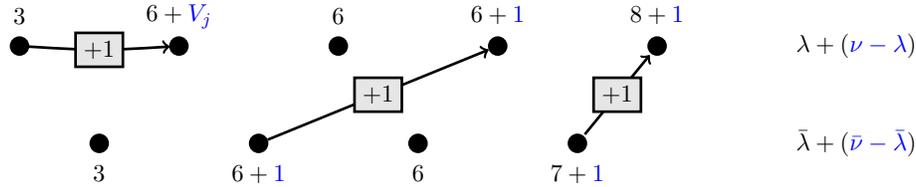
\begin{figure}[htbp]
	\begin{adjustbox}{max height=.16\textwidth}
	\begin{tikzpicture}
		[scale=.7, thick]
		\def\cir{.2}
		\def\ysh{6}
		\def\x{1.8}
		\def\y{2.2}
		\draw[fill] (0,0) circle(\cir) node [below,yshift=-\ysh] {$3$};
		\draw[fill] (2*\x,0) circle(\cir) node [below,yshift=-\ysh] {$6+{\color{blue}1}$};
		\draw[fill] (4*\x,0) circle(\cir) node [below,yshift=-\ysh] {$6$};
		\draw[fill] (6*\x,0) circle(\cir) node [below,yshift=-\ysh] {$7+{\color{blue}1}$};
		\draw[fill] (-1*\x,1*\y) circle(\cir) node [above,yshift=\ysh] {$3$};
		\draw[fill] (1*\x,1*\y) circle(\cir) node [above,yshift=-2.2+\ysh] {$6+{\color{blue}V_j}$};
		\draw[fill] (3*\x,1*\y) circle(\cir) node [above,yshift=\ysh] {$6$};
		\draw[fill] (5*\x,1*\y) circle(\cir) node [above,yshift=\ysh] {$6+{\color{blue}1}$};
		\draw[fill] (7*\x,1*\y) circle(\cir) node [above,yshift=\ysh] {$8+{\color{blue}1}$};
		\node at (9.5*\x,0) {$\bar\la+({\color{blue}\bar\nu-\bar\la})$};
		\node at (9.5*\x,\y) {$\la+({\color{blue}\nu-\la})$};
		\node (lab4) at (0*\x,0) {};
		\node (lab3) at (2*\x,0) {};
		\node (lab2) at (4*\x,0) {};
		\node (lab1) at (6*\x,0) {};
		\node (la5) at (-1*\x,\y) {};
		\node (la4) at (1*\x,\y) {};
		\node (la3) at (3*\x,\y) {};
		\node (la2) at (5*\x,\y) {};
		\node (la1) at (7*\x,\y) {};
		\draw[->,very thick] (lab1) -- (la1) node[rectangle,draw=black,fill=gray!20!white] [midway] {$+1$};
		\draw[->,very thick] (lab3) -- (la2) node[rectangle,draw=black,fill=gray!20!white] [midway] {$+1$};
		\draw[->,very thick] (la5) .. controls 
		(0*\x,.95*\y)
		.. (la4) node[rectangle,draw=black,fill=gray!20!white] [midway] {$+1$};
		% \draw[very thick, dotted] (sb) -- (ba)
		% node[rectangle,draw=black,fill=gray!20!white] [midway] {block};
	\end{tikzpicture}
	\end{adjustbox}
	\caption{An example of a step of 
	$\Qbcol$ at levels 4 and 5.
	Propagation steps
	are performed from right to left, according
	to $\mathsf{push}$ operation. We have $V_j=1$, and the 
	jump of the rightmost particle at level $j$
	is donated to the right.}
	\label{fig:QcRSK_beta}
\end{figure}

The above four rules of constructing the signature $\nu\in\GT_j$
complete the description of the RSK-type dynamics
$\Qarow$, $\Qbrow$,
$\Qacol$, 
and $\Qbcol$, respectively.

\begin{remark}
	By the very construction, 
	at each step of any of the four above RSK-type dynamics 
	the quantity $|\la^{(N)}|$
	is increased by $V_1+\ldots+V_N$,
	as it should be
	(cf. the discussion
	before Remark~\ref{rmk:push_block_RSK_different}).
\end{remark}

\begin{theorem}\label{thm:schur_RSK}
	The RSK-type dynamics 
	$\Qarow$, 
	$\Qacol$, 
	$\Qbrow$,
	and $\Qbcol$
	described above satisfy $q=0$ versions of the main equations
	of Theorem \ref{thm:main_eq}
	and hence act on Schur processes by adding 
	a new usual parameter $\al$ or a new dual parameter
	$\be$, respectively (as in \eqref{adding_spec}).
	\label{thm:schurRSK}
\end{theorem}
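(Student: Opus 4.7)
The plan is to substitute $q=0$ directly into the main equations of Theorem \ref{thm:main_eq} and verify them case by case for each of the four dynamics. At $q=0$ we have $\psi_{\la/\mu}=\varphi_{\la/\mu}=\mathbf{1}_{\mu\prech\la}$, $\psi'_{\la/\mu}=\mathbf{1}_{\mu\precv\la}$, and $(\al a_j;q)_{\infty}\vert_{q=0}=1-\al a_j$. Thus for $\Qarow$ and $\Qacol$, equation \eqref{alpha_main_equation} reduces to
\begin{align*}
\sum_{\bar\la\prech\la,\,\bar\la\prech\bar\nu}\mathscr{U}_j(\la\to\nu\mid\bar\la\to\bar\nu)\,(\al a_j)^{|\la|-|\nu|-(|\bar\la|-|\bar\nu|)}=(1-\al a_j)\,\mathbf{1}_{\bar\nu\prech\nu}\,\mathbf{1}_{\la\prech\nu},
\end{align*}
and for $\Qbrow$, $\Qbcol$, equation \eqref{beta_main_equation} becomes the analogous identity with $\precv$ in the appropriate places and $1/(1+\be a_j)$ in place of $1-\al a_j$. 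The case $j=1$ reduces to the univariate dynamics $\Pspec{\al}$ or $\Pspec{\hat\be}$ at level $\GT_1$ (see Remark \ref{rmk:one_level_dynamics}), which are built into the construction.

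The decisive observation is that, for each of the four dynamics, once $\bar\la\to\bar\nu$ and $\la$ are fixed the output $\nu$ is a \emph{deterministic} function of the input random variable $V_j$. Since every individual $\mathsf{pull}$ or $\mathsf{push}$ operation shifts the total size by exactly one, while $V_j$ contributes $V_j$ additional boxes to $|\nu|$, one has $|\nu|-|\la|=V_j+(|\bar\nu|-|\bar\la|)$, so the exponent satisfies $|\la|-|\nu|-(|\bar\la|-|\bar\nu|)=-V_j$. This exactly cancels the $(\al a_j)^{V_j}$ factor inside the geometric weight $(1-\al a_j)(\al a_j)^{V_j}$ of $V_j$ (or, in the Bernoulli case, the $(\be a_j)^{V_j}$ factor inside $(\be a_j)^{V_j}/(1+\be a_j)$). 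Hence the left-hand side collapses to the constant prefactor $1-\al a_j$ (respectively $1/(1+\be a_j)$) times the number of $\bar\la$ satisfying the lower interlacing for which some admissible $V_j$ reproduces the prescribed $\nu$.

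What remains is the bijective content of the theorem: for each triple $(\la,\bar\nu,\nu)$ satisfying the upper interlacing conditions of Figure \ref{fig:square}, there should be \emph{exactly one} $\bar\la$ satisfying the lower interlacing, together with exactly one admissible $V_j$, such that the dynamics transports $(\la,\bar\la\to\bar\nu)$ to $\nu$. For $\Qarow$ this recovery proceeds by first stripping the rightmost increment $V_j\mathrm{e}_1$ from $\nu$ and then inverting the sequence of $\mathsf{pull}$ operations in reverse order: at each step one reads off from the current pair of levels whether the move $\bar\la_i\to\bar\la_i+1$ induced a pull of the upper-left neighbor or a forced push of the upper-right neighbor, thereby determining $\bar\la_i$ unambiguously. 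The inversions for $\Qbrow$, $\Qacol$, $\Qbcol$ are analogous, run in the opposite spatial order for the row-versus-column variants and with careful tracking of move donation (Remark \ref{rmk:move_donation}) in the column cases.

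The main obstacle is making this bijective verification precise for each of the four dynamics: one must check not only uniqueness of $\bar\la$, but also that the recovered $\bar\la$ automatically obeys $\bar\la\prech\la$ together with $\bar\la\prech\bar\nu$ (or $\bar\la\precv\bar\nu$ in the dual cases), and that the recovered $V_j$ lies in the correct range ($\mathbb{Z}_{\ge 0}$ in the $(\al)$ cases, $\{0,1\}$ in the $(\hat\be)$ cases) precisely when $\la\prech\nu$ (respectively $\la\precv\nu$). A conceptual shortcut is to note that these dynamics coincide with the classical row- and column-bumping RSK correspondences applied to matrices with geometric or Bernoulli entries; the bijective content of the classical RSK, combined with the Cauchy identity \eqref{Cauchy}, then yields the main equations directly, without case-by-case combinatorics.
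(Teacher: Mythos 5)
Your proposal is correct and follows essentially the same route as the paper's own proof, which simply invokes the bijective properties of the classical RSK correspondences (the bijections \eqref{alpha_bijection}, \eqref{beta_bijection} attributed to Fomin). You spell out more of the mechanics than the paper does — the $q=0$ simplification $\psi_{\la/\mu}=\varphi_{\la/\mu}=\mathbf{1}_{\mu\prech\la}$, $\psi'_{\la/\mu}=\mathbf{1}_{\mu\precv\la}$, the exponent cancellation $|\la|-|\nu|-(|\bar\la|-|\bar\nu|)=-V_j$ coming from the RSK-type property, and the reduction to a uniqueness-of-$(\bar\la,V_j)$ statement — but the substance is identical. The paper also notes a second, equivalent justification (take $q\to 0$ in Theorems \ref{thm:QqrRSK_beta}, \ref{thm:QqcRSK_beta}, \ref{thm:QqrRSK_alpha}, \ref{thm:QqcRSK_alpha}), which you do not use; that route requires nothing beyond the $q>0$ results and is available once those are proved.
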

\begin{proof}
	This statement follows from bijective properties of RSK correspondences
	(briefly discussed below in this section),
	or, equivalently, it may be regarded as $q=0$
	degeneration of our main results
	about RSK-type dynamics on $q$-Whittaker processes
	(Theorems \ref{thm:QqrRSK_beta}, \ref{thm:QqcRSK_beta}, \ref{thm:QqrRSK_alpha}, and \ref{thm:QqcRSK_alpha}).
\end{proof}

Each of four RSK-type dynamics described above
gives rise to a certain bijection
between sets $\{\la,\bar\la,\bar\nu,V_j\}$
and $\{\la,\nu,\bar\nu\}$ (at each time step
and at each level $j$ of the interlacing array). 
In more detail, each of the dynamics $\Qarow$
and $\Qacol$
(see Fig.~\ref{fig:QrRSK_alpha}
and \ref{fig:QcRSK_alpha}) produces a bijection
between the following sets:
\begin{align}\label{alpha_bijection}
	\big\{\la,\bar\la,\bar\nu
	\colon
	\la\succh\bar\la\prech\bar\nu\big\}
	\cup\big\{V_j\in\Z_{\ge0}\big\}
	\longleftrightarrow
	\big\{\la,\nu,\bar\nu\colon\la\prech\nu\succh\bar\nu\big\}.
\end{align}
Similarly, each of the
dynamics $\Qbrow$
and $\Qbcol$
(Fig.~\ref{fig:QrRSK_beta}
and \ref{fig:QcRSK_beta})
establishes a bijection between the sets 
\begin{align}\label{beta_bijection}
	\big\{\la,\bar\la,\bar\nu
	\colon
	\la\succh\bar\la\precv\bar\nu\big\}
	\cup\big\{V_j\in\{0,1\}\big\}
	\longleftrightarrow
	\big\{\la,\nu,\bar\nu\colon\la\precv\nu\succh\bar\nu\big\}.
\end{align}
In \eqref{alpha_bijection} and \eqref{beta_bijection}
we have $\bar\la,\bar\nu\in\GT_{j-1}$
and $\la,\nu\in\GT_j$, as usual.

The understanding of RSK correspondences
via bijections as in 
\eqref{alpha_bijection} and \eqref{beta_bijection}
was presented in 
\cite{fomin1995schur}.\footnote{Starting multivariate
dynamics 
from initial condition
$\la^{(j)}_{i}\equiv 0$ for all $1\le i\le j\le N$
and considering all levels of an interlacing array,
bijections \eqref{alpha_bijection}
and \eqref{beta_bijection} extend to bijective 
correspondences between certain
integer matrices and pairs of semistandard Young tableaux
(in agreement with the well-known 
understanding of RSK correspondences).}
It also implies that 
fixing $\la$ and $\bar\nu$
and
taking generating functions
of both sets in \eqref{alpha_bijection}, by
weighting elements
of the left set by $(a_j\al)^{V_j+|\bar\nu|-|\bar\la|}$,
and of the right set by 
$(a_j\al)^{|\nu|-|\la|}$
(under the bijections, these powers are equal to each other),
one recovers the skew Cauchy identity
\eqref{skew_Cauchy_concrete} for $\mathbf{B}=(\al)$.
Similarly, \eqref{beta_bijection} leads to 
\eqref{skew_Cauchy_concrete} with $\mathbf{B}=(\hat\be)$.
This observation agrees with the
understanding of multivariate dynamics as refinements of 
the skew Cauchy identity
(\S \ref{ssub:refined_Cauchy}).

\begin{remark}
	In RSK-type dynamics
	on $q$-Whittaker processes
	considered in 
	\S \ref{sec:bernoulli_}
	and \S \ref{sec:geometric_q_rsks}
	below, a part of new randomness at each step
	also comes 
	from independent random
	variables $V_1,\ldots,V_N$
	(having $q$-geometric or Bernoulli distribution, 
	cf. Remark \ref{rmk:one_level_dynamics}).
	Moreover, for $q>0$ the bijective
	mechanisms \eqref{alpha_bijection}, \eqref{beta_bijection}
	will be \emph{$q$-randomized} (i.e. will no longer be deterministic bijections).
	This would lead to four \emph{$q$-randomized
	RSK correspondences}: the row and column $(\al)$,
	and the row and column $(\hat\be)$. 
	In fact, for $q>0$
	the step-by-step nature 
	of the $q=0$ case (when $\mathsf{push}$ or $\mathsf{pull}$
	operations are performed one at a time) will be broken,
	and certain series of 
	$\mathsf{push}$ or $\mathsf{pull}$ 
	operations will be clumped together and $q$-randomized as a whole.
	This will make the dynamics at the $q$-Whittaker level more complicated.
\end{remark}

Each of the four RSK-type dynamics
possesses a 
marginally Markovian projection
(onto the leftmost or the rightmost particles of the interlacing
array) leading to a certain discrete time
particle system on $\Z$. 
Namely, 
$\Qarow$ and $\Qbrow$
give rise to
the \emph{geometric and Bernoulli PushTASEPs},
respectively,
on the rightmost particles 
$\la^{(j)}_{1}$, $j=1,\ldots,N$.
Similarly, 
$\Qacol$ and $\Qbcol$
lead to the
\emph{geometric and Bernoulli TASEPs},
respectively, 
on the leftmost particles 
$\la^{(j)}_{j}$.
The $q$-deformed dynamics 
of \S \ref{sec:bernoulli_}
and \S \ref{sec:geometric_q_rsks} below would
lead to $q$-deformations 
of these four particle systems. 

\begin{remark}
	By imposing some reasonable 
	nearest neighbor constraints on discrete time multivariate dynamics,
	one may seek a full classification of solutions
	of the main equations of Theorem~\ref{thm:main_eq}
	in the Schur ($q=0$) case.
	Such classification in continuous time setting was obtained in 
	\cite{BorodinPetrov2013NN}.
	We do not pursue this direction here.
\end{remark}

% subsection rsk_type_dynamics (end)

% section schur_degeneration (end)

\section{RSK-type dynamics 
$\Qqbrow$ 
and $\Qqbcol$ 
adding a dual parameter} % (fold)
\label{sec:bernoulli_}

In this section we explain the construction of 
two RSK-type dynamics on $q$-Whittaker processes
adding a dual parameter $\be$ to the specialization 
(in the sense of \eqref{adding_spec}).
For $q=0$, these dynamics degenerate to 
$(\hat\be)$ dynamics on Schur processes 
arising from row and column RSK insertion. 
We also discover that for $0<q<1$,
the row and column dynamics 
$\Qqbrow$ 
and $\Qqbcol$
are related by a certain transformation (we call it \emph{complementation}).
Moreover, in a small $\be$ limit the complementation provides a direct connection
between continuous time RSK-type dynamics on $q$-Whittaker processes
introduced in \cite{OConnellPei2012} (column version) and 
\cite{BorodinPetrov2013NN} (row version).

\subsection{Row insertion dynamics $\Qqbrow$} % (fold)
\label{sub:row_insertion_dynamics_q__hatbe_q-rrsk_}

Let us now describe one time step $\lab\to\nub$ 
of the multivariate Markov dynamics $\Qqbrow$
on $q$-Whittaker processes of depth $N$. 
A part of randomness during this step comes from
independent Bernoulli random variables
$V_1,\ldots,V_N\in\{0,1\}$
with parameters $\be a_1,\ldots,\be a_N$, respectively
(these random variables are resampled during each time step).

The bottommost particle of the interlacing array is updated as $\nu^{(1)}_{1}=\la^{(1)}_{1}+V_1$
(as it should be, cf. Remark \ref{rmk:one_level_dynamics}).
Next, sequentially for each $j=2,\ldots,N$, given the movement $\bar\la\to \bar\nu$ at level $j-1$,
we will randomly update $\la\to\nu$ at level $j$.
% (as usual, we are simplifying the notation as
% $\bar\la=\la^{(j-1)}$, $\la=\la^{(j)}$, and similarly for $\bar\nu$ and $\nu$).
To describe this update, write 
\begin{align*}
	\bar\nu-\bar\la=\sum_{i=1}^{j-1}c_i \bar{\mathrm{e}}_{i},
	\qquad c_i\in\{0,1\},\qquad
	\text{$\bar{\mathrm{e}}_{i}$ are basis vectors of length $j-1$},
\end{align*}
and say that numbers $(k,m)$, where $1\le k\le m\le j-1$, form \emph{island}$(k,m)$
if 
\begin{align*}
	\text{$c_{k-1}=0$\quad(or $k=1$),\quad$c_k=c_{k+1}=\ldots=c_m=1$,\quad{}and\quad$c_{m+1}=0$\quad(or $m=j-1$)}.
\end{align*}
That is, all particles that have moved at level $j-1$
split into several disjoint islands.
Also denote for any $i=1,\ldots,j-1$:
\begin{align}\label{f_g_definition}
	\mathsf{f}_{i}=\mathsf{f}_{i}(\bar\nu,\la):=\frac{1-q^{\la_i-\bar\nu_i+1}}{1-q^{\bar\nu_{i-1}-\bar\nu_i+1}}
	,\qquad
	\mathsf{g}_{i}=\mathsf{g}_{i}(\bar\nu,\la):=1-q^{\la_{i}-\bar\nu_{i}+1}
\end{align}
(by agreement, let $\bar\nu_0:=+\infty$).
Note that all these quantities are between 0 and 1.

\setcounter{la1}{7}
\setcounter{la2}{5}
\setcounter{la3}{5}
\setcounter{la4}{3}
\setcounter{la5}{3}
\setcounter{la6}{2}
\setcounter{lab1}{6}
\setcounter{lab2}{5}
\setcounter{lab3}{5}
\setcounter{lab4}{3}
\setcounter{lab5}{2}

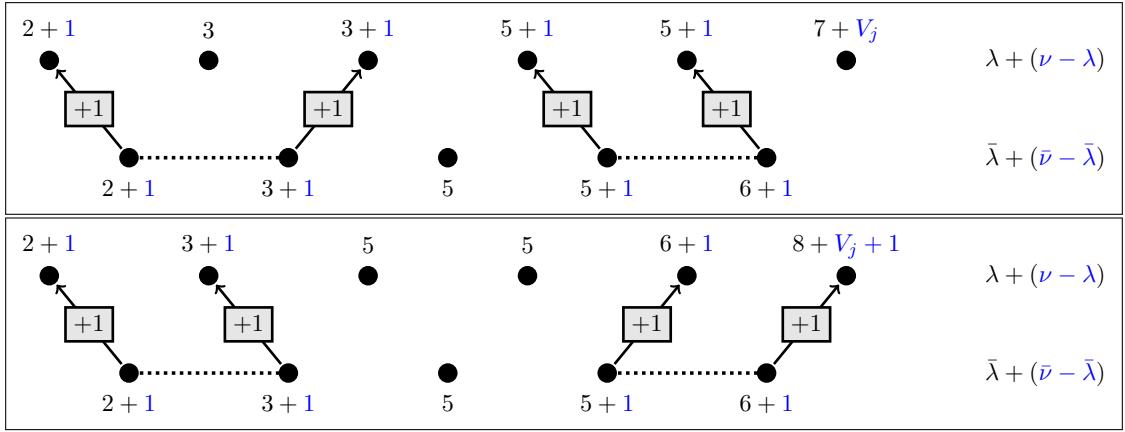
\begin{figure}[htbp]
	\framebox{\begin{adjustbox}{max height=.16\textwidth}
	\begin{tikzpicture}
		[scale=.7, thick]
		\def\cir{.2}
		\def\ysh{6}
		\def\x{1.8}
		\def\y{2.2}
		\draw[fill] (-2*\x,0) circle(\cir) node [below,yshift=-\ysh] {$\arabic{lab5}+{\color{blue}1}$};
		\draw[fill] (0,0) circle(\cir) node [below,yshift=-\ysh] {$\arabic{lab4}+{\color{blue}1}$};
		\draw[fill] (2*\x,0) circle(\cir) node [below,yshift=-\ysh] {$\arabic{lab3}$};
		\draw[fill] (4*\x,0) circle(\cir) node [below,yshift=-\ysh] {$\arabic{lab2}+{\color{blue}1}$};
		\draw[fill] (6*\x,0) circle(\cir) node [below,yshift=-\ysh] {$\arabic{lab1}+{\color{blue}1}$};
		\draw[fill] (-3*\x,1*\y) circle(\cir) node [above,yshift=\ysh] {$\arabic{la6}+{\color{blue}1}$};
		\draw[fill] (-1*\x,1*\y) circle(\cir) node [above,yshift=\ysh] {$\arabic{la5}$};
		\draw[fill] (1*\x,1*\y) circle(\cir) node [above,yshift=\ysh] {$\arabic{la4}+{\color{blue}1}$};
		\draw[fill] (3*\x,1*\y) circle(\cir) node [above,yshift=\ysh] {$\arabic{la3}+{\color{blue}1}$};
		\draw[fill] (5*\x,1*\y) circle(\cir) node [above,yshift=\ysh] {$\arabic{la2}+{\color{blue}1}$};
		\draw[fill] (7*\x,1*\y) circle(\cir) node [above,yshift=-2.2+\ysh] {$\arabic{la1}+{\color{blue}V_j}$};
		\node at (9.5*\x,0) {$\bar\la+({\color{blue}\bar\nu-\bar\la})$};
		\node at (9.5*\x,\y) {$\la+({\color{blue}\nu-\la})$};
		\node (lab5) at (-2*\x,0) {};
		\node (lab4) at (0*\x,0) {};
		\node (lab3) at (2*\x,0) {};
		\node (lab2) at (4*\x,0) {};
		\node (lab1) at (6*\x,0) {};
		\node (la6) at (-3*\x,\y) {};
		\node (la5) at (-1*\x,\y) {};
		\node (la4) at (1*\x,\y) {};
		\node (la3) at (3*\x,\y) {};
		\node (la2) at (5*\x,\y) {};
		\node (la1) at (7*\x,\y) {};
		\draw[ultra thick, dotted] (lab1) -- (lab2);
		\draw[ultra thick, dotted] (lab4) -- (lab5);
		\draw[->,very thick] (lab1) -- (la2) node[rectangle,draw=black,fill=gray!20!white] [midway] {$+1$};
		\draw[->,very thick] (lab2) -- (la3) node[rectangle,draw=black,fill=gray!20!white] [midway] {$+1$};
		\draw[->,very thick] (lab4) -- (la4) node[rectangle,draw=black,fill=gray!20!white] [midway] {$+1$};
		\draw[->,very thick] (lab5) -- (la6) node[rectangle,draw=black,fill=gray!20!white] [midway] {$+1$};
		% \draw[very thick, dotted] (sb) -- (ba)
		% node[rectangle,draw=black,fill=gray!20!white] [midway] {block};
	\end{tikzpicture}
	\end{adjustbox}}
	\\
\setcounter{la1}{8}
\setcounter{la2}{6}
\setcounter{la3}{5}
\setcounter{la4}{5}
\setcounter{la5}{3}
\setcounter{la6}{2}
\setcounter{lab1}{6}
\setcounter{lab2}{5}
\setcounter{lab3}{5}
\setcounter{lab4}{3}
\setcounter{lab5}{2}
	\framebox{\begin{adjustbox}{max height=.16\textwidth}
	\begin{tikzpicture}
		[scale=.7, thick]
		\def\cir{.2}
		\def\ysh{6}
		\def\x{1.8}
		\def\y{2.2}
		\draw[fill] (-2*\x,0) circle(\cir) node [below,yshift=-\ysh] {$\arabic{lab5}+{\color{blue}1}$};
		\draw[fill] (0,0) circle(\cir) node [below,yshift=-\ysh] {$\arabic{lab4}+{\color{blue}1}$};
		\draw[fill] (2*\x,0) circle(\cir) node [below,yshift=-\ysh] {$\arabic{lab3}$};
		\draw[fill] (4*\x,0) circle(\cir) node [below,yshift=-\ysh] {$\arabic{lab2}+{\color{blue}1}$};
		\draw[fill] (6*\x,0) circle(\cir) node [below,yshift=-\ysh] {$\arabic{lab1}+{\color{blue}1}$};
		\draw[fill] (-3*\x,1*\y) circle(\cir) node [above,yshift=\ysh] {$\arabic{la6}+{\color{blue}1}$};
		\draw[fill] (-1*\x,1*\y) circle(\cir) node [above,yshift=\ysh] {$\arabic{la5}+{\color{blue}1}$};
		\draw[fill] (1*\x,1*\y) circle(\cir) node [above,yshift=\ysh] {$\arabic{la4}$};
		\draw[fill] (3*\x,1*\y) circle(\cir) node [above,yshift=\ysh] {$\arabic{la3}$};
		\draw[fill] (5*\x,1*\y) circle(\cir) node [above,yshift=\ysh] {$\arabic{la2}+{\color{blue}1}$};
		\draw[fill] (7*\x,1*\y) circle(\cir) node [above,yshift=-2.2+\ysh] {$\arabic{la1}+{\color{blue}V_j+1}$};
		\node at (9.5*\x,0) {$\bar\la+({\color{blue}\bar\nu-\bar\la})$};
		\node at (9.5*\x,\y) {$\la+({\color{blue}\nu-\la})$};
		\node (lab5) at (-2*\x,0) {};
		\node (lab4) at (0*\x,0) {};
		\node (lab3) at (2*\x,0) {};
		\node (lab2) at (4*\x,0) {};
		\node (lab1) at (6*\x,0) {};
		\node (la6) at (-3*\x,\y) {};
		\node (la5) at (-1*\x,\y) {};
		\node (la4) at (1*\x,\y) {};
		\node (la3) at (3*\x,\y) {};
		\node (la2) at (5*\x,\y) {};
		\node (la1) at (7*\x,\y) {};
		\draw[ultra thick, dotted] (lab1) -- (lab2);
		\draw[ultra thick, dotted] (lab4) -- (lab5);
		\draw[->,very thick] (lab1) -- (la1) node[rectangle,draw=black,fill=gray!20!white] [midway] {$+1$};
		\draw[->,very thick] (lab2) -- (la2) node[rectangle,draw=black,fill=gray!20!white] [midway] {$+1$};
		\draw[->,very thick] (lab4) -- (la5) node[rectangle,draw=black,fill=gray!20!white] [midway] {$+1$};
		\draw[->,very thick] (lab5) -- (la6) node[rectangle,draw=black,fill=gray!20!white] [midway] {$+1$};
		% \draw[very thick, dotted] (sb) -- (ba)
		% node[rectangle,draw=black,fill=gray!20!white] [midway] {block};
	\end{tikzpicture}
	\end{adjustbox}}
	\caption{An example of a step of 
	$\Qqbrow$ at levels 5 and 6.
	There are two islands, $(1,2)$ and $(4,5)$,
	moving at level $j-1$.
	Above: $V_j=1$, 
	and the probability of the displayed transition is
	$1\cdot (1-\mathsf{f}_4)\mathsf{g}_5=1-q$
	(note that here the particle $\la_4=3$ cannot be chosen not to move
	because $\mathsf{f}_4=0$).
	Below: $V_j=0$,
	and the probability of the displayed transition is
	$(1-\mathsf{f}_1)(1-\mathsf{g}_2)\cdot\mathsf{f}_4=q^{3}$
	(note that here the particle $\la_4=5$ must be chosen not to move
	because $\mathsf{f}_4=1$).}
	\label{fig:QqrRSK_beta}
\end{figure}

The update $\la\to\nu$ at level $j$ goes as follows (see Fig.~\ref{fig:QqrRSK_beta}).
First, the rightmost particle jumps to the right by $V_j$, i.e.,
$\nu_1=\la_1+V_j{\mathrm{e}}_{1}$.
Then, independently for every island$(k,m)$ 
of particles that have moved at level $j-1$, perform the following updates:
\begin{enumerate}
	\item If $V_j=1$ and $k=1$ (i.e., the particle $\la_1$
	has already moved, and the island contains the first particle at level $j-1$), 
	then move the
	particles $\la_2,\ldots,\la_{m+1}$
	at level $j$ to the right by one with probability 1.

	\item If $V_j=1$ and $k>1$, or $V_j=0$ (i.e., island$(k,m)$ does not 
	interfere with the movement of $\la_1$ coming from $V_j$, or there is no independent
	movement of $\la_1$), then
	island$(k,m)$ triggers the movement (to the right by one)
	of
	all particles $\la_k,\ldots,\la_{m+1}$
	except one. The particle which does not move 
	is chosen at random:
	\begin{itemize}
		\item $\la_k$ is chosen not to move with probability
		\begin{align}\label{island_prob_1}
			\mathsf{f}_k=\frac{1-q^{\la_k-\bar\nu_k+1}}{1-q^{\bar\nu_{k-1}-\bar\nu_k+1}};
		\end{align}
		\item each $\la_s$, $k+1\le s \le m$,
		is chosen not to move with probability
		\begin{align}\label{island_prob_2}
			(1-\mathsf{f}_k)(1-\mathsf{g}_{k+1})\ldots(1-\mathsf{g}_{s-1})\mathsf{g}_{s}
			=\frac{q^{\la_k-\bar\nu_k+1}-q^{\bar\nu_{k-1}-\bar\nu_k+1}}{1-q^{\bar\nu_{k-1}-\bar\nu_k+1}}
			q^{\sum_{i=k+1}^{s-1}(\la_i-\bar\nu_i+1)}
			(1-q^{\la_s-\bar\nu_s+1});
		\end{align}
		\item $\la_{m+1}$ is chosen not to move with probability
		\begin{align}
			\label{island_prob_3}
			(1-\mathsf{f}_k)(1-\mathsf{g}_{k+1})\ldots(1-\mathsf{g}_{m-1})(1-\mathsf{g}_{m})
			=
			\frac{q^{\la_k-\bar\nu_k+1}-q^{\bar\nu_{k-1}-\bar\nu_k+1}}{1-q^{\bar\nu_{k-1}-\bar\nu_k+1}}
			q^{\sum_{i=k+1}^{m}(\la_i-\bar\nu_i+1)}.
		\end{align}
	\end{itemize}
	Probabilities \eqref{island_prob_1}, \eqref{island_prob_2}, 
	and \eqref{island_prob_3}
	are nonnegative, and
	their sum telescopes to 1.
\end{enumerate}
This completes the description of the $(\hat \be)$ row insertion RSK-type dynamics $\Qqbrow$.
Clearly, thus defined conditional probabilities $\mathscr{U}_j$, $j=1,\ldots,N$,
for this dynamics
satisfy \eqref{U_properties}.

\begin{remark}\label{rmk:automatically_push_pull}
	The $q$-deformed probabilities \eqref{island_prob_1}, \eqref{island_prob_2}, 
	and \eqref{island_prob_3}
	ensure that mandatory pushing and blocking mechanisms
	(built into Definitions \ref{def:pull} and \ref{def:push})
	work automatically:
	\begin{itemize}
		\item 
		If 
		$\la_s=\bar\nu_s-1$ for any $k\le s \le m$, then 
		the particle
		$\la_s$ cannot be chosen not to move.
		This agrees with the mandatory pushing of $\la_s$
		by the move of $\bar\la_s=\la_s$
		which is necessary to restore the interlacing.
		\item If $\la_{k}=\bar\nu_{k-1}$ (i.e., $\la_k$ is blocked), 
		then $\mathsf{f}_k=1$,
		so $\la_k$ must be chosen not to move.
		This means that in this dynamics no move donations ever arise
		(cf Remark \ref{rmk:move_donation}).
	\end{itemize}
\end{remark}

\begin{theorem}\label{thm:QqrRSK_beta}
	The dynamics $\Qqbrow$
	defined above satisfies the main equations
	\eqref{beta_main_equation},
	and hence
	preserves the class of $q$-Whittaker processes
	and adds a new dual parameter $\be$
	to the specialization $\mathbf{A}$
	as in \eqref{adding_spec}.
\end{theorem}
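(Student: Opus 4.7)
The plan is to verify \eqref{beta_main_equation} directly by decomposing both sides into local contributions. Fix $\bar\nu\in\GT_{j-1}$ and $\la,\nu\in\GT_j$ with $\bar\nu\prech\nu$ and $\la\precv\nu$, and parameterize the sum via $c_i:=\bar\nu_i-\bar\la_i\in\{0,1\}$ for the varying $\bar\la$ (which must satisfy $\bar\la\prech\la$, $\bar\la\precv\bar\nu$), together with the movement pattern $d_i:=\nu_i-\la_i\in\{0,1\}$ at level $j$. The interlacing constraints translate into explicit inequalities between $\{c_i\}$ and $\{d_i\}$, and force $\mathscr{U}_j(\la\to\nu\mid\bar\la\to\bar\nu)$ to vanish unless the pattern $d$ can be produced by applying the dynamics to the island structure dictated by $c$.

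The first step is to absorb the $\be$-dependence. A case-by-case inspection of the dynamics (distinguishing whether the leftmost island begins at $k=1$ and whether $V_j=0$ or $1$) shows that under any admissible transition one has $|\nu|-|\la|-(|\bar\nu|-|\bar\la|)=V_j$. Hence $(\be a_j)^{|\la|-|\nu|-(|\bar\la|-|\bar\nu|)}=(\be a_j)^{-V_j}$, which when multiplied by the Bernoulli weight $(\be a_j)^{V_j}/(1+\be a_j)$ carried by $V_j$ inside $\mathscr{U}_j$ always produces the common prefactor $1/(1+\be a_j)$ appearing on the right-hand side. This strips all $\be$-dependence and reduces \eqref{beta_main_equation} to a $q$-algebraic identity involving only the island probabilities \eqref{island_prob_1}--\eqref{island_prob_3} and the $\psi,\psi'$-coefficients.

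Next I would exploit locality. Both the dynamics (which processes each island independently) and the Macdonald coefficients $\psi_{\la/\bar\la}$, $\psi'_{\bar\nu/\bar\la}$, $\psi_{\nu/\bar\nu}$, $\psi'_{\nu/\la}$ (which factor over positions via \eqref{P_one_variable} and \eqref{Q_one_dual_spec}) have a product structure. Grouping positions into the maximal blocks of consecutive $1$'s in $d$, the sum over admissible $\bar\la$ splits into independent local sums — one per block, running over the choice of the unique non-moving particle within the block (and correspondingly over the shape of the island in $c$ that produced it) — plus trivial contributions from gap positions where $c_i=d_i=0$. This reduces \eqref{beta_main_equation} to a finite family of local identities.

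Each local identity states that a sum of products of $\mathsf{f}_k,\mathsf{g}_i$-factors from \eqref{island_prob_1}--\eqref{island_prob_3} and $q$-binomial and $(1-q^{\cdot})$-pieces of $\psi_{\la/\bar\la}\psi'_{\bar\nu/\bar\la}$ equals the corresponding piece of $\psi_{\nu/\bar\nu}\psi'_{\nu/\la}$. Since the island probabilities themselves already telescope to $1$ when summed over the non-moving position, the expected argument is a $\psi,\psi'$-weighted telescoping. The main obstacle will be the combinatorial bookkeeping at the boundaries between islands and gap regions — where $\mathsf{f}_k$ and the $\psi'$-factors couple adjacent positions — together with the special treatment of the leftmost island when $k=1$, $V_j=1$, and the verification that the automatic mandatory-pushing and blocking behaviors noted in Remark \ref{rmk:automatically_push_pull} produce the correct vanishing/non-vanishing patterns on both sides.
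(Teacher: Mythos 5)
Your strategy is essentially identical to the paper's: strip the $\be$-dependence using $|\nu|-|\la|-(|\bar\nu|-|\bar\la|)=V_j$ so that the Bernoulli weight of $V_j$ cancels against $(\be a_j)^{|\la|-|\nu|-(|\bar\la|-|\bar\nu|)}$, then factor the resulting sum over blocks of consecutive moving particles at level $j$ and telescope within each block, in parallel with how the island probabilities \eqref{island_prob_1}--\eqref{island_prob_3} telescope to $1$. One step is glossed over, though: when two moving blocks at level $j$ are separated by exactly one non-moving particle, a single island of moves at level $j-1$ can straddle that gap, so the sum over $\bar\la$ does \emph{not} immediately factor as "one independent local sum per block"; the paper must show by an explicit rearrangement (the calculation around Figs.~\ref{fig:beta_main_equation_in_proof1}--\ref{fig:beta_main_equation_in_proof2}) that the straddling contribution equals a product of two separate-island contributions. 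You anticipate boundaries as the obstacle, but the phrase "plus trivial contributions from gap positions" is too quick for this one-gap case and is exactly where the extra lemma is needed.
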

\begin{proof}
	We need to prove \eqref{beta_main_equation}
	for any fixed $j=2,\ldots,N$
	and 
	$\la,\nu\in\GT_j$, $\bar\nu\in\GT_{j-1}$,
	where $\la\precv\nu\succh\bar\nu$ (cf. Fig.~\ref{fig:square}, right).
	For a subset $I\subseteq\{1,2,\ldots,j-1\}$,
	set
	\begin{align*}
		U_I:=(1+\be a_j)\mathscr{U}_j(\la\to\nu\mid\bar\la\to\bar\nu)
		\frac{\psi_{\la/\bar\la}\psi'_{\bar\nu/\bar\la}}
		{\psi_{\nu/\bar\nu}\psi'_{\nu/\la}},
	\end{align*}
	where $\bar\la=\bar\nu-\sum_{i\in I}\bar{\mathrm{e}}_{i}$,
	i.e., $\bar\la\in\GT_{j-1}$ is obtained
	from $\bar\nu$ by shifting back (by one)
	all particles with indices belonging to $I$.
	By agreement, if $I$ is such that
	$\bar\la$
	does not satisfy
	$\la\succh\bar\la\precv\bar\nu$ 
	(cf.~Fig.~\ref{fig:square}, right),
	then $U_I=0$.
	With this notation, the desired identity 
	\eqref{beta_main_equation} turns into 
	\begin{align}\label{beta_main_equation_in_proof}
		\sum_{I\subseteq\{1,2,\ldots,j-1\}}
		U_I(\be a_j)^{|\la|-|\nu|-(|\bar\la|-|\bar\nu|)}=1.
	\end{align}
	Note that the denominator $(1+\be a_j)$ coming from 
	the Bernoulli distribution of $V_j$
	will always cancel the corresponding 
	factor in all $U_I$'s.

	\smallskip

	First, let us consider a particular case when $\nu=\la+\sum_{i=k}^{m}\mathrm{e}_i$,
	i.e., the movement $\la\to\nu$ involves a consecutive group of particles
	from $k$ to $m$, where $1\le k\le m\le j-1$.
	There are four subcases:

	\smallskip

	\textbf{1.} If $k>1$ and $m<j$, then necessarily $V_j=0$, and 
	\eqref{beta_main_equation_in_proof} becomes
	\begin{align}
		U_{[k-1,m-1]}+\sum_{s=k}^{m-1}U_{[k-1,s-1]\cup[s+1,m]}
		+
		U_{[k,m]}=1
		\label{beta_main_equation_in_proof1}
	\end{align}
	(here and below
	by $[k-1,m-1]$, etc., we mean the corresponding interval of indices).
	See Fig.~\ref{fig:beta_main_equation_in_proof}.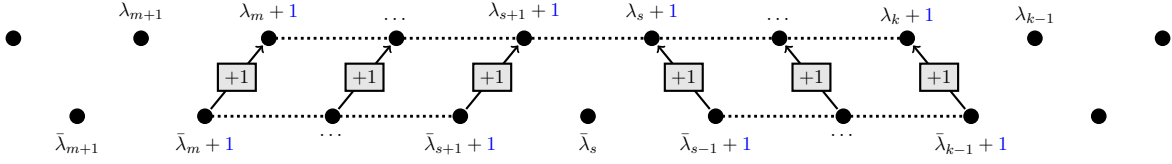
\begin{figure}[htbp]
		\begin{adjustbox}{max height=.136\textwidth}
		\begin{tikzpicture}
			[scale=.7, thick]
			\def\cir{.2}
			\def\ysh{6}
			\def\x{1.8}
			\def\y{2.2}
			\draw[fill] (-6*\x,0) circle(\cir) node [below,yshift=-\ysh] {$\bar\la_{m+1}$};
			\draw[fill] (-4*\x,0) circle(\cir) node [below,yshift=-\ysh] {$\bar\la_m+{\color{blue}1}$};
			\draw[fill] (-2*\x,0) circle(\cir) node [below,yshift=-\ysh] {\ldots};
			\draw[fill] (0,0) circle(\cir) node [below,yshift=-\ysh] {$\bar\la_{s+1}+{\color{blue}1}$};
			\draw[fill] (2*\x,0) circle(\cir) node [below,yshift=-\ysh] {$\bar\la_s$};
			\draw[fill] (4*\x,0) circle(\cir) node [below,yshift=-\ysh] {$\bar\la_{s-1}+{\color{blue}1}$};
			\draw[fill] (6*\x,0) circle(\cir) node [below,yshift=-\ysh] {\ldots};
			\draw[fill] (8*\x,0) circle(\cir) node [below,yshift=-\ysh] {$\bar\la_{k-1}+{\color{blue}1}$};
			\draw[fill] (10*\x,0) circle(\cir) node [below,yshift=-\ysh] {};
			\draw[fill] (-7*\x,1*\y) circle(\cir) node [above,yshift=\ysh] {};
			\draw[fill] (-5*\x,1*\y) circle(\cir) node [above,yshift=\ysh] {$\la_{m+1}$};
			\draw[fill] (-3*\x,1*\y) circle(\cir) node [above,yshift=\ysh] {$\la_{m}+{\color{blue}1}$};
			\draw[fill] (-1*\x,1*\y) circle(\cir) node [above,yshift=\ysh] {\ldots};
			\draw[fill] (1*\x,1*\y) circle(\cir) node [above,yshift=\ysh] {$\la_{s+1}+{\color{blue}1}$};
			\draw[fill] (3*\x,1*\y) circle(\cir) node [above,yshift=\ysh] {$\la_s+{\color{blue}1}$};
			\draw[fill] (5*\x,1*\y) circle(\cir) node [above,yshift=\ysh] {\ldots};
			\draw[fill] (7*\x,1*\y) circle(\cir) node [above,yshift=-2.2+\ysh] {$\la_{k}+{\color{blue}1}$};
			\draw[fill] (9*\x,1*\y) circle(\cir) node [above,yshift=-2.2+\ysh] {$\la_{k-1}$};
			\draw[fill] (11*\x,1*\y) circle(\cir) node [above,yshift=-2.2+\ysh] {};
			% \node at (9.5*\x,0) {$\bar\la+({\color{blue}\bar\nu-\bar\la})$};
			% \node at (9.5*\x,\y) {$\la+({\color{blue}\nu-\la})$};
			\draw[ultra thick, dotted] (-4*\x,0) -- (0,0);
			\draw[ultra thick, dotted] (4*\x,0) -- (8*\x,0);
			\draw[ultra thick, dotted] (-3*\x,\y) -- (7*\x,\y);
			\node (labk1) at (8*\x,0) {}; 
			\node (labk2) at (6*\x,0) {};
			\node (labk3) at (4*\x,0) {};
			\node (labk4) at (0*\x,0) {};
			\node (labk5) at (-2*\x,0) {};
			\node (labk6) at (-4*\x,0) {};
			\node (lak1) at (7*\x,\y) {};
			\node (lak2) at (5*\x,\y) {};
			\node (lak3) at (3*\x,\y) {};
			\node (lak4) at (1*\x,\y) {};
			\node (lak5) at (-1*\x,\y) {};
			\node (lak6) at (-3*\x,\y) {};
			\draw[->,very thick] (labk1) -- (lak1) node[rectangle,draw=black,fill=gray!20!white] [midway] {$+1$};
			\draw[->,very thick] (labk2) -- (lak2) node[rectangle,draw=black,fill=gray!20!white] [midway] {$+1$};
			\draw[->,very thick] (labk3) -- (lak3) node[rectangle,draw=black,fill=gray!20!white] [midway] {$+1$};
			\draw[->,very thick] (labk4) -- (lak4) node[rectangle,draw=black,fill=gray!20!white] [midway] {$+1$};
			\draw[->,very thick] (labk5) -- (lak5) node[rectangle,draw=black,fill=gray!20!white] [midway] {$+1$};
			\draw[->,very thick] (labk6) -- (lak6) node[rectangle,draw=black,fill=gray!20!white] [midway] {$+1$};
			% \draw[very thick, dotted] (sb) -- (ba)
			% node[rectangle,draw=black,fill=gray!20!white] [midway] {block};
		\end{tikzpicture}
		\end{adjustbox}
		\caption{Situation corresponding to the $s$-th term in 
		\eqref{beta_main_equation_in_proof1}.}
		\label{fig:beta_main_equation_in_proof}
	\end{figure}
	Using \eqref{P_one_variable}, \eqref{Q_one_dual_spec},
	we have (as before, here and below in the proof we agree that $\bar\nu_0=+\infty$)
	\begin{align*}
		U_{[k-1,m-1]}&=\underbrace{\mathsf{f}_{k-1}(\bar\nu,\la)}_{\mathscr{U}_j}\cdot
		\underbrace{
		\frac{\binom{\la_{m}-\la_{m+1}}{\la_m-\bar\nu_m}_{q}
		}
		{\binom{\la_{m}+1-\la_{m+1}}{\la_m+1-\bar\nu_m}_{q}
		}
		\frac{\binom{\la_{k-1}-\la_k}{\la_{k-1}-\bar\nu_{k-1}+1}_q}
		{\binom{\la_{k-1}-\la_k-1}{\la_{k-1}-\bar\nu_{k-1}}_q}
		}_{\psi_{\la/\bar\la}/\psi_{\nu/\bar\nu}}
		\cdot
		\underbrace{\frac{1-q^{\bar\nu_{k-2}-\bar\nu_{k-1}+1}}
		{1-q^{\la_{k-1}-\la_{k}}}}_{\psi'_{\bar\nu/\bar\la}/\psi'_{\nu/\la}}
		\\&=
		\frac{1-q^{\la_{k-1}-\bar\nu_{k-1}+1}}{1-q^{\bar\nu_{k-2}-\bar\nu_{k-1}+1}}
		\frac{1-q^{\la_{m}-\bar\nu_m+1}}{1-q^{\la_m-\la_{m+1}+1}}
		\frac{1-q^{\la_{k-1}-\la_k}}{1-q^{\la_{k-1}-\bar\nu_{k-1}+1}}
		\frac{1-q^{\bar\nu_{k-2}-\bar\nu_{k-1}+1}}
		{1-q^{\la_{k-1}-\la_{k}}}
		\\&=
		\frac{1-q^{\la_{m}-\bar\nu_m+1}}{1-q^{\la_m-\la_{m+1}+1}}.
	\end{align*}
	Also for any $k\le s\le m-1$,
	\begin{align*}
		&
		U_{[k-1,s-1]\cup[s+1,m]}\\&=
		\underbrace{\mathsf{f}_{k-1}(\bar\nu,\la)
		\cdot(1-\mathsf{f}_{s+1}(\bar\nu,\la))
		(1-\mathsf{g}_{s+2}(\bar\nu,\la))\ldots
		(1-\mathsf{g}_{m}(\bar\nu,\la))}_{\mathscr{U}_j}\\&\phantom{={}}\times
		\underbrace{
		\frac{\binom{\la_{m}-\la_{m+1}}{\la_m-\bar\nu_m+1}_{q}
		}
		{\binom{\la_{m}+1-\la_{m+1}}{\la_m+1-\bar\nu_m}_{q}
		}
		\frac{\binom{\la_{s}-\la_{s+1}}{\la_s-\bar\nu_{s}}_{q}}
		{\binom{\la_{s}-\la_{s+1}}{\la_s+1-\bar\nu_{s}}_{q}
		}
		\frac{\binom{\la_{k-1}-\la_k}{\la_{k-1}-\bar\nu_{k-1}+1}_q}
		{\binom{\la_{k-1}-\la_k-1}{\la_{k-1}-\bar\nu_{k-1}}_q}
		}_{\psi_{\la/\bar\la}/\psi_{\nu/\bar\nu}}
		\cdot
		\underbrace{\frac{(1-q^{\bar\nu_{k-2}-\bar\nu_{k-1}+1})
		(1-q^{\bar\nu_{s}-\bar\nu_{s+1}+1})}
		{1-q^{\la_{k-1}-\la_{k}}}}_{\psi'_{\bar\nu/\bar\la}/\psi'_{\nu/\la}}
		\\&=
		\frac{1-q^{\la_{k-1}-\bar\nu_{k-1}+1}}{1-q^{\bar\nu_{k-2}-\bar\nu_{k-1}+1}}
		\frac{q^{\la_{s+1}-\bar\nu_{s+1}+1}-q^{\bar\nu_{s}-\bar\nu_{s+1}+1}}
		{1-q^{\bar\nu_{s}-\bar\nu_{s+1}+1}}
		q^{\sum_{i=s+2}^{m}(\la_i-\bar\nu_i+1)}
		\\&\phantom{={}}\times
		\frac{1-q^{\bar\nu_m-\la_{m+1}}}{1-q^{\la_m+1-\la_{m+1}}}
		\frac{1-q^{\la_s+1-\bar\nu_s}}{1-q^{\bar\nu_s-\la_{s+1}}}
		\frac{1-q^{\la_{k-1}-\la_k}}{1-q^{\la_{k-1}-\bar\nu_{k-1}+1}}
		\frac{(1-q^{\bar\nu_{k-2}-\bar\nu_{k-1}+1})
		(1-q^{\bar\nu_{s}-\bar\nu_{s+1}+1})}
		{1-q^{\la_{k-1}-\la_{k}}}
		\\&=
		\frac{(1-q^{\la_s+1-\bar\nu_s})(1-q^{\bar\nu_m-\la_{m+1}})}{1-q^{\la_m+1-\la_{m+1}}}
		q^{\sum_{i=s+1}^{m}(\la_i-\bar\nu_i+1)},
	\end{align*}
	and
	\begin{align*}
		U_{[k,m]}&=
		\underbrace{(1-\mathsf{f}_{k}(\bar\nu,\la))
		(1-\mathsf{g}_{k+1}(\bar\nu,\la))\ldots
		(1-\mathsf{g}_{m}(\bar\nu,\la))}_{\mathscr{U}_j}\\&\phantom{={}}\times
		\underbrace{
		\frac{\binom{\la_{m}-\la_{m+1}}{\la_m-\bar\nu_m+1}_{q}
		}
		{\binom{\la_{m}+1-\la_{m+1}}{\la_m+1-\bar\nu_m}_{q}
		}
		\frac{\binom{\la_{k-1}-\la_k}{\la_{k-1}-\bar\nu_{k-1}}_q}
		{\binom{\la_{k-1}-\la_k-1}{\la_{k-1}-\bar\nu_{k-1}}_q}
		}_{\psi_{\la/\bar\la}/\psi_{\nu/\bar\nu}}
		\cdot
		\underbrace{\frac{1-q^{\bar\nu_{k-1}-\bar\nu_{k}+1}}
		{1-q^{\la_{k-1}-\la_{k}}}}_{\psi'_{\bar\nu/\bar\la}/\psi'_{\nu/\la}}
		\\&=
		\frac{q^{\la_k-\bar\nu_k+1}-q^{\bar\nu_{k-1}-\bar\nu_k+1}}{1-q^{\bar\nu_{k-1}-\bar\nu_k+1}}
		q^{\sum_{i=k+1}^{m}(\la_i-\bar\nu_i+1)}
		\frac{1-q^{\bar\nu_m-\la_{m+1}}}{1-q^{\la_m+1-\la_{m+1}}}
		\frac{1-q^{\la_{k-1}-\la_k}}{1-q^{\bar\nu_{k-1}-\la_k}}
		\frac{1-q^{\bar\nu_{k-1}-\bar\nu_{k}+1}}
		{1-q^{\la_{k-1}-\la_{k}}}
		\\&=
		\frac{1-q^{\bar\nu_m-\la_{m+1}}}{1-q^{\la_m+1-\la_{m+1}}}
		q^{\sum_{i=k}^{m}(\la_i-\bar\nu_i+1)}.
	\end{align*}
	The summation
	in \eqref{beta_main_equation_in_proof1} 
	thus telescopes and gives 1 as desired (similarly
	to the sum of expressions \eqref{island_prob_1}, 
	\eqref{island_prob_2}, and \eqref{island_prob_3}).

	\smallskip

	\textbf{2.} If $k>1$ and $m=j$,
	then also necessarily $V_j=0$,
	and there is only one $I$, namely, $[k-1,j-1]$, contributing to \eqref{beta_main_equation_in_proof}.
	We have
	\begin{align*}
		U_{[k-1,j-1]}&=\mathsf{f}_{k-1}(\bar\nu,\la)\cdot 
		\frac{\binom{\la_{k-1}-\la_{k}}{\la_{k-1}-\bar\nu_{k-1}+1}_{q}}
		{\binom{\la_{k-1}-\la_{k}-1}{\la_{k-1}-\bar\nu_{k-1}} _{q}}\cdot
		\frac{1-q^{\bar\nu_{k-2}-\bar\nu_{k-1}+1}}{1-q^{\la_{k-1}-\la_{k}}}
		\\&=\frac{1-q^{\la_{k-1}-\bar\nu_{k-1}+1}}{1-q^{\bar\nu_{k-2}-\bar\nu_{k-1}+1}}
		\frac{1-q^{\la_{k-1}-\la_{k}}}{1-q^{\la_{k-1}-\bar\nu_{k-1}+1}}
		\frac{1-q^{\bar\nu_{k-2}-\bar\nu_{k-1}+1}}{1-q^{\la_{k-1}-\la_{k}}}
		=1,
	\end{align*}
	so we see that \eqref{beta_main_equation_in_proof} holds.

	\smallskip

	\textbf{3.}
	If $k=1$ and $m<j$,
	then $V_j$ can be either 0 or 1, and 
	\eqref{beta_main_equation_in_proof} now looks as
	\begin{align*}
		U_{[1,m]}+(a_j\be)^{-1}\sum_{s=1}^{m-1}U_{[1,s-1]\cup[s+1,m]}
		+(a_j\be)^{-1} U_{[1,m-1]}=1.
	\end{align*}
	This identity
	is established 
	similarly to the subcase 1. Namely, one readily sees that
	\begin{align*}
		U_{[1,m]}&=
		\frac{1 - q^{\bar\nu_{m}-\la_{m + 1}}}{1-q^{\la_{m} - \la_{m + 1} +1}}
		q^{\sum_{i = 1}^{m}(\la_{i} - \bar\nu_{i} + 1)};
		\\
		U_{[1,s-1]\cup[s+1,m]}&=(a_j\be)
		\frac{(1 - q^{\bar\nu_{m} - \la_{m +1}})(1 - q^{\la_{s} - \bar\nu_{s} + 1})}
		{1 - q^{\la_{m} - \la_{m+1} + 1}}
		q^{\sum_{i = s+1}^{m} (\la_{i} - \bar\nu_{i} + 1)};
		\\
		U_{[1,m-1]}&=(a_j\be)\frac{1-q^{\la_m-\bar\nu_{m}+1}}{1 - q^{\la_{m} - \la_{m+1} + 1}},
	\end{align*}
	and the sum of these quantities telescopes and gives 1.

	\smallskip

	\textbf{4.} If $k=1$ and $m=j$, this means that necessarily
	$V_j=1$, and 
	the only term that enters \eqref{beta_main_equation_in_proof} is
	$U_{[1,j-1]}=\be a_j$,
	so the desired identity also holds.

	\smallskip 
		
	We have now established
	the desired identity
	in the particular case $\nu=\la+\sum_{i=k}^{m}\mathrm{e}_i$. 
	In the general case there could be several consecutive groups of
	particles forming the move $\la\to\nu$ at level $j$. 
	Let there be gaps of at least two not moving particles
	between neighboring moving groups.
	Then, by the product nature of the quantities
	$\psi$ and $\psi'$ \eqref{P_one_variable}, \eqref{Q_one_dual_spec},
	as well as by the independence of propagation for different islands at level $j-1$, 
	cf. Fig.~\ref{fig:QqrRSK_beta},
	the sum in the left-hand side of 
	\eqref{beta_main_equation_in_proof}
	can clearly be represented as a product 
	of sums corresponding to 
	individual groups of moving particles.
	Each such individual sum is the same as in 
	one of the 
	subcases 1--4 above, and therefore is equal to 1.
	This implies \eqref{beta_main_equation_in_proof}
	in the case when moving groups at level $j$
	are sufficiently far apart.\begin{figure}[htbp]
		\begin{adjustbox}{max height=.136\textwidth}
		\begin{tikzpicture}
			[scale=.7, thick]
			\def\cir{.2}
			\def\ysh{6}
			\def\x{1.8}
			\def\y{2.2}
			\draw[fill] (-6*\x,0) circle(\cir) node [below,yshift=-\ysh] {};
			\draw[fill] (-4*\x,0) circle(\cir) node [below,yshift=-\ysh] {};
			\draw[fill] (-2*\x,0) circle(\cir) node [below,yshift=-\ysh] {$\bar\la_{m-1}+{\color{blue}1}$};
			\draw[fill] (0,0) circle(\cir) node [below,yshift=-\ysh] {$\bar\la_{s}+{\color{blue}1}$};
			\draw[fill] (2*\x,0) circle(\cir) node [below,yshift=-\ysh] {$\bar\la_{s-1}+{\color{blue}1}$};
			\draw[fill] (4*\x,0) circle(\cir) node [below,yshift=-\ysh] {\ldots};
			\draw[fill] (6*\x,0) circle(\cir) node [below,yshift=-\ysh] {$\bar\la_{k}+{\color{blue}1}$};
			\draw[fill] (8*\x,0) circle(\cir) node [below,yshift=-\ysh] {};
			\draw[fill] (10*\x,0) circle(\cir) node [below,yshift=-\ysh] {};
			\draw[fill] (-7*\x,1*\y) circle(\cir) node [above,yshift=\ysh] {};
			\draw[fill] (-5*\x,1*\y) circle(\cir) node [above,yshift=\ysh] {};
			\draw[fill] (-3*\x,1*\y) circle(\cir) node [above,yshift=\ysh] {$\la_{m}+{\color{blue}1}$};
			\draw[fill] (-1*\x,1*\y) circle(\cir) node [above,yshift=\ysh] {\ldots};
			\draw[fill] (1*\x,1*\y) circle(\cir) node [above,yshift=\ysh] {$\la_{s}$};
			\draw[fill] (3*\x,1*\y) circle(\cir) node [above,yshift=\ysh] {\ldots};
			\draw[fill] (5*\x,1*\y) circle(\cir) node [above,yshift=\ysh] {\ldots};
			\draw[fill] (7*\x,1*\y) circle(\cir) node [above,yshift=-2.2+\ysh] {$\la_{k}+{\color{blue}1}$};
			\draw[fill] (9*\x,1*\y) circle(\cir) node [above,yshift=-2.2+\ysh] {};
			\draw[fill] (11*\x,1*\y) circle(\cir) node [above,yshift=-2.2+\ysh] {};
			% \node at (9.5*\x,0) {$\bar\la+({\color{blue}\bar\nu-\bar\la})$};
			% \node at (9.5*\x,\y) {$\la+({\color{blue}\nu-\la})$};
			\draw[ultra thick, dotted] (-2*\x,0) -- (6*\x,0);
			\draw[ultra thick, dotted] (-3*\x,\y) -- (-1*\x,\y);
			\draw[ultra thick, dotted] (3*\x,\y) -- (7*\x,\y);
			\node (labk1) at (6*\x,0) {}; 
			\node (labk2) at (4*\x,0) {};
			\node (labk3) at (2*\x,0) {};
			\node (labk4) at (0*\x,0) {};
			\node (labk5) at (-2*\x,0) {};
			\node (lak1) at (7*\x,\y) {};
			\node (lak2) at (5*\x,\y) {};
			\node (lak3) at (3*\x,\y) {};
			\node (lak4) at (-1*\x,\y) {};
			\node (lak5) at (-3*\x,\y) {};
			\draw[->,very thick] (labk1) -- (lak1) node[rectangle,draw=black,fill=gray!20!white] [midway] {$+1$};
			\draw[->,very thick] (labk2) -- (lak2) node[rectangle,draw=black,fill=gray!20!white] [midway] {$+1$};
			\draw[->,very thick] (labk3) -- (lak3) node[rectangle,draw=black,fill=gray!20!white] [midway] {$+1$};
			\draw[->,very thick] (labk4) -- (lak4) node[rectangle,draw=black,fill=gray!20!white] [midway] {$+1$};
			\draw[->,very thick] (labk5) -- (lak5) node[rectangle,draw=black,fill=gray!20!white] [midway] {$+1$};
			% \draw[very thick, dotted] (sb) -- (ba)
			% node[rectangle,draw=black,fill=gray!20!white] [midway] {block};
		\end{tikzpicture}
		\end{adjustbox}
		\caption{Two islands at level $j$ corresponding to a single
		island at level $j-1$.}
		\label{fig:beta_main_equation_in_proof1}
	\end{figure}
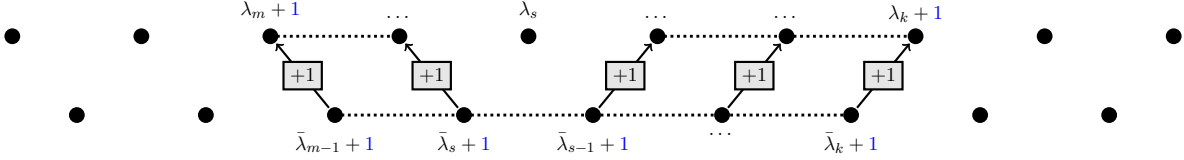
	\begin{figure}[htbp]
		\begin{tabular}{cc}
		\framebox{\begin{adjustbox}{max height=.126\textwidth}
		\begin{tikzpicture}
			[scale=.7, thick]
			\def\cir{.2}
			\def\ysh{6}
			\def\x{1.8}
			\def\y{2.2}
			% \draw[fill] (-6*\x,0) circle(\cir) node [below,yshift=-\ysh] {};
			\draw[fill] (-4*\x,0) circle(\cir) node [below,yshift=-\ysh] {};
			\draw[fill] (-2*\x,0) circle(\cir) node [below,yshift=-\ysh] {$\bar\la_{m-1}+{\color{blue}1}$};
			\draw[fill] (0,0) circle(\cir) node [below,yshift=-\ysh] {$\bar\la_{s}+{\color{blue}1}$};
			\draw[fill] (2*\x,0) circle(\cir) node [below,yshift=-\ysh] {$\bar\la_{s-1}$};
			% \draw[fill] (4*\x,0) circle(\cir) node [below,yshift=-\ysh] {\ldots};
			% \draw[fill] (6*\x,0) circle(\cir) node [below,yshift=-\ysh] {$\bar\la_{k}$};
			% \draw[fill] (8*\x,0) circle(\cir) node [below,yshift=-\ysh] {};
			% \draw[fill] (10*\x,0) circle(\cir) node [below,yshift=-\ysh] {};
			% \draw[fill] (-7*\x,1*\y) circle(\cir) node [above,yshift=\ysh] {};
			\draw[fill] (-5*\x,1*\y) circle(\cir) node [above,yshift=\ysh] {};
			\draw[fill] (-3*\x,1*\y) circle(\cir) node [above,yshift=\ysh] {$\la_{m}+{\color{blue}1}$};
			\draw[fill] (-1*\x,1*\y) circle(\cir) node [above,yshift=\ysh] {$\la_{s+1}+{\color{blue}1}$};
			\draw[fill] (1*\x,1*\y) circle(\cir) node [above,yshift=\ysh] {$\la_{s}$};
			\draw[fill] (3*\x,1*\y) circle(\cir) node [above,yshift=\ysh] {\ldots};
			% \draw[fill] (5*\x,1*\y) circle(\cir) node [above,yshift=\ysh] {\ldots};
			% \draw[fill] (7*\x,1*\y) circle(\cir) node [above,yshift=-2.2+\ysh] {$\la_{k}$};
			% \draw[fill] (9*\x,1*\y) circle(\cir) node [above,yshift=-2.2+\ysh] {};
			% \draw[fill] (11*\x,1*\y) circle(\cir) node [above,yshift=-2.2+\ysh] {};
			% \node at (9.5*\x,0) {$\bar\la+({\color{blue}\bar\nu-\bar\la})$};
			% \node at (9.5*\x,\y) {$\la+({\color{blue}\nu-\la})$};
			\draw[ultra thick, dotted] (-2*\x,0) -- (0*\x,0);
			\draw[ultra thick, dotted] (-3*\x,\y) -- (-1*\x,\y);
			% \draw[ultra thick, dotted] (3*\x,\y) -- (7*\x,\y);
			% \node (labk1) at (6*\x,0) {}; 
			% \node (labk2) at (4*\x,0) {};
			% \node (labk3) at (2*\x,0) {};
			\node (labk4) at (0*\x,0) {};
			\node (labk5) at (-2*\x,0) {};
			% \node (lak1) at (7*\x,\y) {};
			% \node (lak2) at (5*\x,\y) {};
			% \node (lak3) at (3*\x,\y) {};
			\node (lak4) at (-1*\x,\y) {};
			\node (lak5) at (-3*\x,\y) {};
			% \draw[->,very thick] (labk1) -- (lak1) node[rectangle,draw=black,fill=gray!20!white] [midway] {$+1$};
			% \draw[->,very thick] (labk2) -- (lak2) node[rectangle,draw=black,fill=gray!20!white] [midway] {$+1$};
			% \draw[->,very thick] (labk3) -- (lak3) node[rectangle,draw=black,fill=gray!20!white] [midway] {$+1$};
			\draw[->,very thick] (labk4) -- (lak4) node[rectangle,draw=black,fill=gray!20!white] [midway] {$+1$};
			\draw[->,very thick] (labk5) -- (lak5) node[rectangle,draw=black,fill=gray!20!white] [midway] {$+1$};
			% \draw[very thick, dotted] (sb) -- (ba)
			% node[rectangle,draw=black,fill=gray!20!white] [midway] {block};
		\end{tikzpicture}
		\end{adjustbox}}
		&
		\framebox{\begin{adjustbox}{max height=.126\textwidth}
		\begin{tikzpicture}
			[scale=.7, thick]
			\def\cir{.2}
			\def\ysh{6}
			\def\x{1.8}
			\def\y{2.2}
			% \draw[fill] (-6*\x,0) circle(\cir) node [below,yshift=-\ysh] {};
			% \draw[fill] (-4*\x,0) circle(\cir) node [below,yshift=-\ysh] {};
			% \draw[fill] (-2*\x,0) circle(\cir) node [below,yshift=-\ysh] {$\bar\la_{m-1}$};
			\draw[fill] (0,0) circle(\cir) node [below,yshift=-\ysh] {$\bar\la_{s}$};
			\draw[fill] (2*\x,0) circle(\cir) node [below,yshift=-\ysh] {$\bar\la_{s-1}+{\color{blue}1}$};
			\draw[fill] (4*\x,0) circle(\cir) node [below,yshift=-\ysh] {\ldots};
			\draw[fill] (6*\x,0) circle(\cir) node [below,yshift=-\ysh] {$\bar\la_{k}+{\color{blue}1}$};
			\draw[fill] (8*\x,0) circle(\cir) node [below,yshift=-\ysh] {};
			% \draw[fill] (10*\x,0) circle(\cir) node [below,yshift=-\ysh] {};
			% \draw[fill] (-7*\x,1*\y) circle(\cir) node [above,yshift=\ysh] {};
			% \draw[fill] (-5*\x,1*\y) circle(\cir) node [above,yshift=\ysh] {};
			% \draw[fill] (-3*\x,1*\y) circle(\cir) node [above,yshift=\ysh] {$\la_{m}$};
			\draw[fill] (-1*\x,1*\y) circle(\cir) node [above,yshift=\ysh] {\ldots};
			\draw[fill] (1*\x,1*\y) circle(\cir) node [above,yshift=\ysh] {$\la_{s}$};
			\draw[fill] (3*\x,1*\y) circle(\cir) node [above,yshift=\ysh] {$\la_{s-1}+{\color{blue}1}$};
			\draw[fill] (5*\x,1*\y) circle(\cir) node [above,yshift=\ysh] {\ldots};
			\draw[fill] (7*\x,1*\y) circle(\cir) node [above,yshift=-2.2+\ysh] {$\la_{k}+{\color{blue}1}$};
			\draw[fill] (9*\x,1*\y) circle(\cir) node [above,yshift=-2.2+\ysh] {};
			% \draw[fill] (11*\x,1*\y) circle(\cir) node [above,yshift=-2.2+\ysh] {};
			% \node at (9.5*\x,0) {$\bar\la+({\color{blue}\bar\nu-\bar\la})$};
			% \node at (9.5*\x,\y) {$\la+({\color{blue}\nu-\la})$};
			\draw[ultra thick, dotted] (2*\x,0) -- (6*\x,0);
			% \draw[ultra thick, dotted] (-3*\x,\y) -- (-1*\x,\y);
			\draw[ultra thick, dotted] (3*\x,\y) -- (7*\x,\y);
			\node (labk1) at (6*\x,0) {}; 
			\node (labk2) at (4*\x,0) {};
			\node (labk3) at (2*\x,0) {};
			% \node (labk4) at (0*\x,0) {};
			% \node (labk5) at (-2*\x,0) {};
			\node (lak1) at (7*\x,\y) {};
			\node (lak2) at (5*\x,\y) {};
			\node (lak3) at (3*\x,\y) {};
			% \node (lak4) at (-1*\x,\y) {};
			% \node (lak5) at (-3*\x,\y) {};
			\draw[->,very thick] (labk1) -- (lak1) node[rectangle,draw=black,fill=gray!20!white] [midway] {$+1$};
			\draw[->,very thick] (labk2) -- (lak2) node[rectangle,draw=black,fill=gray!20!white] [midway] {$+1$};
			\draw[->,very thick] (labk3) -- (lak3) node[rectangle,draw=black,fill=gray!20!white] [midway] {$+1$};
			% \draw[->,very thick] (labk4) -- (lak4) node[rectangle,draw=black,fill=gray!20!white] [midway] {$+1$};
			% \draw[->,very thick] (labk5) -- (lak5) node[rectangle,draw=black,fill=gray!20!white] [midway] {$+1$};
			% \draw[very thick, dotted] (sb) -- (ba)
			% node[rectangle,draw=black,fill=gray!20!white] [midway] {block};
		\end{tikzpicture}
		\end{adjustbox}}
		\end{tabular}
		\caption{Two configurations giving the same contribution
		as the one on Fig.~\ref{fig:beta_main_equation_in_proof1}.}
		\label{fig:beta_main_equation_in_proof2}
	\end{figure}
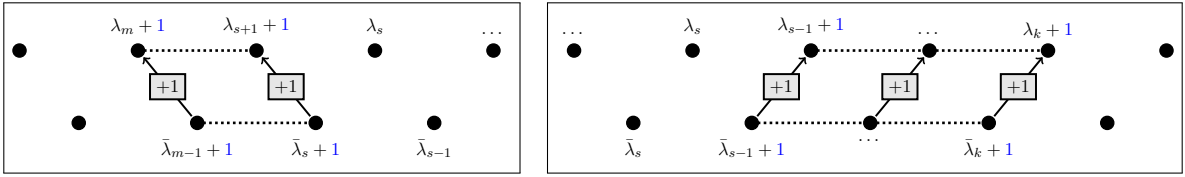

	Finally, it remains to check 
	\eqref{beta_main_equation_in_proof} 
	in the case
	when there could be moving
	groups at level $j$
	separated by one not moving particle. 
	Consider two such neighboring groups.
	The only configuration
	of moves at level $j-1$
	(corresponding to these two groups at level $j$)
	that could prevent 
	the sum in \eqref{beta_main_equation_in_proof}
	to be of product form is given
	on Fig.~\ref{fig:beta_main_equation_in_proof1}.
	However, one readily sees that the contribution of this configuration 
	is the same as the product of contributions of two configurations
	on Fig.~\ref{fig:beta_main_equation_in_proof2}.
	Indeed, factors involving the
	quantities $\psi$
	are already in a product form, and the 
	remaining factors (coming from $\mathscr{U}_j$ and the quantities $\psi'$)
	are
	\begin{align*}
		&
		\underbrace{
		\frac{q^{\la_k-\bar\la_k}-q^{\bar\la_{k-1}-\bar\la_k}}{1-q^{\bar\la_{k-1}-\bar\la_k}}
		q^{\sum_{i=k+1}^{s-1}(\la_i-\bar\la_i)}
		(1-q^{\la_s-\bar\la_s})
		}_{\text{$(1-\mathsf{f}_{k})(1-\mathsf{g}_{k+1})\ldots(1-\mathsf{g}_{s-1})
		\mathsf{g}_{s}$ on Fig.~\ref{fig:beta_main_equation_in_proof1}}}\cdot
		\frac{1-q^{\bar\la_{k-1}-\bar\la_{k}}}
		{(1-q^{\la_{k-1}-\la_k})(1-q^{\la_{s}-\la_{s+1}})}
		\\&\hspace{20pt}=
		\underbrace{\frac{1-q^{\la_s-\bar\la_s}}{1-q^{\bar\la_{s-1}-\bar\la_s}}}_{\text{$\mathsf{f}_{s}$
		on Fig.~\ref{fig:beta_main_equation_in_proof2}, left}}
		\cdot \frac{1-q^{\bar\la_{s-1}-\bar\la_{s}}}{1-q^{\la_{s}-\la_{s+1}}}
		\times
		\underbrace{
		\frac{q^{\la_k-\bar\la_k}-q^{\bar\la_{k-1}-\bar\la_k}}{1-q^{\bar\la_{k-1}-\bar\la_k}}
		q^{\sum_{i=k+1}^{s-1}(\la_i-\bar\la_i)}
		}_{\text{$(1-\mathsf{f}_{k})(1-\mathsf{g}_{k+1})\ldots(1-\mathsf{g}_{s-1})$
		on Fig.~\ref{fig:beta_main_equation_in_proof2}, right}}
		\cdot 
		\frac{1-q^{\bar\la_{k-1}-\bar\la_{k}}}{1-q^{\la_{k-1}-\la_k}}.
	\end{align*}
	Note that we have expressed everything in terms of 
	signatures $\la$ and $\bar\la$ because the signatures
	$\bar\nu$ differ on
	Fig.~\ref{fig:beta_main_equation_in_proof1}
	and
	Fig.~\ref{fig:beta_main_equation_in_proof2}.
	
	Therefore, in the last remaining case we can still rewrite 
	\eqref{beta_main_equation_in_proof} in a product form.
	This completes the proof of the theorem.
\end{proof}

\begin{remark}[Schur degeneration]\label{rmk:schur_degeneration}
	If one sets $q=0$, then 
	in a generic situation 
	(when
	particles at levels $j-1$ and $j$
	are sufficiently far apart from each other)
	all quantities $\mathsf{f}_i$
	and
	$\mathsf{g}_i$ become equal to one,
	see \eqref{f_g_definition}.
	One readily sees that the dynamics 
	$\Qqbrow$
	reduces to 
	the dynamics 
	$\Qbrow$
	on Schur processes. The latter dynamics is 
	based on the classical Robinson--Schensted--Knuth
	row insertion (\S \ref{sub:rsk_type_dynamics_schur}).
\end{remark}

% subsection row_insertion_dynamics_q__hatbe_q-rrsk_ (end)

\subsection{Bernoulli $q$-PushTASEP} % (fold)
\label{sub:bernoulli_q_pushtasep}

One can readily check that under the dynamics 
$\Qqbrow$
we have just constructed, 
the rightmost $N$ particles $\la^{(j)}_{1}$
of the interlacing array
evolve in a \emph{marginally Markovian manner}
(i.e., their evolution does not depend
on the dynamics of the rest of the interlacing
array). 
Namely, at each discrete time step $t\to t+1$
the bottommost particle
is updated as
$\la^{(1)}_{1}(t+1)=\la^{(1)}_{1}(t)+V_1$, and for any $j=2,\ldots,N$:
\begin{itemize}
	\item If $\la^{(j-1)}_{1}$ has not moved, then the rightmost
	particle at level $j$ is updated as
	\begin{align*}
		\la^{(j)}_{1}(t+1)=\la^{(j)}_{1}(t)+V_j;
	\end{align*}
	\item
	If $\la^{(j-1)}_{1}$ has moved to the right by one,
	then the same particle is updated as
	\begin{align*}
		\la^{(j)}_{1}(t+1)=\la^{(j)}_{1}(t)+V_j+(1-V_j)\cdot\mathbf{1}_{\text{pushing
		by $\la^{(j-1)}_{1}$}},
	\end{align*}
	where pushing by $\la^{(j-1)}_{1}$
	happens with probability
	$1-\mathsf{f}_1=q^{\la_1^{(j)}(t)-\la_{1}^{(j-1)}(t)}$
	which depends only on the rightmost particles of the array.
\end{itemize}
(Recall that the $V_i$'s are independent Bernoulli random variables which are 
independently resampled each step of the discrete time.)
This evolution of the 
rightmost particles 
$\la^{(j)}_{1}$,
$1\le j\le N$,
leads to a new interacting particle system
on $\Z$ which we call the (\emph{discrete time}) \emph{Bernoulli \mbox{$q$-PushTASEP}}.
We discuss this process in detail in \S \ref{sec:moments_and_fredholm_determinants_for_bernoulli_}
below.

% subsection bernoulli_q_pushtasep (end)

\begin{figure}[htbp]
	\begin{adjustbox}{max height=.136\textwidth}
	\begin{tikzpicture}
		[scale=.7, thick]
		\def\cir{.17}
		\def\ysh{6}
		\def\x{.8}
		\def\y{2.2}
		\draw[fill, red!90!black] (-22*\x,0) circle(1.5*\cir) node [below,yshift=-\ysh] {};
		\draw[fill, red!90!black] (-20*\x,0) circle(1.5*\cir) node [below,yshift=-\ysh] {};
		\draw[fill] (-18*\x,0) circle(\cir) node [below,yshift=-\ysh] {$\color{blue}+1$};
		\draw[fill] (-16*\x,0) circle(\cir) node [below,yshift=-\ysh] {$\color{blue}+1$};
		\draw[fill] (-14*\x,0) circle(\cir) node [below,yshift=-\ysh] {$\color{blue}+1$};
		\draw[fill] (-12*\x,0) circle(\cir) node [below,yshift=-\ysh] {$\color{blue}+1$};
		\draw[fill, red!90!black] (-10*\x,0) circle(1.5*\cir) node [below,yshift=-\ysh] {};
		\draw[fill, red!90!black] (-8*\x,0) circle(1.5*\cir) node [below,yshift=-\ysh] {};
		\draw[fill] (-6*\x,0) circle(\cir) node [below,yshift=-\ysh] {$\color{blue}+1$};
		\draw[fill] (-4*\x,0) circle(\cir) node [below,yshift=-\ysh] {$\color{blue}+1$};
		\draw[fill, red!90!black] (-2*\x,0) circle(1.5*\cir) node [below,yshift=-\ysh] {};
		\draw[fill, red!90!black] (0,0) circle(1.5*\cir) node [below,yshift=-\ysh] {};
		\draw[fill, red!90!black] (2*\x,0) circle(1.5*\cir) node [below,yshift=-\ysh] {};
		\draw[fill] (4*\x,0) circle(\cir) node [below,yshift=-\ysh] {$\color{blue}+1$};
		\draw[fill] (6*\x,0) circle(\cir) node [below,yshift=-\ysh] {$\color{blue}+1$};
		\draw[fill] (8*\x,0) circle(\cir) node [below,yshift=-\ysh] {$\color{blue}+1$};
		\draw[fill, red!90!black] (10*\x,0) circle(1.5*\cir) node [below,yshift=-\ysh] {};
		\draw[fill, red!90!black] (-23*\x,1*\y) circle(1.5*\cir) node [above,yshift=\ysh] {};
		\draw[fill, red!90!black] (-21*\x,1*\y) circle(1.5*\cir) node [above,yshift=\ysh] {};
		\draw[fill] (-19*\x,1*\y) circle(\cir) node [above,yshift=\ysh] {$\color{blue}+1$};
		\draw[fill] (-17*\x,1*\y) circle(\cir) node [above,yshift=\ysh] {$\color{blue}+1$};
		\draw[fill, red!90!black] (-15*\x,1*\y) circle(1.5*\cir) node [above,yshift=\ysh] {};
		\draw[fill] (-13*\x,1*\y) circle(\cir) node [above,yshift=\ysh] {$\color{blue}+1$};
		\draw[fill] (-11*\x,1*\y) circle(\cir) node [above,yshift=\ysh] {$\color{blue}+1$};
		\draw[fill, red!90!black] (-9*\x,1*\y) circle(1.5*\cir) node [above,yshift=\ysh] {};
		\draw[fill] (-7*\x,1*\y) circle(\cir) node [above,yshift=\ysh] {$\color{blue}+1$};
		\draw[fill] (-5*\x,1*\y) circle(\cir) node [above,yshift=\ysh] {$\color{blue}+1$};
		\draw[fill, red!90!black] (-3*\x,1*\y) circle(1.5*\cir) node [above,yshift=\ysh] {};
		\draw[fill, red!90!black] (-1*\x,1*\y) circle(1.5*\cir) node [above,yshift=\ysh] {};
		\draw[fill, red!90!black] (1*\x,1*\y) circle(1.5*\cir) node [above,yshift=\ysh] {};
		\draw[fill] (3*\x,1*\y) circle(\cir) node [above,yshift=\ysh] {$\color{blue}+1$};
		\draw[fill, red!90!black] (5*\x,1*\y) circle(1.5*\cir) node [above,yshift=\ysh] {};
		\draw[fill] (7*\x,1*\y) circle(\cir) node [above,yshift=-2.2+\ysh] {$\color{blue}+1$};
		\draw[fill] (9*\x,1*\y) circle(\cir) node [above,yshift=-2.2+\ysh] {$\color{blue}+1$};
		\draw[fill, red!90!black] (11*\x,1*\y) circle(1.5*\cir) node [above,yshift=-2.2+\ysh] {};
		\draw[ultra thick, dotted] (-18*\x,0) -- (-12*\x,0);
		\draw[ultra thick, dotted] (-6*\x,0) -- (-4*\x,0);
		\draw[ultra thick, dotted] (4*\x,0) -- (8*\x,0);
		\draw[ultra thick, dotted] (-19*\x,\y) -- (-17*\x,\y);
		\draw[ultra thick, dotted] (-13*\x,\y) -- (-11*\x,\y);
		\draw[ultra thick, dotted] (-7*\x,\y) -- (-5*\x,\y);
		\draw[ultra thick, dotted] (7*\x,\y) -- (9*\x,\y);
		\node (a1) at (-18*\x,0) {};
		\node (b1) at (-19*\x,\y) {};
		\draw[->,very thick] (a1) -- (b1);
		\node (a1) at (-16*\x,0) {};
		\node (b1) at (-17*\x,\y) {};
		\draw[->,very thick] (a1) -- (b1);
		\node (a1) at (-14*\x,0) {};
		\node (b1) at (-13*\x,\y) {};
		\draw[->,very thick] (a1) -- (b1);
		\node (a1) at (-12*\x,0) {};
		\node (b1) at (-11*\x,\y) {};
		\draw[->,very thick] (a1) -- (b1);
		\node (a1) at (-6*\x,0) {};
		\node (b1) at (-7*\x,\y) {};
		\draw[->,very thick] (a1) -- (b1);
		\node (a1) at (-4*\x,0) {};
		\node (b1) at (-5*\x,\y) {};
		\draw[->,very thick] (a1) -- (b1);
		\node (a1) at (-4*\x,0) {};
		\node (b1) at (-5*\x,\y) {};
		\draw[->,very thick] (a1) -- (b1);
		\node (a1) at (4*\x,0) {};
		\node (b1) at (3*\x,\y) {};
		\draw[->,very thick] (a1) -- (b1);
		\node (a1) at (6*\x,0) {};
		\node (b1) at (7*\x,\y) {};
		\draw[->,very thick] (a1) -- (b1);
		\node (a1) at (8*\x,0) {};
		\node (b1) at (9*\x,\y) {};
		\draw[->,very thick] (a1) -- (b1);
		\node (a1) at (-20*\x,0) {};
		\node (b1) at (-15*\x,\y) {};
		\draw[->, line width=3 ,red!90!black, densely dashed] (a1) -- (b1);
		\node (a1) at (-22*\x,0) {};
		\node (b1) at (-21*\x,\y) {};
		\draw[->, line width=3 ,red!90!black, densely dashed] (a1) -- (b1);
		\node (a1) at (-10*\x,0) {};
		\node (b1) at (-9*\x,\y) {};
		\draw[->, line width=3 ,red!90!black, densely dashed] (a1) -- (b1);
		\node (a1) at (-8*\x,0) {};
		\node (b1) at (-3*\x,\y) {};
		\draw[->, line width=3 ,red!90!black, densely dashed] (a1) -- (b1);
		\node (a1) at (-2*\x,0) {};
		\node (b1) at (-1*\x,\y) {};
		\draw[->, line width=3 ,red!90!black, densely dashed] (a1) -- (b1);
		\node (a1) at (0*\x,0) {};
		\node (b1) at (1*\x,\y) {};
		\draw[->, line width=3 ,red!90!black, densely dashed] (a1) -- (b1);
		\node (a1) at (2*\x,0) {};
		\node (b1) at (5*\x,\y) {};
		\draw[->, line width=3 ,red!90!black, densely dashed] (a1) -- (b1);
		\node (a1) at (10*\x,0) {};
		\node (b1) at (11*\x,\y) {};
		\draw[->, line width=3 ,red!90!black, densely dashed] (a1) -- (b1);
	\end{tikzpicture}
	\end{adjustbox}
	\caption{Complementation of propagation rules 
	turning the dynamics $\Qqbrow$
	(with move propagation given by thin solid arrows)
	into
	$\Qqbcol$
	(corresponding to thick dashed arrows).}
	\label{fig:involutive}
\end{figure}
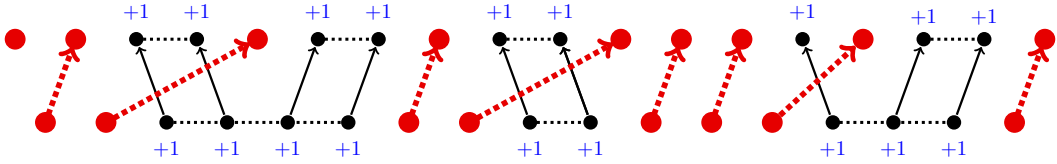

\subsection{Complementation} % (fold)
\label{sub:Complementation}

Let us take another look at propagation rules 
employed in the definition of the row insertion
dynamics
$\Qqbrow$
on $q$-Whittaker processes 
(see the beginning of \S \ref{sub:row_insertion_dynamics_q__hatbe_q-rrsk_}).
These rules state that, generically, an island
of moving particles at level $j-1$
splits (at random) into two moving
islands at level $j$
separated by exactly one staying particle
(either of two moving islands at level $j$ is allowed to be empty).
Now consider the
\emph{pattern of staying particles} at levels $j-1$ and $j$.
We see that an island$(k,m)$ (where $k\le m$) of staying particles
at level $j-1$ always gives rise to an 
island$(k+1,m)$ of staying particles at level $j$,
plus one more staying particle somewhere to the right of 
$k$ (but to the left of the next staying particle at level $j$). 
The latter staying particle (whose index is chosen at random) is precisely the one separating the 
two moving islands at level $j$. See Fig.~\ref{fig:involutive}.

The transformation of propagation rules of $\Qqbrow$
that we just described informally in fact leads to a new RSK-type
multivariate dynamics on $q$-Whittaker processes. Let us work in a more general setting:
\begin{definition}[Complementation of a dynamics]\label{def:complementation}
	Assume that $\mathscr{Q}$
	is a multivariate sequential update dynamics
	on $q$-Whittaker processes
	adding a specialization $(\hat\be)$.
	For $j=2,\ldots,N$
	and signatures $\la,\nu\in\GT_j$, $\bar\la,\bar\nu\in\GT_{j-1}$
	satisfying conditions on Fig.~\ref{fig:square}, right,
	let $\mathscr{U}_j(\la\to\nu\mid\bar\la\to\bar\nu)$
	be the corresponding conditional probabilities.
	Assume that the dynamics is \emph{translation invariant}, i.e., that the
	values $\mathscr{U}_j(\la\to\nu\mid\bar\la\to\bar\nu)$
	do not change if one adds the same number to all coordinates of all four signatures.

	For $S$ a sufficiently large positive integer, 
	define the \emph{complement conditional probabilities} as
	\begin{align*}
		\mathscr{U}_j'(\la\to\nu
		\mid\bar\la\to\bar\nu)
		:=
		(a_j\be)^{-2(|\la|-|\nu|-|\bar\la|+|\bar\nu|)-1}
		\mathscr{U}_j\Big([S-\la]\to [S+1-\nu]
		\;\Big\vert\; [S-\bar\la]\to [S+1-\bar\nu]\Big),
	\end{align*}
	where
	\begin{align*}
		[S-\la]:=\big(S-\la_j\ge S-\la_{j-1}\ge \ldots\ge S-\la_1\big)
	\end{align*}
	is the complement of the Young diagram $\la$ in the $j\times S$ rectangle,
	and similarly for
	$[S+1-\nu]$, $[S-\bar\la]$, and $[S+1-\bar\nu]$
	(hence the name ``complementation'').
	Note that these four new signatures also satisfy 
	conditions on Fig.~\ref{fig:square}, right.

	Let us denote by $\mathscr{Q}'$
	the dynamics on interlacing arrays corresponding to $\mathscr{U}_j'$, $j=2,\ldots,N$.
	Note that due to translation invariance, the complement dynamics 
	$\mathscr{Q}'$ does not depend on $S$ provided that $S$ is large enough.
\end{definition}

\begin{lemma}\label{lemma:psi_complementation}
	Let $S$ be sufficiently large.
	For $\bar\mu\in\GT_{k-1}$, $\mu\in\GT_k$
	such that
	$\bar\mu\prech\mu$, we have
	\begin{align*}
		\psi_{[S-\mu]/[S-\bar\mu]}=\psi_{\mu/\bar\mu}.
	\end{align*}
	For $\mu,\varkappa\in\GT_k$ such that 
	$\mu\precv\varkappa$, we have
	\begin{align*}
		\psi'_{[S+1-\varkappa]/[S-\mu]}=\psi'_{\varkappa/\mu}.
	\end{align*}
\end{lemma}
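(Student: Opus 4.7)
The plan is to prove both identities by direct computation from the product formulas \eqref{P_one_variable} and \eqref{Q_one_dual_spec}, reducing each to an elementary reindexing combined, in the first case, with the symmetry $\binom{n}{r}_q=\binom{n}{n-r}_q$ of the $q$-binomial coefficients.

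For the first identity, I would set $\alpha:=[S-\mu]\in\GT_k$ and $\beta:=[S-\bar\mu]\in\GT_{k-1}$, so that $\alpha_i=S-\mu_{k+1-i}$ and $\beta_i=S-\bar\mu_{k-i}$. The interlacing $\beta\prech\alpha$ is immediate from $\bar\mu\prech\mu$ by reversing all the inequalities. Plugging into $\psi_{\alpha/\beta}=\prod_{i=1}^{k-1}\binom{\alpha_i-\alpha_{i+1}}{\alpha_i-\beta_i}_q$ and computing the two differences gives $\alpha_i-\alpha_{i+1}=\mu_{k-i}-\mu_{k+1-i}$ and $\alpha_i-\beta_i=\bar\mu_{k-i}-\mu_{k+1-i}$. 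Reindexing by $j=k-i$ turns this product into $\prod_{j=1}^{k-1}\binom{\mu_j-\mu_{j+1}}{\bar\mu_j-\mu_{j+1}}_q$, and applying the $q$-binomial symmetry rewrites each factor as $\binom{\mu_j-\mu_{j+1}}{\mu_j-\bar\mu_j}_q$, which is exactly $\psi_{\mu/\bar\mu}$.

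For the second identity, I would set $\alpha:=[S+1-\varkappa]\in\GT_k$ and $\beta:=[S-\mu]\in\GT_k$, so that $\alpha_i=S+1-\varkappa_{k+1-i}$ and $\beta_i=S-\mu_{k+1-i}$. Then $\alpha_i-\beta_i=1-(\varkappa_{k+1-i}-\mu_{k+1-i})\in\{0,1\}$, verifying $\beta\precv\alpha$; the ``$+1$'' shift in $[S+1-\varkappa]$ versus $[S-\mu]$ is precisely what makes the vertical-strip condition complement to a vertical-strip condition. The indices $i$ contributing to $\psi'_{\alpha/\beta}$ are those with $\alpha_i=\beta_i$ and $\alpha_{i+1}=\beta_{i+1}+1$; unpacking these equalities via $j=k-i$ yields exactly $\varkappa_{j+1}=\mu_{j+1}+1$ and $\varkappa_j=\mu_j$, which are the contributing indices for $\psi'_{\varkappa/\mu}$. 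The corresponding factor $1-q^{\beta_i-\beta_{i+1}}=1-q^{\mu_{k-i}-\mu_{k+1-i}}=1-q^{\mu_j-\mu_{j+1}}$ also matches term by term.

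The only delicate point is the bookkeeping around complementation: one must verify that $\prech$ is preserved (with the number of parts dropping by one) in the first case and that $\precv$ is preserved (with the number of parts unchanged) in the second case, and this is what forces the asymmetry between the shifts $S$ and $S+1$. One also needs $S$ large enough that all complementary coordinates remain nonnegative, which is the reason for the hypothesis in the statement. Beyond these index-level sanity checks I do not anticipate any serious obstacle.
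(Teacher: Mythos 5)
Your proof is correct and takes exactly the route the paper intends: the paper's entire proof is the single sentence ``A straightforward verification using definitions \eqref{P_one_variable}, \eqref{Q_one_dual_spec},'' and your reindexing-plus-$q$-binomial-symmetry computation is precisely that verification carried out in detail. Both the index bookkeeping and the factor $1-q^{\beta_i-\beta_{i+1}}=1-q^{\mu_j-\mu_{j+1}}$ check out.
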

\begin{proof}
	A straightforward verification using definitions 
	\eqref{P_one_variable}, \eqref{Q_one_dual_spec}.
\end{proof}

\begin{proposition}\label{prop:complementation_dynamics_is_good}
	If $\mathscr{Q}$ is a multivariate
	sequential update 
	dynamics
	adding a specialization $(\hat\be)$,
	then so is the complement dynamics $\mathscr{Q}'$.
\end{proposition}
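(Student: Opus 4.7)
The plan is to verify the three defining conditions that make $\mathscr{Q}'$ a sequential update dynamics adding $(\hat\be)$: nonnegativity of $\mathscr{U}_j'$, summation to one over $\nu$ (stochasticity), and the main equation \eqref{beta_main_equation}. I will write $\tilde\la:=[S-\la]$, $\tilde\nu:=[S+1-\nu]$, $\tilde{\bar\la}:=[S-\bar\la]$, $\tilde{\bar\nu}:=[S+1-\bar\nu]$ throughout. A quick check (tracing the definitions of horizontal and vertical strips coordinate-by-coordinate) shows that complementation interchanges $\prech$ with $\prech$ and $\precv$ with $\precv$, so the assignments $\bar\la\mapsto\tilde{\bar\la}$ and $\nu\mapsto\tilde\nu$ give bijections between the sets of four-signature configurations satisfying Fig.~\ref{fig:square}, right, on the two sides of the complementation. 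Nonnegativity of $\mathscr{U}_j'$ is immediate from $\mathscr{U}_j\ge 0$ and $a_j,\be>0$.

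For the main equation, the key computational observation is the exponent identity
\[
|\la|-|\nu|-|\bar\la|+|\bar\nu| \;=\; -\bigl(|\tilde\la|-|\tilde\nu|-|\tilde{\bar\la}|+|\tilde{\bar\nu}|\bigr)-1,
\]
obtained by substituting $|\la|=jS-|\tilde\la|$ and the three analogous formulas. Combining the defining prefactor $(a_j\be)^{-2(|\la|-|\nu|-|\bar\la|+|\bar\nu|)-1}$ of $\mathscr{U}_j'$ with the weight $(\be a_j)^{|\la|-|\nu|-|\bar\la|+|\bar\nu|}$ from the main equation collapses these to $(\be a_j)^{|\tilde\la|-|\tilde\nu|-|\tilde{\bar\la}|+|\tilde{\bar\nu}|}$. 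Lemma \ref{lemma:psi_complementation} then rewrites $\psi_{\la/\bar\la}=\psi_{\tilde\la/\tilde{\bar\la}}$ and $\psi'_{\bar\nu/\bar\la}=\psi'_{\tilde{\bar\nu}/\tilde{\bar\la}}$, so after reindexing $\bar\la\to\tilde{\bar\la}$ the LHS of \eqref{beta_main_equation} for $\mathscr{Q}'$ becomes exactly the LHS of the main equation for $\mathscr{Q}$ applied to the tilde-variables. Invoking the assumption that $\mathscr{Q}$ satisfies the main equation gives $(1+\be a_j)^{-1}\psi_{\tilde\nu/\tilde{\bar\nu}}\psi'_{\tilde\nu/\tilde\la}$, and a final application of Lemma \ref{lemma:psi_complementation} converts this to the required $(1+\be a_j)^{-1}\psi_{\nu/\bar\nu}\psi'_{\nu/\la}$.

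The main obstacle is stochasticity $\sum_{\nu}\mathscr{U}_j'(\la\to\nu\mid\bar\la\to\bar\nu)=1$: because the prefactor $(a_j\be)^{-2(|\la|-|\nu|-|\bar\la|+|\bar\nu|)-1}$ depends on $|\nu|$, the sum does not immediately reduce to $\sum_{\tilde\nu}\mathscr{U}_j=1$. To handle it I will use the structural fact (inherited from Remark \ref{rmk:one_level_dynamics} and the sequential update form of $\mathscr{Q}$) that the ``new Bernoulli'' increment $V_j:=|\nu|-|\la|-|\bar\nu|+|\bar\la|$ takes values only in $\{0,1\}$ with conditional probability $\be a_j/(1+\be a_j)$ of being $1$. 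The exponent identity above reads $V_j'=1-V_j^{(\tilde)}$ under complementation, and the only two nonzero values of the $\mathscr{U}_j'$-prefactor are $(a_j\be)^{-1}$ (when $V_j'=0$) and $a_j\be$ (when $V_j'=1$). Summing gives
\[
(a_j\be)^{-1}\cdot\frac{\be a_j}{1+\be a_j} \;+\; (a_j\be)\cdot\frac{1}{1+\be a_j} \;=\; \frac{1}{1+\be a_j}+\frac{\be a_j}{1+\be a_j} \;=\; 1,
\]
which establishes stochasticity (and incidentally confirms that the complement dynamics itself has the correct Bernoulli marginal $\be a_j/(1+\be a_j)$).
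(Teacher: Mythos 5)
Your core argument — the exponent bookkeeping $|\la|-|\nu|-|\bar\la|+|\bar\nu| = -(|\tilde\la|-|\tilde\nu|-|\tilde{\bar\la}|+|\tilde{\bar\nu}|)-1$ together with Lemma~\ref{lemma:psi_complementation} to transport the main equation \eqref{beta_main_equation} — is exactly the paper's proof. Nonnegativity and the interlacing bijection you note are also standard and correct.

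Where your approach diverges is stochasticity, which you rightly observe does not follow trivially because the prefactor $(a_j\be)^{-2(|\la|-|\nu|-|\bar\la|+|\bar\nu|)-1}$ depends on $\nu$; the paper's proof silently skips this point. However, your resolution invokes the claim that conditionally on the square $(\bar\la,\la,\bar\nu)$ the increment $V_j:=|\nu|-|\la|-|\bar\nu|+|\bar\la|$ is $\{0,1\}$-valued Bernoulli with parameter $\be a_j$. That is an RSK-type feature (cf.\ the discussion right after Definition~\ref{def:rsk}), not a consequence of Remark~\ref{rmk:one_level_dynamics} together with the sequential-update form alone. For the push-block dynamics $\Qbpb$ of \S~\ref{sub:push_block_dynamics_schur} (which is a translation-invariant sequential update dynamics adding $(\hat\be)$ but is not RSK-type, see Remark~\ref{rmk:push_block_RSK_different}), $V_j$ can take values below $0$ and above $1$, and the two-term computation
\begin{align*}
(a_j\be)^{-1}\cdot\frac{\be a_j}{1+\be a_j}+(a_j\be)\cdot\frac{1}{1+\be a_j}=1
\end{align*}
does not apply. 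In fact your own calculation shows that $\sum_\nu\mathscr{U}_j'=1$ is equivalent to $\E\bigl[(\be a_j)^{1-2\tilde V_j}\bigr]=1$ under $\mathscr{U}_j(\tilde\la\to\cdot\mid\tilde{\bar\la}\to\tilde{\bar\nu})$, which pinpoints precisely which hypothesis on $V_j$ is needed. So your argument establishes the proposition under the additional RSK-type (Bernoulli increment) hypothesis — which covers the paper's actual application, the complementation of $\Qqbrow$ — but not in the generality of the stated hypotheses. This is a useful observation worth noting rather than a flaw in your reasoning: you have identified, and correctly handled in the relevant case, a step the paper's own one-line proof glosses over.
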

\begin{proof}
	One can show that the 
	complement dynamics $\mathscr{Q}'$
	satisfies the same main equations
	\eqref{beta_main_equation}
	as the original dynamics 
	$\mathscr{Q}$. 
	Indeed, Lemma \ref{lemma:psi_complementation} ensures that 
	the coefficients
	$\psi_{\la/\bar\la}\psi'_{\bar\nu/\bar\la}$
	and
	$\psi_{\nu/\bar\nu}\psi'_{\nu/\la}$
	do not change under complementation,
	and powers of $(a_j\be)$ 
	also transform as they should:
	\begin{align*}
		&\mathscr{U}_j'(\la\to\nu
		\mid\bar\la\to\bar\nu)
		(a_j\be)^{|\la|-|\nu|-|\bar\la|+|\bar\nu|}
		\\&\hspace{15pt}=
		(a_j\be)^{-|\la|+|\nu|+|\bar\la|-|\bar\nu|-1}
		\mathscr{U}_j\Big([S-\la]\to [S+1-\nu]
		\;\Big\vert\; [S-\bar\la]\to [S+1-\bar\nu]\Big)
		\\&\hspace{15pt}=
		(a_j\be)^{\big|[S-\la]\big|-\big|[S+1-\nu]\big|-\big|[S-\bar\la]\big|+\big|[S+1-\bar\nu]\big|}
		\mathscr{U}_j\Big([S-\la]\to [S+1-\nu]
		\;\Big\vert\; [S-\bar\la]\to [S+1-\bar\nu]\Big).
	\end{align*}
	This establishes the main equations for the
	complement dynamics.
\end{proof}

% subsection Complementation (end)

\subsection{Column insertion dynamics $\Qqbcol$} % (fold)
\label{sub:column_insertion_dynamics_q__hatbe_q-rrsk_}

Clearly, the row insertion dynamics 
$\Qqbrow$
on $q$-Whittaker processes is translation invariant
(in the sense of Definition \ref{def:complementation}),
so one can define the complement dynamics.
Denote it by $\Qqbcol$.
Let us describe (in an explicit way) 
the evolution of the interlacing array
under this new dynamics
during one step of the discrete time. See Fig.~\ref{fig:col_beta} for an
example.

As before, a part of randomness comes from
independent Bernoulli random variables
$V_j\in\{0,1\}$ with $P(V_j=0)=1/(1+\be a_j)$, $j=1,\ldots,N$.
The bottommost particle of the interlacing array is updated as $\nu^{(1)}_{1}=\la^{(1)}_{1}+V_1$.
Sequentially for each $j=2,\ldots,N$, given the movement $\bar\la\to \bar\nu$ at level $j-1$,
we will randomly update $\la\to\nu$ at level $j$. Let us denote
(as usual, $\bar\nu_0=+\infty$)
\begin{align}\label{f_g_prime_definition}
	\mathsf{f}_k'=\mathsf{f}_k'(\bar\nu,\la):=\frac{1-q^{\bar\nu_{k-1}-\la_{k}}}{1-q^{\bar\nu_{k-1}-\bar\nu_{k}+1}},
	\qquad \qquad
	\mathsf{g}_s'=\mathsf{g}_s'(\bar\nu,\la):=1-q^{\bar\nu_{s-1}-\la_{s}}.
\end{align}

\setcounter{la1}{7}
\setcounter{la2}{7}
\setcounter{la3}{5}
\setcounter{la4}{3}
\setcounter{la5}{3}
\setcounter{la6}{0}
\setcounter{lab1}{7}
\setcounter{lab2}{6}
\setcounter{lab3}{3}
\setcounter{lab4}{3}
\setcounter{lab5}{2}
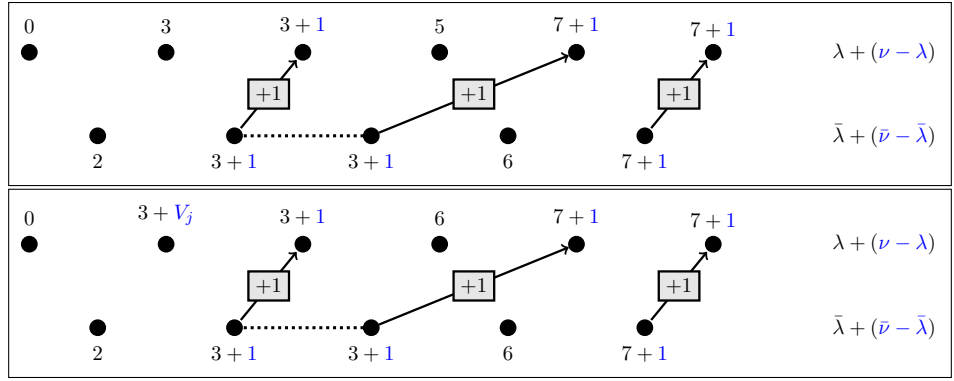
\begin{figure}[htbp]
	\framebox{\begin{adjustbox}{max width=.76\textwidth}
	\begin{tikzpicture}
	[scale=.7, thick]
	\def\cir{.2}
	\def\ysh{6}
	\def\x{1.8}
	\def\y{2.2}
	\draw[fill] (-2*\x,0) circle(\cir) node [below,yshift=-\ysh] {$\arabic{lab5}$};
	\draw[fill] (0,0) circle(\cir) node [below,yshift=-\ysh] {$\arabic{lab4}+{\color{blue}1}$};
	\draw[fill] (2*\x,0) circle(\cir) node [below,yshift=-\ysh] {$\arabic{lab3}+{\color{blue}1}$};
	\draw[fill] (4*\x,0) circle(\cir) node [below,yshift=-\ysh] {$\arabic{lab2}$};
	\draw[fill] (6*\x,0) circle(\cir) node [below,yshift=-\ysh] {$\arabic{lab1}+{\color{blue}1}$};
	\draw[fill] (-3*\x,1*\y) circle(\cir) node [above,yshift=\ysh] {$\arabic{la6}$};
	\draw[fill] (-1*\x,1*\y) circle(\cir) node [above,yshift=\ysh] {$\arabic{la5}$};
	\draw[fill] (1*\x,1*\y) circle(\cir) node [above,yshift=\ysh] {$\arabic{la4}+{\color{blue}1}$};
	\draw[fill] (3*\x,1*\y) circle(\cir) node [above,yshift=\ysh] {$\arabic{la3}$};
	\draw[fill] (5*\x,1*\y) circle(\cir) node [above,yshift=\ysh] {$\arabic{la2}+{\color{blue}1}$};
	\draw[fill] (7*\x,1*\y) circle(\cir) node [above,yshift=-2.2+\ysh] {$\arabic{la1}+{\color{blue}1}$};
	\node at (9.5*\x,0) {$\bar\la+({\color{blue}\bar\nu-\bar\la})$};
	\node at (9.5*\x,\y) {$\la+({\color{blue}\nu-\la})$};
	\node (lab5) at (-2*\x,0) {};
	\node (lab4) at (0*\x,0) {};
	\node (lab3) at (2*\x,0) {};
	\node (lab2) at (4*\x,0) {};
	\node (lab1) at (6*\x,0) {};
	\node (la6) at (-3*\x,\y) {};
	\node (la5) at (-1*\x,\y) {};
	\node (la4) at (1*\x,\y) {};
	\node (la3) at (3*\x,\y) {};
	\node (la2) at (5*\x,\y) {};
	\node (la1) at (7*\x,\y) {};
	\draw[ultra thick, dotted] (lab3) -- (lab4);
	\draw[->,very thick] (lab4) -- (la4) node[rectangle,draw=black,fill=gray!20!white] [midway] {$+1$};
	\draw[->,very thick] (lab3) -- (la2) node[rectangle,draw=black,fill=gray!20!white] [midway] {$+1$};
	\draw[->,very thick] (lab1) -- (la1) node[rectangle,draw=black,fill=gray!20!white] [midway] {$+1$};
	\end{tikzpicture}
	\end{adjustbox}}
	\\\setcounter{la1}{7}
\setcounter{la2}{7}
\setcounter{la3}{6}
\setcounter{la4}{3}
\setcounter{la5}{3}
\setcounter{la6}{0}
\setcounter{lab1}{7}
\setcounter{lab2}{6}
\setcounter{lab3}{3}
\setcounter{lab4}{3}
\setcounter{lab5}{2}
	\framebox{\begin{adjustbox}{max width=.76\textwidth}
	\begin{tikzpicture}
	[scale=.7, thick]
	\def\cir{.2}
	\def\ysh{6}
	\def\x{1.8}
	\def\y{2.2}
	\draw[fill] (-2*\x,0) circle(\cir) node [below,yshift=-\ysh] {$\arabic{lab5}$};
	\draw[fill] (0,0) circle(\cir) node [below,yshift=-\ysh] {$\arabic{lab4}+{\color{blue}1}$};
	\draw[fill] (2*\x,0) circle(\cir) node [below,yshift=-\ysh] {$\arabic{lab3}+{\color{blue}1}$};
	\draw[fill] (4*\x,0) circle(\cir) node [below,yshift=-\ysh] {$\arabic{lab2}$};
	\draw[fill] (6*\x,0) circle(\cir) node [below,yshift=-\ysh] {$\arabic{lab1}+{\color{blue}1}$};
	\draw[fill] (-3*\x,1*\y) circle(\cir) node [above,yshift=\ysh] {$\arabic{la6}$};
	\draw[fill] (-1*\x,1*\y) circle(\cir) node [above,yshift=\ysh] {$\arabic{la5}+{\color{blue}V_j}$};
	\draw[fill] (1*\x,1*\y) circle(\cir) node [above,yshift=\ysh] {$\arabic{la4}+{\color{blue}1}$};
	\draw[fill] (3*\x,1*\y) circle(\cir) node [above,yshift=\ysh] {$\arabic{la3}$};
	\draw[fill] (5*\x,1*\y) circle(\cir) node [above,yshift=\ysh] {$\arabic{la2}+{\color{blue}1}$};
	\draw[fill] (7*\x,1*\y) circle(\cir) node [above,yshift=-2.2+\ysh] {$\arabic{la1}+{\color{blue}1}$};
	\node at (9.5*\x,0) {$\bar\la+({\color{blue}\bar\nu-\bar\la})$};
	\node at (9.5*\x,\y) {$\la+({\color{blue}\nu-\la})$};
	\node (lab5) at (-2*\x,0) {};
	\node (lab4) at (0*\x,0) {};
	\node (lab3) at (2*\x,0) {};
	\node (lab2) at (4*\x,0) {};
	\node (lab1) at (6*\x,0) {};
	\node (la6) at (-3*\x,\y) {};
	\node (la5) at (-1*\x,\y) {};
	\node (la4) at (1*\x,\y) {};
	\node (la3) at (3*\x,\y) {};
	\node (la2) at (5*\x,\y) {};
	\node (la1) at (7*\x,\y) {};
	\draw[ultra thick, dotted] (lab3) -- (lab4);
	\draw[->,very thick] (lab4) -- (la4) node[rectangle,draw=black,fill=gray!20!white] [midway] {$+1$};
	\draw[->,very thick] (lab3) -- (la2) node[rectangle,draw=black,fill=gray!20!white] [midway] {$+1$};
	\draw[->,very thick] (lab1) -- (la1) node[rectangle,draw=black,fill=gray!20!white] [midway] {$+1$};
	\end{tikzpicture}
	\end{adjustbox}}
	\caption{An example of a step of 
	$\Qqbcol$ at levels 5 and 6.
	Above: $V_j=0$, 
	and the probability of the displayed transition is
	$1-\mathsf{f}'_3=(q+q^{2})/(1+q+q^{2})$.
	Below: $V_j=1$,
	and the probability of the displayed transition is
	$(1-\mathsf{g}'_6)(1-\mathsf{f}'_3)=q^{2}$. Note that in the latter case the particle $\la_3=6$
	cannot be chosen to move because it is blocked by $\bar\la_2=\la_3$
	which is not moving; this agrees with $\mathsf{f}'_3=0$.}
	\label{fig:col_beta}
\end{figure}

The update $\la\to\nu$ looks as follows:
\begin{enumerate}
	\item 
	Consider a pair of moved particles
	$(\bar\la_{r}, \bar\la_k)$ at level $j-1$, where $0\le r < k\le j-1$,
	such that the particles 
	$\bar\la_{r+1},\ldots,\bar\la_{k-1}$
	in between
	did not move
	(by agreement, $r=0$
	corresponds to $\bar\la_{k}$ being the rightmost moved particle at level $j-1$).
	Regardless of the value of $V_j$,
	each such pair of moved particles at level $j-1$ triggers the move (to the right by one)
	of exactly one particle $\la_s$, 
	$r+1\le s\le k$,
	between them at level $j$. 
	If $r+1=k$, then there is only one choice $s=k$, so $\la_k$
	must move. Otherwise, the moving particle
	$\la_s$
	is chosen at random
	(independently of everything else)
	with the following probabilities:
	\begin{itemize}
		\item If $s=k$, then $\la_s$ is chosen to move with probability
		\begin{align}\label{beta_col_prob1}
			\mathsf{f}'_{k}=\frac{1-q^{\bar\nu_{k-1}-\la_{k}}}{1-q^{\bar\nu_{k-1}-\bar\nu_{k}+1}};
		\end{align}
		\item If $r+1<s<k$, then $\la_s$ is chosen to move with probability
		\begin{align}\label{beta_col_prob2}
			(1-\mathsf{f}'_{k})(1-\mathsf{g}'_{k-1})\ldots(1-\mathsf{g}'_{s+1})\mathsf{g}'_{s}
			=\frac{q^{\bar\nu_{k-1}-\la_{k}}-q^{\bar\nu_{k-1}-\bar\nu_{k}+1}}{1-q^{\bar\nu_{k-1}-\bar\nu_{k}+1}}
			q^{\sum_{i=s}^{k-2}(\bar\nu_{i}-\la_{i+1})}(1-q^{\bar\nu_{s-1}-\la_{s}});
		\end{align}
			
		\item If $s=r+1$, then $\la_s$ is chosen to move with probability
		\begin{align}\label{beta_col_prob3}
			(1-\mathsf{f}'_{k})(1-\mathsf{g}'_{k-1})\ldots(1-\mathsf{g}'_{r+3})(1-\mathsf{g}'_{r+2})
			=\frac{q^{\bar\nu_{k-1}-\la_{k}}-q^{\bar\nu_{k-1}-\bar\nu_{k}+1}}{1-q^{\bar\nu_{k-1}-\bar\nu_{k}+1}}
			q^{\sum_{i=r+1}^{k-2}(\bar\nu_{i}-\la_{i+1})}.
		\end{align}
	\end{itemize}
	Clearly, these probabilities are nonnegative, and 
	their sum telescopes to 1.

	\item If $V_j=1$, then in addition to the moves described above,
	exactly one more particle at level $j$
	is chosen to move (to the right by one).
	Namely, let $\bar\la_{m}$
	be the leftmost moved particle at level $j-1$.
	If $m=j-1$, then 
	the additional moving particle at level $j$ is $\la_j$, the leftmost particle.
	If $m<j-1$, then one of the particles $\la_s$
	with $m+1\le s\le j$ is randomly chosen to move (independently of everything else)
	with the following probabilities:
	\begin{itemize}
		\item If $s=j$, then $\la_s$ is chosen to move with probability
		\begin{align}\label{beta_col_prob4}
			\mathsf{g}'_{j}=1-q^{\bar\nu_{j-1}-\la_j};
		\end{align}
		\item If $m+1<s<j$, then $\la_s$ is chosen to move with probability
		\begin{align}\label{beta_col_prob5}
			(1-\mathsf{g}'_{j})(1-\mathsf{g}'_{j-1})\ldots(1-\mathsf{g}'_{s+1})\mathsf{g}'_{s}=
			(1-q^{\bar\nu_{s-1}-\la_s})q^{\sum_{i=s}^{j-1}(\bar\nu_{i}-\la_{i+1})};
		\end{align}
		\item If $s=m+1$, then $\la_s$ is chosen to move with probability
		\begin{align}\label{beta_col_prob6}
			(1-\mathsf{g}'_{j})(1-\mathsf{g}'_{j-1})\ldots(1-\mathsf{g}'_{m+3})(1-\mathsf{g}'_{m+2})=
			q^{\sum_{i=m+1}^{j-1}(\bar\nu_{i}-\la_{i+1})}.
		\end{align}
	\end{itemize}
	The sum of these probabilities also telescopes to 1.
\end{enumerate}
This completes the description of the $(\hat\be)$ column insertion RSK-type dynamics 
$\Qqbcol$.

\begin{theorem}\label{thm:QqcRSK_beta}
	The dynamics $\Qqbcol$
	defined above satisfies the main equations
	\eqref{beta_main_equation},
	and hence
	preserves the class of $q$-Whittaker processes
	and adds a new dual parameter $\be$
	to the specialization $\mathbf{A}$
	as in \eqref{adding_spec}.
\end{theorem}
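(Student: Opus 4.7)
The proof is an application of Proposition \ref{prop:complementation_dynamics_is_good} together with Theorem \ref{thm:QqrRSK_beta}. The dynamics $\Qqbrow$ is manifestly translation invariant (its transition probabilities depend only on differences of coordinates via the quantities $\mathsf{f}_i$, $\mathsf{g}_i$ of \eqref{f_g_definition}) and satisfies the main equation \eqref{beta_main_equation}. Consequently its complement in the sense of Definition \ref{def:complementation} also satisfies \eqref{beta_main_equation}. The substantive task is therefore to verify that the explicit description of $\Qqbcol$ given in \S \ref{sub:column_insertion_dynamics_q__hatbe_q-rrsk_} coincides, for sufficiently large $S$, with the complement of $\Qqbrow$.

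To make this identification I would proceed in two steps. First, match the combinatorial structure. The complementation map $\la \mapsto [S-\la]$ reverses the order of particles at each level (new position $i$ corresponds to old position $j+1-i$) and swaps the roles of moving and stationary particles: an increment $\nu_i-\la_i\in\{0,1\}$ at old position $i$ becomes $1-(\nu_i-\la_i)$ at the mirrored position, and likewise at level $j-1$. A maximal island of movers at old positions $k,\ldots,m$ at level $j-1$ in $\Qqbrow$ thus corresponds to a maximal run of stayers at mirrored positions in the complement, flanked by a pair of moved particles --- exactly the configuration $(\bar\la_r,\bar\la_k)$ treated in part (1) of the $\Qqbcol$ description. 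The rule ``exactly one of the $m-k+2$ candidate particles $\la_k,\ldots,\la_{m+1}$ at level $j$ is chosen to stay'' becomes ``exactly one of the $k-r$ mirrored candidates is chosen to move'', and the $V_j$-driven move of the rightmost particle $\la_1$ in $\Qqbrow$ complements to the $V_j$-driven event at the leftmost particle $\la_j$ handled in part (2) of $\Qqbcol$.

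Second, match the probability weights. Using the definitions \eqref{f_g_definition}, \eqref{f_g_prime_definition} and the explicit formulas $[S-\la]_i = S - \la_{j+1-i}$, $[S+1-\bar\nu]_i = S+1-\bar\nu_{j-i}$, a direct calculation yields
\[
\mathsf{f}'_s([S+1-\bar\nu],[S-\la]) = \mathsf{f}_{j+1-s}(\bar\nu,\la), \qquad \mathsf{g}'_s([S+1-\bar\nu],[S-\la]) = \mathsf{g}_{j+1-s}(\bar\nu,\la),
\]
so that the ``stay probabilities'' \eqref{island_prob_1}--\eqref{island_prob_3} of $\Qqbrow$ translate into the ``move probabilities'' \eqref{beta_col_prob1}--\eqref{beta_col_prob6} of $\Qqbcol$ under position mirroring. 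The prefactor $(a_j\be)^{-2(|\la|-|\nu|-|\bar\la|+|\bar\nu|)-1}$ in Definition \ref{def:complementation} absorbs the bijection $V_j^{\mathrm{col}}=1-V_j^{\mathrm{row}}$ between the Bernoulli variables at the two dynamics: a short computation shows $(|[S+1-\nu]|-|[S-\la]|)-(|[S+1-\bar\nu]|-|[S-\bar\la]|) = 1 - \bigl((|\nu|-|\la|)-(|\bar\nu|-|\bar\la|)\bigr)$, and the prefactor converts the factor $(\be a_j)^{V_j^{\mathrm{row}}}/(1+\be a_j) = (\be a_j)^{1-V_j^{\mathrm{col}}}/(1+\be a_j)$ from $\Qqbrow$ into $(\be a_j)^{V_j^{\mathrm{col}}}/(1+\be a_j)$ required by $\Qqbcol$.

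The main obstacle I expect is the careful bookkeeping of the edge cases in the combinatorial correspondence: matching the conventions $\bar\nu_0 = +\infty$ at both levels (which in $\Qqbrow$ sits above the leftmost particle and in $\Qqbcol$ appears on the right), and correctly handling the $r=0$ case in $\Qqbcol$ (which complements to an island in $\Qqbrow$ containing position $1$, where the $V_j$-induced extra move of $\la_1$ is absorbed into the island update). Once these boundary correspondences are reconciled and the probability weights are matched term-by-term, Proposition \ref{prop:complementation_dynamics_is_good} completes the proof.
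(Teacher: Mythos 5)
Your proposal follows the paper's own proof exactly: the paper verbatim states ``One can readily check that $\Qqbcol$ is the complement of $\Qqbrow$'' and then invokes Theorem \ref{thm:QqrRSK_beta} and Proposition \ref{prop:complementation_dynamics_is_good}, so what you have written is simply an expansion of that ``readily check.'' The details you supply (island/stayer duality, the exponent identity for $|\cdot|$, the matching of \eqref{island_prob_1}--\eqref{island_prob_3} with \eqref{beta_col_prob1}--\eqref{beta_col_prob6}) are all correct, with the one cosmetic caveat that your displayed identity has the arguments on the wrong sides --- it should read $\mathsf{f}'_s(\bar\nu,\la)=\mathsf{f}_{j+1-s}\bigl([S+1-\bar\nu],[S-\la]\bigr)$ and likewise for $\mathsf{g}$, since it is $\Qqbrow$'s probability functions that get evaluated at the complemented configuration.
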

\begin{proof}
	One can readily check that
	$\Qqbcol$ is the complement of 
	$\Qqbrow$. Then the desired statement 
	follows from 
	Theorem \ref{thm:QqrRSK_beta}
	and
	Proposition 
	\ref{prop:complementation_dynamics_is_good}.
\end{proof}

\begin{remark}\label{rmk:col_automatically_push_pull}
	Similarly to $\Qqbrow$ (cf. Remark \ref{rmk:automatically_push_pull}), 
	probabilities \eqref{beta_col_prob1}--\eqref{beta_col_prob6}
	employed in the definition of 
	$\Qqbcol$
	ensure the
	mandatory pushing, blocking, and move donation mechanisms
	(described in Definitions \ref{def:pull}
	and \ref{def:push} and Remark \ref{rmk:move_donation}). Namely, observe that
	\begin{itemize}
		\item 
		If $\bar\la_{k}=\la_k$ for some $k$
		and $\bar\la_{k}$ has moved at level $j-1$,
		then $\mathsf{f}'_{k}=1$,
		which means that $\la_{k}$ 
		is chosen to move with probability 1.

		\item If $\la_{s}=\bar\la_{s-1}$, and $\bar\la_{s-1}$
		has not moved, then $\mathsf{g}'_{s}=\mathsf{f}'_{s}=0$,
		so according to \eqref{beta_col_prob1}, \eqref{beta_col_prob2} the particle $\la_s$ at level $j$
		cannot be chosen to move.
		If, moreover, $\bar\la_{s}$ has moved at level $j-1$,
		then this move will trigger some other particle to the 
		right of $\la_s$ at level $j$ to move.
		In other words, the moving impulse 
		coming from $\bar\la_s\to\bar\nu_s=\bar\la_s+1$
		will be donated further to the right of $\la_s$.
	\end{itemize}
\end{remark}

\begin{remark}[Schur degeneration]\label{rmk:col_schur_degeneration}
	When $q=0$, one readily sees 
	from \eqref{f_g_prime_definition}
	that
	generically (i.e., when particles
	at levels $j-1$ and $j$ are sufficiently far apart) 
	we have
	$\mathsf{f}'_{k}=\mathsf{g}'_{s}=1$.
	This implies that
	the dynamics 
	$\Qqbcol$
	degenerates to the multivariate dynamics
	$\Qbcol$
	on Schur processes.
	The latter is
	based on the classical Robinson--Schensted--Knuth
	column insertion (\S \ref{sub:rsk_type_dynamics_schur}).
\end{remark}

\subsection{Bernoulli $q$-TASEP} % (fold)
\label{sub:bernoulli_tasep}

Under the dynamics 
$\Qqbcol$, 
the leftmost $N$ particles
$\la^{(j)}_{j}$
of the interlacing array
evolve in a \emph{marginally Markovian manner}.
Indeed, one can readily check that
at each discrete time step $t\to t+1$
the bottommost particle
is updated as
$\la^{(1)}_{1}(t+1)=\la^{(1)}_{1}(t)+V_1$, and for any $j=2,\ldots,N$:
\begin{itemize}
	\item If $\la^{(j-1)}_{j-1}$ has moved, then 
	the leftmost particle at level $j$ is updated as
	\begin{align*}
		\la^{(j)}_{j}(t+1)=\la^{(j)}_{j}(t)+V_j;
	\end{align*}
	\item
	If $\la^{(j-1)}_{j-1}$ has not moved, then the same
	particle is updated as
	\begin{align*}
		\la^{(j)}_{j}(t+1)=\la^{(j)}_{j}(t)+V_j\cdot\mathbf{1}_{\text{$\la^{(j)}_{j}$ is chosen to move}},
	\end{align*}
	where $\la^{(j)}_{j}$ is chosen to move
	with probability 
	$\mathsf{g}_j'=1-q^{\la^{(j-1)}_{j-1}(t)-\la^{(j)}_{j}(t)}$
	which depends only on the leftmost particles
	of the array.
\end{itemize}
This evolution of the 
leftmost particles 
$\la^{(j)}_{j}$,
$1\le j\le N$,
is the 
(\emph{discrete time}) 
\emph{Bernoulli \mbox{$q$-TASEP}}
which was introduced and studied in 
\cite{BorodinCorwin2013discrete}.

% subsection bernoulli_tasep (end)

% subsection column_insertion_dynamics_q__hatbe_q-rrsk_ (end)

\subsection{Small $\be$ continuous time limit} % (fold)
\label{sub:small_be_continuous_time_limit}

If one sends the parameter $\be$
to zero and simultaneously 
rescales time from discrete to continuous,
then both dynamics
$\Qqbrow$ and $\Qqbcol$
turn into certain
continuous time Markov dynamics
on $q$-Whittaker processes.
At the level $j=1$ (cf. Remark \ref{rmk:one_level_dynamics}), 
this limit transition coincides with the 
one bringing the (one-sided) discrete time random walk to the 
continuous time Poisson process.
In continuous time setting, 
at most one particle can move at each level $j=1,\ldots,N$
during an instance of continuous time. 

\medskip

The continuous time limit of $\Qqbrow$ looks as follows.
Each rightmost particle $\la^{(j)}_{1}$
of the interlacing array has an independent 
exponential clock with rate $a_j$.
When the clock rings, the particle jumps to the right by one.

There is also a jump propagation mechanism present:
If at level $j-1$ some particle 
$\la^{(j-1)}_{m}$
has moved (to the right by one), then this move 
instantaneously triggers the move 
of the upper left neighbor $\la^{(j)}_{m+1}$
with probability $\mathsf{f}_{m}=
\frac{1-q^{\la^{(j)}_{m}-\la^{(j-1)}_m}}
{1-q^{\la^{(j-1)}_{m-1}-\la^{(j-1)}_m}}$,\footnote{Note 
that here we are rewriting $\mathsf{f}_m$ through the 
particle coordinates before the 
move at level $j-1$ (cf. \eqref{f_g_definition})
so that the probabilistic meaning is clearer. 
One could also similarly rewrite 
all other formulas for $\mathsf{f}_m,
\mathsf{g}_m,\mathsf{f}_m',\mathsf{g}_m'$
above in this section, but for the purposes of checking
the main equations of Theorem \ref{thm:main_eq}
it is more convenient to use the notation 
involving the coordinates $\bar\nu$ after the jump.} 
or the move of the upper right neighbor $\la^{(j)}_{m}$
with the complementary probability $1-\mathsf{f}_{m}$.
This dynamics was introduced
in \cite{BorodinPetrov2013NN} (Dynamics 8). 
Under it, the rightmost particles of the array 
also evolve in a marginally Markovian manner.
This leads to the \emph{continuous time $q$-PushTASEP} on $\Z$
\cite[\S 8.3]{BorodinPetrov2013NN},
\cite{CorwinPetrov2013}.

\medskip

The continuous time limit of $\Qqbcol$ looks as follows.
Each particle $\la_k$, $1\le k \le j$,
at level $j$ has an independent exponential clock with
rate
\begin{align*}
	\begin{cases}
		a_j\mathsf{g}'_j,&\hspace{15pt}k=j;\\
		a_j(1-\mathsf{g}'_j)(1-\mathsf{g}'_{j-1})\ldots(1-\mathsf{g}'_{k+1})\mathsf{g}'_{k}
		,&\hspace{15pt}1<k<j;\\
		a_j(1-\mathsf{g}'_j)(1-\mathsf{g}'_{j-1})\ldots(1-\mathsf{g}'_{3})(1-\mathsf{g}'_{2}),&\hspace{15pt}k=1.
	\end{cases}
\end{align*}
These quantities correspond to
\eqref{beta_col_prob4}--\eqref{beta_col_prob6} 
with $\bar\nu=\bar\la$ (because if an independent jump occurs at level $j$
then at level $j-1$ there could be no movement).
When the clock of $\la_k$ rings, this particle jumps to the right by one.
Note that the move donation mechanism described in Remark \ref{rmk:move_donation}
follows from the above probabilities.

There is also a jump propagation mechanism: If a particle
$\bar\la_{k}$ has moved at level $j-1$,
then it triggers the move (to the right by one) of exactly one
particle $\la_s$, $1\le s\le k$, at level $j$,
where $s$ is chosen at random with probabilities
\begin{align*}
	\begin{cases}
		\mathsf{f}'_{k},
		&\hspace{15pt}s=k;\\
		(1-\mathsf{f}'_{k})(1-\mathsf{g}'_{k-1})\ldots(1-\mathsf{g}'_{s+1})\mathsf{g}'_{s},
		&\hspace{15pt}1<s<k;\\
		(1-\mathsf{f}'_{k})(1-\mathsf{g}'_{k-1})\ldots(1-\mathsf{g}'_{3})(1-\mathsf{g}'_{2}),
		&\hspace{15pt}s=1.
	\end{cases}
\end{align*}
The above probabilities are given by 
\eqref{beta_col_prob1}--\eqref{beta_col_prob3} where
$\bar\nu$ differs from $\bar\la$
as $\bar\nu=\bar\la+\bar{\mathrm{e}}_{k}$.
This dynamics on $q$-Whittaker processes was introduced in 
\cite{OConnellPei2012}. Under it, the 
leftmost particles of the interlacing array
evolve in a marginally Markovian manner
as a $q$-TASEP. 
This continuous time particle system
was introduced in
\cite{BorodinCorwin2011Macdonald}.
See also, e.g.,
\cite{BorodinCorwinSasamoto2012}, 
\cite{BorodinCorwinPetrovSasamoto2013},
\cite{FerrariVeto2013} for further results on the $q$-TASEP.
 
\medskip

Thus, the two continuous time dynamics on $q$-Whittaker processes 
(or, in other words, $q$-randomized Robinson--Schensted correspondences)
introduced in
\cite{OConnellPei2012} and \cite{BorodinPetrov2013NN}
are the $\be\to0$ degenerations 
of $\Qqbcol$ and $\Qqbrow$, respectively.
On the other hand, 
complementation (\S \ref{sub:Complementation}) provides a 
straightforward link between the 
two latter discrete time dynamics.

% subsection small_be_continuous_time_limit (end)

% section bernoulli_ (end)

\section{RSK-type dynamics 
$\Qqarow$ 
and $\Qqacol$ 
adding a usual parameter} % (fold)
\label{sec:geometric_q_rsks}

In this section we explain the construction of 
two RSK-type dynamics $\Qqarow$ and $\Qqacol$ on $q$-Whittaker processes
adding a usual parameter $\al$ to the specialization 
(as in \eqref{adding_spec}).
For $q=0$, these dynamics degenerate to 
$(\al)$ dynamics on Schur processes 
arising from row and column RSK insertion. 
As in the case of $\Qqbrow$ and $\Qqbcol$ dynamics, 
in a small $\al$ limit 
the dynamics 
$\Qqarow$ and $\Qqacol$
degenerate to continuous time 
RSK-type dynamics from \cite{OConnellPei2012} (column version) and \cite{BorodinPetrov2013NN} (row version).

\subsection{The $q$-deformed Beta-binomial distribution} % (fold)
\label{sub:the_q_deformed_binomial_distribution}

We will use the following quantities:

\begin{definition}
	Let $y\in\{0,1,2,\ldots\}\cup\{+\infty\}$, and $s\in\{0,1,\ldots,y\}$.
	Recall the $q$-notation from \eqref{q_notation}. Let
	\begin{align}\label{Om_qmunu_definition}
		\Om_{q,\muq,\eta}(s\mid y):=\muq^{s}\frac{(\eta/\muq;q)_{s}(\muq;q)_{y-s}}{(\eta;q)_{y}}\frac{(q;q)_{y}}{(q;q)_{s}(q;q)_{y-s}}.
	\end{align}
	If $y=+\infty$, the limits of the above quantities are
	\begin{align}\label{Om_qmunu_limit}
		\Om_{q,\muq,\eta}(s\mid +\infty)=
		\muq^{s}\frac{(\eta/\muq;q)_{s}}{(q;q)_{s}}
		\frac{(\muq;q)_{\infty}}{(\eta;q)_{\infty}}.
	\end{align}
\end{definition}
An important property of the quantities \eqref{Om_qmunu_definition} and \eqref{Om_qmunu_limit}
is that
for all $y\in\{0,1,2,\ldots\}\cup\{+\infty\}$, we have
\begin{align} \label{Om_qmunu_sum}
\sum_{s=0}^{y} \Om_{q,\muq,\eta}(s\mid y)=1.
\end{align}
This statement may be rewritten as the $q$-Chu-Vandermonde identity for the basic 
hypergeometric series $_{2}\varphi_{1}$. For the proof and more 
details see \cite{GasperRahman}, 
\cite{Corwin2014qmunu}. Recall that in general the unilateral basic hypergeometric series $_{j}\varphi_{k}$ is defined via
\begin{align}
_{j}\varphi_{k} \left[\begin{array}{ccc} a_{1} & \ldots & a_{j} \\ b_{1} & \ldots & b_{k} \end{array}; q, z \right] := \sum_{n=0}^{\infty} 
\frac{(a_{1}, \ldots, a_{j}; q)_{n}}{(b_{1}, \ldots, b_{k}, q; q)_{n}} \left((-1)^{n}q^{\binom {n}{2}}\right)^{1+k-j} z^{n}, 
\end{align}
where $(c_{1}, \ldots, c_{m}; q)_{n} = \prod_{i=1}^{m} (c_{i}; q)_{n}$.
Later on in this section to prove some identities we will need to apply transformation formulas for certain hypergeometric series. %\footnote{Note that in \cite{Corwin2014qmunu},
%the quantities $\Om_{q,\muq,\eta}$ were denoted by $\phi_{q,\muq,\eta}$.} 

Therefore, for all values of the parameters $(q,\muq,\eta)$
for which $\Om_{q,\muq,\eta}(s\mid y)$
is well-defined and nonnegative for every $0 \leq y \leq s$,
\eqref{Om_qmunu_definition} 
defines a probability distribution on $\{0, 1, \ldots, y\}$. 
One such family of parameters is $0\le q<1$, $0\le \eta\le\muq<1$, cf. \cite{Povolotsky2013},
\cite{Corwin2014qmunu}.
Another choice of parameters leading to a probability distribution 
which we will use
is $\Om_{q^{-1},q^{a},q^{b}}(\cdot\mid c)$, where $a\le b$, $c\le b$ are nonnegative integers.

The distribution $\Om_{q,\muq,\eta}$ 
appears (under a simple change of parameters) 
as the orthogonality weight of the classical
$q$-Hahn orthogonal polynomials \cite[\S3.6]{Koekoek1996},
and is also related to a very natural $q$-deformation
of the Polya urn scheme \cite{Gnedin2009}.
As such, $\Om_{q,\muq,\eta}$ 
may be regarded as a
\emph{$q$-deformed Beta-binomial distribution},
since the latter is the orthogonality weight for the Hahn 
orthogonal polynomials, and also arises from the ordinary 
Polya urn scheme. We can also directly see by taking $q=e^{-\epsilon}, \muq=e^{-\al\epsilon}, \eta=e^{-(\al+\be)\epsilon}$ and letting $\epsilon \to 0+$, that $\Om_{q,\muq,\eta}(s\mid y)$ converges to 
\begin{align*}
\frac{\Gamma(\al+\be)}{\Gamma{(\al)}\Gamma(\be)}
\frac{\Gamma(\al+y-s)\Gamma(\be+s)}
{\Gamma(\al+\be+y)}\binom ys,
\end{align*}
which is the probability of $s$ under the beta-binomial distribution with parameters $y, \al, \be$.

\smallskip

Let us now record two straightforward observations which we will be using below. 
First,
\begin{align}\label{qqinverse}
	\binom nk_{q^{-1}} = q^{-k(n-k)} \binom nk_{q}.
\end{align}
Second, if $a\le b$, $c\le b$ are nonnegative integers ($b$ might also be $+\infty$),
then for any
$s\in\{0,1,\ldots,c\}$ one has
\begin{align}\label{q=0_Omega}
	\lim_{q\searrow0}\Om_{q^{-1},q^{a},q^{b}}(s\mid c)=\mathbf{1}_{s=\max\{c-a,0\}}.
\end{align}
Indeed,  in this case 
\begin{align*}
\Om_{q^{-1},q^{a},q^{b}}(s\mid c) = q^{s(a-c+s)}\frac{(q^{a}; q^{-1})_{c-s}(q^{b-a};q^{-1})_{s}}{(q^{b};q^{-1})_{c}}\binom sc_{q}.
\end{align*}
If $a > c$, as $q \to 0$ this converges to $1$ for $s=0$ and to $0$ for $s > 0$. If $a \le c$, as $q \to 0$ this converges to $0$ for $0 \le s < c-a$, since $(q^{a}; q^{-1})_{c-s}$ vanishes,  to $0$ for $s > c-a$, since a positive power of $q$ tends to $0$, and to $1$ for $s=c-a$.
	
% subsection the_q_deformed_binomial_distribution (end)

\subsection{Row insertion dynamics $\Qqarow$} % (fold)
\label{sub:row_insertion_dynamics_q__hatal_q-rrsk_}
Let us now describe one time step $\boldsymbol\la \to \boldsymbol\nu$ of the multivariate Markov dynamics $\Qqarow$ on $q$-Whittaker processes of depth $N$. A part of randomness a time step comes from independent $q$-geometric random variables $V_1,\ldots,V_N\in \Z_{\ge0}$ with parameters $\al a_1,\ldots,\al a_N$, respectively (these random variables are resampled during each time step).

The bottommost particle of the interlacing array is updated as $\nu_{1}^{(1)} = \la_{1}^{(1)} + V_{1}$.  Next, sequentially for each $j=2,\ldots,N$, given the movement $\bar\la = \la^{(j-1)}\to \bar\nu = \nu^{(j-1)}$ at level $j-1$,
we will randomly update $\la = \la^{(j)} \to\nu = \nu^{(j)}$ at level $j$.
To describe this update, write 
\begin{align*}
	\bar\nu-\bar\la=\sum_{i=1}^{j-1}c_i \bar{\mathrm{e}}_{i},
	\qquad c_i\in\Z_{\ge0},\qquad
	\text{$\bar{\mathrm{e}}_{i}$ are basis vectors of length $j-1$}.
\end{align*}
Note that by interlacing, it must be that $c_i\le\bar \la_{i-1} - \bar \la_{i}$.

Sample independent random variables $W_{1}, \ldots, W_{j-1}$, 
such that each $W_{i}\in\{0,1,\ldots,c_i\}$ is distributed according to 
\begin{align}
	\Om_{q^{-1},\muq_{i},\eta_{i}}(\cdot\mid c_{i}),
	\quad \text{where $\muq_{i}: = q^{\la_{i} - \bar \la_{i}}$ 
	and  $\eta_{i}: = q^{\bar \la_{i-1} - \bar \la_{i}}$}
	\label{splitting_distribution}
\end{align}
(this is a probability distribution because $\la_{i} - \bar \la_{i}\le 
\bar \la_{i-1} - \bar \la_{i}$ and $c_i\le \bar \la_{i-1} - \bar \la_{i}$, 
cf. \S \ref{sub:the_q_deformed_binomial_distribution}).
We will use the conventions $\bar \la_{0} = +\infty$ and  $\eta_{1} = 0$. Define a sequence of signatures 
\begin{align*}
	\la=\mu({0}), \mu({1}), \ldots, \mu({j-1})
\end{align*}
via
\begin{align*} 
 & \mu({i}) := 
 \mu({i-1}) + W_{j-i}\mathrm{e}_{j-i} + (c_{j-i} - W_{j-i})\mathrm{e}_{j-i+1} \quad 
 \text{for $1 \leq i \leq j-1$}
\end{align*}
(where $\mathrm{e}_{i}$ are basis vectors of length $j$).
Finally, define $\nu := \mu({j-1}) + V_{j}e_{1}$, this is our new 
signature at level $j$.

In words, each $i$th particle on the $(j-1)$-st level 
which has moved by $c_i$, must trigger a total of $c_i$
moves (to the right by one) at level $j$ (this the RSK-type property,
see Definition \ref{def:rsk}). Each such particle at level $j-1$
independently from the others, \emph{in parallel}, splits contribution from its jump between its nearest neighbors on the level $j$, according to 
the distribution \eqref{splitting_distribution}. After this pushing, the rightmost particle on the $j$-th level additionally performs an independent jump according to the $q$-geometric distribution with parameter $\al a_{j}$. Clearly, thus defined conditional probabilities $\mathscr{U}_j$, $j=1,\ldots,N$, for this dynamics are nonnegative and satisfy \eqref{U_properties}. See Fig.~\ref{fig:row_alpha}.

One must verify that the interlacing properties (as on
Fig.~\ref{fig:square}, left) are preserved by this dynamics:
\begin{lemma} \label{lemma:interlacing_after}
	If $\bar\la \prech \bar \nu$, $\bar\la\prech \la$ 
	and $\mathscr{U}_j(\la \to \nu \mid \bar \la \to \bar \nu) > 0$, 
	then $\bar \nu\prech\nu$ and $\la \prech \nu$.
\end{lemma}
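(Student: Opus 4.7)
My plan is to deduce the interlacing $\bar\nu\prech\nu$ and $\la\prech\nu$ from explicit support bounds on the random splitting variables $W_i$, together with the assumed interlacings $\bar\la\prech\la$ and $\bar\la\prech\bar\nu$. Unwinding the recursive definition of $\mu^{(i)}$, I would first record the master formula
\begin{align*}
\nu_k-\la_k = V_j\mathbf{1}_{k=1}+W_k\mathbf{1}_{k\le j-1}+(c_{k-1}-W_{k-1})\mathbf{1}_{k\ge 2},
\qquad 1\le k\le j,
\end{align*}
since at stage $i=j-k$ the vector $W_k\mathrm{e}_k$ is added, while the ``leftover'' piece $c_k-W_k$ is added at position $k+1$.

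The key observation is a statement about the support of the $q$-deformed Beta-binomial distribution \eqref{splitting_distribution}:
whenever $\Om_{q^{-1},q^{a},q^{b}}(s\mid c)>0$ one has $\max(0,c-a)\le s\le \min(c,b-a)$, because the Pochhammer factor $(q^{b-a};q^{-1})_{s}$ vanishes for $s>b-a$ while $(q^{a};q^{-1})_{c-s}$ vanishes for $c-s>a$. Applied with $a_i=\la_i-\bar\la_i$ and $b_i=\bar\la_{i-1}-\bar\la_i$, this forces
\begin{align*}
\max\bigl(0,\,c_i-(\la_i-\bar\la_i)\bigr)\le W_i\le \min\bigl(c_i,\,\bar\la_{i-1}-\la_i\bigr),\qquad 1\le i\le j-1.
\end{align*}

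Granting these bounds, verifying $\la\prech\nu$ and $\bar\nu\prech\nu$ reduces to routine arithmetic using the master formula and the interlacings of $\la$ and $\bar\la$. For $\la\prech\nu$, the inequality $\nu_k\ge\la_k$ is immediate (nonnegativity of all increments), while $\la_{k-1}\ge\nu_k$ follows by substituting $W_k\le \bar\la_{k-1}-\la_k$ and $c_{k-1}-W_{k-1}\le \la_{k-1}-\bar\la_{k-1}$. For $\bar\nu\prech\nu$, the inequality $\nu_k\ge\bar\nu_k$ reduces to $W_k\ge c_k-(\la_k-\bar\la_k)$ (plus nonnegativity of the remaining terms), and $\bar\nu_k\ge\nu_{k+1}$ reduces to $\bar\la_k-\la_{k+1}+W_k-W_{k+1}\ge 0$, which follows since $\bar\la_k\ge\la_{k+1}$ and $W_{k+1}\le \bar\la_k-\la_{k+1}$. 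The boundary cases $k=1$, $k=j$, together with the conventions $\bar\la_0=+\infty$ and $\eta_1=0$, have to be checked separately but are immediate once the notation is set up, and the extra $V_j\mathrm{e}_1$ at the top only strengthens the relevant inequalities.

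The main obstacle I anticipate is purely bookkeeping: making sure the indexing of $\mu^{(i)}$ (whose updates move ``from right to left'' in $k$) lines up with the support bounds (indexed by level $j-1$ positions), and that the degenerate cases $c_i=0$ and $\bar\la_{i-1}=+\infty$ are handled uniformly. There is no analytic content beyond the support statement for $\Om_{q^{-1},q^a,q^b}$; all remaining inequalities telescope from the two input interlacings.
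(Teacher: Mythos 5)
Your proposal is correct and follows essentially the same route as the paper: both establish a support bound on $\Om_{q^{-1},q^a,q^b}(s\mid c)$ from the vanishing of the $q$-Pochhammer factors (the paper's \eqref{phinequalities}), translate this into the double bound $c_i-\la_i+\bar\la_i\le W_i\le\bar\la_{i-1}-\la_i$, and then chase the same four inequalities $\nu_i\ge\la_i$, $\nu_i\ge\bar\nu_i$, $\la_{i-1}\ge\nu_i$, $\bar\nu_{i-1}\ge\nu_i$ via $\nu_i=\la_i+W_i+c_{i-1}-W_{i-1}$. Your extra observation that the $V_j$ term at position $1$ only strengthens the relevant bounds is implicit in the paper but worth making explicit, and your index bookkeeping lines up with the paper's.
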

\begin{proof}
	Observe that for $a \le b$ and $c \le b$
	\begin{align} \label{phinequalities} 
	\Om_{q^{-1},q^{a}, q^{b}}(s \mid c) = 0, \quad \text{if $s > b-a$ or $c-s > a$}.
	\end{align}  
	%\begin{align} \label{phinequalities} \Om_{q^{-1},\muq_{i}, \eta_{i}}(s \mid c_{i}) = 0, \quad \text{if $s > \bar\la_{i-1} - \la_{i}$ or $c_{i} - s > \la_{i} - \bar \la_{i}$}.\end{align}
	Apply this for $a=\la_{i}-\bar \la_{i}$, $b = \bar \la_{i-1} - \bar \la_{i}$, $c = c_{i}$ to get $c_i-\la_i+\bar\la_i\le W_i\le \bar\la_{i-1}-\la_i$.

	Since $\nu_i=\la_i+W_i+c_{i-1}-W_{i-1}$, we have
	\begin{align*}
		\nu_i\le \la_i+\bar\la_{i-1}-\la_i+c_{i-1}-W_{i-1}=\bar\la_{i-1}+c_{i-1}-W_{i-1}
		=\bar\nu_{i-1}-W_{i-1}\le \bar\nu_{i-1}
	\end{align*}
	and $\bar\nu_{i} = \bar\la_{i}+c_{i} \le \la_{i}+W_{i} \le \nu_{i}$,
	so $\nu\succh\bar\nu$. Moreover,
	we can also write 
	\begin{align*}
		\nu_i\le \la_i+\bar\la_{i-1}-\la_i
		+\la_{i-1}-\bar\la_{i-1}=\la_{i-1}, \qquad \la_{i} \le \nu_{i}
	\end{align*}
	which implies that 
	$\nu\succh\la$.
\end{proof}
This verification completes the description of 
the $(\al)$ row insertion RSK-type dynamics 
$\Qqarow$.

\begin{remark} (Schur degeneration).
If one sets $q=0$, then the dynamics $\Qqarow$ reduces to the dynamics $\Qarow$ on Schur processes based on the classical 
Robinson--Schensted--Knuth row insertion 
(\S \ref{sub:rsk_type_dynamics_schur}). To see this,
observe that \eqref{q=0_Omega} implies 
\begin{align*}
\Om_{q^{-1},\muq_{i}, \eta_{i}}(s \mid c_{i}) \to \mathbf{1}_{s = \max\{c_{i} -\la_{i} + \bar \la_{i}, 0\}} \quad \text{as $q \to 0$},
\end{align*}
that is, each $W_i$ becomes equal to 
$\max\{c_{i} -\la_{i} + \bar \la_{i}, 0\}$ in the $q\searrow0$ limit.
Therefore, the update $\la\to\nu$
is reduced to applying $c_i$ operations $\mathsf{pull}$ at positions $i$ from $j-1$ to $1$, 
plus 
an
additional independent jump 
of the 
rightmost particle according to the geometric distribution with parameter $\al a_{j}$.
\end{remark}

\setcounter{la1}{35}
\setcounter{la2}{25}
\setcounter{la3}{15}
\setcounter{la4}{9}
\setcounter{la5}{6}
\setcounter{la6}{0}
\setcounter{lab1}{29}
\setcounter{lab2}{19}
\setcounter{lab3}{12}
\setcounter{lab4}{8}
\setcounter{lab5}{2}
\begin{figure}[htbp]
	{\begin{adjustbox}{max width=.86\textwidth}
	\begin{tikzpicture}
	[scale=.7, thick]
	\def\cir{.2}
	\def\ysh{6}
	\def\x{1.8}
	\def\y{2.2}
	\draw[fill] (-2*\x,0) circle(\cir) node [below,yshift=-\ysh] {$\arabic{lab5}+{\color{blue}3}$};
	\draw[fill] (0,0) circle(\cir) node [below,yshift=-\ysh] {$\arabic{lab4}$};
	\draw[fill] (2*\x,0) circle(\cir) node [below,yshift=-\ysh] {$\arabic{lab3}+{\color{blue}4}$};
	\draw[fill] (4*\x,0) circle(\cir) node [below,yshift=-\ysh] {$\arabic{lab2} +{\color{blue}5}$};
	\draw[fill] (6*\x,0) circle(\cir) node [below,yshift=-\ysh] {$\arabic{lab1}+{\color{blue}4}$};
	\draw[fill] (-3*\x,1*\y) circle(\cir) node [above,yshift=\ysh] {$\arabic{la6}+{\color{blue}2}$};
	\draw[fill] (-1*\x,1*\y) circle(\cir) node [above,yshift=\ysh] {$\arabic{la5}+{\color{blue}1}$};
	\draw[fill] (1*\x,1*\y) circle(\cir) node [above,yshift=\ysh] {$\arabic{la4}+{\color{blue}2}$};
	\draw[fill] (3*\x,1*\y) circle(\cir) node [above,yshift=\ysh] {$\arabic{la3}+{\color{blue}5}$};
	\draw[fill] (5*\x,1*\y) circle(\cir) node [above,yshift=\ysh] {$\arabic{la2}+{\color{blue}5}$};
	\draw[fill] (7*\x,1*\y) circle(\cir) node [above,yshift=-2.2+\ysh] {$\arabic{la1}+{\color{blue}1}+{V_j}$};
	\node at (9.5*\x,0) {$\bar\la+({\color{blue}\bar\nu-\bar\la})$};
	\node at (9.5*\x,\y) {$\la+({\color{blue}\nu-\la})$};
	\node (lab5) at (-2*\x,0) {};
	\node (lab4) at (0*\x,0) {};
	\node (lab3) at (2*\x,0) {};
	\node (lab2) at (4*\x,0) {};
	\node (lab1) at (6*\x,0) {};
	\node (la6) at (-3*\x,\y) {};
	\node (la5) at (-1*\x,\y) {};
	\node (la4) at (1*\x,\y) {};
	\node (la3) at (3*\x,\y) {};
	\node (la2) at (5*\x,\y) {};
	\node (la1) at (7*\x,\y) {};
	\draw[->,very thick] (lab5) -- (la6) node[rectangle,draw=black,fill=gray!20!white] [midway] {$+2$};
           \draw[->,very thick] (lab5) -- (la5) node[rectangle,draw=black,fill=gray!20!white] [midway] {$+1$};
	\draw[->,very thick] (lab3) -- (la4) node[rectangle,draw=black,fill=gray!20!white] [midway] {$+2$};
           \draw[->,very thick] (lab3) -- (la3) node[rectangle,draw=black,fill=gray!20!white] [midway] {$+2$};
	\draw[->,very thick] (lab2) -- (la3) node[rectangle,draw=black,fill=gray!20!white] [midway] {$+3$};
           \draw[->,very thick] (lab2) -- (la2) node[rectangle,draw=black,fill=gray!20!white] [midway] {$+2$};
           \draw[->,very thick] (lab1) -- (la2) node[rectangle,draw=black,fill=gray!20!white] [midway] {$+3$};
           \draw[->,very thick] (lab1) -- (la1) node[rectangle,draw=black,fill=gray!20!white] [midway] {$+1$};
	\end{tikzpicture}
	\end{adjustbox}}
	\caption{An example of a step of 
	$\Qqarow$ at levels 5 and 6, with
	$V_{j}=3$. The probability of this update is equal to
    ${\Om_{q^{-1},q^{4}, q^{6}}(1 \mid 3)}{\Om_{q^{-1},q^{3}, q^{7}}(2 \mid 4)}{\Om_{q^{-1},q^{6}, q^{10}}(2 \mid  5)}\Om_{q^{-1},q^{6}, 0}(1 \mid 4)(\al a_{6}; q)_{\infty}\frac{(\al a_{6})^{3}}{(q;q)_{3}}.$
    Note that, e.g., ${\Om_{q^{-1},q^{3}, q^{7}}(0 \mid 4)}=0$,
    which ensures the mandatory pushing (by at least $1$) of $\la_3$ by the move
    of $\bar\la_3$.}
	\label{fig:row_alpha}
\end{figure}
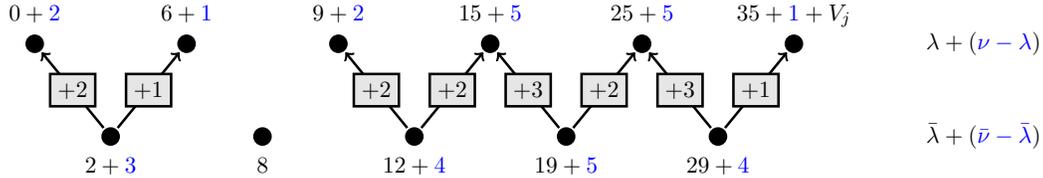

\begin{theorem}\label{thm:QqrRSK_alpha}
The dynamics $\Qqarow$ defined above satisfies the main equations \eqref{alpha_main_equation}, and hence preserves the class of $q$-Whittaker processes and adds a new usual parameter $\al$ to the specialization $\mathbf{A}$ as in \eqref{adding_spec}.
\end{theorem}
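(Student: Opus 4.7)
The plan is to verify the identity \eqref{alpha_main_equation} at each fixed level $j\ge 2$ by direct substitution of the explicit conditional distribution $\mathscr{U}_j(\la\to\nu\mid\bar\la\to\bar\nu)$ constructed above. The key first observation is that once the four signatures $\la,\nu,\bar\la,\bar\nu$ are fixed, the recurrences $\nu_i-\la_i = W_i + c_{i-1} - W_{i-1}$ (for $2\le i\le j-1$, with $c_i = \bar\nu_i-\bar\la_i$), together with $\nu_1-\la_1 = W_1+V_j$ and $\nu_j-\la_j = c_{j-1}-W_{j-1}$, uniquely determine all of $V_j$ and $W_1,\ldots,W_{j-1}$. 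In particular, telescoping these relations yields
\begin{align*}
    V_j \;=\; (|\nu|-|\la|) \;-\; (|\bar\nu|-|\bar\la|),
\end{align*}
so $\mathscr{U}_j(\la\to\nu\mid\bar\la\to\bar\nu)$ is a single product of the $\Om$-factors times the $q$-geometric weight $(\al a_j;q)_\infty (\al a_j)^{V_j}/(q;q)_{V_j}$, rather than a genuine sum.

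Once this is in hand, the factor $(\al a_j)^{V_j}$ from the $q$-geometric law of $V_j$ cancels the weight $(\al a_j)^{|\la|-|\nu|-(|\bar\la|-|\bar\nu|)}$ on the left-hand side of \eqref{alpha_main_equation}, and the prefactor $(\al a_j;q)_\infty$ matches the right-hand side. What is left is an $\al$-independent identity
\begin{align*}
    \sum_{\bar\la} \frac{\psi_{\la/\bar\la}\,\varphi_{\bar\nu/\bar\la}}{(q;q)_{V_j}}
    \prod_{i=1}^{j-1}\Om_{q^{-1},\,q^{\la_i-\bar\la_i},\,q^{\bar\la_{i-1}-\bar\la_i}}(W_i\mid c_i)
    \;=\; \psi_{\nu/\bar\nu}\,\varphi_{\nu/\la},
\end{align*}
in which every factor is an explicit product of $q$-Pochhammer symbols and $q$-binomial coefficients via \eqref{P_one_variable}, \eqref{Q_one_variable}, and \eqref{Om_qmunu_definition}. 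The strategy from here is to expand both sides, collect powers of $q$ together with the Pochhammer factors position by position, and show that after cancellations the residual summation reduces to a standard $q$-hypergeometric transformation.

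The main obstacle, and the source of essentially all of the technical work, is that the sum over $\bar\la$ is coupled across the $j-1$ positions: the interlacing constraints $\bar\la\prech\la$, $\bar\la\prech\bar\nu$ link neighboring $\bar\la_i$'s, and the value of each $W_i$ depends on the entire tuple $(\bar\la_1,\ldots,\bar\la_{j-1})$. A naive position-by-position factorization of the sum will therefore not succeed. Instead I would parametrize the sum by $\bar\la_1,\ldots,\bar\la_{j-1}$ (equivalently, by $c_1,\ldots,c_{j-1}$, or by the $W_i$'s) and perform the summations iteratively, eliminating one variable at a time via a single-index $q$-binomial identity. This is precisely the role that the Krattenthaler identities (Propositions \ref{prop:Kr1} and \ref{prop:Kr2}) play, as the authors emphasize in the acknowledgments by calling them ``crucial ingredients''; the $q$-Chu--Vandermonde identity \eqref{Om_qmunu_sum} is the simplest such telescoping step but by itself is not enough. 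Identifying the correct order in which to collapse the summation variables, and checking that the factors accumulated along the iteration assemble precisely into $\psi_{\nu/\bar\nu}\varphi_{\nu/\la}$, is where I expect the bulk of the effort to lie. The case $j=1$ is handled separately and reduces to $\mathscr{U}_1=\Pspec{\al}$, which is immediate from Remark \ref{rmk:one_level_dynamics} and the definition of the bottommost update $\nu^{(1)}_1=\la^{(1)}_1+V_1$ with $V_1$ $q$-geometric.
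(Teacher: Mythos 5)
Your reduction to an $\al$-independent identity is correct, and so is the observation that once $(\la,\nu,\bar\la,\bar\nu)$ are fixed the variables $W_1,\dots,W_{j-1}$ and $V_j = (|\nu|-|\la|)-(|\bar\nu|-|\bar\la|)$ are all determined, so $\mathscr{U}_j$ is a single product of $\Om$-factors and you need to verify
\begin{align*}
\sum_{\bar\la}\frac{\psi_{\la/\bar\la}\,\varphi_{\bar\nu/\bar\la}}{(q;q)_{V_j}}\prod_{i=1}^{j-1}\Om_{q^{-1},\,q^{\la_i-\bar\la_i},\,q^{\bar\la_{i-1}-\bar\la_i}}(W_i\mid c_i)
= \psi_{\nu/\bar\nu}\,\varphi_{\nu/\la}.
\end{align*}
Your plan to kill the summation variables one at a time is also the right instinct: it is exactly what the paper does, phrased as an induction on $j$ in which each step strips off the leftmost particle $\bar\la_{j-1}$, pulls out a single factor $\Om_{q^{-1},\,q^{\nu_{j-1}-\bar\nu_{j-1}},\,q^{\nu_{j-1}-\nu_j}}(t-\nu_j+\la_j\mid\nu_{j-1}-\la_{j-1})$ with $t=c_{j-1}$, and leaves an inner sum that is $1$ by the inductive hypothesis at level $j-1$.

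However, your claim that the Krattenthaler identities Propositions~\ref{prop:Kr1} and~\ref{prop:Kr2} are ``crucial ingredients'' here is wrong, and it would send you down the wrong path. The acknowledgments say those identities are crucial for ``one of our four dynamics'' --- namely the \emph{column} dynamics $\Qqacol$ and Theorem~\ref{thm:QqcRSK_alpha}, where the pushes are entangled through a ``stabilization fund'' and one really does need a nontrivial $_3\varphi_2$ transformation. For the row dynamics $\Qqarow$, contrary to what you assert, the $q$-Chu--Vandermonde identity \eqref{Om_qmunu_sum} \emph{is} enough: the only nontrivial cancellation in the paper's induction step is a rearrangement of $q$-Pochhammer and $q$-binomial factors converting one $\Om$-weight into another, after which $\sum_t\Om(\cdot\mid\cdot)=1$ finishes the job. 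So the structure of your outline is sound, and your $\al$-cancellation is a clean way to phrase the setup, but you still need to actually carry out the one-particle-at-a-time elimination (which is the substantive content of the proof, not a routine expansion), and you should drop the expectation that Propositions~\ref{prop:Kr1}--\ref{prop:Kr2} enter --- they do not.
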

\begin{proof} 
We will prove \eqref{alpha_main_equation}  by induction on $j$. Case $j=1$ is straightforward because $\bar\la$ is empty (cf. Remark \ref{rmk:one_level_dynamics}).

Assume now that \eqref{alpha_main_equation} holds for signatures 
$\la,\nu$ having length $j-1$, and let us prove 
this identity for $\la,\nu$ or length $j$.
The idea is to expand each term in the sum in the left-hand side of \eqref{alpha_main_equation} with respect to what happens to the 
leftmost particle on the $(j-1)$-st level (and its neighborhood), 
and then use the inductive assumption and the fact that the $\Om$'s sum to $1$. 

For a signature $\mu = (\mu_{1} \geq \ldots \geq \mu_{m})$ we denote by $\mu^{-}$ the signature $(\mu_{1} \geq \ldots \geq \mu_{m-1})$ obtained by deleting the smallest  part of $\mu$, 
and by $\mu + [s]_{\textnormal{lm}}$ the signature $(\mu_{1} \geq \ldots \geq \mu_{m-1} \geq \mu_{m} + s)$ obtained by adding $s$ to the smallest part of $\mu$ (for $s \leq \mu_{m-1} - \mu_{m}$).
To simplify certain notations below, also denote
\begin{align}\label{Vj_notation}
\mathscr{V}_{j}(\la \to \nu \mid \bar \la \to \bar \nu): = \mathscr{U}_{j}(\la \to \nu \mid \bar \la \to \bar \nu)\frac{(\al a_j)^{|\la| - |\bar \la| - |\nu| + |\bar \nu|}}{(\al a_j; q)_{\infty}}.
\end{align}

Temporarily let $t$ stand for $c_{j-1} = \bar \nu_{j-1} - \bar \la_{j-1}$ which is the move of the leftmost particle on the $(j-1)$-st level.  
In order to have at least one nonzero summand in the left-hand side of \eqref{alpha_main_equation},
we need to have (see Fig.~\ref{fig:arp}): 
\begin{itemize}
	\item 
$t \geq \nu_{j} - \la_{j}$, since the jump of the leftmost particle on the $j$-th level happens due to contribution of a part of the jump of the leftmost particle on the $(j-1)$-st level.  

\item 
$t \geq \nu_{j} - \la_{j} + \bar \nu_{j-1} - \la_{j-1}$, 
since  $\Om_{q^{-1},\muq_{j-1}, \eta_{j-1}}(t - \nu_{j} + \la_{j} \mid t) > 0$ implies by \eqref{phinequalities} that $\nu_{j} - \la_{j} \leq \la_{j-1} - \bar \nu_{j-1} + t$.

\item $t \leq \bar \nu_{j-1} - \la_{j}$, since we must have $\bar \la_{j-1} \geq \la_{j}$.

\item $t \leq \nu_{j-1} - \la_{j-1} + \nu_{j} - \la_{j}$, since the contribution from the jump of the leftmost particle on the $(j-1)$-st level is split between particles $\la_{j}$
and $\la_{j-1}$ at the level $j$.
\end{itemize}
Denote the interval of $t$ satisfying the above inequalities by $I$.  We also must have
\begin{itemize}
	\item 
	$t \leq \bar \la_{j-2} - \la_{j-1} + \nu_{j} - \la_{j}$,  since  $\Om_{q^{-1},\muq_{j-1}, \eta_{j-1}}(t - \nu_{j} + \la_{j} \mid t) > 0$ implies by \eqref{phinequalities} 
	that 
	$t -\nu_{j} + \la_{j} \leq \bar \la_{j-2} -  \la_{j-1}$. 
	For $j=2$ this last inequality should be omitted.
\end{itemize}
We will use the notation $\tilde{\la}:= \bar \la^{-}$. 
Denote by $J(t)$ the set of signatures $\tilde{\la}$ of length $j-2$, 
such that  $\tilde{\la} \prech \bar \nu$, $\tilde\la\prech\la^{-}$, 
and $\tilde{\la}_{j-2} \geq t + \la_{j-1}-\nu_{j} + \la_{j}$. 
For $j=2$ this set consists of just the empty signature $\varnothing$. If $\bar \la$ is such that $\bar \la \prech \bar \nu$, $\bar \la \prech \la$ and $\mathscr{V}_{j}(\la \to \nu \mid \bar \la \to \bar \nu) \neq 0$, then $\bar \la^{-} \in \bigsqcup_{t \in I} J(t)$.

\begin{figure}[h] 

\hspace*{0 cm}\includegraphics[width = 0.7\textwidth]{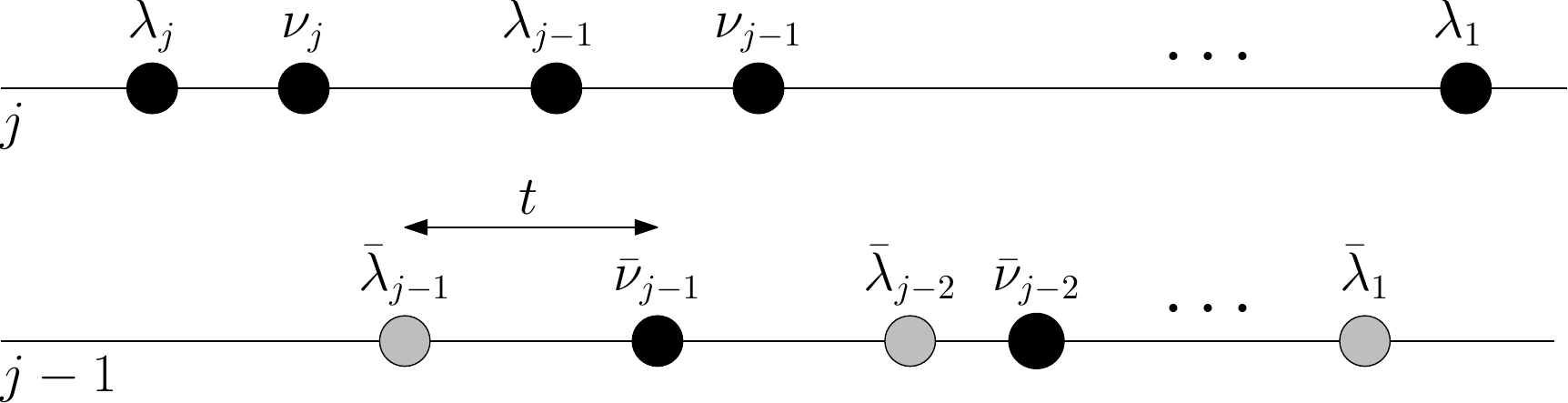}

\caption{We expand sum with respect to the 
jump $t = c_{j-1} =\bar \nu_{j-1} - \bar \la_{j-1}$ 
of the leftmost particle on the $(j-1)$-st level. 
Note that the signatures $\nu, \la, \bar \nu$ are fixed, 
while the positions of particles $\bar \la_{i}$ vary in the sum.} 

\label{fig:arp}
\end{figure}
The left-hand side of \eqref{alpha_main_equation} divided by the 
right-hand side of the same equation is equal to
\begin{align*}
& \sum_{\bar \la} \mathscr{V}_{j}(\la \to \nu \mid \bar \la \to \bar \nu) \frac{\psi_{\la / \bar\la} \varphi_{\bar \nu / \bar \la}}{\psi_{\nu / \bar \nu} \varphi _{\nu / \la}} 
\\&=\sum_{t \in I} \sum_{\tilde{\lambda} \in J(t)} \underbrace{\binom {t}{\nu_{j} - \la_{j}}_{q^{-1}}  \frac{(q^{\la_{j-1} - \bar \nu_{j-1} + t}; q^{-1})_{\nu_{j} - \la_{j}}(q^{\tilde{\la}_{j-2} - \la_{j-1}}; q^{-1})_{t - \nu_{j} + \lambda_{j}}}{(q^{\tilde{\la}_{j-2} - \bar \nu_{j-1}+t}; q^{-1})_{t}}q^{(\la_{j-1} - \bar \nu_{j-1} + t)(t - \nu_{j} + \la_{j})}}_{\Om_{q^{-1},q^{\tilde{\la}_{j-2}- \la_{j-1}}, q^{\tilde{\la}_{j-2}- \bar \nu_{j-1}+t}}(t - \nu_{j} + \la_{j} \mid t)} 
\\&\hspace{30pt} \times \mathscr{V}_{j-1}(\la^{-} + [t - \nu_{j} + \la_{j}]_{\textnormal{lm}} \to \nu^{-} \mid \tilde{\la} \to \bar \nu^{-}) 
\cdot\frac{\psi_{\la^{-} + [t - \nu_{j} + \la_{j}]_{\textnormal{lm}} / \tilde{\la}} \varphi_{\bar \nu^{-} / \tilde{\la}}}{\psi_{\nu^{-} / \bar\nu^{-}} 
\varphi _{\nu^{-} / \la^{-} + [t - \nu_{j} + \la_{j}]_{\textnormal{lm}}}}
\\&\hspace{30pt} \times \underbrace{
\frac{\binom {\la_{j-1} - \lambda_{j}}{\bar \nu_{j-1} - t - \la_{j}}_{q}}{\binom {\nu_{j-1} - \nu_{j}}{\bar \nu_{j-1} - \nu_{j}}_{q}} 
\cdot \frac{\binom {\tilde{\la}_{j-2} - \bar \nu_{j-1} + t}{t}_{q}}{\binom {\la_{j-1} - \la_{j}} {\nu_{j} - \la_{j}}_{q}}\cdot \frac{(q^{\nu_{j-1} - \la_{j-1}}; q^{-1})_{t - \nu_{j} + \la_{j}}}{(q^{\tilde{\la}_{j-2} - \la_{j-1}}; q^{-1})_{t - \nu_{j} + \la_{j}}}}_{
\frac{\psi_{\la / \bar \la}\psi_{\nu^{-} / \bar \nu^{-}}}
{\psi_{\la^{-} + [t - \nu_{j} + \la_{j}]_{\textnormal{lm}}/\tilde{\la}}\psi_{\nu / \bar \nu}}
\cdot\frac{\varphi_{\bar \nu / \bar \la}
\varphi_{\nu^{-}/ \la^{-} + [t-\nu_{j}+\la_{j}]_{\textnormal{lm}}}}
{\varphi_{\nu / \la}\varphi_{\bar \nu^{-}/ \tilde{\la}}}}
\\&= \sum_{t \in I} \bigg(\Om_{q^{-1}, q^{\nu_{j-1} - \bar \nu_{j-1}}, q^{\nu_{j-1} - \nu_{j}}}(t - \nu_{j} + \la_{j} \mid \nu_{j-1} - \la_{j-1}) 
\\&\hspace{30pt}\times \sum_{\tilde{\lambda} \in J(t)} 
\mathscr{V}_{j-1}(\la^{-} + [t - \nu_{j} + \la_{j}]_{\textnormal{lm}} 
\to \nu^{-} \mid \tilde{\la} \to \bar \nu^{-}) \cdot
\frac{\psi_{\la^{-} + [t - \nu_{j} + \la_{j}]_{\textnormal{lm}} / \tilde{\la}} \varphi_{\bar \nu^{-} / \tilde{\la}}}
{\psi_{\nu^{-} / \bar\nu^{-}} \varphi _{\nu^{-} / \la^{-} + [t - \nu_{j} + \la_{j}]_{\textnormal{lm}}}}\bigg) 
\\&= \sum_{t \in I} \Om_{q^{-1}, q^{\nu_{j-1} - \bar \nu_{j-1}}, q^{\nu_{j-1} - \nu_{j}}}(t - \nu_{j} + \la_{j} \mid \nu_{j-1} - \la_{j-1}) = 1.
\end{align*}
Above $\mathscr{V}_{j-1}$ and $\mathscr{V}_{j}$ have the same value of the parameter $a=a_j$. 
We have also used the fact that 
\begin{align*}
 |\nu| - |\la|  - |\bar \nu| + |\bar \la| & = |\nu^{-}| - |\la^{-}| + \nu_{j} - \la_{j}  - |\bar \nu^{-}| + |\bar \la^{-}| - \bar \nu_{j-1} + \bar \la_{j-1} \\ & = 
 |\nu^{-}| - |\la^{-} + [t - \nu_{j} + \la_{j}]_{\textnormal{lm}}| 
 - |\bar \nu^{-}| + |\bar \la^{-}| ,
\end{align*}
hence $\mathscr{V}_{j}(\la \to \nu \mid \bar \la \to \bar \nu)$ involves the same power of $\al a_j$ as $\mathscr{V}_{j-1}(\la^{-} + [t - \nu_{j} + \la_{j}]_{\textnormal{lm}} \to \nu^{-} \mid \bar \la^{-} \to \bar \nu^{-})$. 
Also, \eqref{phinequalities} implies that  $\Om_{q^{-1}, q^{\nu_{j-1} - \bar \nu_{j-1}}, q^{\nu_{j-1} - \nu_{j}}}(t - \nu_{j} + \la_{j} \mid \nu_{j-1} - \la_{j-1})$ is nonzero only for $t \in I$, hence one gets $1$ after summing these quantities over $t \in I$.

This concludes the proof, and also establishes Theorem \ref{thm:sampling_intro}
from Introduction.
\end{proof}

\subsection{Geometric $q$-PushTASEP} % (fold)
\label{sub:geometric_q_pushtasep}

Under the dynamics 
$\Qqarow$
we have just constructed, 
the rightmost $N$ particles $\la^{(j)}_{1}$
of the interlacing array
evolve in a \emph{marginally Markovian manner}
(i.e., their evolution does not depend
on the dynamics of the rest of the interlacing
array). 
Namely, at each discrete time step $t\to t+1$
the bottommost particle
is updated as $\la^{(1)}_{1}(t+1)=\la^{(1)}_{1}(t)+V_{1}$, and for any $j=2,\ldots,N$ if we let $\gap_j(t)= \la^{(j)}_{1}(t) - \la^{(j-1)}_{1}(t)$  be the gap between the rightmost particles on the $(j-1)$-st and the $j$-th levels at time $t$, then
\begin{align*}
\la^{(j)}_{1}(t+1)=\la^{(j)}_{1}(t)+ V_{j}+W_{j, t}
\end{align*}
for an independent random variable $W_{j, t}$ 
distributed according to 
\begin{align*}
	\Om_{q^{-1}, q^{\gap_j(t)}, 0}\big(\cdot \mid \la^{(j-1)}_{1}(t+1) - \la^{(j-1)}_{1}(t)\big).
\end{align*}
The random variable $V_j$ (recall that it has the $q$-geometric distribution with parameter
$\al a_j$ which is resampled during each time step) 
represents an independent jump of $\la^{(j)}_{1}$.
The variable $W_{j, t}$ represents the pushing
of $\la^{(j)}_{1}$ by the move of 
$\la^{(j-1)}_{1}$.

This evolution of the 
rightmost particles 
$\la^{(j)}_{1}$,
$1\le j\le N$,
leads to a new interacting particle system
on $\Z$ which we call the (\emph{discrete time}) \emph{geometric \mbox{$q$-PushTASEP}}.

% subsection geometric_q_pushtasep (end)

\subsection{Column insertion dynamics $\Qqacol$. Description and discussion} % (fold)
\label{sub:col_insertion_dynamics_q__hatal_q-rrsk_}

Let us now describe one time step $\boldsymbol\la \to \boldsymbol\nu$ of the multivariate Markov dynamics $\Qqacol$ on $q$-Whittaker processes of depth $N$. As in the previous case, the bottommost particle of the interlacing array is updated as $\nu_{1}^{(1)} = \la_{1}^{(1)} + X$ for a $q$-geometric random variable $X$ with parameter $\al a_{1}$. Next, sequentially for each $j=2,\ldots,N$, given the movement $\bar\la \to\bar\nu$ at level $j-1$, we will randomly update 
$\la\to\nu$ at level $j$. To describe this update we write, as usual, 
\begin{align*}
	\bar\nu-\bar\la=\sum_{i=1}^{j-1}c_i \bar{\mathrm{e}}_{i},
	\qquad c_i\in\Z_{\ge0}.
\end{align*}

All randomness during this update comes from a collection of $3j$ dependent random variables $X_{1}, \ldots, X_{j}, Y_{1}, \ldots, Y_{j}, Z_{1}, \ldots, Z_{j}$ (they are resampled during each time step), and 
\begin{align*}
	\nu_{j-i+1} - \la_{j-i+1} = \underbrace{X_i}_{\text{voluntary jump}}+
	\underbrace{Y_i}_{\text{push from $\bar\la_{j-i+1}$}}+\underbrace{Z_i}_{\text{push 
		from the ``stabilization fund''}},\qquad i=1,\ldots,j.
\end{align*}
(It will be convenient to let
$i$ represent the position of the particle counted from the left.)
Observe that $Y_1$ must be identically zero. The ``stabilization fund''
means the leftover push from the first $i-2$ particles from the left 
at level $j-1$ (i.e., from $\bar\la_{j-1},\ldots,\bar\la_{j-i+2}$)
(in particular, $Z_1$ and $Z_2$ are identically zero).

% \begin{figure}[h] 

% \hspace*{0 cm} \includegraphics[width = 0.6\textwidth]{XYZtable.pdf}

% \caption{Three parts of movement $\nu_{j-i+1} - \la_{j-i+1}$ of the $i$-th particle from the left on the $j$-th level.} 

% \label{fig:ale1}

% \end{figure}

Let us first formally define the distribution of all the parts of the jumps:
\begin{enumerate}
\item[(1)] Set $\theta_{1}: =1$. For $i$ from $1$ to $j$ sample $X_{i}$ according to 
\begin{align}\label{Xi_distribution}
X_i\sim\Om_{q, \al a_{j}\theta_{i}, 0}(\cdot \mid \bar \la_{j-i} - \la_{j-i+1})
\end{align}
and set 
\begin{align*}
\theta_{i+1}:= \theta_{i}q^{\bar \la_{j-i} - \la_{j-i+1} - X_{i}}.
\end{align*} 
The jump $X_{i}$  comes from the input $V_{j}$, see Remark \ref{rmk:Xi_from_Vj} below. Here the convention $\bar \la_{0} = +\infty$ applies when $i=j$.

\item[(2)] Set $Y_{1} := 0$. For $i$ from $2$ to $j-1$ take $Y_{i} = y$ with probability 
\begin{align}\label{Yi_distribution}
Y_i\sim\Om_{q^{-1}, q^{c_{j-i+1}}, q^{\bar \la_{j-i} - \bar \la_{j-i+1}}}(\bar \la_{j-i} - \la_{j-i+1} - X_{i} - y \mid  \bar \la_{j-i} - \la_{j-i+1} - X_{i}).
\end{align}
Finally, set $Y_{j}:= c_{1}$.

\item[(3)] Set $r_{1}=r_{2}=1$ and $Z_{1} = Z_{2} := 0$. Set $r_{3}: =r_{2}q^{c_{j-1}-Y_{2}}$. For $i$ from $3$ to $j-1$ take $Z_{i} = z$ with probability
\begin{align}\label{Zi_distribution}
Z_i\sim\Om_{q^{-1}, r_{i}, 0}(\bar \la_{j-i} - \la_{j-i+1} - X_{i} - Y_{i} - z \mid  \bar \la_{j-i} - \la_{j-i+1} - X_{i} - Y_{i})
\end{align}
and set
\begin{align*}
r_{i+1}:= r_{i}q^{c_{j-i+1} - Y_{i} - Z_{i}}.
\end{align*}
Finally, let $Z_{j} := \log_{q} r_{j}$.
\end{enumerate}

\begin{remark}
For fixed $s, u, d \geq 0$ (possibly $u = \infty$) and $D \to \infty$ 
observe that 
$$\Om_{q^{-1}, q^{s}, q^{u+D}}(D-d \mid D) = q^{(s-d)(D-d)}(q^{s}; q^{-1})_{d} \binom {D}{d}_{q} \frac{(q^{u+D-s}; q^{-1})_{D-d}}{(q^{u+D}; q^{-1})_{D}} \to \textbf{1}_{d=s}.$$ 
Therefore, the definitions of
$Z_{j} = \log_{q} r_{j}$ 
and $Y_{j} = c_{1}$ 
are consistent
with the definitions of $Z_{i}$ and $Y_{i}$ ($i<j$), respectively.
In words, the consistency for $Z_j$
means that
the stabilization fund is depleted for 
the push of the rightmost particle 
on the $j$-th level.
The consistency for $Y_j$
means that 
the whole 
value of the jump of the rightmost particle on the $(j-1)$-st level is transferred to the 
rightmost particle on the $j$-th level via immediate pushing.
\end{remark}

\begin{figure}[h] 

\hspace*{0 cm} \includegraphics[width = 0.7\textwidth]{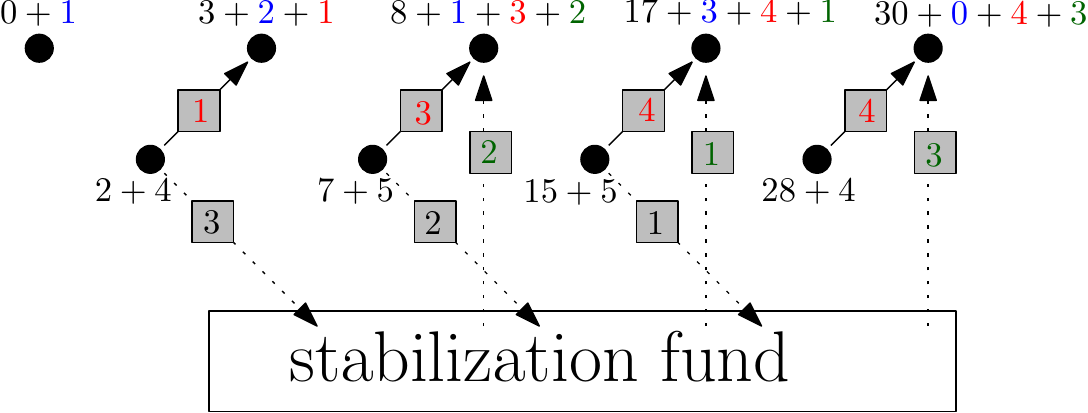}

\caption{An example of a step of $\Qqacol$ at levels 4 and 5.} 

\label{fig:ale2}

\end{figure}

\begin{lemma}
	If $\bar\la \prech \bar \nu$, $\bar\la\prech \la$ 
	and $\mathscr{U}_j(\la \to \nu \mid \bar \la \to \bar \nu) > 0$, 
	then $\bar \nu\prech\nu$ and $\la \prech \nu$.
\end{lemma}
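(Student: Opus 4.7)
The plan is to reduce the interlacing check to two pointwise inequalities on the joint jumps $\Delta_i := \nu_{j-i+1} - \la_{j-i+1} = X_i + Y_i + Z_i$, and then extract those inequalities directly from the support conditions of the $q$-deformed Beta-binomial distributions used to sample $X_i$, $Y_i$, $Z_i$, together with the hypothesized interlacings $\bar\la \prech \la$ and $\bar\la \prech \bar\nu$.

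First, I would translate the desired interlacings into bounds on the $\Delta_i$. The relation $\la \prech \nu$ with $\la,\nu \in \GT_j$ amounts to $\Delta_i \leq \la_{j-i} - \la_{j-i+1}$ for $i = 1,\ldots,j-1$ (together with the trivial $\Delta_i \geq 0$). The relation $\bar\nu \prech \nu$ with $\bar\nu \in \GT_{j-1}$, $\nu \in \GT_j$ breaks into two parts: $\bar\nu_k \geq \nu_{k+1}$ for $k = 1,\ldots,j-1$, which in the $i$-variable (with $i = j-k$) reads $\Delta_i \leq \bar\nu_{j-i} - \la_{j-i+1}$; and $\nu_k \geq \bar\nu_k$ for $k = 1,\ldots,j-1$, which (with $i = j-k+1$) reads $X_i + Y_i + Z_i \geq \bar\nu_{j-i+1} - \la_{j-i+1}$.

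For the three upper bounds I would chain the supports: $X_i \leq \bar\la_{j-i} - \la_{j-i+1}$ from the support of $\Om_{q,\al a_j \theta_i, 0}(\,\cdot\,\mid \bar\la_{j-i} - \la_{j-i+1})$; then $Y_i \leq \bar\la_{j-i} - \la_{j-i+1} - X_i$ and $Z_i \leq \bar\la_{j-i} - \la_{j-i+1} - X_i - Y_i$ since each conditional $\Om$ is supported on a prefix of $\{0,1,\ldots\}$ that is cut off at the previous residual. Thus $\Delta_i \leq \bar\la_{j-i} - \la_{j-i+1}$. Combining this with $\bar\la_{j-i} \leq \la_{j-i}$ (from $\bar\la \prech \la$) yields $\Delta_i \leq \la_{j-i} - \la_{j-i+1}$, establishing $\la \prech \nu$; combining instead with $\bar\la_{j-i} \leq \bar\la_{j-i} + c_{j-i} = \bar\nu_{j-i}$ yields $\Delta_i \leq \bar\nu_{j-i} - \la_{j-i+1}$, giving the $\bar\nu_k \geq \nu_{k+1}$ half of $\bar\nu \prech \nu$.

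The only genuine computation is the remaining lower bound $X_i + Y_i + Z_i \geq \bar\nu_{j-i+1} - \la_{j-i+1}$ for $i = 2,\ldots,j$, which is where I expect the main technical care is needed. For $i = j$ it is immediate from $Y_j := c_1$ and $\la_1 \geq \bar\la_1$. For $2 \leq i \leq j-1$ I would invoke \eqref{phinequalities} on the $\Om_{q^{-1},q^{c_{j-i+1}},q^{\bar\la_{j-i}-\bar\la_{j-i+1}}}$ governing $Y_i$: with $a = c_{j-i+1}$, $b = \bar\la_{j-i}-\bar\la_{j-i+1}$, $c = \bar\la_{j-i}-\la_{j-i+1}-X_i$, and $s = c - Y_i$, the ``$s \leq b-a$'' clause in \eqref{phinequalities} simplifies, after substituting $\bar\nu_{j-i+1} = \bar\la_{j-i+1} + c_{j-i+1}$, to exactly $X_i + Y_i \geq \bar\nu_{j-i+1} - \la_{j-i+1}$, so that $\nu_k \geq \bar\nu_k + Z_{j-k+1} \geq \bar\nu_k$. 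The bulk of the work is bookkeeping the index swap $i \leftrightarrow k = j-i+1$ and keeping track of which of the two clauses in \eqref{phinequalities} produces which interlacing bound; no further identities are required.
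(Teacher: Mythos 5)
Your proof is correct and follows essentially the same route as the paper: bound $X_i+Y_i+Z_i$ above by $\bar\la_{j-i}-\la_{j-i+1}$ from the support of each $\Om$, combine with $\bar\la_{j-i}\le\min(\bar\nu_{j-i},\la_{j-i})$ to get both upper interlacing inequalities, and extract $X_i+Y_i\ge\bar\nu_{j-i+1}-\la_{j-i+1}$ from the ``$s\le b-a$'' clause of \eqref{phinequalities} applied to the distribution \eqref{Yi_distribution} of $Y_i$. The only cosmetic difference is that you treat $i=j$ by hand using $Y_j=c_1$ instead of appealing to the $b\to+\infty$ consistency remark, which is slightly more careful but not a different argument.
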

\begin{proof}
	It is straightforward from the definition of the dynamics $\Qqacol$ that $\nu_{j-i+1} \leq \bar \la_{j-i} \le \min(\bar\nu_{j-i}, \la_{j-i})$.  Also for $2 \leq i \leq j$ \eqref{Yi_distribution} together with \eqref{phinequalities} implies that $\bar \la_{j-i} - \la_{j-i+1} - X_{i} - Y_{i} \leq \bar \la_{j-i} - \bar \la_{j-i+1} - c_{j-i+1}$, hence $\nu_{j-i+1} \geq \la_{j-i+1} + X_{i} + Y_{i} \geq \bar \nu_{j-i+1}$. 
It follows that the interlacing properties are preserved.  	
\end{proof}

In the rest of this subsection we will describe the column insertion dynamics
in words, and also discuss its various properties. 
The (rather involved) proof that this dynamics acts on $q$-Whittaker processes in a desired way
is postponed to the next subsection.

There are two stages of the update of particle positions $\la_{j}, \la_{j-1},\ldots,\la_1$, 
performed in order
\emph{from left to right}, which we will describe below.

During the first stage of the update, the particles at level $j$ level make voluntary jumps in order from left to right. 
The value $X_i$ of the voluntary jump of $\la_{j+1-i}$ 
depends on the previous jump $X_{i-1}$, where $2\le i \le j$.
Indeed, this dependence comes from the parameters $\theta_{i}$ (note that they are nonincreasing in $i$), 
see \eqref{Xi_distribution}.
Note that unlike the $\Qqarow$ case, 
in which all random movements not coming from pushing 
are restricted to the right edge, 
in the case of $\Qqacol$ any particle might make a voluntary jump.

\begin{remark}
\label{rmk:Xi_from_Vj}
The random variable $X_1+\ldots+X_j$
has the 
$q$-geometric distribution with parameter $\al a_{j}$,
as it should be by Remark \ref{rmk:one_level_dynamics} 
and the discussion of \S \ref{sub:rsk_type_dynamics}. 
This is seen by applying inductively the next Lemma.
\end{remark}
\begin{lemma} Let $A$ and $B$ be random variables
such that
$A$ is distributed according to $\Om_{q, \al, 0}(\cdot \mid a)$,
and $B$ given $A$ is distributed according to $\Om_{q, \al q^{a-A}, 0}(\cdot \mid b)$ (where $b$ might be $+\infty$).  
Then $A+B$ is distributed according to $\Om_{q, \al, 0}(\cdot \mid a+b)$.
\end{lemma}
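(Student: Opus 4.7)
The plan is to expand $P(A+B=n)$ using the explicit formula for $\Om_{q,\al,0}$, factor out everything that does not depend on the summation index, and recognize the remaining sum as a $q$-Vandermonde identity. First, specializing \eqref{Om_qmunu_definition} at $\eta=0$ gives the simple form
\begin{align*}
\Om_{q,\muq,0}(s\mid y)=\muq^{s}\,(\muq;q)_{y-s}\binom{y}{s}_{q},
\end{align*}
with the limit at $y=+\infty$ given by \eqref{Om_qmunu_limit}. Thus I would write
\begin{align*}
P(A+B=n)=\sum_{k=0}^{n}\Om_{q,\al,0}(k\mid a)\,\Om_{q,\al q^{a-k},0}(n-k\mid b),
\end{align*}
and substitute the explicit formulas into the summand.

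Next, the central algebraic observation is the $q$-Pochhammer factorization
\begin{align*}
(\al;q)_{a-k}\,(\al q^{a-k};q)_{b-(n-k)}=(\al;q)_{a+b-n},
\end{align*}
which is an immediate instance of $(u;q)_{m+r}=(u;q)_m(uq^m;q)_r$ (and which still makes sense when $b=+\infty$, producing $(\al;q)_{\infty}$ on the right). Pulling this common factor out, together with the overall power $\al^n$, reduces the desired identity $P(A+B=n)=\Om_{q,\al,0}(n\mid a+b)$ to
\begin{align*}
\sum_{k=0}^{n}q^{(a-k)(n-k)}\binom{a}{k}_{q}\binom{b}{n-k}_{q}=\binom{a+b}{n}_{q}.
\end{align*}

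The last step is simply to invoke the $q$-Vandermonde identity (see e.g.\ \cite[(1.5.3)]{GasperRahman}), which is exactly the displayed equality, and is valid also when $b=+\infty$ (the identity then becomes the $q$-binomial theorem applied to a truncation, or equivalently a limiting case of $q$-Vandermonde). No step here is a real obstacle: the only mild care is the limiting case $b=+\infty$, where one checks directly that the formula for $\Om_{q,\muq,0}(\cdot\mid +\infty)$ is consistent with the finite-$b$ computation. The main qualitative point to highlight in the write-up is that the dependence of the second $\Om$ on $A$ through the parameter shift $\al\mapsto \al q^{a-A}$ is precisely the one that makes the $q$-Pochhammer symbols telescope into $(\al;q)_{a+b-n}$, which is what allows the summation over $k$ to collapse via $q$-Vandermonde.
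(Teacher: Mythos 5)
Your proposal is correct and follows essentially the same path as the paper's proof: expand the convolution $\sum_s \Om_{q,\al,0}(s\mid a)\,\Om_{q,\al q^{a-s},0}(n-s\mid b)$, use the telescoping $(\al;q)_{a-s}(\al q^{a-s};q)_{b-n+s}=(\al;q)_{a+b-n}$ to pull out $\al^n(\al;q)_{a+b-n}$, and reduce the remaining sum to the $q$-Vandermonde identity $\sum_k q^{(a-k)(n-k)}\binom{a}{k}_q\binom{b}{n-k}_q=\binom{a+b}{n}_q$. The only cosmetic difference is that the paper establishes this last sum by rewriting it as $\binom{a+b}{n}_q\sum_s\Om_{q^{-1},q^a,q^{a+b}}(n-s\mid n)$ and invoking the already-stated normalization \eqref{Om_qmunu_sum} of the $q$-deformed Beta-binomial distribution, rather than citing $q$-Vandermonde directly.
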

\begin{proof}Indeed, we have
\begin{align*}
& Prob(A+B = y) = \sum_{s=0}^{y} Prob(A=s)Prob(B= y-s | A=s)
\\&\hspace{30pt} = \sum_{s=0}^{y} \al^{s} (\al; q)_{a-s} \binom {a}{s}_{q} (\al q^{a-s})^{y-s} (\al q^{a-s}; q)_{b-y+s} \binom {b}{y-s}_{q}
\\&\hspace{30pt} = \sum_{s=0}^{y} \al^{y} (\al; q)_{a+b-y} q^{a(y-s)} \frac{(q^{a};q^{-1})_{s}(q^{b};q^{-1})_{y-s}}{(q; q)_{y}} q^{-s(y-s)} \binom ys_{q}
\\&\hspace{30pt} = \al^{y} (\al; q)_{a+b-y} \binom {a+b}{y}_{q} \cdot \sum_{s=0}^{y} \Om_{q^{-1}, q^{a}, q^{a+b}}(y-s \mid y) 
\\&\hspace{30pt} = \Om_{q, \al, 0}(y \mid a+b),
\end{align*}
which establishes the desired statement.
\end{proof}

The second stage of the update
consists of pushing, 
in order from left to right. 
We start an initially empty stabilization fund, 
which will collect impulses not immediately used for pushing,
and will be a source of the pushes $Z_i$. 
The value of the stabilization fund just before the movement of $\la_{j+1-i}$ 
is $\log_{q}r_{i}$ (by agreement, $r_{1} = r_{2} = 1$ always). 
For each $i$ ranging from $2$ to $j$, the following three steps happen:
\begin{align}\label{col_pushing_3steps}
\parbox{.88\textwidth}{
\begin{enumerate}[(1)]
	\item The particle $\la_{j+1-i}$
	gets a push $Y_i$ from its lower left neighbor $\bar\la_{j+1-i}$.
	The size of this push (distributed according to \eqref{Yi_distribution}) is at most 
	$c_{j-i+1}$.
	\item Then $\la_{j+1-i}$ gets a push from the stabilization fund 
	(if it is not empty) of size not exceeding the current value of the stabilization fund.
	This push is distributed according to $Z_i$ \eqref{Zi_distribution}.
	\item Finally, the 
	amount of pushing not used in (1) above, i.e., $c_{j-i+1}-Y_i$,
	is added to the stabilization fund.
\end{enumerate}}
\end{align}

One can also think that the above two update
stages are performed together for each particle
$\la_{j}, \la_{j-1},\ldots,\la_1$.

\begin{proposition}\label{prop:Kr1}
	One can switch the order of the lower left neighbor pushing 
	and stabilization fund pushing (i.e., steps {\rm{}(1)\/} and {\rm{}(2)\/} 
	in \eqref{col_pushing_3steps}) 
	without changing the dynamics.\footnote{Here for the version with interchanged steps {\rm{}(1)\/} and {\rm{}(2)\/} the distributions of the jump components are changed as $Z_i\sim\Om_{q^{-1}, r_{i}, 0}(\bar \la_{j-i} - \la_{j-i+1} - X_{i} - \cdot \mid  \bar \la_{j-i} - \la_{j-i+1} - X_{i}) $ and $Y_i\sim\Om_{q^{-1}, q^{c_{j-i+1}}, q^{\bar \la_{j-i} - \bar \la_{j-i+1}}}(\bar \la_{j-i} - \la_{j-i+1} - X_{i} - Z_{i}-\cdot \mid  \bar \la_{j-i} - \la_{j-i+1} - X_{i}-Z_{i})$.}
\end{proposition}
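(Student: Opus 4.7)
To prove the proposition, the first step is to observe that the subsequent evolution of the dynamics depends on the pair $(Y_i, Z_i)$ only through the sum $Y_i + Z_i$: the updated position of the particle is $\la_{j-i+1} + X_i + Y_i + Z_i$, and the updated stabilization fund is $r_{i+1} = r_i q^{c_{j-i+1} - Y_i - Z_i}$. Therefore it suffices to verify that, conditionally on the previously sampled data, the distribution of $Y_i + Z_i$ is the same in both orderings. Note that the full joint distribution of $(Y_i, Z_i)$ is generically \emph{different} in the two orderings (as one can check in small cases, e.g., $M := \bar\la_{j-i} - \la_{j-i+1} - X_i = 1$), so one really needs to compare the marginals of the sum rather than the joint laws.

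Setting $a := q^{c_{j-i+1}}$, $b := q^{\bar\la_{j-i} - \bar\la_{j-i+1}}$, and $r := r_i$, equality of the marginal distributions of $Y_i + Z_i$ reduces to showing, for each $s \in \{0, 1, \ldots, M\}$, the identity
\begin{align*}
&\sum_{\substack{y + z = s \\ y, z \ge 0}} \Om_{q^{-1}, a, b}(M - y \mid M) \, \Om_{q^{-1}, r, 0}(M - y - z \mid M - y) \\
&\qquad = \sum_{\substack{y + z = s \\ y, z \ge 0}} \Om_{q^{-1}, r, 0}(M - z \mid M) \, \Om_{q^{-1}, a, b}(M - z - y \mid M - z).
\end{align*}
After relabeling $y \leftrightarrow z$ on the right-hand side, this is a statement that the convolution $\Om_{q^{-1},a,b} \ast \Om_{q^{-1},r,0}$ is commutative in the two component distributions. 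Expanding via \eqref{Om_qmunu_definition} and using the elementary $q$-identity $\binom{M}{y}_{q^{-1}}\binom{M-y}{z}_{q^{-1}} = \binom{M}{s}_{q^{-1}}\binom{s}{y}_{q^{-1}}$, one factors out the common combinatorial prefactor and rewrites each side as a terminating balanced basic hypergeometric series ${}_{3}\varphi_{2}$ in the summation variable, with parameters built from $a$, $b$, $r$, $q^{-s}$, and $q^{s-M}$.

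The main obstacle will then be the $q$-hypergeometric identity relating these two ${}_{3}\varphi_{2}$ sums. I would derive it from the Sears transformation for terminating balanced ${}_{3}\varphi_{2}$ series (equivalently, from the $q$-Pfaff--Saalsch\"utz summation), after matching parameters carefully. This is precisely the sort of $q$-binomial identity the authors attribute to Krattenthaler in the acknowledgments; once it is in hand, the proposition follows immediately. In other words, essentially all of the combinatorial content of the proposition is ultimately encoded in a single convolution symmetry of the $q$-Beta-binomial distributions, and the probabilistic reduction in the first two paragraphs is routine bookkeeping.
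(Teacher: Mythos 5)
Your proposal follows essentially the same route as the paper: both reduce the claim to the observation that only $Y_i+Z_i$ matters (since both the updated particle position and the updated stabilization fund depend only on that sum), both equate the probability of the total push being $s$ under the two orderings, and both identify the resulting single-sum identity as a $q$-hypergeometric transformation, provable via a Sears-type $_{3}\varphi_{2}$ transformation (the paper uses \cite[(III.12)]{GasperRahman} with one parameter sent to $0$). One small imprecision in your write-up: after the reduction, the $_{3}\varphi_{2}$ that appears is not balanced/Saalschützian (one numerator parameter degenerates to $0$), so it is the general Sears transformation, not $q$-Pfaff--Saalsch\"utz summation, that closes the argument; but this does not affect the structure of the proof.
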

\begin{proof}
Fix $k=2,\ldots,j$.
Suppose that after the voluntary displacement stage the distance from 
the $k$-th particle from the left at level $j$
(denote this particle by $P$)
to $\bar\la_{j+1-k}$ is 
$h := \bar \la_{j-k} - \la_{j-k+1} - X_{j-k+1}$. 
Also set
$\ell := \bar \nu_{j-k+1} - \bar \la_{j-k+1}$,
$b := \bar \la_{j-k} - \bar \la_{j-k+1}$,
and let the current size of the stabilization fund be $R$. 

If the steps (1) and (2) in \eqref{col_pushing_3steps} are not interchanged,
then the probability
that $P$ jumps by $s\ge0$ is 
\begin{align*}
	\sum_{y=0}^{s} \Om_{q^{-1}, q^{\ell}, q^{b}}(h-y \mid h)\Om_{q^{-1}, q^{R}, 0}(h-s \mid h-y).
\end{align*}
If the steps (1) and (2) in \eqref{col_pushing_3steps} are interchanged, then the 
same probability is given by 
\begin{align*}
	\sum_{y=0}^{s} \Om_{q^{-1}, q^{R}, 0}(h-s+y \mid h) \Om_{q^{-1}, q^{\ell}, q^{b}}(h-s \mid h-s+y).
\end{align*}
	
After dividing each of these two expressions by $\frac{q^{(R+\ell)(h-s)}(q^{b-\ell}; q^{-1})_{h-s}}{(q^{b}; q^{-1})_{h-s}} \binom hs_{q^{-1}}$ we arrive to the following identity we need to verify:
\begin{multline}\label{qbinom_id_1} 
\sum_{y=0}^{s} \binom {s}{y}_{q^{-1}} q^{\ell(s-y)}(q^{\ell}; q^{-1})_{y}(q^{R}; q^{-1})_{s-y}(q^{b-\ell-h+s}; q^{-1})_{s-y}
\\=\sum_{y=0}^{s} \binom {s}{y}_{q^{-1}} q^{Ry}(q^{\ell}; q^{-1})_{y}(q^{R}; q^{-1})_{s-y}(q^{b-h+1}; q)_{s-y}.
\end{multline}
We are very grateful to Christian Krattenthaler
for providing us with a proof of the $q$-binomial 
identity \eqref{qbinom_id_1}, which we reproduce below.

First, use a transformation formula 
for $_{3}\varphi_{2}$ series \cite[(III.12)]{GasperRahman}:
\begin{align*} 
_{3}\varphi_{2} \left[ \begin{array}{c} q^{-n}, b, c \\ d, e \end{array}; q, q \right] 
= \frac{(e/c; q)_{n}}{(e;q)_{n}}c^{n} {_{3}\varphi_{2}} \left[ \begin{array}{c} q^{-n}, c, d/b \\ d, cq^{1-n}/e \end{array}; q, \frac{bq}{e} \right]
\end{align*}
Sending $b \to 0$ we obtain
\begin{align} \label{transform1}
_{3}\varphi_{2} \left[ \begin{array}{c} q^{-n}, 0, c \\ d, e \end{array}; q, q \right] 
= \frac{(e/c; q)_{n}}{(e;q)_{n}}c^{n} {_{2}\varphi_{2}} \left[ \begin{array}{c} q^{-n}, c \\ d, cq^{1-n}/e \end{array}; q, \frac{dq}{e} \right]
\end{align}
Multiply both sides of \eqref{transform1} by $c^{-n}(d; q)_{n}(e;q)_{n}$ to obtain
\begin{align*}
c^{-n}\sum_{y=0}^{n}\frac{(q^{-n}; q)_{y}}{(q;q)_{y}}(c; q)_{y}(dq^{n-1}; q^{-1})_{n-y}(eq^{n-1}; q^{-1})_{n-y}q^{y} = \\ \sum_{y=0}^{n}\frac{(q^{-n}; q)_{y}(e/c; q)_{n}}{(q;q)_{y}(cq^{1-n}/e; q)_{y}}(c; q)_{y}(dq^{n-1}; q^{-1})_{n-y} (-1)^{y} q^{y(y-1)/2}(dq/e)^{y}.
\end{align*}
This equality can be rewritten as 
\begin{align*}
\sum_{y=0}^{n}\binom ny_{q^{-1}}c^{y-n}(c^{-1}; q^{-1})_{y}(dq^{n-1}; q^{-1})_{n-y}(eq^{n-1}; q^{-1})_{n-y} = \\ \sum_{y=0}^{n}\binom ny_{q^{-1}} (e/c; q)_{n-y}(c^{-1}; q^{-1})_{y}(dq^{n-1}; q^{-1})_{n-y} (dq^{n-1})^{y}.
\end{align*}
Now make the substitution $n:=s$, $d:= q^{1+R-s}$, $c:=q^{-\ell}$, $e:= q^{1+b-h-\ell}$ to arrive to \eqref{qbinom_id_1}.
\end{proof}

\begin{remark} (Schur degeneration)
If one sets $q=0$, then the dynamics $\Qqacol$ reduces to the dynamics $\Qacol$ on Schur processes based on the classical Robinson--Schensted--Knuth column insertion (\S \ref{sub:rsk_type_dynamics_schur}). Indeed, observe that
\begin{align*}
	\lim_{q \to 0}\Om_{q,uq^{t}, 0}(s \mid g)  = \mathbf{1}_{s=0} \quad \text{for} \ t > 0, \ \text{and} \quad \lim_{q \to 0}\Om_{q,u, 0}(s \mid g)=  (1-u+u\mathbf{1}_{s=g})u^{s}.
\end{align*}
Thus, the first update stage (voluntary movements) reduces to the propagation of the impulse the leftmost particle receives
(which has geometric distribution with parameter $\al a_{j}$).
The lower left neighbor pushing due to \eqref{q=0_Omega} and the stabilization fund pushing together degenerate to performing $c_{j-1} + \cdots + c_{1}$ operations $\mathsf{push}$ (Definition \ref{def:push}) 
in order from left to right.
\end{remark}

\subsection{Column insertion dynamics $\Qqacol$. Proof} % (fold)
\label{sub:column_insertion_dynamics_qqacol_proof}

\begin{theorem}\label{thm:QqcRSK_alpha}
The dynamics $\Qqacol$ defined above satisfies the main equations \eqref{alpha_main_equation}, and hence preserves the class of $q$-Whittaker processes and adds a new usual parameter $\al$ to the specialization $\mathbf{A}$ as in \eqref{adding_spec}.
\end{theorem}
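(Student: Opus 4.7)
The plan is to prove \eqref{alpha_main_equation} for $\Qqacol$ by induction on the level $j$, paralleling the strategy used for $\Qqarow$ in Theorem \ref{thm:QqrRSK_alpha}. The base case $j=1$ reduces to $\mathscr{U}_1 = \Pspec{\al}$, which holds by construction since $X_1$ is $q$-geometric with parameter $\al a_1$. For the inductive step I would write $\mathscr{U}_j(\la\to\nu\mid \bar\la\to\bar\nu)$ as the explicit product over $i=1,\ldots,j$ of the $\Om$-weights governing the variables $X_i$, $Y_i$, $Z_i$ from \eqref{Xi_distribution}--\eqref{Zi_distribution}, then multiply by the ratio $\psi_{\la/\bar\la}\varphi_{\bar\nu/\bar\la}/(\psi_{\nu/\bar\nu}\varphi_{\nu/\la})$ and try to reduce the sum over $\bar\la\in\GT_{j-1}$ to the inductive hypothesis applied at level $j-1$.

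The crucial preliminary step is to use Proposition \ref{prop:Kr1} to interchange the lower-left-neighbor and stabilization-fund pushes. After this reordering, the update at position $j-i+1$ (for $i\ge 2$) depends on the previous positions only through the scalar pair $(\theta_i, r_i)$, which opens the way to a telescoping argument analogous to the one carried out for $\Qqarow$: expand the sum with respect to $t := c_{j-1} = \bar\nu_{j-1}-\bar\la_{j-1}$ (the displacement of the leftmost particle at level $j-1$) and the signature $\tilde\la := \bar\la^{-}\in\GT_{j-2}$, so that the inner sum over $\tilde\la$ becomes an instance of \eqref{alpha_main_equation} at level $j-1$ with a shifted configuration $\la^{-}+[\cdots]_{\mathrm{lm}}$, and the outer sum over $t$ collapses via the identity $\sum_s \Om = 1$.

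Carrying this out, the factor coming from the ratio of $\psi$'s and $\varphi$'s combines with the $\Om$-weights at position $j-1$ (which involve the stabilization-fund variables $Z_{j-1}$ and the neighbor-push $Y_{j-1}$) to produce, after the inductive hypothesis is invoked, a single basic hypergeometric sum in $t$. Verifying that this sum telescopes to $1$ is precisely where the second Krattenthaler identity, Proposition \ref{prop:Kr2}, should enter, playing the role that the $q$-Chu--Vandermonde identity \eqref{Om_qmunu_sum} played for $\Qqarow$. The transfer of the parameter $\theta_i$ to the voluntary jumps and of $r_i$ to the stabilization fund is compatible with the shift-by-one in $\la_{j-1}$ that appears in the inductive step, since both parameters are multiplicative in $q^{c_k - Y_k}$ and hence respect the additive change in coordinates.

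The main obstacle will be the stabilization fund: unlike the row-insertion case, the variables $Z_2,\ldots,Z_j$ are coupled through the cumulative product $r_{i+1}=r_iq^{c_{j-i+1}-Y_i-Z_i}$, and summing over $\bar\la$ forces one to integrate them out simultaneously with the $Y_i$'s. Matching the resulting marginal with the $\mathscr{U}_{j-1}$-shape dictated by the induction hypothesis, and showing that the leftover factors conspire into the $q$-hypergeometric expression handled by Proposition \ref{prop:Kr2}, is the technical heart of the argument and where essentially all the work will concentrate.
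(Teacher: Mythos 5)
Your high-level outline has the right skeleton---induction on $j$, expansion with respect to the leftmost jump $t=c_{j-1}$, appeal to Proposition~\ref{prop:Kr2} for the telescoping---and you correctly flag the stabilization fund as the place where all the work concentrates. But there is a genuine gap in the inductive mechanism, and it is precisely the one you point to without resolving. In the row case $\Qqarow$, the ``leftover impulse'' passed from the leftmost $(j-1)$-level particle to the rest of level $j$ is an integer that can be absorbed into a coordinate shift, which is why the inductive step there lands on the \emph{same} equation at level $j-1$ with $\la^{-}$ replaced by $\la^{-}+[t-\nu_j+\la_j]_{\mathrm{lm}}$. In the column case this trick does not work: the stabilization fund behaves multiplicatively (it appears as the parameter $r_i$ in $\Om_{q^{-1},r_i,0}$), and after you sum over $\bar\la$ at level $j$ the inner sum is \emph{not} an instance of \eqref{alpha_main_equation} at level $j-1$ for the unmodified $\mathscr{U}_{j-1}$, with or without a shift of $\la^{-}$. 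The paper handles this by strengthening the inductive hypothesis: it introduces a modified dynamics $\mathscr{U}_j^{h}$ (with seed value $r_2=q^{h}$ for the stabilization fund instead of $r_2=1$), and then forms the $q$-binomial mixture
$\tilde{\mathscr{U}}_j^{H}(\la\to\nu\mid\bar\la\to\bar\nu)
:=\sum_{h=0}^{H}\binom{H}{h}_{q^{-1}}q^{(H-h)\sigma+h(b-t-a-k)}\,\mathscr{U}_j^{h}(\la\to\nu\mid\bar\la\to\bar\nu)$,
and proves \eqref{alpha_main_equation_mod} for $\tilde{\mathscr{V}}_j^{H}$ and \emph{all} $H\geq 0$, of which $H=0$ gives the theorem. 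The inductive step then passes from $\tilde{\mathscr{V}}_j^{H}$ at level $j$ to $\tilde{\mathscr{V}}_{j-1}^{H+B}$ at level $j-1$ (with $B=b-a-k$), and Proposition~\ref{prop:Kr2} is exactly the identity that effects this recombination. Without this strengthening the induction simply does not close, because the inner sum produces a $q$-binomial mixture of impulse values $r$, not a single one.

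A secondary point: you propose invoking Proposition~\ref{prop:Kr1} (interchanging the neighbor push and the stabilization-fund push) as a ``crucial preliminary step.'' That proposition is used in the paper only to justify an alternative, equivalent description of the dynamics; it plays no role in the proof of Theorem~\ref{thm:QqcRSK_alpha}. The hypergeometric identity that the proof actually rests on is Proposition~\ref{prop:Kr2}, and your appeal to a ``shifted $\la^{-}$'' in place of the auxiliary parameter $H$ suggests you are still trying to reuse the $\Qqarow$ template where a different gadget is needed.
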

\begin{proof}
We aim to prove the desired statement by induction on $j$. 
To apply this induction, 
we will need a more general statement. To describe it, introduce
the following notation.
For a nonnegative integer $h$, use $\mathscr{U}_{j}^{h}(\la \to \nu \mid \bar \la \to \bar \nu)$ to denote the 
probability that transition $\bar \la \to \bar \nu$ on the $(j-1)$-st level 
spurs a transition $\lambda \to \nu$ on the $j$-th level according to 
the rules of $\Qqacol$ specified above, but modified so that $Z_{2}=z$ with probability
\begin{align*}
\Om_{q^{-1}, r_{2}, 0}(\bar \la_{j-2} - \la_{j-1} - X_{2} - Y_{2} - z \mid  \bar \la_{j-2} - \la_{j-1} - X_{2} - Y_{2}),
\qquad
r_{2}:=q^{h}.
\end{align*}
Note that the original dynamics $\Qqacol$ has 
$r_{2}=1$.
In other words, the modification $\mathscr{U}_{j}^{h}$
means that we introduce an additional impulse of size $h$
which is distributed among particles at level $j$ (except for $\la_j$), 
as if coming from (nonexistent) particles preceding the leftmost particle on the $(j-1)$-st level. 

Let $\sigma:= |\nu^{-}| - |\la^{-}| - |\bar \nu| + |\bar \la|$
(recall that the notation $\mu^{-}$ means $\mu$ without the last coordinate). 
Under the modified probabilities $\mathscr{U}_{j}^{h}$ as above, 
$\sigma-h$ is a sum of voluntary movements of particles on the 
$j$-th level except for the leftmost one. Note also that 
$\mathscr{U}_{j}^{h}(\la \to \nu \mid \bar \la \to \bar \nu)=0$ for $h > \sigma$.

For the purposes of the proof, let (see Fig.~\ref{fig:alp})
\begin{align*}
	&a:=\la_{j},\qquad k:= \nu_{j} - \la_{j},\qquad b:=\bar \nu_{j-1},\qquad 
	t:=\bar \nu_{j-1} - \bar \la_{j-1},\\&
	c:=\la_{j-1},\qquad d:=\bar \nu_{j-2},\qquad 
	s:=\bar \nu_{j-2} - \bar \la_{j-2},\\& 
	\ell:=\nu_{j-1}-\la_{j-1},
	\qquad x:=X_{2}\qquad y:=Y_{2}. 
\end{align*}
For a nonnegative integer $H$ define
\begin{align}\label{tilde_U_notation}
\tilde{\mathscr{U}}_{j}^{H}(\la \to \nu \mid \bar \la \to \bar \nu):= \sum_{h=0}^{H} \binom Hh_{q^{-1}}q^{(H-h)\sigma + h(b-t-a-k)}\mathscr{U}_{j}^{h}(\la \to \nu \mid \bar \la \to \bar \nu).
\end{align}
In particular, $\tilde{\mathscr{U}}_{j}^{0}(\la \to \nu \mid \bar \la \to \bar \nu) = \mathscr{U}_{j}(\la \to \nu \mid \bar \la \to \bar \nu)$. 
In general, the quantities
$\tilde{\mathscr{U}}_{j}^{H}$ are not probability distributions in $\nu$.
Their only meaning is that they come up in the inductive proof below.

With all the above notation we are now able to describe 
and prove the generalized statement which we will prove by induction:
\begin{align}
\label{alpha_main_equation_mod}
		\sum_{\bar\la\in\GT_{j-1}}
		\tilde{\mathscr{V}}_j^{H}
		(\la\to\nu\mid \bar\la\to\bar\nu)
		\frac{\psi_{\la/\bar\la}\varphi_{\bar\nu/\bar\la}}{\psi_{\nu/\bar\nu}\varphi_{\nu/\la}}
        = 1 \qquad \text{for any $H\ge0$.}
\end{align}
Here and below $\tilde{\mathscr{V}}_j^{H}$
is related to $\tilde{\mathscr{U}}_j^{H}$
as in \eqref{Vj_notation}. For $H=0$ this statement gives us \eqref{alpha_main_equation}.

For $j=1$ we have $\sigma=0$, so 
only the term $h=0$ contributes to \eqref{tilde_U_notation}.
Therefore, 
checking this induction base is the same as in the proof for $\Qqarow$ dynamics.  
\begin{figure}[h] 
\hspace*{0 cm} \includegraphics[width = 0.7\textwidth]{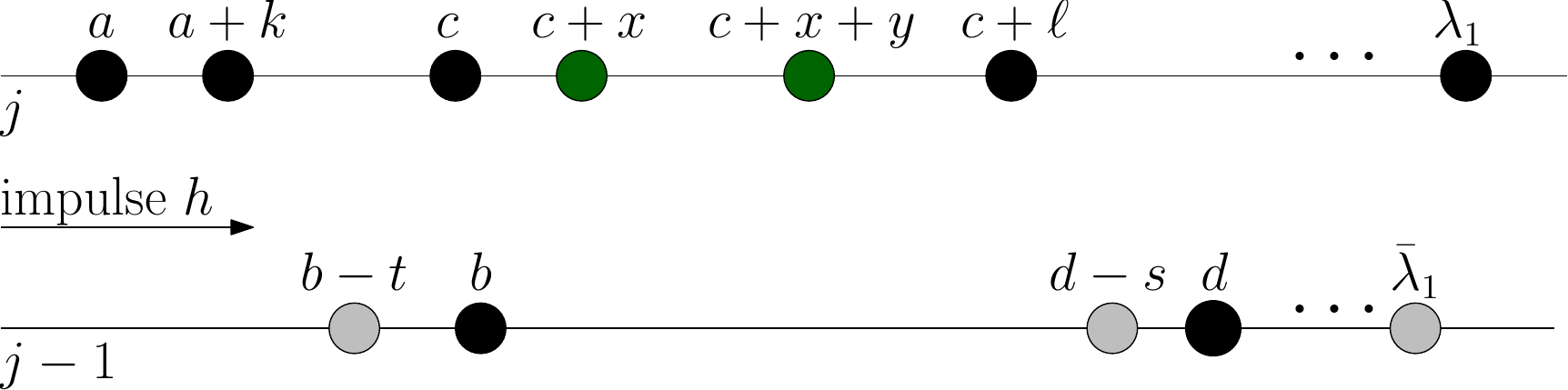}
\caption{We expand sum with respect to jump $t = c_{j-1} =\bar \nu_{j-1} - \bar \la_{j-1}$ of the leftmost particle on the $(j-1)$-st level, voluntary movement $x$ of the second leftmost particle on the $j$-th level and push $y$ from the leftmost particle on the $(j-1)$-st level.} 
\label{fig:alp}
\end{figure}

%\note{Kostya: add $j=2$ for the induction!}

Let us now perform the inductive step.
Denote by $I$ the range of $(t, x, y, h)$, such that 
\begin{align*}
	t, x, y, h \geq 0, \qquad
	x + y \leq \ell,\qquad h+t+x -\ell \geq 0,\qquad t \leq b-a-k.
\end{align*}
Then we may write (see Fig.~\ref{fig:alp})
\begin{align*}
& \sum_{\bar\la\in\GT_{j-1}} \tilde{\mathscr{V}}_{j}^{H}(\la\to\nu\mid \bar\la\to\bar\nu) 
\frac{\psi_{\la/\bar\la}\varphi_{\bar\nu/\bar\la}}{\psi_{\nu/\bar\nu}\varphi_{\nu/\la}}
{}
\\&\hspace{20pt} =\sum_{(t, x, y, h) \in I} 
\frac{(q;q)_{H}}{(q;q)_{h}(q;q)_{H-h}} 
\frac{(q;q)_{c-a}}{(q;q)_{b-t-a}(q;q)_{c-b+t}} 
\frac{(q;q)_{b-a-k}(q;q)_{c+\ell-b}}{(q;q)_{c+\ell-a-k}} 
\frac{(q;q)_{d-s-b+t}}{(q;q)_{t}(q;q)_{d-s-b}} 
\\& \hspace{40pt}\times 
\frac{(q;q)_{k}(q;q)_{c-a-k}}{(q;q)_{c-a}}
\frac{(q;q)_{b-t-a}}{(q;q)_{k}(q;q)_{b-t-a-k}} 
\frac{(q;q)_{d-s-c}}{(q;q)_{x}(q;q)_{d-s-c-x}} 
\frac{(q;q)_{d-s-c-x}}{(q;q)_{y}(q;q)_{d-s-c-x-y}} 
\\& \hspace{40pt}\times 
\frac{(q^{t};q^{-1})_{y}(q^{d-s-b}; q^{-1})_{d-s-c-x-y}}{(q^{d-s-b+t};q^{-1})_{d-s-c-x}}
\frac{(q;q)_{d-s-c-x-y}}{(q;q)_{\ell -x -y}(q;q)_{d-s-c-\ell}} (q^{h}; q^{-1})_{\ell-x-y} 
\\& \hspace{40pt}\times q^{-h(H-h)-y(d-s-c-x-y) -(d-s-c-\ell)(\ell-x-y) + t(d-s-c-x-y)} 
\\& \hspace{40pt}\times  q^{h(d-s-c-\ell) + (H-h)\sigma + h(b-t-a-k) + (b-t-a-k)x + (b-t-a-k + \ell-x)(\sigma-x-h)} 
\\& \hspace{60pt} \times 
\sum_{\tilde{\la}\in\GT_{j-2}} \mathscr{V}_{j}^{h+t+x-\ell}(\la^{-}\to\nu^{-}\mid \tilde{\la}\to\bar\nu^{-}) \frac{\psi_{\la^{-}/\tilde{\la}}\varphi_{\bar\nu^{-}/\tilde{\la}}}{\psi_{\nu^{-}/\bar\nu^{-}}\varphi_{\nu^{-}/\la^{-}}} 
\\&\hspace{20pt} 
= \sum_{r=0}^{H+B}\binom {H+B}{r}_{q^{-1}} q^{(H+B-r)(\sigma - \ell+t) + r(d-s-c-\ell)} 
\\& \hspace{60pt} \times 
\sum_{\tilde{\la}\in\GT_{j-2}} \mathscr{V}_{j}^{r}(\la^{-}\to\nu^{-}\mid \tilde{\la}\to\bar\nu^{-}) 
\frac{\psi_{\la^{-}/\tilde{\la}}\varphi_{\bar\nu^{-}/\tilde{\la}}}{\psi_{\nu^{-}/\bar\nu^{-}}\varphi_{\nu^{-}/\la^{-}}} 
\\&\hspace{20pt} 
= \sum_{\tilde\la\in\GT_{j-2}} \tilde{\mathscr{V}}_j^{H+B} 
(\la^{-}\to\nu^{-}\mid \tilde{\la}\to\bar\nu^{-}) 
\frac{\psi_{\la^{-}/\tilde{\la}}\varphi_{\bar\nu^{-}/\tilde{\la}}}{\psi_{\nu^{-}/\bar\nu^{-}}\varphi_{\nu^{-}/\la^{-}}} 
\\&\hspace{20pt} = 1.
\end{align*}
Here we have applied Proposition \ref{prop:Kr2}
(see below) 
with $A = H$, $B = b-a-k$, $C = c-b+ \ell$, 
where
$r: = h+t - \ell + x$ is 
the value of the stabilization fund just before the push of the third 
leftmost particle on the $j$-th level plus the value of the additional impulse in the inductive assumption.
This completes the inductive step in proving \eqref{alpha_main_equation_mod},
and thus implies the
theorem.
\end{proof}

\begin{proposition}\label{prop:Kr2}  For $A, B, C, \ell, r \geq 0$, such that $A+B \geq r$ and $B+C \geq \ell$, one has
\begin{multline*} 
	\sum_{t = 0}^{B} \sum_{x=0}^{\ell} \sum_{y=0}^{\ell-x} 
	\Bigg[
	\binom {\ell}{x, \ y}_{q^{-1}}  \binom {B}{t}_{q^{-1}} (q^{t}; q^{-1})_{y}
	(q^{r+\ell-x}; q^{-1})_{t} (q^{r+\ell-t-x}; q^{-1})_{\ell-x-y}  \frac{(q;q)_{r}}{(q;q)_{r+\ell-x}} 
	\\\times
	\frac{(q;q)_{A}}{(q;q)_{A+B}}
	\frac{(q^{A+B-r}; q^{-1})_{B-t+\ell - x}(q^{C+t}; q^{-1})_{\ell}(q^{C}; q^{-1})_{\ell-x-y} }{(q^{B+C}; q^{-1})_{\ell} (q^{C+t}; q^{-1})_{\ell-x}}
	\\\times
	q^{t(\ell-x-y)+ (r+\ell-x)(B-t)+(A+B-r)x} 
	\Bigg]
	= 1.
\end{multline*}
Here and thereafter we use $q$-multinomial 
notation $\displaystyle\binom {n}{m,\ k}_{q}:= \frac{(q;q)_{n}}{(q;q)_{k}(q;q)_{m}(q;q)_{n-m-k}}$.
\end{proposition}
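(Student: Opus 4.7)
My plan is to evaluate the triple sum iteratively, starting from the innermost index, reducing each stage to a standard basic hypergeometric summation. First I would convert all of the $q^{-1}$-Pochhammer symbols to ordinary $q$-Pochhammer symbols via $(a;q^{-1})_n = (a^{-1};q)_n(-a)^n q^{-\binom{n}{2}}$, and rewrite the $q^{-1}$-binomial/multinomial coefficients in a parallel fashion. This book-keeping will make it transparent which classical series each partial sum represents.

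Next, for fixed $t$ and $x$, I would collect the $y$-dependent pieces. After the conversion above, the factors depending on $y$ are $(q^t;q^{-1})_y$, $(q^{r+\ell-t-x};q^{-1})_{\ell-x-y}$, $(q^C;q^{-1})_{\ell-x-y}$, the multinomial piece $1/[(q;q)_y(q;q)_{\ell-x-y}]$, and the power $q^{-t y}$ (from combining $q^{t(\ell-x-y)}$ with the other $y$-powers). This sum should rearrange into a terminating $_3\phi_2$ at argument~$q$ whose parameters satisfy the Saalschützian balancing condition; if so, the $q$-Pfaff--Saalschütz theorem \cite[(II.12)]{GasperRahman} (or, in a simpler case, the $q$-Chu--Vandermonde identity \cite[(II.6)]{GasperRahman}, which was used in the proof of \eqref{Om_qmunu_sum}) evaluates it in closed form. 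Substituting back and simplifying, one should land on a double sum whose inner sum over $x$ is again a terminating balanced $_3\phi_2$ (or a well-poised $_2\phi_1$), to which the same machinery applies. After two passes, the claim should reduce to a single $q$-summation over $t$ that collapses to $1$, analogous to the way the identity in the proof of Proposition \ref{prop:Kr1} reduced to a single application of \eqref{transform1}.

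The main obstacle I anticipate is purely technical: verifying that after the first $y$-summation the remaining series is truly Saalschützian (and matching parameters precisely, since the powers of $q$ outside the Pochhammer symbols must also line up). A slight miscount of an exponent can turn a summable $_3\phi_2$ into a non-summable one. I would therefore recommend carrying out the first two reductions separately and symbolically checking the balancing condition ($q^{1+\text{top sum}} = q^{\text{bottom product}}$) before invoking the evaluation theorem.

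Finally, an independent sanity check is available: the identity has a probabilistic meaning inside the proof of Theorem~\ref{thm:QqcRSK_alpha}. Namely, specialising $H=0$ in \eqref{alpha_main_equation_mod} reduces $\tilde{\mathscr U}_j^H$ to $\mathscr U_j$, and the resulting identity asserts that the generalised transition weights of $\Qqacol$ sum to a quantity consistent with the main linear equation \eqref{alpha_main_equation}. This provides a probabilistic confirmation of the algebraic claim, and in the forward direction one could in principle replace the purely hypergeometric argument by showing that both sides of the proposed identity give the right normalisation of the $q$-deformed Beta-binomial random variables $(X_i, Y_i, Z_i)$; however, a closed-form algebraic derivation via iterated $q$-hypergeometric summation, as sketched above, is what will make the identity usable in the induction.
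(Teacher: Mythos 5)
Your plan — "sum $y$, then $x$, then $t$, each time landing on a Saalschützian $_3\phi_2$ or a $q$-Chu--Vandermonde-type sum that evaluates in closed form" — does not survive the first step.  Fix $t$ and $x$ and carry out the $y$-sum as the paper does; you get (after pulling out the $y$-independent factors) a terminating $_3\phi_2$ of the form
\begin{align*}
{_{3}\varphi_{2}} \left[ \begin{array}{c} q^{-t}, 0, q^{-\ell + x} \\ \beta q^{1-t}, \gamma q^{1-\ell+x}  \end{array}; q, q \right],
\end{align*}
i.e.\ one of the numerator parameters degenerates to $0$.  Such a series is not Saalschützian — the balancing condition $de = abq^{1-n}$ has no hope of holding when one of $a,b,c$ is $0$ — and it also has two genuine denominator parameters, so it is not a $_2\phi_1$ to which $q$-Chu--Vandermonde could apply.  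There is no closed-form evaluation available; the paper's proof has to \emph{transform} it (the $b\to 0$ limit of \cite[(III.12)]{GasperRahman}, recorded as \eqref{transform1}) rather than sum it.  This is the central structural fact your proposal misses, and it is not a matter of "a slight miscount of an exponent": the series is intrinsically non-summable at this stage.

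Because of this, the rest of your iterated-summation roadmap also diverges from what actually works.  After the \eqref{transform1} transformation one can sum over $x$ and does hit a genuine closed-form evaluation, but it is the $_1\phi_1$ summation \cite[(II.5)]{GasperRahman}, not a balanced $_3\phi_2$.  Then the residual double sum in $(t,y)$ requires yet another $_3\phi_2$ \emph{transformation} (\cite[(III.13)]{GasperRahman}) before the expression can finally be re-indexed into a product of two nested $q$-Chu--Vandermonde-type sums and telescoped via \eqref{Om_qmunu_sum}.  So the identity is a "transform, sum, transform, telescope" argument, not an iterated Saalschütz.  If you want to salvage your approach you would need to first promote $A,r,C$ to free parameters $\alpha=q^A,\ \beta=q^r,\ \gamma=q^C$ (as the paper does, to avoid spurious terminations), and you would need to look for the correct transformation, not a summation, of the zero-parameter $_3\phi_2$ arising from the $y$-sum.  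The probabilistic sanity check you mention is a reasonable consistency test but, as you yourself note, does not substitute for the algebraic derivation needed in the induction for Theorem~\ref{thm:QqcRSK_alpha}.
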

We are extremely grateful to Christian Krattenthaler
for providing us with a proof of this proposition. 
We reproduce the proof below.
\begin{proof}
The left hand side of the equality can be expressed as a power series in $q^{A}, q^{r}, q^{C}$, hence we can set $\alpha = q^{A}$, $\beta = q^{r}$, $\gamma = q^{C}$ and prove a more general equality:
\begin{multline*} 
	\sum_{t = 0}^{B} \sum_{x=0}^{\ell} \sum_{y=0}^{\ell-x} 
	\Bigg[
	\binom {\ell}{x, \ y}_{q^{-1}}  \binom {B}{t}_{q^{-1}} (q^{t}; q^{-1})_{y}
	\frac{(\beta q^{\ell-x}; q^{-1})_{t + \ell-x-y}}{(\beta q^{\ell-x}; q^{-1})_{\ell-x}} 
	\\\times
	\frac{(\alpha q^{B}/ \beta; q^{-1})_{B-t+\ell - x}(\gamma q^{t}; q^{-1})_{\ell}(\gamma; q^{-1})_{\ell-x-y} }{(\alpha q^{B}; q^{-1})_{B}(\gamma q^{B}; q^{-1})_{\ell} (\gamma q^{t}; q^{-1})_{\ell-x}}
	\times
	\alpha^{x} \beta^{B-t-x} q^{-ty + B\ell} 
	\Bigg]
	= 1.
\end{multline*}
By first summing over $y$, the left hand side can be written as 
\begin{multline*}
\sum_{t = 0}^{B} \sum_{x=0}^{\ell} \Bigg[ {_{3}\varphi_{2}} \left[ \begin{array}{c} q^{-t}, 0, q^{-\ell + x} \\ \beta q^{1-t}, \gamma q^{1-\ell+x}  \end{array}; q, q \right] \binom {\ell}{x}_{q^{-1}}  \binom {B}{t}_{q^{-1}}
	\frac{(\beta q^{\ell-x}; q^{-1})_{t + \ell-x}}{(\beta q^{\ell-x}; q^{-1})_{\ell-x}} 
	\\\times
	\frac{(\alpha q^{B}/ \beta; q^{-1})_{B-t+\ell - x}(\gamma q^{t}; q^{-1})_{\ell}(\gamma; q^{-1})_{\ell-x} }{(\alpha q^{B}; q^{-1})_{B}(\gamma q^{B}; q^{-1})_{\ell} (\gamma q^{t}; q^{-1})_{\ell-x}}
	\times
	\alpha^{x} \beta^{B-t-x} q^{B\ell} 
	\Bigg].
\end{multline*}
We now apply transformation formula 
\eqref{transform1} to rewrite this as
\begin{align*}
&=\sum_{t = 0}^{B} \sum_{x=0}^{\ell} \Bigg[ {_{2}\varphi_{2}} \left[ \begin{array}{c} q^{-t}, q^{-\ell + x}  \\ \beta q^{1-t}, q^{-t}/\gamma  \end{array}; q, \beta q^{1 + \ell-t-x}/\gamma \right] \binom {\ell}{x}_{q^{-1}}  \binom {B}{t}_{q^{-1}}
	\frac{(\beta q^{\ell-x}; q^{-1})_{t + \ell-x}}{(\beta q^{\ell-x}; q^{-1})_{\ell-x}} 
           \frac{(\gamma q; q)_{t}}{(\gamma q^{1-\ell+x}; q)_{t}}
	\\&\hspace{40pt}\times
	\frac{(\alpha q^{B}/ \beta; q^{-1})_{B-t+\ell - x}(\gamma q^{t}; q^{-1})_{\ell}(\gamma; q^{-1})_{\ell-x} }{(\alpha q^{B}; q^{-1})_{B}(\gamma q^{B}; q^{-1})_{\ell} (\gamma q^{t}; q^{-1})_{\ell-x}}
	\times
	\alpha^{x} \beta^{B-t-x} q^{B\ell - \ell t + tx} 
	\Bigg]
\\&\hspace{20pt}=
\sum_{t = 0}^{B} \sum_{x=0}^{\ell} \sum_{y=0}^{\min\{t, \ell-x\}} \Bigg[(-1)^{y} \frac{(q^{-t}; q)_{y}(q^{-\ell+x}; q)_{y}}{(q^{-t}/\gamma; q)_{y}(\beta q^{1-t}; q)_{y}(q; q)_{y}}  \binom {\ell}{x}_{q^{-1}}  \binom {B}{t}_{q^{-1}}
	\frac{(\gamma q; q)_{t}}{(\gamma q^{1- \ell+x}; q)_{t}}
	\\&\hspace{40pt}\times
           \frac{(\beta; q^{-1})_{t}(\alpha q^{B}/ \beta; q^{-1})_{B-t+\ell - x}(\gamma q^{t}; q^{-1})_{\ell}(\gamma; q^{-1})_{\ell-x} }{(\alpha q^{B}; q^{-1})_{B}(\gamma q^{B}; q^{-1})_{\ell} (\gamma q^{t}; q^{-1})_{\ell-x}}
	\\&\hspace{40pt}\times
	\alpha^{x} \beta^{B-t-x+y} \gamma^{-y} q^{B\ell + \ell y - \ell t + tx + y^{2}/2 + y/2 -ty - xy} 
	\Bigg] 
\\&\hspace{20pt}=
\sum_{t = 0}^{B} \sum_{y=0}^{\ell} \Bigg[{_{1}\varphi_{1}} \left[ \begin{array}{c} q^{-\ell + y} \\ \alpha q^{1-\ell+t}/ \beta \end{array}; q, \alpha q^{1+t-y}/\beta \right] (-1)^{y} \frac{ (q^{-\ell}; q)_{y} (q^{-t}; q)_{y}}{(q^{-t}/\gamma; q)_{y}(\beta q^{1-t}; q)_{y} (q; q)_{y}} 
	\\&\hspace{40pt}\times
	\binom {B}{t}_{q^{-1}}  \frac{(\beta; q^{-1})_{t}(\alpha q^{B}/ \beta; q^{-1})_{B-t+\ell }(\gamma q^{t}; q^{-1})_{\ell}}{(\alpha q^{B}; q^{-1})_{B}(\gamma q^{B}; q^{-1})_{\ell}}
	\times
	\beta^{B-t+y} \gamma^{-y} q^{B\ell - \ell t + \ell y + y^{2}/2 + y/2 -ty} 
	\Bigg].
\end{align*}
The last equality is obtained by summing over $x$. 
We now use the summation formula 
\cite[(II.5)]{GasperRahman}:
\begin{align*}
{_{1}\varphi_{1}} \left[ \begin{array}{c} a \\ c \end{array}; q, c/a \right] = \frac{(c/a; q)_{\infty}}{(c; q)_{\infty}},
\end{align*}
and by summing over $y$ rewrite our expression as
\begin{multline*}
=\sum_{t = 0}^{B} \Bigg[ {_{3}\varphi_{2}} \left[ \begin{array}{c} q^{-\ell} , \beta q^{-t}/\alpha, q^{-t}  \\ \beta q^{1-t}, q^{-t}/\gamma  \end{array}; q, \alpha q^{1+\ell}/\gamma \right] 
\\\times
	\binom {B}{t}_{q^{-1}}  \frac{(\beta; q^{-1})_{t}(\alpha q^{B}/ \beta; q^{-1})_{B-t+\ell }(\gamma q^{t}; q^{-1})_{\ell}}{(\alpha q^{t}/\beta; q^{-1})_{\ell}(\alpha q^{B}; q^{-1})_{B}(\gamma q^{B}; q^{-1})_{\ell}}
	\times
	\beta^{B-t} q^{B\ell - \ell t} 
	\Bigg].
\end{multline*}
We now aim to use
the transformation formula \cite[(III.13)]{GasperRahman}:
\begin{align}
{_{3}\varphi_{2}} \left[ \begin{array}{c} q^{-n}, b, c \\ d, e  \end{array}; q, deq^{n}/bc \right] 
= \frac{(e/c; q)_{n}}{(e; q)_{n}}{_{3}\varphi_{2}} \left[ \begin{array}{c} q^{-n}, c, d/b \\ d, cq^{1-n}/e  \end{array}; q, q \right].
\end{align} 
Applying it, we can 
rewrite our expression as 
\begin{align*}
&=\sum_{t = 0}^{B} \Bigg[ {_{3}\varphi_{2}} \left[ \begin{array}{c} q^{-\ell}, q^{-t},  \alpha q \\ \beta q^{1-t}, \gamma q^{1-\ell}  \end{array}; q, q \right] 
\\&\hspace{40pt}\times
	\binom {B}{t}_{q^{-1}}  \frac{(1/\gamma; q)_{\ell}(\beta; q^{-1})_{t}(\alpha q^{B}/ \beta; q^{-1})_{B-t+\ell }(\gamma q^{t}; q^{-1})_{\ell}}{(\alpha q^{t}/\beta; q^{-1})_{\ell}(\alpha q^{B}; q^{-1})_{B}(\gamma q^{B}; q^{-1})_{\ell}(q^{-t}/\gamma; q)_{\ell}(q; q)_{y}}
	\times
	\beta^{B-t} q^{B\ell - \ell t} 
	\Bigg]
\\&\hspace{5pt}=
\sum_{t = 0}^{B} \sum_{y=0}^{\ell} \Bigg[\binom {B}{t}_{q^{-1}}  \frac{(q^{-\ell}; q)_{y}(q^{-t}; q)_{y}(\alpha q; q)_{y}(\gamma; q^{-1})_{\ell-y}(\beta q^{y}; q^{-1})_{t}(\alpha q^{B}/ \beta; q^{-1})_{B-t+\ell }}{(\alpha q^{t}/\beta; q^{-1})_{\ell}(\alpha q^{B}; q^{-1})_{B}(\gamma q^{B}; q^{-1})_{\ell}(\beta q^{y}; q^{-1})_{y}(q; q)_{y}}
	\times
	\beta^{B-t} q^{B\ell + y} 
	\Bigg] 
\\&\hspace{5pt}=
\sum_{y=0}^{\ell} \sum_{t = y}^{B}  \Bigg[\binom {B-y}{t-y}_{q^{-1}}\frac{\beta^{B-t} (\beta; q^{-1})_{t-y} (\alpha q^{B}/ \beta; 
q^{-1})_{B-t} }{(\alpha q^{B}; q^{-1})_{B-y}} \times \binom {\ell}{y}_{q^{-1}} \frac{q^{B(\ell-y)}(\gamma; q^{-1})_{\ell-y}(q^{B}; q^{-1})_{y}}{(\gamma q^{B}; q^{-1})_{\ell}}   
	\Bigg].
\end{align*}
The fact that this expression is equal to $1$ now
follows by applying \eqref{Om_qmunu_sum} twice.
\end{proof}

% subsection column_insertion_dynamics_qqacol_proof (end)

\subsection{Geometric $q$-TASEP} % (fold)
\label{sub:geometric_q_tasep}

Under the dynamics 
$\Qqacol$, 
the leftmost $N$ particles
$\la^{(j)}_{j}$
of the interlacing array
evolve in a \emph{marginally Markovian manner}.

Namely, let $\gap_j(t) := \la^{(j-1)}_{j-1}(t) - \la^{(j)}_{j}(t)$ 
be the gap between the consecutive 
leftmost particles at time $t$. We assume $\gap_1(t) = +\infty$. 
Then at each discrete time step $t\to t+1$
the leftmost particle on the $j$-th level is updated as
\begin{align*}
\la^{(j)}_{1}(t+1)=\la^{(j)}_{1}(t)+ W_{j, t}
\end{align*}
for an independent random variable $W_{j, t}$ distributed according to $\Om_{q, \al a_{j} , 0}(\cdot \mid \gap_j(t))$.

This evolution of
$\la^{(j)}_{j}$,
$1\le j\le N$,
is the 
(\emph{discrete time}) 
\emph{geometric \mbox{$q$-TASEP}}
which was introduced and studied in 
\cite{BorodinCorwin2013discrete}.

% subsection geometric_q_tasep (end)

\subsection{Small $\al$ continuous time limit} % (fold)
\label{sub:small_al_continuous_time_limit}

Let us send the parameter $\al$ to zero and simultaneously rescale time from discrete to continuous.
Namely, set
$\al: = (1-q)\Delta$, and let each discrete time step
correspond to continuous time 
$\Delta$.
In the limit 
$\Delta \to 0$, 
both dynamics
$\Qqarow$ and $\Qqacol$ turn into the same continuous time Markov dynamics on $q$-Whittaker processes 
as in \S\ref{sub:small_be_continuous_time_limit} above. 
That is, the limit of $\Qqarow$ is the dynamics introduced 
in \cite{BorodinPetrov2013NN}, same as for $\Qqbrow$. 
The limit of $\Qqacol$ is the dynamics 
introduced in \cite{OConnellPei2012}, same as for $\Qqbcol$.
To see this, note that repeated $q$-geometric trials
can be approximated by (continuous time) Poisson processes in this scaling.

\section{Moments and Fredholm determinants} % (fold)
\label{sec:moments_and_fredholm_determinants_for_bernoulli_}

In this section we briefly discuss 
moment and Fredholm determinantal
formulas for the Bernoulli $q$-PushTASEP started from the step initial configuration
(corresponding to $\la^{(j)}_{1}(0)=0$, $j=1,\ldots,N$).
% The moment formulas are obtained very similarly to 
% \cite{BorodinCorwin2013discrete}
% (see also \cite{CorwinPetrov2015}).
% However, the dual system to the Bernoulli $q$-PushTASEP (which governs the evolution
% of observables, cf. the discussion before Theorem \ref{thm:moment_formulas} below) 
% is not stochastic,
% which is similar to the continuous time $q$-PushASEP case \cite{CorwinPetrov2013}.
The Fredholm determinantal formula
which we extract from moment formulas
in a way similar to \cite{BorodinCorwinSasamoto2012} 
allow us to prove (in a small $\be$ continuous time limit, cf. \S \ref{sub:small_be_continuous_time_limit}) 
the conjectural 
Fredholm determinantal formula \cite[Conjecture 1.4]{CorwinPetrov2013}
for the continuous time $q$-PushASEP (which is a two-sided dynamics
unifying continuous time $q$-TASEP and $q$-PushTASEP).

\subsection{Bernoulli $q$-PushTASEP on the line} % (fold)
\label{sub:bernoulli_qpush_def}

In this section it will be convenient to work in 
the shifted coordinates
\begin{align*}
	x_i:=-\la_1^{(i)}-i,\qquad i=1,\ldots,N,
\end{align*}
so that
$x_1>\ldots >x_N$. We will think that the $x_j$'s
encode positions of particles on 
the line $\Z$ which jump to the \emph{left}.
Let us reformulate the definition of the Bernoulli $q$-PushTASEP (\S \ref{sub:bernoulli_q_pushtasep})
in these terms.

\begin{definition}\label{def:Bernoulli_qpush}
	Each discrete time step $t\to t+1$
	of the Bernoulli $q$-PushTASEP  
	consists of the following sequential updates (see Fig.~\ref{fig:Bernoulli}):
	\begin{enumerate}
		\item 
		The first particle $x_{1}$
		jumps to the left by one with probability 
		$\frac{a_1\be}{1+a_1\be}$, and stays put with
		the complementary probability $\frac{1}{1+a_1\be}$.
		\item Sequentially for $j=2,\ldots,N$: 
		\begin{enumerate}
			\item 
		If the particle 
		$x_{j-1}$ has not jumped, then
		$x_{j}$
		jumps to the left by one with probability 
		$\frac{a_j\be}{1+a_j\be}$, and stays put with
		the complementary probability $\frac{1}{1+a_j\be}$.
		\item 
		If the particle 
		$x_{j-1}$ has jumped (to the left by one), then
		$x_{j}$
		jumps to the left by one with probability 
		$\frac{a_j\be+q^{\gap_j(t)}}{1+a_j\be}$, and stays put with
		the complementary probability $\frac{1-q^{\gap_j(t)}}{1+a_j\be}$,
		where ${\gap_j}(t):=x_{j-1}(t)-x_{j}(t)-1$
		is the number of holes between the particles before the jump of 
		$x_{j-1}$.\footnote{Note that if $x_{j-1}$ has jumped and 
		$x_{j}(t)=x_{j-1}(t)-1$,
		then the probability that $x_j$
		jumps is equal to one, as it should be.}
		\end{enumerate}
	\end{enumerate}
\end{definition}
\begin{figure}[htbp]
	\begin{center}
		\begin{tabular}{lll}
		{\small\rm{}(1) Update $x_1(t+1)$ first:}&
		\multicolumn{2}{l}{\small\rm{}{(2a), (2b) Then update $x_2(t+1)$ based on 
		whether $x_1$ has jumped:}}\\
		\begin{adjustbox}{max width=.31\textwidth}
		\begin{tikzpicture}
			[scale=1,very thick]
			\def\pt{.17}
			\def\ee{.1}
			\def\h{.45}
			\draw[->] (1,0) -- (6.5,0);
			\foreach \ii in {(3*\h,0),(7*\h,0),(5*\h,0),(6*\h,0) ,(8*\h,0),(10*\h,0),(9*\h,0),(12*\h,0),(13*\h,0)}
			{
				\draw \ii circle(\pt);
			}
			\foreach \ii in {(4*\h,0),(11*\h,0)}
			{
				\draw[fill] \ii circle(\pt);
			}
			\node at (4*\h,-5*\ee) {$x_{2}(t)$};
			\node at (11*\h,-5*\ee) {$x_{1}(t)$};
		    \draw[->, very thick] (11*\h,.3) to [in=0, out=90] (10.5*\h,.6)
		    to [in=90, out=180] (10*\h,.3)
		    node [xshift=0,yshift=20] {$\mathrm{Prob}=\frac{a_1\be}{1+a_1\be}
		    \phantom{\frac{\be q^{6}}{1+\be}}$};
		\end{tikzpicture}
		\end{adjustbox}
		&
		\begin{adjustbox}{max width=.31\textwidth}
		\begin{tikzpicture}
			[scale=1,very thick]
			\def\pt{.17}
			\def\ee{.1}
			\def\h{.45}
			\draw[->] (1,0) -- (6.5,0);
			\foreach \ii in {(3*\h,0),(7*\h,0),(5*\h,0),(6*\h,0) ,(8*\h,0),(10*\h,0),(9*\h,0),(12*\h,0),(13*\h,0)}
			{
				\draw \ii circle(\pt);
			}
			\foreach \ii in {(4*\h,0),(11*\h,0)}
			{
				\draw[fill] \ii circle(\pt);
			}
			\node at (4*\h,-5*\ee) {$x_{2}(t)$};
			\node at (11*\h,-5*\ee) {$x_{1}(t+1)$};
		    \draw[->, very thick] (4*\h,.3) to [in=0, out=90] (3.5*\h,.6)
		    to [in=90, out=180] (3*\h,.3)
		    node [xshift=35,yshift=20] {$\mathrm{Prob}=
		    \frac{a_2\be}{1+a_2\be}$};
		\end{tikzpicture}
		\end{adjustbox}
		&
		\begin{adjustbox}{max width=.31\textwidth}
		\begin{tikzpicture}
			[scale=1,very thick]
			\def\pt{.17}
			\def\ee{.1}
			\def\h{.45}
			\draw[->] (1,0) -- (6.5,0);
			\foreach \ii in {(3*\h,0),(7*\h,0),(5*\h,0),(6*\h,0) ,(8*\h,0),(11*\h,0),(9*\h,0),(12*\h,0),(13*\h,0)}
			{
				\draw \ii circle(\pt);
			}
			\foreach \ii in {(4*\h,0),(10*\h,0)}
			{
				\draw[fill] \ii circle(\pt);
			}
			\node at (4*\h,-5*\ee) {$x_{2}(t)$};
			\node at (10*\h,-5*\ee) {$x_{1}(t+1)$};
		    \draw[->, very thick, dashed] (11*\h,.3) to [in=0, out=90] (10.5*\h,.6)
		    to [in=90, out=180] (10*\h,.3);
		    \draw[->, very thick] (4*\h,.3) to [in=0, out=90] (3.5*\h,.6)
		    to [in=90, out=180] (3*\h,.3)
		    node [xshift=35,yshift=20] {$\mathrm{Prob}=
		    \frac{a_2\be+q^{6}}{1+a_2\be}$};
		\end{tikzpicture}
		\end{adjustbox}
		\end{tabular}
	\end{center}
  	\caption{Bernoulli $q$-PushTASEP (on this picture, $\gap_{2}(t)=6$).}
  	\label{fig:Bernoulli}
\end{figure}
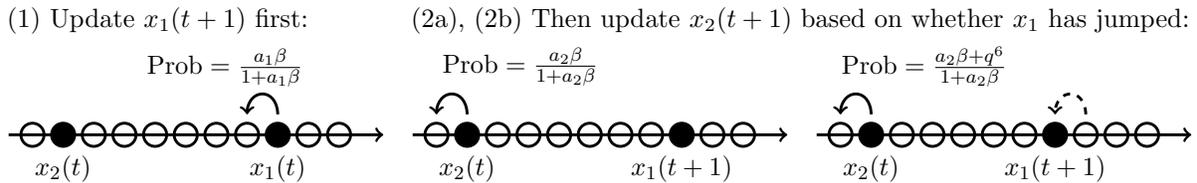
We will assume that the Bernoulli $q$-PushTASEP  
starts from the step initial configuration
$x_i(0)=-i$, $i=1,\ldots,N$. 

\begin{remark}
	Similarly to \cite{BorodinCorwin2013discrete},
	one could readily make the parameter 
	$\be$ of the process depend on time, so that at each 
	discrete time step $t\to t+1$, a new parameter
	$\be_{t+1}$ is used. This will not affect the presence and the general
	structure of 
	moment and Fredholm determinantal formulas.
	Below we will use a fixed parameter $\be$.
\end{remark}

% subsection bernoulli_qpush_def (end)

\subsection{Connection to the Bernoulli $q$-TASEP} % (fold)
\label{sub:connection_to_the_bernoulli_}

The Bernoulli $q$-PushTASEP looks quite similar 
to the Bernoulli $q$-TASEP introduced in \cite{BorodinCorwin2013discrete}
(see also \S \ref{sub:bernoulli_tasep} above
for an explanation of how the latter process arises
from the dynamics $\Qqbcol$ on $q$-Whittaker processes).

Moreover, there exists a \emph{direct coupling} between the two processes 
which we now explain. 
Recall that under the
Bernoulli $q$-TASEP 
(we will denote its particles with tildes:
$\tilde x_1(t)>\ldots>\tilde x_N(t)$)
particles jump to the \emph{right} by one
according to the rules on Fig.~\ref{fig:Bernoulli_TASEP}.
Let this process also start from the step initial configuration
$\tilde x_i(0)=-i$, $i=1,\ldots,N$. 

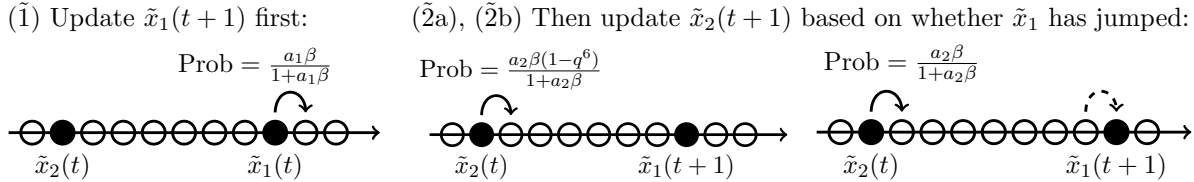
\begin{figure}[htbp]
	\begin{center}
		\begin{tabular}{lll}
		{\small\rm{}(\~{1}) Update $\tilde x_1(t+1)$ first:}&
		\multicolumn{2}{l}{\small\rm{}{(\~2a), (\~2b) Then update $\tilde x_2(t+1)$ based on 
		whether $\tilde x_1$ has jumped:}}\\
		\begin{adjustbox}{max width=.31\textwidth}
		\begin{tikzpicture}
			[scale=1,very thick]
			\def\pt{.17}
			\def\ee{.1}
			\def\h{.45}
			\draw[->] (1,0) -- (6.5,0);
			\foreach \ii in {(3*\h,0),(7*\h,0),(5*\h,0),(6*\h,0) ,(8*\h,0),(10*\h,0),(9*\h,0),(12*\h,0),(13*\h,0)}
			{
				\draw \ii circle(\pt);
			}
			\foreach \ii in {(4*\h,0),(11*\h,0)}
			{
				\draw[fill] \ii circle(\pt);
			}
			\node at (4*\h,-5*\ee) {$\tilde x_{2}(t)$};
			\node at (11*\h,-5*\ee) {$\tilde x_{1}(t)$};
		    \draw[->, very thick] (11*\h,.3) to [in=180, out=90] (11.5*\h,.6)
		    to [in=90, out=0] (12*\h,.3)
		    node [xshift=-12,yshift=20] {$\mathrm{Prob}=\frac{a_1\be}{1+a_1\be}
		    \phantom{\frac{\be q^{6}}{1+\be}}$};
		\end{tikzpicture}
		\end{adjustbox}
		&
		\begin{adjustbox}{max width=.31\textwidth}
		\begin{tikzpicture}
			[scale=1,very thick]
			\def\pt{.17}
			\def\ee{.1}
			\def\h{.45}
			\draw[->] (1,0) -- (6.5,0);
			\foreach \ii in {(3*\h,0),(7*\h,0),(5*\h,0),(6*\h,0) ,(8*\h,0),(10*\h,0),(9*\h,0),(12*\h,0),(13*\h,0)}
			{
				\draw \ii circle(\pt);
			}
			\foreach \ii in {(4*\h,0),(11*\h,0)}
			{
				\draw[fill] \ii circle(\pt);
			}
			\node at (4*\h,-5*\ee) {$\tilde x_{2}(t)$};
			\node at (11*\h,-5*\ee) {$\tilde x_{1}(t+1)$};
		    \draw[->, very thick] (4*\h,.3) to [in=180, out=90] (4.5*\h,.6)
		    to [in=90, out=0] (5*\h,.3)
		    node [xshift=0,yshift=20] {$\mathrm{Prob}=
		    \frac{a_2\be(1-q^{6})}{1+a_2\be}$};
		\end{tikzpicture}
		\end{adjustbox}
		&
		\begin{adjustbox}{max width=.31\textwidth}
		\begin{tikzpicture}
			[scale=1,very thick]
			\def\pt{.17}
			\def\ee{.1}
			\def\h{.45}
			\draw[->] (1,0) -- (6.5,0);
			\foreach \ii in {(3*\h,0),(7*\h,0),(5*\h,0),(6*\h,0) ,(8*\h,0),(10*\h,0),(9*\h,0),(11*\h,0),(13*\h,0)}
			{
				\draw \ii circle(\pt);
			}
			\foreach \ii in {(4*\h,0),(12*\h,0)}
			{
				\draw[fill] \ii circle(\pt);
			}
			\node at (4*\h,-5*\ee) {$\tilde x_{2}(t)$};
			\node at (12*\h,-5*\ee) {$\tilde x_{1}(t+1)$};
		    \draw[->, very thick, dashed] (11*\h,.3) to [in=180, out=90] (11.5*\h,.6)
		    to [in=90, out=0] (12*\h,.3);
		    \draw[->, very thick] (4*\h,.3) to [in=180, out=90] (4.5*\h,.6)
		    to [in=90, out=0] (5*\h,.3)
		    node [xshift=0,yshift=20] {$\mathrm{Prob}=
		    \frac{a_2\be}{1+a_2\be}$};
		\end{tikzpicture}
		\end{adjustbox}
		\end{tabular}
	\end{center}
  	\caption{Bernoulli $q$-TASEP (on this picture, $\gap_{2}(t)=6$).}
  	\label{fig:Bernoulli_TASEP}
\end{figure}

\begin{proposition}\label{prop:TASEP_coupling}
	Let $\{x_i(t)\}_{t=0,1,\ldots}$
	be the Bernoulli $q$-PushTASEP
	started from the step initial configuration
	and depending on parameters $\{a_i\}$ and $\be$.
	
	Then the evolution of the process $\{t+x_i(t)\}_{t=0,1,\ldots}$
	coincides with the Bernoulli $q$-TASEP 
	$\{\tilde x_i(t)\}_{t=0,1,\ldots}$
	started from the step initial configuration and
	depending on the parameters $\{a_i^{-1}\}$ and $\be^{-1}$.
\end{proposition}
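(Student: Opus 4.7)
The plan is to exhibit an explicit coupling: set $y_i(t) := t + x_i(t)$, and show directly that $\{y_i(t)\}_{t \ge 0}$ is a Bernoulli $q$-TASEP with parameters $\{a_i^{-1}\}$ and $\be^{-1}$. The step initial conditions match trivially, $y_i(0) = -i = \tilde x_i(0)$, and because the time shift is the same for every particle, the gaps are preserved:
\begin{equation*}
	y_{j-1}(t) - y_j(t) - 1 = x_{j-1}(t) - x_j(t) - 1 = \gap_j(t).
\end{equation*}
At each discrete time step, $y_i(t+1) - y_i(t) \in \{0, 1\}$, with $y_i$ jumping right by one exactly when $x_i$ stays put, and staying put exactly when $x_i$ jumps to the left.

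The key algebraic observation is the elementary Bernoulli duality under $a \mapsto a^{-1}$, $\be \mapsto \be^{-1}$:
\begin{equation*}
	\frac{a\be}{1+a\be} = \frac{1}{1+a^{-1}\be^{-1}}, \qquad \frac{1}{1+a\be} = \frac{a^{-1}\be^{-1}}{1+a^{-1}\be^{-1}}.
\end{equation*}
This immediately handles rule (1) in Definition \ref{def:Bernoulli_qpush} for $x_1$: the probability that $x_1$ stays equals the probability that $\tilde x_1$ jumps right under the dual parameters. It also handles rule (2a) for $x_j$ with $j\ge2$, i.e.\ the case where $x_{j-1}$ did not jump, which under the coupling corresponds to $\tilde x_{j-1}$ having jumped.

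The only mildly substantive point is rule (2b): when $x_{j-1}$ has jumped, i.e.\ $\tilde x_{j-1}$ stayed, the factor $q^{\gap_j(t)}$ must appear on both sides of the comparison, entering the PushTASEP through the pushing probability and the Bernoulli $q$-TASEP through a factor $1-q^{\gap_j(t)}$ in the jump probability of $\tilde x_j$. Verifying this case reduces to the identity
\begin{equation*}
	\frac{1-q^{\gap_j(t)}}{1+a_j\be} = \frac{a_j^{-1}\be^{-1}(1-q^{\gap_j(t)})}{1 + a_j^{-1}\be^{-1}},
\end{equation*}
which is again immediate. Since both processes are sequential-update Markov chains with the same update order $i=1,2,\ldots,N$ and matching one-step conditional transition probabilities, this pointwise verification forces equality of all finite-dimensional distributions. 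The main conceptual content is thus the role-swap between ``jumps'' and ``stays'' induced by $\be \mapsto \be^{-1}$, together with its compatibility with the $q$-deformed pushing/blocking on the two sides; no further machinery is required.
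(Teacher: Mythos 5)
Your proof is correct and takes essentially the same route as the paper: set $y_i(t) = t + x_i(t)$, observe that $y_i$ jumps right exactly when $x_i$ stays put and that gaps are preserved, and then match the one-step conditional probabilities case by case using $\frac{1}{1+a\be} = \frac{a^{-1}\be^{-1}}{1+a^{-1}\be^{-1}}$. The paper's proof is a terser version of the same verification (explicitly checked only for $x_1, x_2$), while you spell out the role-swap between ``jumps'' and ``stays'' and carry the gap-preservation remark, so no gap to report.
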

\begin{proof}
	The process $\{t+x_i(t)\}$
	jumps to the right, and, moreover, 
	each of its particles makes a jump precisely when 
	the corresponding $q$-PushTASEP particle $x_i(t)$
	stays put. 
	In particular, the first particle $x_1(t)$
	stays put with probability $1/(1+a_1\be)=(a_1^{-1}\be^{-1})/(1+a_1^{-1}\be^{-1})$.
	Next, if 
	$x_1(t)$ stayed put, then $x_2(t)$
	stays put with probability 
	$1/(1+a_2\be)=(a_2^{-1}\be^{-1})/(1+a_2^{-1}\be^{-1})$.
	Otherwise, if $x_1(t)$ jumped to the left, then 
	$x_2(t)$
	stays put with probability
	\begin{align*}
		1- \frac{a_2\be+q^{\gap_2(t)}}{1+a_2\be}
		=
		\frac{1-q^{\gap_2(t)}}{1+a_2\be}
		=
		\frac{a_2^{-1}\be^{-1}(1-q^{\gap_2(t)})}{1+a_2^{-1}\be^{-1}}.
	\end{align*}
	We see that the particles 
	$\{t+x_i(t)\}$ indeed perform the Bernoulli $q$-TASEP evolution
	with the desired parameters.
\end{proof}
One can think that this coupling 
between the two particle systems on $\Z$
comes 
from the 
complementation procedure 
(\S \ref{sub:Complementation}) relating the corresponding 
two-dimensional dynamics.

% subsection connection_to_the_bernoulli_ (end)

\subsection{Nested contour integral formulas for $q$-moments} % (fold)
\label{sub:nested_contour_integral_formulas_for_q_moments}

The above coupling between the Bernoulli $q$-PushTASEP and the Bernoulli $q$-TASEP allows
to readily write down moment formulas for the former process:
\begin{theorem}\label{thm:Bernoulli_qpush_nested_contours}
	Let $\{x_i(t)\}_{t=0,1,\ldots}$
	be the Bernoulli $q$-PushTASEP jumping to the left,
	started from the step initial configuration.
	Fix $k\ge 1$. For all $t=0,1,2,\ldots$
	and all integers $N\ge n_1\ge n_2\ge \ldots\ge n_k\ge0$,
	\begin{multline}
		\E^{\textnormal{step}}\bigg(\prod_{i=1}^{k}q^{x_{n_i}(t)+n_i}\bigg)
		\\=
		\frac{(-1)^{k}q^{\frac{k(k-1)}2}}{(2\pi\i)^{k}}
		\oint 
		\ldots
		\oint
		\prod_{1\le A<B\le k}
		\frac{z_A-z_B}{z_A-qz_B}
		\prod_{j=1}^{k}
		\left(\prod_{i=1}^{n_j}
		\frac{1}{1-a_iz_j}\right)
		\left(\frac{1+q^{-1}\be z_j^{-1}}{1+\be z_j^{-1}}\right)^{t}
		\frac{dz_j}{z_j},
		\label{Bernoulli_qpush_nested_contours}
	\end{multline}
	where the contour of integration for each $z_A$
	contains $a_1^{-1},\ldots,a_N^{-1}$,
	and the contours $\{qz_B\}_{B>A}$,
	but not poles $0$ or $(-\be)$.
\end{theorem}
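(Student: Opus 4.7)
The plan is to derive the formula from the known $q$-moment formula for the Bernoulli $q$-TASEP via the direct coupling recorded in Proposition \ref{prop:TASEP_coupling}. Recall that if $\{x_i(t)\}$ is our Bernoulli $q$-PushTASEP with parameters $\{a_i\}$ and $\beta$ started from step initial condition, then the shifted process $\tilde x_i(t):=t+x_i(t)$ is distributed as the Bernoulli $q$-TASEP with parameters $\{a_i^{-1}\}$ and $\beta^{-1}$, also started from step initial condition. Since $q^{x_{n_i}(t)+n_i}=q^{-t}\cdot q^{\tilde x_{n_i}(t)+n_i}$ for every $i$, one has the identity
\begin{equation*}
	\E^{\textnormal{step}}\bigg(\prod_{i=1}^{k}q^{x_{n_i}(t)+n_i}\bigg)
	=q^{-kt}\,\widetilde{\E}^{\textnormal{step}}\bigg(\prod_{i=1}^{k}q^{\tilde x_{n_i}(t)+n_i}\bigg),
\end{equation*}
where $\widetilde{\E}$ denotes expectation with respect to the Bernoulli $q$-TASEP with the swapped parameters.

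Next, I would invoke the nested contour integral moment formula for the Bernoulli $q$-TASEP proved by Borodin--Corwin \cite{BorodinCorwin2013discrete}. Applied with parameters $\tilde a_i=a_i^{-1}$ and $\tilde\beta=\beta^{-1}$, the factor $\prod_{i=1}^{n_j}\tilde a_i/(\tilde a_i-z_j)$ becomes $\prod_{i=1}^{n_j}(1-a_iz_j)^{-1}$, and the contours (which originally encircle $\{\tilde a_i\}$ and the dilations $\{qz_B\}_{B>A}$, but not $0$) become contours encircling $\{a_i^{-1}\}$ and $\{qz_B\}_{B>A}$, but not $0$. It remains to check that the time-dependent factor transforms correctly: the Borodin--Corwin formula carries a factor of the form $\left(\tfrac{q+\tilde\beta^{-1}z_j^{-1}}{1+\tilde\beta^{-1}z_j^{-1}}\right)^{t}$ for each variable, and after substituting $\tilde\beta^{-1}=\beta$ and multiplying by the prefactor $q^{-kt}$ from the coupling, each variable contributes exactly $\left(\tfrac{1+q^{-1}\beta z_j^{-1}}{1+\beta z_j^{-1}}\right)^{t}$, as required.

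An alternative, self-contained route is to argue directly within the Macdonald-process framework. The step initial configuration corresponds to the trivial $q$-Whittaker measure, and by Theorem \ref{thm:QqrRSK_beta} the dynamics $\Qqbrow$ (of which the Bernoulli $q$-PushTASEP is the top-row Markov marginal, see \S\ref{sub:bernoulli_q_pushtasep}) maps this measure after $t$ steps to the $q$-Whittaker measure with specialization $\mathbf{A}=(\hat\beta,\ldots,\hat\beta)$ (with $t$ copies). Applying the eigenrelations of the Macdonald first difference operator to observables of the form $\prod_i q^{\la^{(n_i)}_1}$ and using $x_{n_i}+n_i=-\la^{(n_i)}_{1}$ produces the stated integral formula. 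In particular, each copy of $\hat\beta$ added to the specialization multiplies the relevant generating function by $\prod_j(1+\beta u_j)/(1+\beta a_j)$, and the resulting $u_j$-dependent factor, after the change of variables $u_j\mapsto q^{-1}z_j^{-1}\beta$ inherent in the $q$-moment extraction, yields precisely $\left(\tfrac{1+q^{-1}\beta z_j^{-1}}{1+\beta z_j^{-1}}\right)^t$.

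The main obstacle I anticipate is not the algebra but a careful verification that the contour prescription from \cite{BorodinCorwin2013discrete} transforms into the one stated in the theorem without picking up spurious residues. Specifically, one must check that the pole at $z_j=-\beta$ (which appears from the ratio $(1+q^{-1}\beta z_j^{-1})/(1+\beta z_j^{-1})$) can be kept outside the contour, that the pole at $z_j=0$ remains outside, and that the nesting $\{qz_B\}_{B>A}\subset\text{contour of }z_A$ is compatible with the contours enclosing $\{a_i^{-1}\}$. This is a standard but slightly delicate verification, analogous to those carried out in the Bernoulli $q$-TASEP setting, and should follow by a small deformation argument once $\beta>0$ and the $a_i$'s are chosen so that $\max_i a_i^{-1}$ is bounded away from $-\beta$.
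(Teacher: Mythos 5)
Your proposal is correct and takes essentially the same route as the paper, which simply combines the coupling of Proposition \ref{prop:TASEP_coupling} with the Bernoulli $q$-TASEP moment formula from \cite[Theorem 2.1.(3)]{BorodinCorwin2013discrete}; your algebraic check that $q^{-kt}\bigl(\tfrac{1+q\beta^{-1}z_j}{1+\beta^{-1}z_j}\bigr)^t = \bigl(\tfrac{1+q^{-1}\beta z_j^{-1}}{1+\beta z_j^{-1}}\bigr)^t$ and that the pole $-\tilde\beta^{-1}=-\beta$ stays outside the contour after the substitution confirms the details. The alternative Macdonald-process route you sketch is the analytic continuation approach of \cite{BorodinCorwinFerrariVeto2013}, which the paper uses elsewhere but not here.
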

\begin{proof}
	Immediately follows from 
	Proposition \ref{prop:TASEP_coupling} and
	\cite[Theorem 2.1.(3)]{BorodinCorwin2013discrete}.
\end{proof}

\begin{remark}\label{rmk:determine_distribution}
	Since $x_i(t)+i\ge -t$ for any $i=1,\ldots,N$ and any $t\ge0$, 
	the $q$-moments in \eqref{Bernoulli_qpush_nested_contours} admit an
	a priori bound. Therefore, 
	for a fixed $t\ge0$
	they determine the distribution
	of the random variables $(x_1(t),\ldots,x_N(t))$.
\end{remark}
\begin{remark}\label{rmk:Fredholm_Bernoulli}
	The moment formula \eqref{Bernoulli_qpush_nested_contours}
	readily leads (similarly to \cite{BorodinCorwinSasamoto2012}) 
	to a Fredholm determinantal formula for the
	$q$-Laplace transform of $x_n(t)$ for any $n$. 
	A similar Fredholm determinantal formula already appeared in 
	\cite{BorodinCorwinFerrariVeto2013}
	(based on the technique of Macdonald difference operators,
	cf. \cite{BorodinCorwin2011Macdonald}). 
	We discuss this and more general two-sided
	formulas in \S \ref{sub:fredholm_determinants}
	below (in particular, see Proposition
	\ref{prop:two-sided_Fredholm}. See
	also \S \ref{sub:remark_geometric_q_pushtasep_formulas}
	for a related discussion of the case of the geometric $q$-PushTASEP.
\end{remark}
\begin{remark}\label{rmk:direct_proof_of_nested}
	One can also establish the nested contour
	integral formula \eqref{Bernoulli_qpush_nested_contours}
	directly, similarly to \cite{BorodinCorwin2013discrete}
	(see also \cite{CorwinPetrov2015}).
	Indeed, denote
	\begin{align*}
		I_t(\vec y):=q^{t(y_1+y_2+\ldots+y_N)}\E^{\textnormal{step}}
		\bigg(\prod_{i=0}^{N}q^{y_i(x_i(t)+i)}\bigg),
	\end{align*}
	where $(y_0,\ldots,y_N)\in\Z^{N}_{\ge0}$
	and, by agreement, the product is zero
	if $y_0>0$.\footnote{One should think that the
	variables $y_j$ encode the $n_i$'s 
	in \eqref{Bernoulli_qpush_nested_contours}:
	each $y_j$ denotes the number
	of $n_i$'s which are equal to $j$.}
	One can directly show that
	these quantities satisfy 
	certain linear equations in the $y_j$'s.
	For each $i=1,\ldots,N$, 
	consider the following difference operators
	acting on functions in $\vec y$:
	\begin{align}\label{Hi_oper}
		[\mathcal{H}^{q,\muq}]_{i}f(\vec y)
		:=\sum_{s_i=0}^{y_i}\Om_{q,a_i^{-1}\muq,0}(s_i\mid y_i)
		f(y_0,y_1,\ldots,y_{i-2},y_{i-1}+s_i,y_i-s_i,y_{i+1},\ldots,y_N).
	\end{align}
	Here the quantities $\Om$ 
	are defined in \eqref{Om_qmunu_definition}.

	Also, denote by $\mathcal{H}^{q,\muq}$
	the operator which acts as 
	$[\mathcal{H}^{q,\muq}]_{i}$
	in each variable $y_i$:
	\begin{align}\label{H_oper}
		\mathcal{H}^{q,\muq}:=
		[\mathcal{H}^{q,\muq}]_{N}
		[\mathcal{H}^{q,\muq}]_{N-1}
		\ldots
		[\mathcal{H}^{q,\muq}]_{1}.
	\end{align}
	Applying operators 
	$[\mathcal{H}^{q,\muq}]_{i}$
	in this order corresponds to 
	first changing $y_1$ (by decreasing it by $s_1$), 
	then $y_{2}$ (by sending $s_2$ to $y_1-s_1$), etc.,
	up to $y_N$.
	In other words, these changes 
	(encoded by $s_1,\ldots,s_N$)
	happen in parallel, simultaneously 
	with each of $y_1,y_2,\ldots,y_N$.
		
	One can then show that 
	for any $t=0,1,2,\ldots$
	and any $\vec{y}=(y_0,y_1,\ldots,y_N)\in\Z_{\ge0}^{N+1}$, 
	the quantities $I_t(\vec y)$ satisfy 
	\begin{align*}
		\mathcal{H}^{q,-\be^{-1}}I_{t+1}(\vec y)=
		\mathcal{H}^{q,-q\be^{-1}}I_{t}(\vec y).
	\end{align*}

	These linear equations 
	can then be solved by the coordinate
	Bethe ansatz technique, because the action of each of 
	the operators $\mathcal{H}^{q,-\be^{-1}}$
	and $\mathcal{H}^{q,-q\be^{-1}}$
	reduces to the action of a free operator (i.e., which acts
	on each of the variables $n_i$ separately; note the identification 
	of the $y_j$'s and $n_i$'s in the previous footnote)
	plus two-body boundary conditions. This immediately leads to 
	the desired nested contour integral formula.
\end{remark}

% subsection nested_contour_integral_formulas_for_q_moments (end)

\subsection{Remark. Geometric $q$-PushTASEP formulas} % (fold)
\label{sub:remark_geometric_q_pushtasep_formulas}

There are also nested contour integral formulas for $q$-moments of the geometric
$q$-PushTASEP (\S \ref{sub:geometric_q_pushtasep}). 
They can be obtained directly using the definition of the 
dynamics, similarly to the approach outlined in Remark \ref{rmk:direct_proof_of_nested}.
The moment formulas (for the geometric
$q$-PushTASEP jumping to the left)
will have the same form as in \eqref{Bernoulli_qpush_nested_contours},
with the following replacement of factors:
\begin{align*}
	\prod_{j=1}^{k}\left(\frac{1+q^{-1}\be z_j^{-1}}{1+\be z_j^{-1}}\right)^{t}
	\longrightarrow
	\prod_{j=1}^{k}\frac{1}{(1-\al q^{-1} z_j^{-1})^{t}}.
\end{align*}
However, because particles in the geometric
$q$-PushTASEP can jump arbitrarily far to the left (at least as far as by 
independent $q$-geometric jumps), only a finite number of $q$-moments
of the form
$\E^{\textnormal{step}}\left(\prod_{i=1}^{k}q^{x_{n_i}(t)+n_i}\right)$
exists. Therefore, these $q$-moments do not determine the 
distribution of the geometric
$q$-PushTASEP. 

One can overcome this issue and write down a Fredholm determinantal
formula for the distribution of the geometric
$q$-PushTASEP via a certain analytic continuation 
from the Bernoulli case. This analytic continuation
is performed in \cite[Section 3]{BorodinCorwinFerrariVeto2013}
and heavily relies on 
properties of $q$-Whittaker symmetric functions.
Namely, \cite[Theorem 3.3]{BorodinCorwinFerrariVeto2013} contains
a Fredholm determinantal
expression for the distribution of the 
particle $\la^{(N)}_{1}$ under a $q$-Whittaker process.
Our results on RSK-type dynamics (\S \ref{sec:geometric_q_rsks})
show that the same distribution arises under the geometric
$q$-PushTASEP started from the step initial configuration,\footnote{In other 
words, the geometric
$q$-PushTASEP provides a \emph{coupling} of
the quantities $\la^{(N)}_{1}$ arising
from $q$-Whittaker processes $\MP_{\mathbf{A}}^{\vec a}$
(\S \ref{sub:qwhit_proc})
differing 
by adding usual parameters to the specialization 
$\mathbf{A}$.}
thus \cite[Theorem 3.3]{BorodinCorwinFerrariVeto2013}
provides a 
Fredholm determinantal formula for the 
geometric
$q$-PushTASEP.

% subsection remark_geometric_q_pushtasep_formulas (end)

\subsection{Fredholm determinantal formula for the continuous time $q$-PushASEP} % (fold)
\label{sub:fredholm_determinants}
 
The moment formulas of Theorem \ref{thm:Bernoulli_qpush_nested_contours}
combined with certain spectral ideas allow us to 
establish Conjecture 1.4 from \cite{CorwinPetrov2013}
concerning a Fredholm determinantal formula for the 
continuous time $q$-PushASEP.

For simplicity, from now on we assume that all parameters $a_j\equiv1$. 
Let us recall the definition of the $q$-PushASEP.

\begin{definition}\label{def:cont_qpush}
	The \emph{continuous time $q$-PushASEP} 
	$\xx(\tau)$
	depending on parameters $L,R\ge0$ 
	lives on 
	particle configurations 
	$(\xx_N<\xx_{N-1}< \ldots<\xx_1)$
	on $\Z$ and evolves in continuous time $\tau$ 
	as follows:
	\begin{itemize}
		\item Each particle $\xx_i$ jumps to the right by one at rate $R(1-q^{\xx_{i-1}(\tau)-\xx_{i}(\tau)-1})$
		(by agreement, $\xx_0\equiv +\infty$).

		\item Each particle $\xx_i$ jumps to the left by one at rate $L$.
		If a particle $\xx_i$ has jumped to the left, then it has the possibility
		to push its left neighbor $\xx_{i+1}$ with probability $q^{\xx_i(\tau)-\xx_{i+1}(\tau)-1}$.
		(Pushing means that $\xx_{i+1}$ instantaneously moves to the left by one.)
		If the pushing of $\xx_{i+1}$ has occured, then $\xx_{i+1}$
		may push $\xx_{i+2}$ with the corresponding probability
		$q^{\xx_{i+1}(\tau)-\xx_{i+2}(\tau)-1}$, and so on.
	\end{itemize}
	We are assuming that this process starts from the step initial configuration
	$\xx_i(0)=-i$.
\end{definition}
\begin{remark}\label{rmk:qpush_limit}
	When $\be\searrow0$ and one rescales the discrete 
	time to the continuous one as
	$\sim\tau/\be$,
	the Bernoulli $q$-TASEP (Fig.~\ref{fig:Bernoulli_TASEP})
	clearly converges to $L=0$, $R=1$ version of the above process 
	(this is the pure, one-sided $q$-TASEP).
	Under the same limit, the
	Bernoulli $q$-PushTASEP (Fig.~\ref{fig:Bernoulli})
	becomes the $L=1$, $R=0$ version (pure $q$-PushTASEP).
\end{remark}

\begin{theorem}[{\cite[Conjecture 1.4]{CorwinPetrov2013}}]\label{claim:Fredholm}
	Let $\xx_n(\tau)$ be the $n$-th particle of the $q$-PushASEP
	started from the step initial configuration.
	For all $\zeta\in\C\setminus\R_{>0}$,
	\begin{align}
		\E\left(\frac{1}{(\zeta q^{\xx_n(\tau)+n};q)_{\infty}}\right)
		=\det(I+K_{\zeta}).
		\label{Fredholm_continuous}
	\end{align}
	Here $\det(I+K_{\zeta})$ is the Fredholm determinant
	of $K_{\zeta}\colon L^{2}(C_1)\to L^{2}(C_1)$,
	where $C_1$ is a small positively 
	oriented circle containing 1, and $K_{\zeta}$ 
	is an integral operator with kernel
	\begin{align*}
		K_{\zeta}(w,w')=\frac{1}{2\pi\i}
		\int_{-\i\infty+1/2}^{{\i\infty+1/2}}
		\frac{\pi}{\sin(-\pi s)}(-\zeta)^{s}
		\frac{(wq^{s};q)_{\infty}^{n}}{(w;q)_{\infty}^{n}}
		\frac{e^{\tau Rw(q^{s}-1)}e^{\tau Lw^{-1}(q^{-s}-1)}}{q^{s}w-w'}ds.
	\end{align*}
\end{theorem}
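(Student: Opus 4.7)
The plan is to derive \eqref{Fredholm_continuous} by first obtaining an analogous Fredholm identity at the level of a discrete-time two-sided Bernoulli process and then passing to a continuous-time limit. Working at the discrete level first is motivated by a moment-problem issue: in the $q$-PushASEP the rightmost particle can move arbitrarily far to the right, so its $q$-moments alone need not determine its distribution (contrast Remark \ref{rmk:determine_distribution} for the purely leftward Bernoulli $q$-PushTASEP), whereas the left-hand side of \eqref{Fredholm_continuous} is uniformly bounded in $\xx_{n}(\tau)$ for $\zeta\notin\R_{>0}$.

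For the first step I would compose a Bernoulli $q$-PushTASEP time step (dual parameter $\be_{L}$, arising from $\Qqbrow$) with a Bernoulli $q$-TASEP time step (dual parameter $\be_{R}$, arising from $\Qqbcol$). Both act nicely on $q$-Whittaker processes, so their composition also adds specializations as in \eqref{adding_spec}. Repeating the coordinate-Bethe-ansatz derivation sketched in Remark \ref{rmk:direct_proof_of_nested} for this composed transfer operator yields a nested contour integral formula for the $q$-moments of the rightmost particles identical to \eqref{Bernoulli_qpush_nested_contours} except that the time factor $\bigl(\frac{1+q^{-1}\be z_{j}^{-1}}{1+\be z_{j}^{-1}}\bigr)^{t}$ is multiplied by the analogous Bernoulli $q$-TASEP time factor accounting for the rightward steps. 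Alternatively, one may assemble these moments directly from the Macdonald-difference-operator machinery of \cite{BorodinCorwin2011Macdonald}, as was done in \cite{BorodinCorwinFerrariVeto2013} for the one-sided case referenced in Remark \ref{rmk:Fredholm_Bernoulli}.

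For the second step I would apply the standard conversion of \cite{BorodinCorwinSasamoto2012}: expand the $q$-Laplace transform as $\sum_{k\ge 0}\zeta^{k}/(q;q)_{k}$ times the $k$-th $q$-moment, introduce a Mellin--Barnes representation producing the $\pi/\sin(-\pi s)\cdot(-\zeta)^{s}$ factor, and recognize the resulting nested integral as the Fredholm expansion of $\det(I+K_{\zeta}^{\mathrm{discr}})$ for a discrete-time analogue of the kernel in the statement. The two time-dependent factors in the kernel arise from the $\be_{L}$ and $\be_{R}$ transfer factors of the composed dynamics, while the factor $(wq^{s};q)_{\infty}^{n}/(w;q)_{\infty}^{n}$ comes from the $n$ contour factors $1/(1-a_{i}z_{j})$ in \eqref{Bernoulli_qpush_nested_contours} under the $a_{i}\equiv 1$ specialization and the Mellin substitution that turns $z_{j}$-integrals on $C_{1}$ into shifts by $q^{s}$.

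Finally, the rescaling $\be_{L}=L/T$, $\be_{R}=R/T$ with discrete time $t=\lfloor T\tau\rfloor$ and $T\to\infty$ sends the composed Bernoulli process to the continuous-time $q$-PushASEP (cf.\ Remark \ref{rmk:qpush_limit}), and the discrete kernel factors converge pointwise to $e^{\tau Rw(q^{s}-1)}$ and $e^{\tau Lw^{-1}(q^{-s}-1)}$. The main obstacle I anticipate is justifying this limit at the Fredholm determinant level: one needs uniform bounds on the discrete kernel along the fixed compact contour $C_{1}$ and uniform decay in $\lvert s\rvert$ along the vertical line $\Re(s)=1/2$, strong enough that dominated convergence produces convergence of traces of all powers of the kernel and hence of the Fredholm determinants themselves. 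Once these bounds are in place, \eqref{Fredholm_continuous} follows by passing to the limit in the discrete identity, with the left-hand side converging from weak convergence of the composed Bernoulli process to the $q$-PushASEP together with boundedness of $1/(\zeta q^{\xx_{n}(\tau)+n};q)_{\infty}$ for $\zeta\notin\R_{>0}$.
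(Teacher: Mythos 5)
Your proposal follows the same high-level route as the paper: manufacture a discrete-time two-sided Bernoulli process that acts nicely on $q$-Whittaker processes, extract nested contour $q$-moment formulas, convert these to a Fredholm determinant \`a la \cite{BorodinCorwinSasamoto2012}, and send the dual parameter(s) to zero while rescaling time to land on the continuous-time $q$-PushASEP. The place where you genuinely diverge is the choice of discrete decomposition. You compose a $\Qqbrow$-step (parameter $\be_{L}$) with a $\Qqbcol$-step (parameter $\be_{R}$) inside a single time slice and iterate, scaling $\be_{L}=L/T$, $\be_{R}=R/T$, $t=\lfloor T\tau\rfloor$. That is a Trotter-type alternating scheme, so convergence of the composed Markov operator to $e^{\tau(L\mathcal{G}_{left}+R\mathcal{G}_{right})}$ is a Trotter--Kato statement and you never need the two transfer operators to commute. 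The paper instead uses a single parameter $\be$ and runs \emph{all} $q$-TASEP steps first ($\lfloor tR\rfloor$ of them) and then \emph{all} $q$-PushTASEP steps ($\lfloor tL\rfloor$ of them); since $e^{\tau R\mathcal{G}_{right}}e^{\tau L\mathcal{G}_{left}}$ is not a priori $e^{\tau(R\mathcal{G}_{right}+L\mathcal{G}_{left})}$, the paper must establish that the left and right transfer operators commute when applied to the step initial condition. It does so indirectly: since the $q$-moments \eqref{2Bernoulli_qpush_nested_contours} are a product over the ``right'' and ``left'' eigenvalue factors, and the $q$-moments determine the distribution (by the a priori bound on displacement), the order of application cannot change the law (Remark \ref{rmk:T_T_commute}, Proposition \ref{prop:two_part_to_two_sided}). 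Your route trades that commutativity argument for a Trotter argument you did not spell out (Remark \ref{rmk:qpush_limit} only treats the one-sided limits, not the composed one), so that piece would need to be written down, but the overall strategy is sound. On the remaining technical point --- passing the Fredholm identity to the limit --- the paper's treatment (Proposition \ref{prop:two-sided_Fredholm} plus the final paragraph) is the same as the one you anticipated: the $w$-contour is a fixed compact circle, $q^{s}$ and $w$ are uniformly bounded there, the $\pi/\sin(-\pi s)$ factor controls the $s$-line, and both sides of the discrete identity \eqref{Fredholm_beta} are analytic in $\zeta\in\C\setminus\R_{>0}$, so kernel convergence implies determinant convergence.
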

The $L=0$ case of the above theorem 
corresponds to the $q$-TASEP and was established
in
\cite{BorodinCorwin2011Macdonald}, \cite{BorodinCorwinSasamoto2012}. 
The $R=0$ case (one-sided $q$-PushTASEP) 
with different contours
is contained in 
\cite[Theorem 3.3]{BorodinCorwinFerrariVeto2013},
and it is obtained by an analytic continuation 
from the Bernoulli case.\footnote{The contours used
in \cite{BorodinCorwinFerrariVeto2013}
seem to be more suitable to 
the particular type of asymptotic analysis performed
in that paper. Here we do not pursue
further discussion of integration contours.} This analytic continuation relies 
on algebraic properties of $q$-Whittaker symmetric functions.
The corresponding algebraic picture for the two-sided dynamics
is not developed (cf. the discussion in \cite[Appendix A]{CorwinPetrov2013}).
To establish the above theorem we will utilize
a certain two-sided process
at the Bernoulli level 
whose $\be\searrow0$ limit leads to the distribution of particles under the
two-sided $q$-PushASEP at any given time $\tau>0$.

\medskip

The rest of this subsection is devoted to the proof of the above theorem.

\begin{definition}\label{def:two-part}
	Fix $t\in\Z_{\ge0}$, and
	let $\xxi^{(\be)}(t)=(\xxi_N^{(\be)}(t)<\ldots<\xxi_1^{(\be)}(t))\in\Z^{N}$ be a random vector
	which encodes positions of particles  
	started from $\xxi_i^{(\be)}(0)=-i$ ($i=1,\ldots,N$),
	which have evolved according to the Bernoulli $q$-TASEP (with right jumps)
	for $\lfloor tR\rfloor$ steps,\footnote{Here and below, 
	$\lfloor\cdots\rfloor$ means the floor function.} and then 
	according to the Bernoulli $q$-PushTASEP (with left jumps)
	for $\lfloor tL\rfloor$ steps. Both discrete time processes are now assumed to
	depend on the same parameter $\be$ (also recall
	that $a_j\equiv1$).
\end{definition}
It is possible to write down $q$-moments
of this random vector:
\begin{proposition}
	Fix $k\ge 1$. For all $t=0,1,2,\ldots$
	and all $N\ge n_1\ge n_2\ge \ldots\ge n_k\ge0$,
	\begin{multline}
		\E^{\textnormal{step}}\bigg(\prod_{i=1}^{k}q^{\xxi^{(\be)}_{n_i}(t)+n_i}\bigg)
		=
		\frac{(-1)^{k}q^{\frac{k(k-1)}2}}{(2\pi\i)^{k}}
		\oint 
		\ldots
		\oint
		\prod_{1\le A<B\le k}
		\frac{z_A-z_B}{z_A-qz_B}
		\\\times\prod_{j=1}^{k}
		\frac{1}{(1-z_j)^{n_j}}
		\left(\frac{1+q\be z_j}{1+\be z_j}\right)^{\lfloor tR\rfloor}
		\left(\frac{1+q^{-1}\be z_j^{-1}}{1+\be z_j^{-1}}\right)^{\lfloor tL\rfloor}
		\frac{dz_j}{z_j},
		\label{2Bernoulli_qpush_nested_contours}
	\end{multline}
	where the contour of integration for each $z_A$
	contains $1$,
	and the contours $\{qz_B\}_{B>A}$,
	but not poles $0$ or $(-\be)^{\pm1}$.
\end{proposition}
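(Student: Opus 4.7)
My plan is to deduce this formula by combining the two single-dynamics moment formulas --- Theorem \ref{thm:Bernoulli_qpush_nested_contours} for Bernoulli $q$-PushTASEP and its Bernoulli $q$-TASEP counterpart (which is essentially \cite[Theorem 2.1(3)]{BorodinCorwin2013discrete}, to which one can reduce via Proposition \ref{prop:TASEP_coupling}) --- via the spectral scheme sketched in Remark \ref{rmk:direct_proof_of_nested}. The essential observation is that both one-step transition operators act on the space of $q$-moment observables of the step-initial process through a common family of nested contour integral eigenfunctions, differing only in their eigenvalues.

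Concretely, both single-dynamics moment formulas share the form
\[
\E^{\textnormal{step}}\Bigl(\prod_{i=1}^{k} q^{x_{n_i}(t)+n_i}\Bigr)
= \frac{(-1)^{k}q^{\binom{k}{2}}}{(2\pi\i)^{k}}\oint\cdots\oint \Phi(\vec n;\vec z)\,E(\vec z)^{t}\,\frac{dz_{1}\cdots dz_{k}}{z_{1}\cdots z_{k}},
\]
where $\Phi(\vec n;\vec z)=\prod_{A<B}\frac{z_A-z_B}{z_A-qz_B}\prod_{j=1}^{k}(1-z_j)^{-n_j}$ is common to both dynamics (with $a_j\equiv 1$), while the one-step eigenvalue factors are
\[
E_T(\vec z)=\prod_{j=1}^{k}\frac{1+q\be z_j}{1+\be z_j},\qquad
E_P(\vec z)=\prod_{j=1}^{k}\frac{1+q^{-1}\be z_j^{-1}}{1+\be z_j^{-1}}.
\]
The nested contours, small positively oriented loops around $1$ enclosing each $\{qz_B\}_{B>A}$ and avoiding $0$, are simultaneously admissible for both formulas: the $q$-TASEP factor introduces a pole at $z_j=-\be^{-1}$ and the $q$-PushTASEP factor introduces a pole at $z_j=-\be$, both of which lie outside the contours. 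Since $\xxi^{(\be)}(t)$ is by definition the composition of $\lfloor tR\rfloor$ steps of the Bernoulli $q$-TASEP transition operator with $\lfloor tL\rfloor$ steps of the Bernoulli $q$-PushTASEP transition operator applied to the step initial state, composing these two actions multiplies the eigenvalue factor into $E_T(\vec z)^{\lfloor tR\rfloor}E_P(\vec z)^{\lfloor tL\rfloor}$, producing exactly the integrand of \eqref{2Bernoulli_qpush_nested_contours}.

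The main obstacle is justifying the spectral statement rigorously: each transition operator must act diagonally in the common Bethe basis when applied to $\vec x\mapsto\prod_{i} q^{x_{n_i}+n_i}$ starting from an arbitrary configuration, not only from the step one. The cleanest route is the inductive approach of Remark \ref{rmk:direct_proof_of_nested}: translate each one-step evolution into a difference equation in the dual $\vec y$-variables of the form $\mathcal{H}^{q,\muq_{+}}I_{t+1}=\mathcal{H}^{q,\muq_{-}}I_{t}$ with operators of type \eqref{H_oper}, check that both dynamics produce the \emph{same} two-body boundary conditions at $n_i=n_{i+1}$ (inherited from the $q$-symmetrization built into the operator $\mathcal{H}^{q,\muq}$), and then induct on $\lfloor tR\rfloor+\lfloor tL\rfloor$: the base case is the single-dynamics formula, and the inductive step amounts to verifying by a direct residue computation that applying one more $q$-TASEP or $q$-PushTASEP step multiplies the integrand of the nested contour representation by $E_T(\vec z)$ or $E_P(\vec z)$, respectively.
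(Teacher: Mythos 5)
Your proposal matches the paper's proof in substance: the paper likewise writes $\mathcal{T}_{\mathrm{right}}\mathsf{H}=\mathsf{H}\big((\mathcal{H}^{q,-\be})^{-1}\mathcal{H}^{q,-q\be}\big)^{\mathrm{transpose}}$ and $\mathcal{T}_{\mathrm{left}}\mathsf{H}=\mathsf{H}\big(q^{-\sum y_j}(\mathcal{H}^{q,-\be^{-1}})^{-1}\mathcal{H}^{q,-q\be^{-1}}\big)^{\mathrm{transpose}}$ and then concludes via the Plancherel theory of \cite{BorodinCorwinPetrovSasamoto2013}, \cite{CorwinPetrov2015} that stacking $\lfloor tR\rfloor$ and $\lfloor tL\rfloor$ one-step transitions multiplies the integrand by the corresponding eigenvalue factors $E_T(\vec z)^{\lfloor tR\rfloor}E_P(\vec z)^{\lfloor tL\rfloor}$. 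The only cosmetic difference is that you phrase the final step as an explicit induction with residue computations, whereas the paper invokes the Plancherel isomorphism wholesale; the underlying Bethe-ansatz/duality machinery and the identification of the common eigenfunction $\Phi(\vec n;\vec z)$ with differing eigenvalues are identical.
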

\begin{proof}
	Let $\mathcal{T}_{right}$ and $\mathcal{T}_{left}$
	denote the one-step transition operators
	of the Bernoulli $q$-TASEP and the 
	Bernoulli $q$-PushTASEP acting in $\vec x$ variables, respectively. 
	Also denote
	\begin{align*}
		\mathsf{H}(\vec x,\vec y):=\prod_{i=0}^{N}q^{y_i(x_i(t)+i)}
	\end{align*}
	(by agreement, the product is zero
	if $y_0>0$).

	Then it follows from the results of \cite{BorodinCorwin2013discrete}
	and the discussion of Remark \ref{rmk:direct_proof_of_nested}
	that
	\footnote{We are writing ``transpose''
	simply to indicate that the operators 
	in the right-hand side act in variables $\vec y$.}
	\begin{align*}
		\mathcal{T}_{right}
		\mathsf{H}&=
		\mathsf{H}
		\left(
		(\mathcal{H}^{q,-\be})^{-1}
		\mathcal{H}^{q,-q\be}
		\right)^{transpose},
		\\
		\qquad
		\mathcal{T}_{left}
		\mathsf{H}&=
		\mathsf{H}
		\left(
		q^{-(y_1+\ldots+y_N)}(\mathcal{H}^{q,-\be^{-1}})^{-1}
		\mathcal{H}^{q,-q\be^{-1}}
		\right)^{transpose},
	\end{align*}
	where the operators $\mathcal{H}^{q,\xi}$
	are defined in
	\eqref{Hi_oper}--\eqref{H_oper},
	and $q^{-(y_1+\ldots+y_N)}$ is the multiplication operator.
	Observe that the operators $\mathcal{H}^{q,\xi}$
	applied to $\mathsf{H}$ do not change
	$y_1+\ldots+y_N$ and hence commute with this multiplication operator.
	The inverse operators above exist on a certain space 
	$\mathcal{W}_{\max}$ of functions in $\vec{y}$, see 
	\cite{CorwinPetrov2015}.

	Applying the Plancherel theory \cite{BorodinCorwinPetrovSasamoto2013}
	(see also \cite{CorwinPetrov2015}),
	we now see that the application 
	of the product
	$(\mathcal{T}_{left})^{\lfloor tL\rfloor}
	(\mathcal{T}_{right})^{\lfloor tR\rfloor}$
	which is needed to compute the expectation
	in the left-hand side of \eqref{2Bernoulli_qpush_nested_contours},
	reduces to multiplication by the 
	corresponding eigenvalue
	$\prod_{j=1}^{k}
	\big(\frac{1+q\be z_j}{1+\be z_j}\big)^{\lfloor tR\rfloor}
	\big(\frac{1+q^{-1}\be z_j^{-1}}{1+\be z_j^{-1}}\big)^{\lfloor tL\rfloor}$
	under the nested contour integral.
	This completes the proof.
\end{proof}
\begin{remark}\label{rmk:T_T_commute}
	Similarly to Remark \ref{rmk:determine_distribution}, 
	one sees that the $q$-moments
	\eqref{2Bernoulli_qpush_nested_contours}
	determine the distribution of the vector $\xxi^{(\be)}(t)$.
	This implies that any powers of operators 
	$\mathcal{T}_{right}$ and $\mathcal{T}_{left}$
	(defined above)
	\emph{commute} when applied to the step initial configuration, 
	i.e., they yield the same probability distribution regardless 
	of the order of application.
\end{remark}
\begin{proposition}\label{prop:two_part_to_two_sided}
	As $\be\searrow0$, 
	the random variables 
	$\xxi_{n}^{(\be)}(\lfloor \tau/\be\rfloor)$
	defined above
	converge in distribution to
	$\xx_n(\tau)$
	(the latter 
	is the particle position
	coming from the
	continuous time $q$-PushASEP started from the step initial configuration).
\end{proposition}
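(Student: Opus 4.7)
The plan is to prove the convergence via the $q$-moment method: I will show that the joint $q$-moments of $\xxi^{(\beta)}_n(\lfloor \tau/\beta \rfloor)$ converge to those of $\xx_n(\tau)$, and combine this with a tightness argument to conclude. Starting from the explicit formula \eqref{2Bernoulli_qpush_nested_contours} with $t = \lfloor \tau/\beta \rfloor$, observe that the integration contours can be chosen as fixed small positively oriented circles around $1$, independent of $\beta$, since the spurious poles at $(-\beta)^{\pm 1}$ retreat to $0$ and $\infty$ as $\beta \to 0$. On such compacts,
\begin{align*}
\left(\frac{1+q\beta z_j}{1+\beta z_j}\right)^{\lfloor \tau R/\beta \rfloor} \to e^{\tau R(q-1)z_j},
\qquad
\left(\frac{1+q^{-1}\beta z_j^{-1}}{1+\beta z_j^{-1}}\right)^{\lfloor \tau L/\beta \rfloor} \to e^{\tau L(q^{-1}-1)z_j^{-1}}
\end{align*}
uniformly, so dominated convergence gives that $\lim_{\beta \to 0}\E^{\mathrm{step}}\prod_{i=1}^{k}q^{\xxi^{(\beta)}_{n_i}(\lfloor \tau/\beta \rfloor)+n_i}$ equals the $k$-fold nested contour integral in which these exponentials replace the $\beta$-dependent factors. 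This limiting integral is precisely the $q$-moment $\E^{\mathrm{step}}\prod_{i=1}^{k}q^{\xx_{n_i}(\tau)+n_i}$ of the continuous-time $q$-PushASEP, as derived in \cite{CorwinPetrov2013}; the match of exponents is also consistent with the kernel in Theorem \ref{claim:Fredholm}.

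Next I will establish tightness of the family $\{\xxi^{(\beta)}_n(\lfloor \tau/\beta \rfloor)\}_{\beta > 0}$. For the upper bound on the right displacement during the $q$-TASEP phase, the total number of right jumps of particle $n$ is stochastically dominated by a $\mathrm{Binomial}(\lfloor \tau R/\beta \rfloor, \beta/(1+\beta))$ variable, obtained by coupling with i.i.d.\ Bernoullis and ignoring the $1-q^{\gap}$ suppression factor; as $\beta \to 0$ this converges to $\mathrm{Poisson}(\tau R)$. For the lower bound on the left displacement during the $q$-PushTASEP phase, I will invoke Proposition \ref{prop:TASEP_coupling} to identify the evolution with a mirror-image Bernoulli $q$-TASEP of parameter $\beta^{-1}$; the number of \emph{non}-jumps per step of this dual process is $\beta/(1+\beta)$, and an analogous Bernoulli domination gives an $O_{\mathbb{P}}(1)$ bound on the total left displacement of particle $n$ uniformly in $\beta$.

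Combining these two ingredients, any subsequential weak limit $\xi^{\ast}$ of $\xxi^{(\beta)}(\lfloor \tau/\beta \rfloor)$ is supported on configurations with $|\xi^{\ast}_i + i| = O_{\mathbb{P}}(1)$, so that each $q^{\xi^{\ast}_i + i}$ is almost surely bounded above and below, and its joint $q$-moments agree with those of $\xx(\tau)$. The joint $q$-Laplace transform $\E\prod_i (\zeta_i q^{\xi^{\ast}_{n_i}+n_i};q)_{\infty}^{-1}$ is therefore an analytic function of $\zeta_1,\ldots,\zeta_k$ near the origin and expands as a convergent series in the joint $q$-moments by the $q$-binomial theorem; since $\xx(\tau)$ satisfies the same analyticity and expansion (cf.\ \cite{CorwinPetrov2013}), the two distributions coincide. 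Uniqueness of the subsequential limit then yields the desired convergence in distribution.

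The main obstacle in this plan is the tightness of the lower bound on particle positions, since cascading pushes during the $q$-PushTASEP phase could a priori produce left displacements of size $\Omega(1/\beta)$; Proposition \ref{prop:TASEP_coupling} is the crucial tool that makes this bound tractable by reducing the left-jump analysis to a dual right-jump $q$-TASEP where the Bernoulli coupling gives uniform control.
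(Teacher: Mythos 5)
Your approach is genuinely different from the paper's: you pursue a direct $q$-moment method (convergence of the nested contour integral, tightness, moment-to-distribution), whereas the paper's proof is a semigroup argument. The paper uses Remark \ref{rmk:T_T_commute} to conclude that the semigroups $e^{\tau\mathcal{G}_{right}}$ and $e^{\tau\mathcal{G}_{left}}$ commute when applied to the step initial configuration, writes the two-sided semigroup as $e^{\tau R\mathcal{G}_{right}}e^{\tau L\mathcal{G}_{left}}$, and then observes via Remark \ref{rmk:qpush_limit} that the two-part evolution $\xxi^{(\be)}(\lfloor\tau/\be\rfloor)$ is visibly the discretization of exactly this splitting. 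Your first step --- the uniform convergence of $\bigl(\frac{1+q\be z}{1+\be z}\bigr)^{\lfloor\tau R/\be\rfloor}\to e^{\tau R(q-1)z}$ and its mirror, hence convergence of the $q$-moment contour integrals --- is correct.

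There are, however, two genuine gaps further along. First, the tightness estimate for the left phase is wrong as stated: after Proposition \ref{prop:TASEP_coupling} the dual Bernoulli $q$-TASEP has non-jump probability $\frac{a\be+q^{\gap}}{1+a\be}$ (or $\frac{a\be}{1+a\be}$ when the neighbor did not jump), which can be close to $1$ whenever a particle is blocked by its right neighbor. It is not uniformly $\be/(1+\be)$, so a blind Bernoulli domination does not bound the non-jump count. You could instead extract geometric tail bounds from $\E\,q^{x_n(t)+n}$ itself, which stays bounded along the scaling, and this would repair the tightness step; relatedly, note that tightness yields $O_{\mathbb{P}}(1)$, not the almost-sure boundedness your closing paragraph relies on --- the limit has Poissonian tails in both directions, so $q^{\xi^{*}_i+i}$ is certainly not a.s.\ bounded.

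Second, and more fundamentally, the final moment-to-distribution step cannot be carried out as you describe. For the continuous-time $q$-PushASEP the $q$-moments grow doubly exponentially in $k$ (already for $\xx_1$ one has $\E\,q^{k(\xx_1(\tau)+1)}=e^{-\tau R(1-q^k)}e^{-\tau L(1-q^{-k})}\asymp e^{\tau L q^{-k}}$), so the power-series expansion of the $q$-Laplace transform in $\zeta$ has radius of convergence $0$; it is not ``a convergent series in the joint $q$-moments.'' This is exactly the issue the authors flag in the introduction (``for the continuous time $q$-PushTASEP the moments grow too fast and also do not determine the distribution''), and it is precisely why the paper carries out the Fredholm-determinant analysis at the Bernoulli level (Proposition \ref{prop:two-sided_Fredholm}) and only passes to $\be\searrow 0$ at the level of the determinant and of the identified two-part process, rather than at the level of $q$-moments. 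Your plan would need to be replaced either by the paper's semigroup/commutation argument, or by taking the $\be\searrow 0$ limit at the level of the $q$-Laplace transform / Fredholm determinant of the discrete process, which does not reduce to termwise moment convergence.
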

\begin{proof}
	Let $\mathcal{G}_{right}$ denote the infinitesimal generator
	of the pure $q$-TASEP process (with only right jumps)
	corresponding to Definition \ref{def:cont_qpush}
	with $L=0$, $R=1$.
	Similarly, let 
	$\mathcal{G}_{left}$ denote the 
	infinitesimal generator
	of the pure $q$-PushTASEP (with only left jumps)
	corresponding to $L=1$, $R=0$.

	It follows from Remark \ref{rmk:T_T_commute}
	that the semigroups $e^{\tau \mathcal{G}_{right}}$
	and $e^{\tau \mathcal{G}_{left}}$
	commute when applied to the step initial configuration
	(in the same sense as in Remark \ref{rmk:T_T_commute}).
	This means that the semigroup
	of the two-sided $q$-PushASEP
	has the form
	\begin{align*}
		e^{\tau (R\mathcal{G}_{right}+L\mathcal{G}_{left})}
		=
		e^{\tau R\mathcal{G}_{right}}
		e^{\tau L\mathcal{G}_{left}}
	\end{align*}
	(when applied to the step initial configuration).
	Therefore, 
	$\xx_n(\tau)$ has the same distribution
	as if it arises by first running the 
	pure $\mathcal{G}_{right}$ process for time $\tau R$,
	and then the 
	pure $\mathcal{G}_{left}$ process for time $\tau L$.
	The latter two-part evolution clearly is the $\be\searrow0$ limit 
	of $\xxi_{n}^{(\be)}(\lfloor \tau/\be\rfloor)$
	(cf. Remark~\ref{rmk:qpush_limit}),
	as desired.
\end{proof}

\begin{proposition}\label{prop:two-sided_Fredholm}
	Recall the random vector $\xxi^{(\be)}(t)$ of Definition
	\ref{def:two-part}. For all $\zeta\in\C\setminus\R_{>0}$,
	\begin{align}
		\E\left(\frac{1}{(\zeta q^{\xxi^{(\be)}_n(t)+n};q)_{\infty}}\right)
		=\det(I+K_{\zeta}^{(\be)}).\label{Fredholm_beta}
	\end{align}
	Here $\det(I+K_{\zeta}^{(\be)})$ is the Fredholm determinant
	of $K_{\zeta}^{(\be)}\colon L^{2}(C_1)\to L^{2}(C_1)$,
	where $C_1$ is a small positively 
	oriented circle containing 1, and $K_{\zeta}^{(\be)}$ 
	is an integral operator with kernel
	\begin{multline}
		K_{\zeta}^{(\be)}(w,w'):=\frac{1}{2\pi\i}
		\int_{-\i\infty+1/2}^{{\i\infty+1/2}}
		\frac{\pi}{\sin(-\pi s)}(-\zeta)^{s}
		\\\times\frac{(wq^{s};q)_{\infty}^{n}}{(w;q)_{\infty}^{n}}
		\bigg(\frac{1+\be q^{s}w}{1+\be w}\bigg)^{\lfloor tR\rfloor}
		\bigg(\frac{1+\be (q^{s}w)^{-1}}{1+\be w^{-1}}\bigg)^{\lfloor tL\rfloor}
		\frac{1}{q^{s}w-w'}
		ds.
		\label{K_beta_kernel}
	\end{multline}
\end{proposition}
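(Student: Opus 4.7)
The plan is to derive \eqref{Fredholm_beta} from the $q$-moment formula \eqref{2Bernoulli_qpush_nested_contours} (specialized to $n_1=\ldots=n_k=n$) via the standard symmetrization and Mellin--Barnes procedure developed in \cite{BorodinCorwinSasamoto2012} for the continuous-time $q$-TASEP, with only bookkeeping modifications for the extra factor coming from the Bernoulli jumps.

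First I would justify the passage from the Fredholm side to a series in $q$-moments. Since each particle moves at most one step per discrete unit of time, the a priori bound $\xxi^{(\be)}_n(t)+n \ge -\lfloor tL\rfloor$ holds deterministically, so $q^{\xxi^{(\be)}_n(t)+n}$ is bounded. The $q$-binomial identity
\[
\frac{1}{(\zeta q^{m};q)_{\infty}} = \sum_{k\ge 0} \frac{\zeta^{k}q^{km}}{(q;q)_{k}}
\]
together with Fubini gives, for $|\zeta|$ small,
\[
\E\!\left(\frac{1}{(\zeta q^{\xxi^{(\be)}_n(t)+n};q)_{\infty}}\right) = \sum_{k\ge 0}\frac{\zeta^{k}}{(q;q)_{k}}\,\E\bigl(q^{k(\xxi^{(\be)}_n(t)+n)}\bigr),
\]
and since both sides will turn out to be analytic in $\zeta \in \C \setminus \R_{>0}$ the identity \eqref{Fredholm_beta} will then follow on the whole slit plane by analytic continuation.

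Second, I would insert \eqref{2Bernoulli_qpush_nested_contours} for each $q$-moment. Writing
\[
F_{\be,t}(z) := \left(\frac{1+q\be z}{1+\be z}\right)^{\lfloor tR\rfloor}\left(\frac{1+q^{-1}\be z^{-1}}{1+\be z^{-1}}\right)^{\lfloor tL\rfloor},
\]
each $k$-fold nested integral has the single-variable factor $(1-z_j)^{-n} F_{\be,t}(z_j)\,dz_j/z_j$ and the interaction kernel $\prod_{A<B}(z_A-z_B)/(z_A-qz_B)$. Applying the symmetrization identity for such nested integrals (see \cite[Prop.~7.4]{BorodinCorwinSasamoto2012} or \cite[\S3]{BorodinCorwinFerrariVeto2013}) I can collapse all contours onto a single small positively oriented circle $C_1$ around $w=1$ and turn the resulting symmetric sum over partitions into a Fredholm determinant on $L^2(C_1)$ via the standard Macdonald-process Mellin--Barnes formula
\[
\sum_{k\ge 0}\frac{\zeta^{k}}{(q;q)_{k}}\cdot\bigl(\text{symmetric $k$-fold integral}\bigr) = \det\bigl(I+K_{\zeta}^{(\be)}\bigr)_{L^{2}(C_{1})},
\]
whose kernel is built from the ratio of the single-variable factor evaluated at $q^{s}w$ and at $w$: precisely
\[
\frac{(wq^{s};q)_{\infty}^{n}}{(w;q)_{\infty}^{n}}\cdot\frac{F_{\be,t}(q^{s}w)}{F_{\be,t}(w)} = \frac{(wq^{s};q)_{\infty}^{n}}{(w;q)_{\infty}^{n}}\left(\frac{1+\be q^{s}w}{1+\be w}\right)^{\lfloor tR\rfloor}\left(\frac{1+\be(q^{s}w)^{-1}}{1+\be w^{-1}}\right)^{\lfloor tL\rfloor},
\]
multiplied by $\frac{1}{q^{s}w-w'}$ and by the Mellin--Barnes weight $\pi(-\zeta)^{s}/\sin(-\pi s)$, integrated over $\Re s=1/2$. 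This is exactly \eqref{K_beta_kernel}.

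The main step, and essentially the only nontrivial one, is verifying that the contour manipulations go through with the extra Bernoulli factors. One must check that when collapsing the nested contours onto $C_1$ no pole of $F_{\be,t}$ (located at $z=-\be$ and $z=-\be^{-1}$) is crossed, and that when deforming the Mellin--Barnes contour to $\Re s = 1/2$ the functions $\bigl(1+\be q^{s}w\bigr)^{\lfloor tR\rfloor}$ and $\bigl(1+\be(q^{s}w)^{-1}\bigr)^{\lfloor tL\rfloor}$ introduce no new poles in the strip $0<\Re s<1$ for $w\in C_1$ (which is automatic if $C_1$ is taken small enough around $1$, since then $\be w$ and $\be w^{-1}$ stay far from $-q^{-s}$ for $\Re s=1/2$). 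Once this is secured, the identification of the kernel is the same bookkeeping as in \cite{BorodinCorwinSasamoto2012} and \cite[Thm.~3.3]{BorodinCorwinFerrariVeto2013}, and the proof of Theorem~\ref{claim:Fredholm} follows by combining Proposition~\ref{prop:two-sided_Fredholm} with Proposition~\ref{prop:two_part_to_two_sided} and sending $\be\searrow 0$ with $t=\lfloor\tau/\be\rfloor$.
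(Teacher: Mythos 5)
Your proof follows the same route as the paper: for small $|\zeta|$ expand $1/(\zeta q^{m};q)_\infty$ as a series, insert the nested contour moment formula \eqref{2Bernoulli_qpush_nested_contours}, apply the Borodin--Corwin--Sasamoto symmetrization and Mellin--Barnes machinery to assemble the Fredholm determinant, and then extend to all of $\C\setminus\R_{>0}$ by analyticity on both sides. The paper's own argument is identical in substance, just stated more tersely (it simply cites \cite{BorodinCorwinSasamoto2012} for the algebraic manipulations and the analyticity bound), so your write-up fills in exactly the steps the authors compress, with the a priori bound $\xxi^{(\be)}_{n}(t)+n\ge -\lfloor tL\rfloor$ and the contour-compatibility check being the correct points to flag.
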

\begin{proof}
	The passage from the moment formulas \eqref{2Bernoulli_qpush_nested_contours}
	to the desired Fredholm determinantal formula
	is done similarly to \cite{BorodinCorwinSasamoto2012}
	(based on \cite{BorodinCorwin2011Macdonald}). 
	Namely, for $|\zeta|$ sufficiently small, the
	Fredholm determinant can be obtained by purely algebraic
	manipulations. Then one must show that the
	resulting right-hand side of \eqref{Fredholm_beta}
	is analytic in $\zeta\in\C\setminus\R_{>0}$,
	which follows from bounds like in
	\cite[Proposition 3.6]{BorodinCorwinSasamoto2012}
	and can be readily checked in our situation. The 
	left-hand side of \eqref{Fredholm_beta}
	is also analytic in $\zeta$
	because the function $\zeta\mapsto (\zeta;q)_{\infty}$
	is uniformly bounded away from zero and analytic
	once $\zeta$ is bounded away from $\R_{>0}$.
	Therefore, the desired claim holds.
\end{proof}

Theorem \ref{claim:Fredholm} now follows by observing that
the kernels $K^{(\be)}_{\zeta}$ converge,
as $\be\searrow0$ and $t=\lfloor \tau/\be \rfloor$,
to the kernel $K_{\zeta}$ from Theorem \ref{claim:Fredholm},
because
$q^{s}$ and $w$ are uniformly bounded on our contours. This also implies the 
convergence of the corresponding Fredholm determinants. 
On the other hand, by Proposition \ref{prop:two_part_to_two_sided},
we know that the left-hand sides of 
\eqref{Fredholm_beta} converge to the 
left-hand side of \eqref{Fredholm_continuous}.
This establishes Theorem \ref{claim:Fredholm}.

% subsection fredholm_determinants (end)

% section moments_and_fredholm_determinants_for_bernoulli_ (end)

\section{Polymer limits of $(\al)$ dynamics on $q$-Whittaker processes} % (fold)
\label{sec:polymer_limit}

In this section we explain how the two $(\al)$
dynamics on $q$-Whittaker processes
behave in the limit as $q\nearrow 1$.
This leads to discrete time
stochastic processes related to geometric RSK correspondences
and directed random polymers.

\subsection{Polymer partition functions} % (fold)
\label{sub:polymer_partition_functions}

Let us first describe the polymer models we will be dealing with.
They are based on inverse-Gamma random variables:

\begin{definition}
	A positive random variable $X$ has \emph{Gamma distribution} with shape parameter $\theta > 0$ if it has probability density 
	\begin{align*}
	P(X \in dx) = \frac{1}{\Gamma(\theta)}x^{\theta-1}e^{-x}dx.
	\end{align*}
	We abbreviate this by $X \sim$ $\GRV{\theta}$.  Then $X^{-1}$ has probability density  
	\begin{align*}
	P(X^{-1} \in dx) = \frac{1}{\Gamma(\theta)}x^{-\theta-1}e^{-1/x}dx,
	\end{align*}
	which is called \emph{inverse-Gamma distribution} and denoted by $\GRVI{\theta}$. 
\end{definition}

We recall partition functions of two models of log-Gamma polymers in $1+1$ dimensions studied previously in \cite{Seppalainen2012}, \cite{BorodinCorwinRemenik}, \cite{COSZ2011}, \cite{OSZ2012}, \cite{OConnellOrtmann2014}, \cite{CorwinSeppalainenShen2014}
(see also \cite{Oconnell2009_Toda} for a continuous time version). 
Both models are defined on the lattice strip $\{(t, j) \ | \ t \in \{0, 1, 2, \ldots\}, j \in \{ 1, 2, \ldots, n \}\}$. One should think of $t$ as time. Suppose we have two collections of real numbers $\theta_{j}$ for  $j \in \{ 1, 2, \ldots, n \}$ and $\hat \theta_{t}$ for $t \in \{0, 1, 2, \ldots\}$, such that $\theta_{j} + \hat \theta_{t} > 0$ for all $j$ and $t$.

\begin{definition}[Log-Gamma polymer \cite{Seppalainen2012}; Fig.~\ref{fig:ppf}, left]\label{def:R_polymer}
Each vertex $(t, j)$ in the strip 
is equipped with a random weight $d_{t, j}$. 
These weights are independent, and $d_{t, j}$  is 
distributed according to $\GRVI{\theta_{j} + \hat \theta_{t}}$.  
The \emph{log-Gamma polymer partition function} with parameters $\theta_{j}, \hat \theta_{t}$ is given by
\begin{align} \label{R1j}
R^{j}_{1}(t) := \sum_{\pi: (1, 1) \to (t, j)} \prod_{(s, i) \in \pi}d_{s, i},
\end{align}
where the sum is over directed up/right lattice paths $\pi$ from $(1, 1)$ to $(t, j)$, which are made of horizontal edges $(s, i) \to (s+1, i)$ and vertical edges $(s, i) \to (s, i+1)$. Extend this definition to denote by $R^{j}_{k}(t)$ for $t \geq k$ the weighted sum over all $k$-tuples of \emph{nonintersecting} up/right lattice paths starting from $(1, 1), (1, 2), \ldots, (1, k)$ and going respectively to ${(t, j-k+1)}, {(t, j-k+2)}, \ldots,$ $(t, j)$. The weight of a tuple of paths is defined by taking a product of weights of vertices of these paths. The inequality $t \geq k$ ensures that $R^{j}_{k}(t)$ is positive.
\end{definition}

\begin{definition}[Strict-weak polymer \cite{CorwinSeppalainenShen2014}, \cite{OConnellOrtmann2014};\footnote{These two
papers independently introduce essentially the same model. 
We will be using the notation of~\cite{CorwinSeppalainenShen2014}.}
Fig.~\ref{fig:ppf}, right]
\label{def:L_polymer}
	Each horizontal edge $e$ in the strip is equipped with a random weight $d_{e}$. These weights are independent, and $d_{(t-1, j) \to (t, j)}$ is distributed according to $\GRV{\theta_{j} + \hat \theta_{t}}$.  The \emph{strict-weak polymer partition function} with parameters $\theta_{j}, \hat \theta_{t}$ is given by 
	\begin{align} \label{L1j}
	L^{j}_{1}(t) := \sum_{\pi: (0, 1) \to (t, j)} \prod_{e \in \pi}d_{e},
	\end{align} 
	where the sum is over directed lattice paths from $(0, 1)$ to $(t, j)$ which are made of horizontal edges $(s, i) \to (s+1, i)$ and diagonal moves $(s, i) \to (s+1, i+1)$. The product is taken only over horizontal edges of the path. Extend this definition to denote by $L^{j}_{k}(t)$ for $t \geq j-k$ the weighted sum over all $k$-tuples of the corresponding
	\emph{nonintersecting} lattice paths starting from $(0, 1), (0, 2), \ldots, (0, k)$ and going respectively to ${(t, j-k+1)}, {(t, j-k+2)}, \ldots,$ $(t, j)$. The weight of a tuple of paths is defined by taking a product of weights of horizontal edges of these paths. The inequality $t \geq j-k$ ensures that $L^{j}_{k}(t)$ is positive.
\end{definition}

\begin{figure}[h] 
\hspace*{0 cm} \includegraphics[width = 0.8\textwidth]{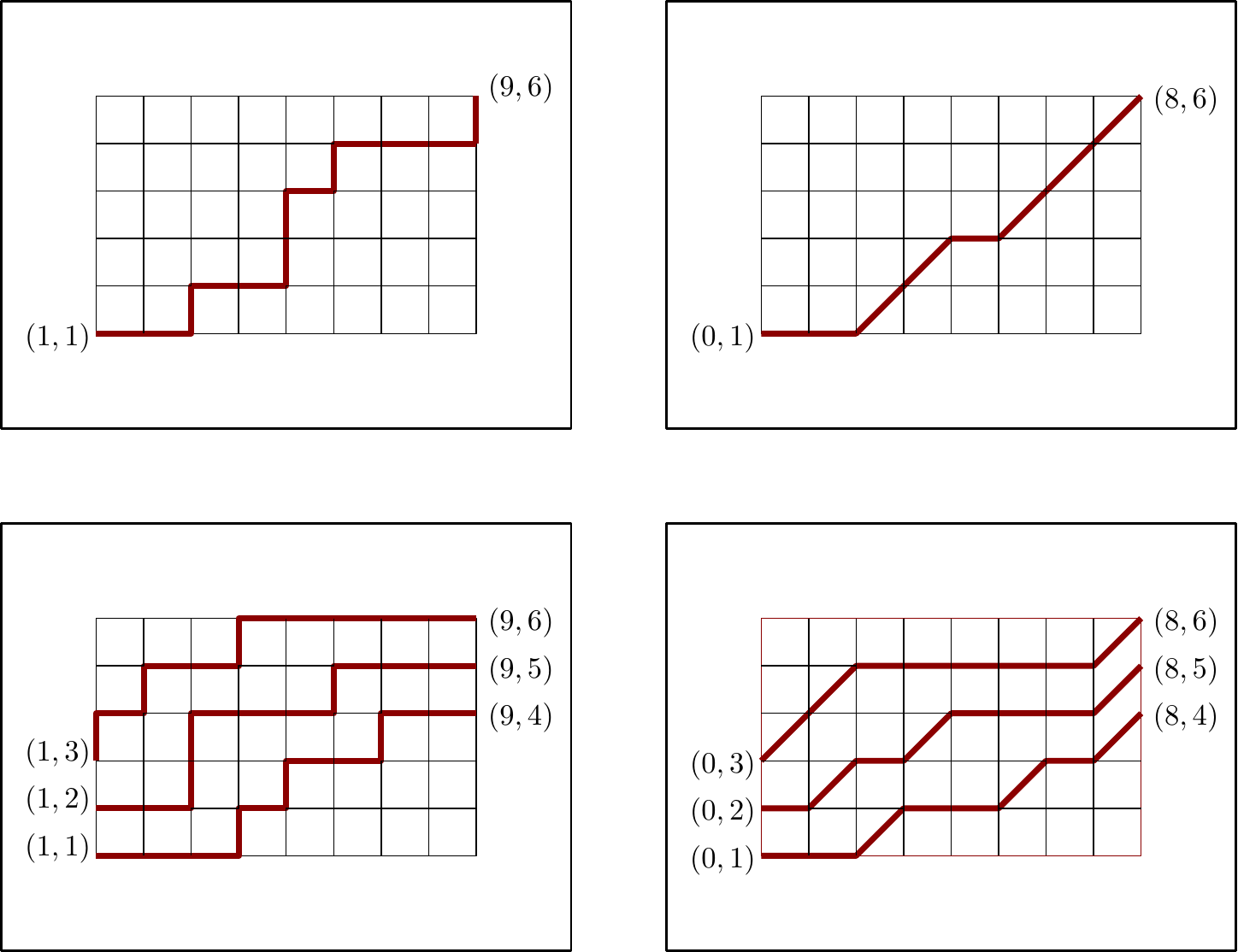}
\caption{Paths and tuples of paths that contribute to the polymer partition functions: 
$R^{5}_{1}(9)$ (top left),
$R^{5}_{3}(9)$ (bottom left),
$L^{6}_{1}(8)$ (top right),
$L^{6}_{3}(8)$ (bottom right).}
\label{fig:ppf}
\end{figure}

Distributions of 
ratios of the polymer partition functions
defined above
are sometimes called
\emph{Whittaker processes} (or, to be more precise, \emph{$\al$-Whittaker processes}), cf. \cite{BorodinCorwin2011Macdonald}.
They arise
as 
limits (as $q$, the $a_j$'s and the $\al_t$'s simultaneously go to $1$) 
of suitably rescaled particle positions in an interlacing integer array 
distributed according to the $q$-Whittaker process $\MP_{\mathbf{A}}^{\vec a}$
(\S \ref{sub:qwhit_proc}), where 
\begin{align*}
	\mathbf{A} = (\al_{1}, \ldots, \al_{t}),
	\qquad
	{\vec a} = (a_{1}, \ldots, a_{n}).
\end{align*}
The convergence
of $q$-Whittaker processes to Whittaker processes
is known in the literature, 
see 
\cite[Thm. 4.2.4]{BorodinCorwin2011Macdonald}. 
% \cite{COSZ2011},
% \cite{BorodinCorwinRemenik},
% \cite{BorodinCorwinFerrariVeto2013},
% \cite{CorwinSeppalainenShen2014}.
Since both dynamics
$\Qqarow$ and $\Qqacol$ 
constructed in \S \ref{sec:geometric_q_rsks}
sample the $q$-Whittaker processes, we can employ them 
to give another proof of this limit
transition. Moreover, we also establish 
the convergence of the corresponding
\emph{stochastic dynamics}. 

Let us first define the appropriately scaled pre-limit dynamics.
In what follows, for $\epsilon > 0$ and $\theta_{j}, \hat \theta_{t}$ as above, we set $q := e^{-\epsilon}$, $a_{j} = e^{- \theta_{j}\epsilon}$ and $\alpha_{t} := e^{-\hat \theta_{t}\epsilon}$.

\begin{definition}[Scaled $\Qqarow$ dynamics]\label{def:row_scaled}
	Start the dynamics 
	$\Qqarow$
	from the zero initial condition (that is, $\la^{(j)}_{i}(0)\equiv 0$).
	Denote by $r_{j, k}(t, \epsilon)$ the position of the $k$-th particle \emph{from the right} on the $j$-th level of the array after $t$ steps of the dynamics (at each time step $t\to t+1$, 
	apply the dynamics $\Qqarow$ with parameter $\al=\al_{t+1}$).
	For $t \geq k$, define the random variables $\hat R^{j}_{k}(t, \epsilon)$ via
	\begin{align*}
		r_{j,k}(t, \epsilon) = (t+j-2k+1) \epsilon^{-1} \log \epsilon^{-1} +  \epsilon^{-1} \log(\hat R^{j}_{k}(t, \epsilon)).
	\end{align*} 
	The reason for the restriction $t \geq k$ comes from the fact that $r_{j, k}(t, \epsilon)=0$ 
	for $t < k$. 

	We will view the collection of random variables $\{\hat R^{j}_{k}(t, \epsilon)\}$
	as a stochastic process $\hat R(t, \epsilon)$
	which at a fixed time $t$ becomes 
	an array $\hat R^{j}_{k}(t, \epsilon)$, $1 \leq k \leq j \leq n$ for $t \geq n$, 
	or a truncated array $\hat R^{j}_{k}(t, \epsilon)$, $1 \leq k \leq \min\{t, j\} \leq n$ for $0 < t < n$. 
\end{definition}

\begin{definition}[Scaled $\Qqacol$ dynamics]\label{def:col_scaled}
	Start the dynamics 
	$\Qqacol$
	from the zero initial condition,
	and
	denote by $\ell_{j, k}(t, \epsilon)$ the position of the $k$-th particle \emph{from the left} on the $j$-th level 
	of the array after $t$ steps of the dynamics
	(again, at each time step $t\to t+1$, 
	apply the dynamics $\Qqacol$ with parameter $\al=\al_{t+1}$). For $t \geq j-k+1$, 
	define the random variable $\hat L^{j}_{k}(t, \epsilon)$ via 
	\begin{align*}
		\ell_{j, k}(t, \epsilon) = (t-j+2k-1) \epsilon^{-1} \log \epsilon^{-1} -  \epsilon^{-1} \log(\hat L^{j}_{k}(t, \epsilon)).
	\end{align*}
	The reason for the restriction $t \geq j-k+1$ 
	comes from the fact that 
	$\ell_{j, k}(t, \epsilon)=0$
	for $t < j-k+1$. 

	We will view the collection of random variables $\{\hat L^{j}_{k}(t, \epsilon)\}$ as a stochastic 
	process $\hat L(t, \epsilon)$, which at a fixed time $t$ becomes 
	an array $\hat L^{j}_{k}(t, \epsilon)$, $1 \leq k \leq j \leq n$ for $t \geq n$, 
	or a truncated array $\hat L^{j}_{k}(t, \epsilon)$, $1 \leq k \leq j \leq \min\{n, k+t-1\}$ for $0 < t < n$. 
\end{definition}

\begin{remark}
Observe that
for a fixed time $t$, the array $r_{j, k}(t, \epsilon)$ has the same distribution as the array $\ell_{j, j-k+1}(t, \epsilon)$
(by Theorems \ref{thm:QqrRSK_alpha} and \ref{thm:QqcRSK_alpha},
they are distributed as $q$-Whittaker processes).
Hence the
(possibly truncated) arrays $\hat R^{j}_{k}(t, \epsilon)$ and $1/\hat L^{j}_{j-k+1}(t, \epsilon)$ for $1 \leq k \leq \min\{t, j\} \leq n$ have the same distribution. 
However, these arrays will not be identically distributed 
as \emph{stochastic processes} in $t$ since they come from different multivariate dynamics.
% Do the limiting arrays have the same distribution as processes in $t$? - most likely not, but we will not write it
\end{remark}

In the setting of polymer partition functions, 
define random processes $\hat R(t)$ and $\hat L(t)$ on (possibly truncated) arrays via
\begin{align*}
\hat R^{j}_{k}(t) :=  R^{j}_{k}(t) / R^{j}_{k-1}(t), \quad \text{for $1 \leq k \leq \min\{t, j\} \leq n$}
\end{align*}
and
\begin{align*}
\hat L^{j}_{k}(t) := L^{j}_{k}(t) / L^{j}_{k-1}(t), \quad \text{for $1 \leq k \leq j \leq \min\{n, k+t-1\}$}.
\end{align*}
They are well defined, because $R^{j}_{k}(t), R^{j}_{k-1}(t) > 0$ for $t \geq k$ 
and $L^{j}_{k}(t), L^{j}_{k-1}(t) > 0$ for $t \geq j-k+1$.  

\medskip

We are now in a position to formulate results
on the limiting behavior of dynamics $\Qqarow$ and $\Qqacol$.
In this section we prove the following:

\begin{theorem}\label{thm:TR}
As $\epsilon \to 0$, the process $\hat R(t, \epsilon)$ of Definition \ref{def:row_scaled} 
converges in distribution to the process $\hat R(t)$.
\end{theorem}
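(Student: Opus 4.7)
The plan is to split the proof into two ingredients: convergence of the one-time marginal law for each fixed $t$, and convergence of the one-step Markov transition kernel of $\Qqarow$ under the scaling. Together these imply convergence of all finite-dimensional distributions of the Markov chain, which is the desired statement.

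For the fixed-$t$ marginal, Theorems \ref{thm:sampling_intro} and \ref{thm:QqrRSK_alpha} identify the law of $\lab(t)$ with a $q$-Whittaker process with parameters $(a_1,\ldots,a_n)$ and $(\al_1,\ldots,\al_t)$. Under the scaling $q=e^{-\epsilon}$, $a_j=e^{-\theta_j\epsilon}$, $\al_s=e^{-\hat\theta_s\epsilon}$, I would invoke the known convergence of $q$-Whittaker processes to $\al$-Whittaker processes from \cite{BorodinCorwin2011Macdonald}, combined with the identification of the latter with joint laws of ratios of log-Gamma polymer partition functions from \cite{COSZ2011,OSZ2012,Seppalainen2012}, to conclude that $\hat R(t,\epsilon)\Rightarrow\hat R(t)$ for each individual $t$.

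For the Markov step, I would analyze the two sources of randomness in $\Qqarow$ separately. The $q$-geometric jump $V_j$ with parameter $\al_{t+1}a_j$ satisfies, after the substitution $V_j=\epsilon^{-1}\log\epsilon^{-1}-\epsilon^{-1}\log G_j$, the law of an $\GRVI{\theta_j+\hat\theta_{t+1}}$ random variable in the limit; this is a direct Stirling-type computation on the $q$-Pochhammer symbols $(q;q)_k$ and $(\al_{t+1}a_j;q)_\infty$. The $q$-deformed Beta-binomial weight $W_i\sim\Om_{q^{-1},q^{\la_i-\bar\la_i},q^{\bar\la_{i-1}-\bar\la_i}}(\cdot\mid c_i)$ is the harder ingredient: the parameters $q^{\la_i-\bar\la_i}$ and $q^{\bar\la_{i-1}-\bar\la_i}$ vanish at explicit rates proportional to $\epsilon$ and $\epsilon^{2}$ (determined by $\hat R^{j}_i/\hat R^{j-1}_i$ and $\hat R^{j-1}_i/\hat R^{j-1}_{i-1}$) while $c_i$ grows like $\epsilon^{-1}\log\epsilon^{-1}$, so no classical limit theorem applies directly. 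Saddle-point (equivalently, $q\to1$ Euler-dilogarithm) asymptotics of the $q$-Pochhammer factors in $\Om$ should nevertheless yield concentration on a single value, deterministically determined by the rescaled arrays $\hat R^{j}_i,\hat R^{j-1}_i,\hat R^{j-1}_{i-1}$ and by $c_i$; I would then check that this value coincides with the split dictated by Kirillov's tropicalization of row insertion \cite{Kirillov2000_Tropical,NoumiYamada2004}.

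Having identified both limits, one verifies that the resulting update $\la\to\nu$, rewritten in the variables $\hat R^j_k$, is exactly the geometric row insertion applied to an additional inverse-Gamma column, so that $\hat R(t+1,\epsilon)\Rightarrow\hat R(t+1)$ conditionally on $\hat R(t,\epsilon)\to\hat R(t)$; combined with the first paragraph this yields convergence of all finite-dimensional distributions. The main obstacle is the concentration of $\Om_{q^{-1},\muq,\eta}(\cdot\mid c)$ in the simultaneous regime $\muq\to0$, $\eta\to0$, $c\to\infty$ with the prescribed relative rates; once this is established and the concentration locus is identified, the remaining identification with the log-Gamma gRSK recurrence is an algebraic check.
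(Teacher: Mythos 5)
Your plan is essentially the paper's. You correctly identify both the key technical obstacle — concentration of $\Om_{q^{-1},\mu,\eta}(\cdot\mid c)$ in the scaling regime $\mu\sim\epsilon$, $\eta\sim\epsilon^{2}$, $c\sim\epsilon^{-1}\log\epsilon^{-1}$ — and the right method for overcoming it, a Laplace/saddle-point analysis of the $q$-Pochhammer factors. The paper isolates this as Lemma \ref{L3}, proved exactly by the estimate you anticipate, with $\int_{0}^{\psi}\log(1-e^{-x})\,dx$ playing the dilogarithm role, and then verifies that the concentration loci reproduce the geometric RSK row-insertion recurrences of Proposition \ref{prop:geom_RSK_row_insertion}. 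The paper does this not by a single abstract "transition-kernel convergence" statement but by a lexicographic induction over $(t,k,j)$, conditioning on convergence of all lexicographically earlier rescaled entries; this is the precise form your "conditionally on $\hat R(t,\epsilon)\to\hat R(t)$" step has to take, because the $\Qqarow$ update is sequential over levels and positions within a single discrete time step and the $\Om$-splitting at site $(j,i)$ has parameters built from several already-updated neighboring entries.

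The two steps you name but do not supply — the concentration lemma and the site-by-site algebraic check against gRSK — are exactly where the work of the proof lies, so as written this is an outline rather than a proof. Two further points. First, invoking the external $q$-Whittaker $\to\alpha$-Whittaker fixed-time marginal convergence from \cite{BorodinCorwin2011Macdonald} is redundant and somewhat circular in spirit: once the conditional transition convergence is established the initial condition is degenerate and the marginals follow; the paper in fact presents Theorems \ref{thm:TR}--\ref{thm:TL} as giving a \emph{new} proof of that marginal convergence rather than consuming it. Second, there is a sign slip: Lemma \ref{L1} gives $\epsilon\,e^{\epsilon V_{j}}\Rightarrow\GRVI{\theta_{j}+\hat\theta_{t+1}}$, so $V_{j}\approx \epsilon^{-1}\log\epsilon^{-1}+\epsilon^{-1}\log G_{j}$ with $G_{j}$ inverse-Gamma; your substitution $V_{j}=\epsilon^{-1}\log\epsilon^{-1}-\epsilon^{-1}\log G_{j}$ with $G_{j}\sim\GRVI{\cdot}$ would put Gamma rather than inverse-Gamma node weights into the recurrence, which would not match Definition \ref{def:R_polymer}.
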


\begin{theorem}\label{thm:TL}
As $\epsilon \to 0$, the process $\hat L(t, \epsilon)$ 
of Definition \ref{def:col_scaled}
converges in distribution to the process $\hat L(t)$.
\end{theorem}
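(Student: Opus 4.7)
The plan is to mirror the strategy that presumably yields Theorem \ref{thm:TR}, but adapted to the column dynamics $\Qqacol$ and the strict-weak polymer. The first task is to write down the deterministic geometric (tropical) RSK update associated with the strict-weak polymer as an explicit recursion for the array of ratios $\hat L^j_k(t) = L^j_k(t)/L^j_{k-1}(t)$. Decomposing the nonintersecting path ensembles in Definition \ref{def:L_polymer} according to whether each path's last step on the top row is horizontal or diagonal, one obtains such a recursion relating $\hat L^j_k(t)$ to the previous array $\hat L^j_k(t-1)$ and to the new column of Gamma weights $d_{(t-1,j)\to(t,j)}$, $1 \le j \le n$. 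This recursion should coincide with the column geometric RSK introduced in \cite{CorwinSeppalainenShen2014}, parallel to how in Theorem \ref{thm:TR} the log-Gamma ratios evolve according to the row geometric RSK of \cite{Seppalainen2012}.

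The second step is to take the $\epsilon \to 0$ limit of one step of $\Qqacol$ under the scaling of Definition \ref{def:col_scaled}. The initial $q$-geometric voluntary jump $X_1$ in \eqref{Xi_distribution}, with parameter $\al_{t+1} a_j = e^{-(\hat\theta_{t+1}+\theta_j)\epsilon}$, has the property that $e^{-\epsilon X_1}$ converges in distribution to a variable whose logarithm is essentially $\GRV{\hat\theta_{t+1}+\theta_j}$, providing the top-level Gamma edge weight of the strict-weak polymer. The subsequent $X_i$ for $i \ge 2$ have chained parameters, but the recursion $\theta_{i+1} = \theta_i q^{\bar\la_{j-i}-\la_{j-i+1}-X_i}$ tropicalizes, under our scaling, to a deterministic rule that propagates differences exactly as needed for column insertion. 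For the $q$-Beta-binomial pushes $Y_i$ from \eqref{Yi_distribution} and $Z_i$ from \eqref{Zi_distribution}, when the parameters $q^a,\, q^b$ and the range $c$ are all of order $\epsilon^{-1}\log\epsilon^{-1}$, a saddle-point analysis of the density \eqref{Om_qmunu_definition} will show concentration on a single deterministic value determined by the limits of $a,b,c$, so that the randomness in all pushes disappears. The combined sequential update becomes a deterministic transformation on the limiting array, which should then be matched with the recursion identified in the first step.

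An inductive argument in $t$ then propagates this one-step convergence to convergence of the full process, the base case $t=0$ being trivial. The main obstacle I anticipate is controlling the cumulative effect of the stabilization fund $\log_q r_i$ down the column: unlike in the row dynamics, where splittings at different positions are conditionally independent given the previous level, in $\Qqacol$ the fund couples the updates of all particles at level $j$, so one must argue that the limiting fund value at each position equals a specific tropical expression built from the previous array and the new Gamma weight. Here the order-swap allowed by Proposition \ref{prop:Kr1} should let one choose the most convenient order in which to take $\epsilon \to 0$, while the algebraic identity of Proposition \ref{prop:Kr2} should furnish the consistency check that the concentration values of the $Y_i$ and $Z_i$ combine correctly. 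Promoting the finite-dimensional convergence to convergence of stochastic processes in all $t$ should then follow from the Markov property together with tightness coming from the a priori positivity bounds on the scaled $\hat L^j_k(t,\epsilon)$.
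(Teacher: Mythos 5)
Your high-level plan is the right one — identify the geometric (tropical) column RSK recursion for the ratios $\hat L^j_k(t)$, show (via concentration of the $q$-Beta-binomial) that one step of $\Qqacol$ under the scaling of Definition \ref{def:col_scaled} converges to one step of that recursion with i.i.d.\ Gamma inputs, and close the argument by induction. This is essentially what the paper does: Proposition \ref{prop:geom_RSK_col_insertion} supplies the target recursion (via a matrix identity and Lindstr\"om--Gessel--Viennot rather than your direct path decomposition, but that is a matter of taste), and then the proof of Theorem \ref{thm:TL} proceeds by conditioning lexicographically on $\hat L^J_K(T,\epsilon)\to Y^J_K(T)$ for $(T,K,J)<(t,k,j)$ and matching the resulting limit, case by case, with the geometric column insertion formulas. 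Since the process has finitely many coordinates over a finite time horizon, this finite-dimensional convergence is all that is required; no tightness step is needed.

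The genuine gap is your proposed mechanism for controlling the stabilization fund and the coupled pushes $Y_i, Z_i$. You suggest invoking Propositions \ref{prop:Kr1} and \ref{prop:Kr2} to ``choose the most convenient order'' for the $\epsilon\to 0$ limit and to ``furnish the consistency check.'' Those propositions play no role here: Proposition \ref{prop:Kr1} merely says the joint law of one step of $\Qqacol$ is unchanged if you swap the neighbor push and the fund push at each site, and Proposition \ref{prop:Kr2} is the purely algebraic $q$-binomial identity used to prove that $\Qqacol$ preserves $q$-Whittaker processes (Theorem \ref{thm:QqcRSK_alpha}); neither helps you compute the $\epsilon\to 0$ limit of the push lengths, which is exactly where the work lies. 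What is actually needed is a family of concentration results for the $\Om$ distributions in the relevant parameter regimes; in the paper these are Lemmas \ref{L1}--\ref{L5}, and the stabilization fund in particular is handled by Lemma \ref{L5}, which treats $\Om_{q^{-1},\alpha,0}(\cdot\mid n)$ with $\alpha$ fixed and $n\epsilon\to\log(1+\sigma)$, showing concentration of $\epsilon\,Y^{\epsilon}$ at $\log(1+\alpha\sigma)$. The argument then tracks the fund explicitly as a chained product of gap ratios coming from the conditioning (this is the point where the couplings you were worried about enter), and Lemmas \ref{L3}--\ref{L5} show each contribution concentrates on the correct tropical expression. Without the analogue of Lemma \ref{L5} your second step stalls precisely at the stabilization fund, and Kr1/Kr2 will not rescue it.
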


\begin{corollary}\label{cor:arrays_R_L}
The (possibly truncated)
arrays $\hat R^{j}_{k}(t)$ and $1/\hat L^{j}_{j-k+1}(t)$ for $1 \leq k \leq \min\{t, j\} \leq n$ have the same distribution.
\end{corollary}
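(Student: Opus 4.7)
The plan is to observe that this corollary is a direct consequence of Theorems~\ref{thm:TR} and~\ref{thm:TL} combined with the fact, already noted in the remark preceding the corollary, that the fixed-time marginal distributions of the pre-limit processes $\hat R(t,\epsilon)$ and $\hat L(t,\epsilon)$ are tautologically related by a reflection of the array.

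First, I would recall that by Theorems~\ref{thm:QqrRSK_alpha} and~\ref{thm:QqcRSK_alpha}, both dynamics $\Qqarow$ and $\Qqacol$ started from the zero initial condition sample, at any fixed time $t$, the same $q$-Whittaker process $\MP_{\mathbf{A}}^{\vec a}$ on the interlacing array $\lab$, with $\mathbf{A}=(\al_1,\ldots,\al_t)$ and $\vec a=(a_1,\ldots,a_n)$. Since $r_{j,k}(t,\epsilon)=\la^{(j)}_k$ is the $k$-th particle from the right and $\ell_{j,k}(t,\epsilon)=\la^{(j)}_{j-k+1}$ is the $k$-th particle from the left, the (possibly truncated) arrays $\{r_{j,k}(t,\epsilon)\}$ and $\{\ell_{j,j-k+1}(t,\epsilon)\}$ coincide in distribution at each fixed time $t$.

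Second, I would compare the two scalings in Definitions~\ref{def:row_scaled} and~\ref{def:col_scaled}. Substituting $k \mapsto j-k+1$ in the scaling for $\ell_{j,\cdot}(t,\epsilon)$ gives the divergent prefactor $(t-j+2(j-k+1)-1)\epsilon^{-1}\log\epsilon^{-1} = (t+j-2k+1)\epsilon^{-1}\log\epsilon^{-1}$, which exactly matches the prefactor in the scaling of $r_{j,k}(t,\epsilon)$. The only remaining difference is a sign in front of the $\epsilon^{-1}\log(\cdot)$ term, from which the equality in distribution of $r_{j,k}(t,\epsilon)$ and $\ell_{j,j-k+1}(t,\epsilon)$ immediately yields equality in distribution of the arrays $\{\hat R^{j}_{k}(t,\epsilon)\}$ and $\{1/\hat L^{j}_{j-k+1}(t,\epsilon)\}$ for every admissible range of indices.

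Finally, passing to the limit $\epsilon \searrow 0$ and applying Theorem~\ref{thm:TR} to the left-hand side and Theorem~\ref{thm:TL} (combined with continuity of $x \mapsto 1/x$ on $\mathbb{R}_{>0}$) to the right-hand side, we conclude that $\{\hat R^{j}_{k}(t)\}$ and $\{1/\hat L^{j}_{j-k+1}(t)\}$ have the same distribution as arrays. There is no real obstacle here; the entire content of the corollary is already packaged in Theorems~\ref{thm:TR},~\ref{thm:TL} and the matching of the two coordinatizations, so the proof reduces to checking that the deterministic shift in the scaling is indeed the same for both sides (which it is, as displayed above).
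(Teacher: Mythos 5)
Your proof is correct and follows essentially the same route as the paper: the paper already records in the remark preceding the corollary that $r_{j,k}(t,\epsilon)$ and $\ell_{j,j-k+1}(t,\epsilon)$ have the same distribution at fixed $t$ (both being $q$-Whittaker distributed by Theorems~\ref{thm:QqrRSK_alpha} and~\ref{thm:QqcRSK_alpha}), hence $\hat R^{j}_{k}(t,\epsilon)$ and $1/\hat L^{j}_{j-k+1}(t,\epsilon)$ have the same distribution, and the corollary then follows by applying Theorems~\ref{thm:TR} and~\ref{thm:TL}. Your verification that the divergent prefactors match after the index substitution is the same bookkeeping the paper relies on.
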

In particular, $1/\hat R^{j}_{j}(t)$ and $\hat L^{j}_{1}(t)$ have the same distribution. 
The latter fact was proven in \cite{OConnellOrtmann2014}, and was used 
to analyze the strict-weak polymer partition function 
via the geometric RSK row insertion (see \S \ref{ssub:geometric_rsk_row_insertion} below), 
and to establish the Tracy-Widom asymptotics for the strict-weak polymer. 

See also \cite{OConnellOrtmann2014} for the close relation between 
the log-gamma and strict-weak polymers, where it is explained 
that one is the complement of the other at the level of lattice paths.
To the best of our knowledge,
the full statement of Corollary \ref{cor:arrays_R_L}
has not previously appeared in the literature.

\medskip

Let us provide a brief outline of our proofs of Theorems 
\ref{thm:TR} and \ref{thm:TL} which are 
presented in the rest of this section. 
First, 
in \S \ref{sub:geometric_rsks} we describe the
constructions of the \emph{geometric RSK correspondences},
which will serve as $\epsilon\to0$
limits of elementary steps 
used in dynamics $\Qqarow$ and $\Qqacol$.
Then in \S \ref{sub:some_asymptotics_of_q_deformed_binomial_distributions}
we prove a number of lemmas concerning 
$\epsilon\to0$ behavior of the 
$q$-distributions from \S \ref{sub:the_q_deformed_binomial_distribution}.
Finally, in \S \ref{sub:proofs_of_theorems_thm:tr_and_thm:tl} we 
use these ingredients to establish the desired statements.

% subsection polymer_partition_functions (end)

\subsection{Geometric RSKs} % (fold)
\label{sub:geometric_rsks}

As we already know, the dynamics
on $q$-Whittaker processes constructed in \S \ref{sec:geometric_q_rsks}
degenerate for $q=0$ into the
dynamics $\Qarow$ and $\Qacol$ based on the classical 
RSK row or column insertion, respectively.
In this subsection we describe the corresponding
\emph{geometric Robinson--Schensted--Knuth insertions},
which will serve as building blocks 
for understanding $q\nearrow 1$
limits of the dynamics
on $q$-Whittaker processes.

The $q=0$ and $q\nearrow1$
pictures (i.e., the classical and the geometric RSK correspondences)
are related via a certain procedure called \emph{detropicalization}.
Namely, the 
geometric RSK \emph{row insertion}
introduced in \cite{Kirillov2000_Tropical}
is obtained by detropicalizing
the classical RSK row insertion
by replacing the $(\max, +)$ operations in its definition by $(+, \times)$.
About the geometric RSK row insertion see also, e.g.,  
\cite{NoumiYamada2004}, 
\cite{COSZ2011}, 
\cite{OSZ2012},
and \cite{Chhaibi2013}.

By analogy with the geometric RSK row insertion, 
one can define the 
\emph{geometric RSK column insertion}, 
by detropicalizing the classical RSK column insertion, 
this time replacing the $(\min, +)$ operations by $(+, \times)$.

\begin{remark}[Names and notation]\label{rmk:names_notation}
	The geometric RSK correspondences 
	are also sometimes called 
	\emph{tropical RSK correspondences}
	\cite{Kirillov2000_Tropical},
	\cite{NoumiYamada2004}, 
	\cite{COSZ2011},
	despite the fact that they come from the process
	of detropicalization. We adopt a convention of calling them
	the geometric RSK correspondences 
	(following, e.g., \cite{OSZ2012}, 
	\cite{Chhaibi2013}, \cite{OConnell2013geomToda}). The latter name arises in connection
	with geometric crystals (see \cite{Chhaibi2013} for more background).

	Note that the 
	word ``geometric'' in the 
	name of the geometric RSK correspondences
	should be distinguished from the same word
	in the names of the
	geometric $q$-PushTASEP and the geometric $q$-TASEP
	(described in \S  \ref{sub:geometric_q_pushtasep} and \S \ref{sub:geometric_q_tasep}, respectively).
	The former refers to detropicalization
	of the classical RSK correspondences, while the latter
	is attached to the $q$-geometric jump distribution.

	Below in this section, by $\la,\nu,\ldots$ we will 
	denote vectors (words) with continuous components,
	and not signatures as before. To indicate the 
	difference, we will use superscripts to denote their components.
\end{remark}

\subsubsection{Geometric RSK row insertion} % (fold)
\label{ssub:geometric_rsk_row_insertion}

Consider a triangular array $z^{j}_{k}$ ($1 \leq k \leq j \leq n$) of nonnegative real numbers, such that a \emph{word} $z_{k} = (z^{k}_{k}, \ldots, z^{n}_{k})$ either has all positive entries or is equal to $(1, 0, \ldots, 0)$ (in which case we call it an {\it empty word}).  

First, define the \emph{geometric row insertion} 
of a nonempty word $a = (a^{k}, \ldots, a^{n})$ 
into a nonempty word $\lambda = (\lambda^{k}, \ldots, \lambda^{n})$ 
as an operation that takes the pair 
$\{\lambda, a\}$ as input, 
and produces a pair of words $\{ \nu =(\nu^{k}, \ldots, \nu^{n}), b = (b^{k+1}, \ldots, b^{n})\}$ 
as output 
via the following rule:
\begin{align*}
	\scalebox{.95}{
	\begin{tikzpicture}
		[scale=1, very thick, >=latex, ->]
		\draw[->] (0,1) -- (0,-1) node [pos=-.2] {$a$} node [pos=1.2] {$b$};
		\draw[->] (-1,0) -- (1,0) node [pos=-.2] {$\la$} node [pos=1.2] {$\nu$};
		\node[anchor=west] at (3,.6) {$\displaystyle\nu^{j}=\sum_{i=k}^{j}\la^{i}a^{i}\ldots a^{j}$};
		\node[anchor=west] at (3,-.6) {$\displaystyle b^{j}=a^{j}\frac{\la^{j}\nu^{j-1}}{\la^{j-1}\nu^{j}}$};
	\end{tikzpicture}
	}
\end{align*}
% \begin{figure}[h] 
% \hspace*{0 cm} \includegraphics[width = 0.6\textwidth]{grrso.pdf}
%\caption{}
% \label{fig:grrso}
% \end{figure}
If $\lambda$ is an empty word, then by definition $b$ is not produced, while 
\begin{align*}
	\nu := (a^{k}, a^{k}a^{k+1},\ldots, a^{k}a^{k+1} \cdots a^{n})
\end{align*}
is produced according to the same rule. 
The word $b$
is also not produced for $k=n$. 
Observe that always $\nu^{j} = (\lambda^{j} + \nu^{j-1})a^{j}$ for $k < j \le n$ and $\nu^{k} = \la^{k} a^{k}$.

\begin{figure}[htb] 
	\center \includegraphics[width = 0.12\textwidth]{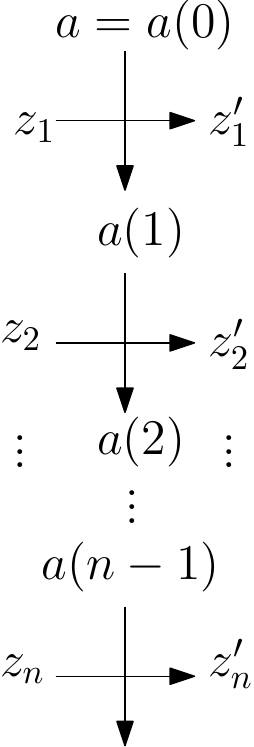}
	\caption{Geometric RSK row insertion.}
	\label{fig:grr}
\end{figure}
\begin{definition}\label{def:geom_RSK_row_insertion}
	The \emph{geometric RSK row insertion} of a word $a = (a^{1}, \ldots, a^{n})$ into an array $z^{j}_{k}$ 
	is defined by consecutively modifying the words 
	$z_{1}, \ldots, z_{n}$ via the insertion according to the diagram on Fig.~\ref{fig:grr}. 
	The bottom output word $a(1),a(2),\ldots$ 
	of each insertion is then used as a top input word for the next insertion. 
	If after some insertion no bottom output word is produced, then no further insertions are performed.  
\end{definition}

\begin{figure}[htbp]
\includegraphics[width = 0.8\textwidth]{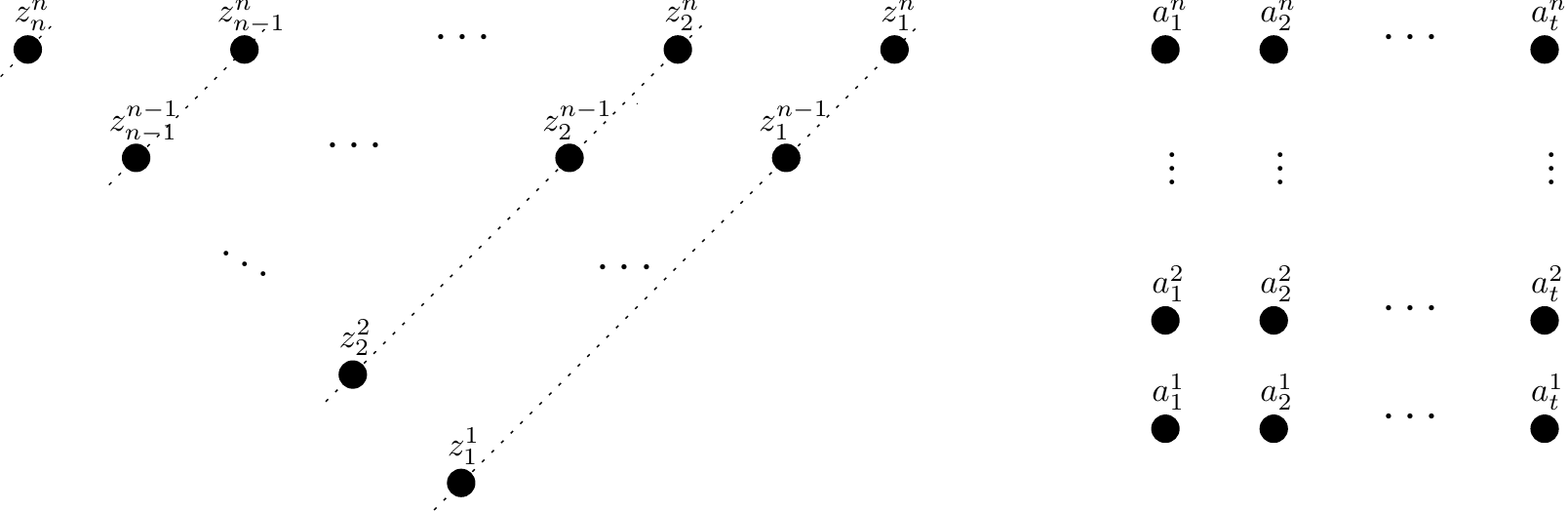}
\caption{Array and strip for the geometric RSK row insertion.}
\label{fig:grra}
\end{figure}
The geometric RSK row insertion is related to the
polymer partition functions of 
\S \ref{sub:polymer_partition_functions}
in the following way:
\begin{proposition}[\cite{NoumiYamada2004}]\label{prop:geom_RSK_row_insertion}
	If we start with an array $z$ 
	of empty words, and consecutively 
	insert into it 
	nonempty fixed words $a_{1}, \ldots, a_{t}$, $a_{i} = (a_{i}^{1}, \ldots, a_{i}^{n})$, 
	via the geometric RSK row insertion, then in the obtained array we have
	\begin{align*}
	z^{j}_{k}(t) = \frac{R^{j}_{k}(t)(a_{1}, \ldots, a_{t})}{R^{j}_{k-1}(t)(a_{1}, \ldots, a_{t})} 
	\qquad \text{for all $t \geq k$.}
	\end{align*}
	Here with a slight abuse of notation we denote by 
	$R^{j}_{k}(t)(a_{1}, \ldots, a_{t})$ the same weighted sum over $k$-tuples of 
	nonintersecting paths as in Definition \ref{def:R_polymer}, 
	but in a strip in which each node $(s, i)$ has a deterministic weight $a^{i}_{s}$
	(see Fig.~\ref{fig:grra}).
\end{proposition}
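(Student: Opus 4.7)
My plan is to prove Proposition \ref{prop:geom_RSK_row_insertion} by a double induction: on the time step $t$ and on the row index $k$. Since the geometric RSK row insertion is defined via a sequence of local birational moves as in Figure \ref{fig:grra}, the key will be to show that the local update rule matches the recursion satisfied by the ratios $R^{j}_{k}(t)/R^{j}_{k-1}(t)$ when one adjoins a new column of weights $a_{t+1}=(a_{t+1}^{1},\dots,a_{t+1}^{n})$ to the strip on the right.

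First, I would handle the case $k=1$ by induction on $t$. The insertion of $a_{t+1}$ into the bottom word $z_{1}(t)=(z_{1}^{1}(t),\dots,z_{1}^{n}(t))$ produces, by Definition \ref{def:geom_RSK_row_insertion}, the new bottom word
\[
z_{1}^{j}(t+1)=\sum_{i=1}^{j}z_{1}^{i}(t)\,a_{t+1}^{i}a_{t+1}^{i+1}\cdots a_{t+1}^{j}.
\]
On the polymer side, any up/right path from $(1,1)$ to $(t+1,j)$ enters column $t+1$ at a unique row $i$, and decomposes as a path from $(1,1)$ to $(t,i)$ (contributing $R^{i}_{1}(t)$) followed by the vertical segment in column $t+1$ from $(t+1,i)$ to $(t+1,j)$ (contributing $a_{t+1}^{i}\cdots a_{t+1}^{j}$). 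Summing over $i$ yields exactly the same recursion, so the inductive step goes through, with the initial case $t=1$ checked by direct computation.

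For general $k\ge 2$, I would argue by induction on $k$ and use the output word $b$ going down from each elementary insertion box: by construction, the word $a_{i}$ entering the row $z_{k}$ at time step $i$ is determined by the values at row $k-1$, and so the same formula that expresses $z_{1}^{j}(t)$ as a polymer partition function propagates to rows $k\ge 2$ once we reinterpret the inputs correctly. The cleanest way to make this rigorous is to write down the local birational update at the box labeled $(t,j,k)$ in the three-dimensional array of insertion outputs, and to show that it coincides with the recursion for $\hat R^{j}_{k}(t)$ obtained by combining the Lindström--Gessel--Viennot determinantal formula for $R^{j}_{k}(t)$ with the one-step update for column $t+1$. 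Equivalently, using the $\tau$-function formulation of Noumi--Yamada \cite{NoumiYamada2004}, one may package the ratios $\hat R^{j}_{k}(t)=R^{j}_{k}(t)/R^{j}_{k-1}(t)$ into a matrix of minors and recognize the geometric RSK update as the standard birational action on these minors.

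The main obstacle will be the bookkeeping in the inductive step for $k\ge 2$: one must verify that the word $b$ produced when inserting $a_{t+1}$ into row $z_{k}$ and fed as input into row $z_{k+1}$ corresponds precisely to the effect of restricting $k$-tuples of nonintersecting paths to the top $k-1$ of them, or equivalently, that the birational local update at level $k$ reproduces the LGV recursion for nonintersecting $k$-tuples. This compatibility is the content of the Noumi--Yamada identity, and once the local update is written out explicitly in terms of the four neighboring values $z^{j}_{k}(t),z^{j-1}_{k}(t),z^{j}_{k-1}(t+1),z^{j-1}_{k-1}(t+1)$, the verification reduces to a direct (though somewhat tedious) algebraic identity, which can be checked either by expanding the determinants given by LGV or by appealing to the known $\tau$-function bilinear identities for the geometric RSK.
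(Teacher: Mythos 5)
The paper does not supply a proof of this proposition: the bracketed citation in the proposition statement attributes it to \cite{NoumiYamada2004}, and the Remark preceding Proposition~\ref{prop:geom_RSK_col_insertion} says explicitly that a proof of the row version ``is given in \cite{NoumiYamada2004}.'' What the paper \emph{does} prove, in full detail, is the analogous column version (Proposition~\ref{prop:geom_RSK_col_insertion}): the elementary insertion box is encoded as a local commutation relation $G(\lambda)H_k(a)=H_{k+1}(b)G(\nu)$ between explicit matrices, iterating gives a global factorization $G(y_n)\cdots G(y_1)=H(a_1)\cdots H(a_t)$, and then the Lindstr\"{o}m--Gessel--Viennot principle is applied to the $k\times k$ minors of both sides to identify the entries with polymer partition functions. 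Your proposed strategy is genuinely different: a double induction on $(t,k)$ in which one checks directly that the local geometric-RSK update reproduces a one-column recursion for the ratios $R^j_k(t)/R^j_{k-1}(t)$.

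Your $k=1$ base case is correct and complete: the decomposition of an up/right path from $(1,1)$ to $(t+1,j)$ by the row at which it first enters column $t+1$ gives exactly $R^j_1(t+1)=\sum_{i=1}^{j}R^i_1(t)\,a_{t+1}^i\cdots a_{t+1}^j$, matching the geometric row-insertion rule $\nu^j=\sum_{i=1}^{j}\la^i a^i\cdots a^j$, and the initial column $t=1$ is verified directly.

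For $k\ge 2$, however, what you have is only an outline, and the precise content of the crucial step is not supplied. The induction on $k$ requires knowing, in closed form, what the downward output word $b=(b^{k},\dots,b^{n})$ produced by inserting $a_{t+1}$ into row $z_{k-1}(t)$ is when expressed through the ratios $\hat R^j_{k-1}$, and then verifying that feeding this $b$ into row $z_k(t)$ reproduces the one-column recursion for $\hat R^j_k(t)$. You explicitly identify this as ``a direct (though somewhat tedious) algebraic identity'' to be checked by expanding LGV determinants or by appealing to $\tau$-function bilinear identities, but you do not carry it out; moreover, the last appeal is close to circular, since the identification of the geometric RSK outputs with those determinants/$\tau$-functions is precisely the content of the proposition being proven. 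A complete proof along your lines would need either (a) a precise LGV-determinant recursion for $R^j_k(t+1)$ in terms of the $R^j_\ell(t)$ together with an algebraic verification that $z^j_k(t+1)/z^j_k(t)$ and $b^j_k(t+1)$ satisfy the matching recursions, or (b) the matrix-factorization argument used for the column case in the paper, suitably adapted to the $H$-type matrices of the row insertion. As written, the proposal identifies the right target but leaves the essential computation unperformed.
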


% subsubsection geometric_rsk_row_insertion (end)

\subsubsection{Geometric RSK column insertion} % (fold)
\label{ssub:geometric_rsk_column_insertion}

Consider a triangular array $y^{j}_{k}$ ($1 \leq k \leq j \leq n$) of nonnegative real numbers, such that in each 
word $y_{k} = (y^{k}_{k}, \ldots, y^{n}_{k})$ either all entries are positive, 
or there is $k \leq j \leq n$, 
such that $y^{j}_{k} = 1$, $y^{i}_{k} = 0$ for $ j < i \leq n$ and $y^{i}_{k} > 0$ for $k \leq i \leq j$.  
We again call $(1, 0, \ldots, 0)$ an  empty word. 

To define the \emph{geometric RSK column insertion} first define the
insertion of a word $a = (a^{k}, \ldots, a^{n})$ with positive entries into 
a word $\lambda = (\lambda^{k}, \ldots, \lambda^{n})$ as an operation 
that takes the pair $\{\lambda, a\}$ as input, 
and produces a pair of words $\{\nu =(\nu^{k}, \ldots, \nu^{n}), b = (b^{k+1}, \ldots, b^{n})\}$ 
as output
via the following rule:
\begin{align*}
	\scalebox{.95}{
	\begin{tikzpicture}
		[scale=1, very thick, >=latex, ->]
		\draw[->] (0,1.2) -- (0,-.8) node [pos=-.2] {$a$} node [pos=1.2] {$b$};
		\draw[->] (-1,0.2) -- (1,0.2) node [pos=-.2] {$\la$} node [pos=1.2] {$\nu$};
		\node[anchor=west] at (3,1.7) {$\nu^{k}=a^{k}\la^{k}$};
		\node[anchor=west] at (3,1) {$\nu^{j}=\la^{j}a^{j}+\la^{j-1}$ for $k<j\le n$};
		\node[anchor=west] at (3,-.4) {\parbox{.5\textwidth}{$\displaystyle b^{j}=
		\begin{cases}
			\displaystyle a^{j}\frac{\la^{j}\nu^{j-1}}{\la^{j-1}\nu^{j}},& \text{if $\la^{j}>0$},\\
			\rule{0pt}{14pt}a^{j}\nu^{j-1},&\text{if $\la^{j}=0$ and $\la^{j-1}>0$},\\
			a^{j},&\text {if $\la^{j-1}=0$}.
		\end{cases}$}};
	\end{tikzpicture}
	}
\end{align*}

% \begin{figure}[h] 
% \includegraphics[width = 0.7\textwidth]{gcrso.pdf}
% %\caption{}
% \label{fig:gcrso}
% \end{figure}

\begin{definition}\label{def:geom_RSK_col_insertion}
	The \emph{geometric RSK column insertion} of a word 
	into an array 
	is defined similarly to
	the row insertion (Definition \ref{def:geom_RSK_row_insertion}),
	by consecutively performing the 
	column insertion operations
	defined above, in order as on
	Fig.~\ref{fig:grr}. 
\end{definition}

\begin{figure}[htbp] 
\includegraphics[width = 0.8\textwidth]{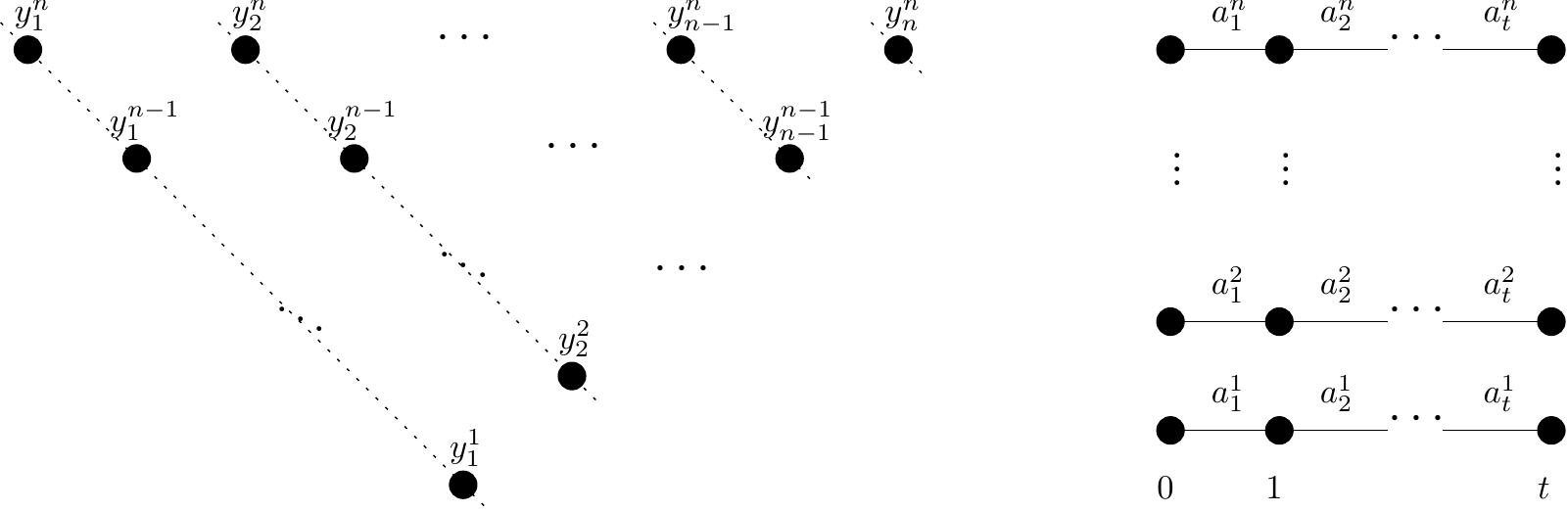}
\caption{Array and strip for the geometric RSK column insertion.}
\label{fig:grca}
\end{figure}
Note that $y^{j}_{k}$ in this definition corresponds to $\la^{(j)}_{j-k+1}$ in the classical RSK column insertion.
We will need the following fact which is analogous to
Proposition \ref{prop:geom_RSK_row_insertion}:
\begin{proposition}\label{prop:geom_RSK_col_insertion}
If we start with an array $y$ of empty words, and consecutively insert 
into it words $a_{1}, \ldots, a_{t}$ with positive entries 
via the geometric RSK column insertion, then in the obtained array 
we have
\begin{align*}
y^{j}_{k}(t) = \frac{L^{j}_{k}(t)(a_{1}, \ldots, a_{t})}{L^{j}_{k-1}(t)(a_{1}, \ldots, a_{t})} \qquad \text{for all $t \geq j-k+1$}. 
\end{align*}
Here again we denote by $L^{j}_{k}(t)(a_{1}, \ldots, a_{t})$ the same weighted sum over $k$-tuples of
nonintersecting paths as in Definition \ref{def:L_polymer}, 
but in a strip in which each edge $(s-1, i) \to (s, i)$ has a deterministic weight $a^{i}_{s}$ (see Fig.~\ref{fig:grca}).
\end{proposition}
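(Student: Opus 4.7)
The plan is to prove Proposition \ref{prop:geom_RSK_col_insertion} by induction on $t$, in close parallel to the standard proof of Proposition~\ref{prop:geom_RSK_row_insertion} in \cite{NoumiYamada2004}, but with the vertical moves of the log-Gamma setting replaced by the diagonal moves of the strict-weak setting. The induction hypothesis is that after inserting $a_1,\ldots,a_{t-1}$ the array $y^j_k(t-1)$ records the ratios $L^j_k(t-1)/L^j_{k-1}(t-1)$ (with the convention $L^j_0(t)\equiv 1$) for all $(j,k)$ with $t-1 \geq j-k+1$. For the inductive step I would unpack the sequential structure of the insertion: the word $a_t=(a_t^1,\ldots,a_t^n)$ is inserted into $y_1$, producing a new $y_1$ and a bumped word $a(1)=(a(1)^2,\ldots,a(1)^n)$; then $a(1)$ is inserted into $y_2$, producing $a(2)$; and so on up to level~$n$.

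The base case $k=1$ is clean: the local insertion rules $\nu^1=a^1\lambda^1$ and $\nu^j=a^j\lambda^j+\lambda^{j-1}$ with input $\lambda^j = L^j_1(t-1)$ and $a^j = a_t^j$ give outputs $\nu^1=a_t^1 L^1_1(t-1)$ and $\nu^j=a_t^j L^j_1(t-1)+L^{j-1}_1(t-1)$. These match the one-step recurrence $L^j_1(t)=a_t^j L^j_1(t-1)+L^{j-1}_1(t-1)$ obtained by splitting a single directed path from $(0,1)$ to $(t,j)$ according to whether its last edge is horizontal or diagonal (with the trivial base $L^1_1(t)=a_t^1 L^1_1(t-1)$).

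The inductive step on $k$ requires identifying the bumped word $a(k-1)^j$ with an explicit ratio of polymer partition functions, analogous to the "secondary" ratios that arise in the Noumi--Yamada analysis of the row case. A natural candidate, dictated by consistency with $y^j_k(t)=L^j_k(t)/L^j_{k-1}(t)$ and with the two insertion rules at level $k$, is
\[
a(k-1)^j \;=\; \frac{L^j_k(t)\,L^{j-1}_{k-1}(t-1)}{L^j_{k-1}(t)\,L^{j-1}_k(t-1)}.
\]
Substituting this and the corresponding expressions for $\lambda^j=y^j_k(t-1)$ and $\nu^j=y^j_k(t)$ into the local rules $\nu^k=a^k\lambda^k$ and $\nu^j=a^j\lambda^j+\lambda^{j-1}$ reduces the inductive step to verifying two bilinear identities among the $L^{\bullet}_{\bullet}(\bullet)$. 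These identities are instances of a Lindström--Gessel--Viennot style decomposition of a $k$-tuple of nonintersecting paths ending at $(t,j),(t,j-1),\ldots,(t,j-k+1)$ according to the index $m$ at which the "cascade" of diagonal last edges begins: if the top path enters $(t,j)$ diagonally, then by nonintersection every path above some index must also enter diagonally, producing a tuple at time $t-1$ whose endpoint set is a shift of an admissible configuration; if the top path enters horizontally, one extracts a factor $a_t^j$ and the remaining $k$-tuple is a valid configuration at time $t-1$.

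The main obstacle is the clean bookkeeping of this cascade decomposition, which mixes partition functions at indices $(j,k)$, $(j,k-1)$, $(j-1,k)$, and $(j-1,k-1)$ at both times $t$ and $t-1$. I would handle this either by a direct path-surgery argument as above, or more efficiently by encoding the recurrence in a $2\times 2$ transfer-matrix formulation, mirroring the matrix factorization underlying the Noumi--Yamada proof for the row case; the strict-weak setting only alters the form of the elementary transfer matrix (horizontal weight $a_t^j$, diagonal weight $1$) and the verification then reduces to a routine commutation computation. As a sanity check one notes that the entire argument uses only the operations $(+,\times)$, so it is the detropicalization of the standard classical-RSK statement that column insertion computes last passage in strict-weak geometry, which provides both a guide and an independent consistency check.
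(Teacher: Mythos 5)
Your two proposed routes play very different roles relative to what the paper actually does.

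Your route~(b) --- encoding the one-step insertion as a matrix factorization in the spirit of Noumi--Yamada --- is exactly the paper's argument. The paper introduces the elementary matrix $H(a)$ (diagonal entries $a^i$, superdiagonal entries $1$), which is precisely the ``transfer matrix (horizontal weight $a^j_t$, diagonal weight $1$)'' you describe, together with an upper-triangular ``Lax'' matrix $G(\lambda)$ built from the current word, and proves a local commutation relation
$G(\lambda)H_k(a)=H_{k+1}(b)G(\nu)$. Iterating this over the levels of the array yields $G(y_n(t))\cdots G(y_1(t)) = H(a_1)\cdots H(a_t)$, and the proposition then follows by comparing minor determinants of both sides via Lindstr\"om--Gessel--Viennot: the right side gives the strict-weak partition functions, and the structure of the $G$'s makes the left side collapse to a single product of array entries. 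You named this mechanism correctly but did not actually produce the companion matrix $G(\lambda)$ or verify the commutation, which is where essentially all of the work lies; as written, route~(b) is an outline rather than a proof.

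Your route~(a) has a concrete flaw: the proposed closed form for the bumped word,
\begin{equation*}
a(k-1)^j \;=\; \frac{L^j_k(t)\,L^{j-1}_{k-1}(t-1)}{L^j_{k-1}(t)\,L^{j-1}_k(t-1)},
\end{equation*}
is incorrect. For $j=k$ the quantity $L^{j-1}_k$ is not even defined (it would require paths ending at level $0$). Even where it is defined one can refute it with a small example: take $n=3$, $t=2$, $k=2$, $j=3$; running the column insertion algorithm starting from the empty array gives $a(1)^3 = a^3_2\,(a^1_1+a^2_2)$, while your formula evaluates to $\bigl(a^1_1a^2_1+a^1_1a^3_2+a^2_2a^3_2\bigr)/(a^1_1a^2_1)$, and these differ. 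So the pivotal algebraic identity on which route~(a) rests does not hold, and the ``bilinear identities'' you hoped would fall out of a path-surgery/Pl\"ucker argument will not reduce correctly. Unless you can find the correct expression for the bumped word (which, as the matrix proof reveals, is cleaner to encode implicitly through the commutation relation than to write explicitly in terms of polymer ratios), route~(a) is blocked. The shortest way to salvage your argument is to abandon route~(a), take route~(b) seriously, and actually write down $G(\lambda)$, prove the commutation entrywise, and carry out the LGV minor computation.
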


\begin{proof}
Our proof is similar to that of Proposition \ref{prop:geom_RSK_row_insertion}
(the latter is given in \cite{NoumiYamada2004}).

For $a = (a^{1}, \ldots, a^{n})$, denote by $H(a)$ the $n \times n$ matrix such that $H(a)_{i,i}:= a^{i}$, $H(a)_{i,i+1} = 1$, and other entries are $0$. 
For  $a = (a^{k}, \ldots, a^{n})$, denote by $H_{k}(a)$ 
the $n \times n$ matrix of the form 
$\left( \begin{array}{cc} Id_{k-1} & 0 \\ 0 & H(a) \end{array} \right).$   
For $\lambda = (\lambda^{k}, \ldots, \lambda^{n})$ such that $\lambda^{i} > 0 $ for $k \leq i \leq j$ 
and $\lambda^{i} = 0$ for $j < i \leq n$, 
denote by $G(\lambda)$ the $n \times n$ matrix of the form $$\left( \begin{array}{ccc} Id_{k-1} & 0 & 0 \\ 0 & G & 0 \\ 0 & 0 & Id_{n-j} \end{array} \right),$$ where $G$ is the upper-triangular $(j-k+1) \times (j-k+1)$ matrix with 
\begin{align*}
G_{p, r} = \frac{\lambda^{r+k-1}}{\lambda^{p+k-2}} \quad \text{for $1 \leq p \leq r \leq j-k+1$.} 
\end{align*}
Assume $\lambda^{k-1} = 1$. 

The key to the proof is the commutation relation 
\begin{align}\label{comm_rel_HG}
	G(\lambda)H_{k}(a) = H_{k+1}(b)G(\nu), 
\end{align}
whenever a pair of words $\nu =(\nu^{k}, \ldots, \nu^{n}), b = (b^{k+1}, \ldots, b^{n})$ 
is obtained by inserting $a = (a^{k}, \ldots, a^{n})$ into $\lambda = (\lambda^{k}, \ldots, \lambda^{n})$. 

To check \eqref{comm_rel_HG}, denote its left-hand side by $\LL$ and right-hand side by $\RR$. 
Clearly, $\LL_{i,i} = \RR_{i,i} = 1$ for $1 \leq i \leq k-1$ and $\LL_{i,i} = \RR_{i,i} = a^{i}$ 
for  $j+2 \leq i \leq n$, and $\LL_{j+1,j+2} = \RR_{j+1,j+2} = 1$. Otherwise $\LL_{i, m} = \RR_{i, m} = 0$ unless $k \leq i \leq j+1$ and $k \leq m \leq j+1$. 
Let us thus assume that the two latter inequalities hold.
On the diagonal, for $k < i < j+1$, we have
\begin{align*}
	\LL_{i,i} = a^{i}\frac{\lambda^{i}}{\lambda^{i-1}} = b^{i}\frac{\nu^{i}}{\nu^{i-1}} = \RR_{i,i},
	\qquad
	\LL_{k, k} = a^{k}\lambda^{k} = \nu^{k} = \RR_{k, k}, 
\end{align*}
and 
\begin{align*}
	\LL_{j+1, j+1} = a^{j+1}   = \frac{b^{j+1}}{\nu^{j}} = \RR_{j+1, j+1}.
\end{align*}
Above the diagonal, for $k < i < m \leq j+1$, we have 
\begin{align*}
	\LL_{i,m} = \frac{\lambda^{m-1}}{\lambda^{i-1}} + \frac{\lambda^{m}}{\lambda^{i-1}}a^{m} = \frac{\nu^{m}}{\lambda^{i-1}} 
	= b^{i} \frac{\nu^{m}}{\nu^{i-1}} + \frac{\nu^{m}}{\nu^{i}} = \RR_{i, m}, 
\end{align*}
since $\frac{b^{i}}{\nu^{i-1}} + \frac{1}{\nu^{i}} = \frac{1}{\nu^{i}}(a^{i}\frac{\lambda^{i}}{\lambda^{i-1}} + 1) = \frac{1}{\lambda^{i-1}}$, and finally 
\begin{align*}
	\LL_{k, m} = \lambda^{m-1} + \lambda^{m}a^{m} = \nu^{m} =  \RR_{k, m}.
\end{align*}
This completes the proof of the commutation relation \eqref{comm_rel_HG}.

By applying the commutation relation multiple times according
to the geometric column RSK insertion (Definition \ref{def:geom_RSK_col_insertion}), 
we get 
\begin{align}\label{comm_rel_HG_many}
	G(y_{n}(t)) \cdot \cdots \cdot G(y_{1}(t)) = H(a_{1}) \cdot \cdots \cdot H(a_{t}).
\end{align}
Observe that the $(i, j)$-entry of the right-hand side above is equal to the sum of weights 
of all directed strict-weak (as on Fig.~\ref{fig:ppf}, right) paths from $(0, i)$ to $(t, j)$,
where the weight of a path is given by the product of weights of horizontal edges, as before. Indeed, this entry is equal to 
\begin{align*}
\sum_{1 \le i_{1}, \ldots, i_{t+1} \le n: \ i_{1}=i, i_{t+1}=j} \prod_{\ell=1}^{t}H(a_{\ell})_{i_{\ell}, i_{\ell+1}} = \sum_{1 \le i_{1}, \ldots, i_{t+1} \le n: i_{1}=i, i_{t+1}=j} \prod_{\ell=1}^{t} (\mathbf{1}_{i_{\ell}=i_{\ell+1}-1}+ a_{\ell}\mathbf{1}_{i_{\ell}=i_{\ell+1}}).
\end{align*}
By the Lindstr\"{o}m-Gessel-Viennot principle \cite{lindstrom1973vector},
\cite{gessel1985binomial}, the determinant of the minor of the right-hand side at the intersection of the first $k$ rows, and columns from $(j-k+1)$-st to $j$-th, is $L^{j}_{k}(t)(a_{1}, \ldots, a_{t})$.  

Next, observe that for $1 \leq s \leq k$, $j-k+1 \leq p \leq j$, the $(s, p)$-entry of the left-hand side 
of \eqref{comm_rel_HG_many}
is equal to the sum of weights of directed up/right (as on Fig.~\ref{fig:ppf}, left) lattice paths 
from $(k+1-s, s)$ to $(\min \{k+t+1-p, k \}, p)$ 
in the array as on 
Fig.~\ref{fig:RSKproof} (the left picture if $t \geq j$, and the right one if $j-k+1 \leq t < j$).
\begin{figure}[h] 
\begin{center}
	\includegraphics[width = 0.75\textwidth]{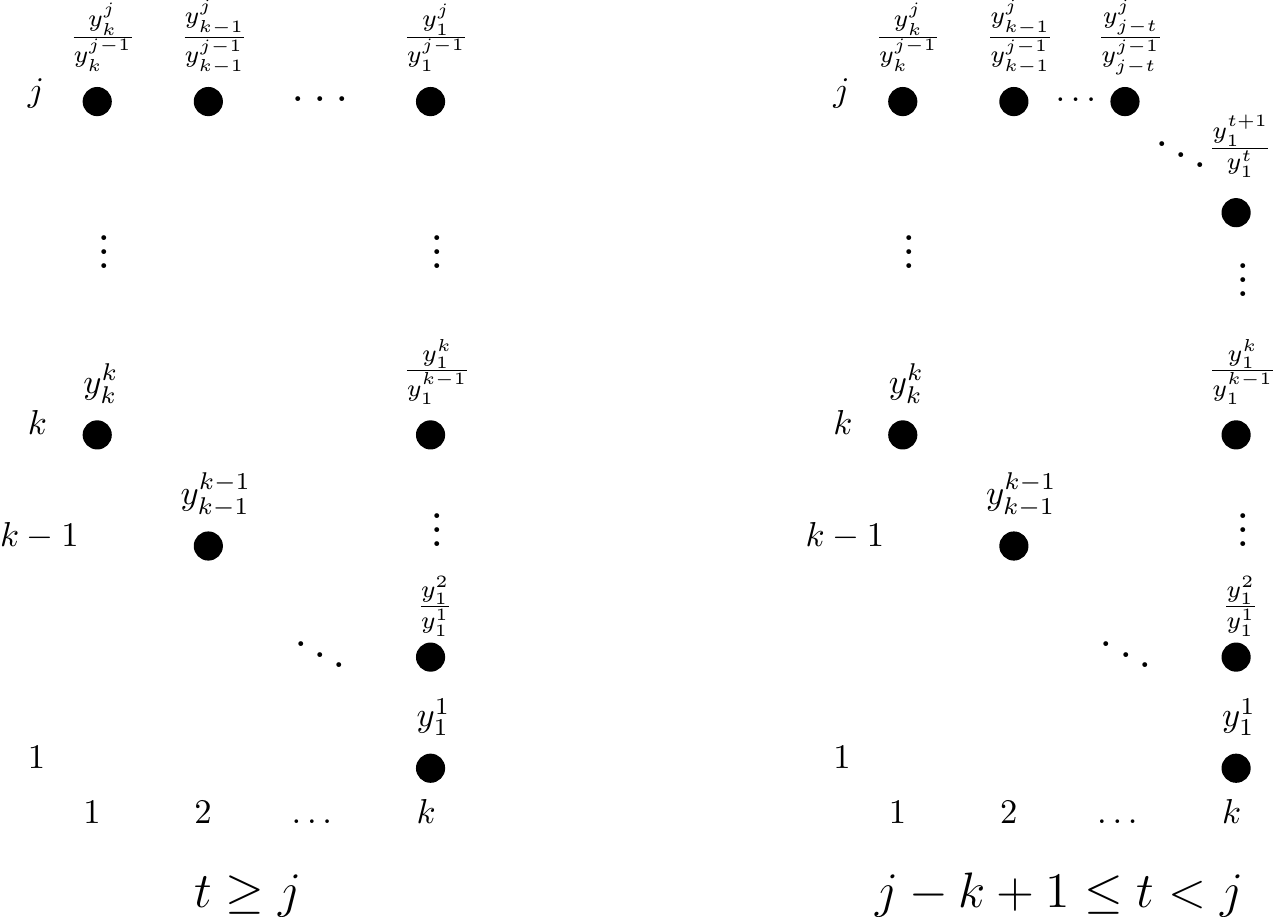}	
\end{center}
\caption{Arrays used in the proof of Proposition 
\ref{prop:geom_RSK_col_insertion}.}
\label{fig:RSKproof}
\end{figure}The weight of each path is defined to be the product of weights of all nodes along the path. 
By Lindstr\"{o}m-Gessel-Viennot principle, 
the determinant of the minor of the left-hand side of \eqref{comm_rel_HG_many} 
at the intersection of the first $k$ rows, and columns from $(j-k+1)$-st to $j$-th, 
is equal to the sum of weights of all $k$-tuples of 
nonintersecting paths from $(k, 1), \ldots, (2, k-1), (1, k)$ to ${(\min\{2k+t - j, k\}, j-k+1)}, \ldots,$ ${(\min \{k+t+2-j, k \}, j-1)},$  ${(\min \{k + t+1 - j, k\}, j)}$.  
There is only one such tuple, which covers all points on Fig.~\ref{fig:RSKproof} and has weight $\displaystyle \prod_{i=1}^{k} y^{\min \{ i+t, j \}}_{i}$. 

Therefore,
\begin{align*}
 	\displaystyle L^{j}_{k}(t)(a_{1}, \ldots, a_{t}) = \prod_{i=1}^{k} y^{\min \{ i+t, j \}}_{i}
 	,
 	\qquad t \geq j - k + 1,
\end{align*}
which establishes the desired statement.
\end{proof}

% subsubsection geometric_rsk_column_insertion (end)

% subsection geometric_rsks (end)

\subsection{Asymptotics of $q$-deformed Beta-binomial distributions} % (fold)
\label{sub:some_asymptotics_of_q_deformed_binomial_distributions}

We will need several lemmas about the limiting properties of the distributions 
$\Om_{q,\muq,\eta}(s\mid y)$ \eqref{Om_qmunu_definition}.

\begin{lemma} \label{L1} Let $X^{\epsilon}$ be a $\Z_{\ge0}$-valued random variable with 
\begin{align*}
Prob(X^{\epsilon} = j) = (\alpha; q)_{\infty} \frac{\alpha^{j}}{(q; q)_{j}} \quad \text{for $\alpha = e^{-\theta\epsilon}$ and $q = e^{-\epsilon}$.}
\end{align*}  
Then as $\epsilon \to 0$, $\epsilon \exp \{ \epsilon X^{\epsilon}\}$ converges in distribution to $\GRVI{\theta}$.

\end{lemma}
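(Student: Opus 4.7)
The plan is to establish the claim by the method of moments, applied not to $Y^{\epsilon} := \epsilon \exp\{\epsilon X^{\epsilon}\}$ itself but to its reciprocal $W^{\epsilon} := 1/Y^{\epsilon} = \epsilon^{-1} q^{X^{\epsilon}}$. This reciprocation is forced by the fact that $\GRVI{\theta}$ has only finitely many positive moments (namely those of order $k < \theta$), whereas, as will be seen, $W^{\epsilon}$ converges to a Gamma random variable, which has all moments finite and is uniquely determined by them. A continuous mapping argument applied to $w \mapsto 1/w$ on $(0,\infty)$ will then transfer convergence back to $Y^{\epsilon}$.

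First, I would compute $\E[(W^{\epsilon})^{k}]$ in closed form for each integer $k \ge 0$. Using the $q$-binomial theorem $\sum_{j \ge 0} z^{j}/(q;q)_{j} = 1/(z;q)_{\infty}$ with $z = \al q^{k}$, together with the given formula for $\prob(X^{\epsilon} = j)$, one obtains
\begin{align*}
\E[(W^{\epsilon})^{k}] = \epsilon^{-k} (\al;q)_{\infty} \sum_{j \ge 0} \frac{(\al q^{k})^{j}}{(q;q)_{j}} = \epsilon^{-k} \frac{(\al;q)_{\infty}}{(\al q^{k};q)_{\infty}}.
\end{align*}
Substituting $\al = q^{\theta}$ and invoking the $q$-Gamma identity $(q^{x};q)_{\infty} = (1-q)^{1-x}(q;q)_{\infty}/\Gamma_{q}(x)$ for both Pochhammer symbols, the ratio collapses to
\begin{align*}
\E[(W^{\epsilon})^{k}] = \left(\frac{1-q}{\epsilon}\right)^{k} \frac{\Gamma_{q}(\theta+k)}{\Gamma_{q}(\theta)}.
\end{align*}

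Next I would pass to the $\epsilon \to 0^{+}$ limit. Since $q = e^{-\epsilon}$ one has $(1-q)/\epsilon \to 1$, and since $\Gamma_{q}(x) \to \Gamma(x)$ as $q \to 1^{-}$ uniformly on compact subsets of $(0,\infty)$ (a classical property of the $q$-Gamma function, cf.~\cite{GasperRahman}), it follows that
\begin{align*}
\E[(W^{\epsilon})^{k}] \longrightarrow \frac{\Gamma(\theta + k)}{\Gamma(\theta)} = \E[Z^{k}]
\end{align*}
for every integer $k \ge 0$, where $Z \sim \GRV{\theta}$.

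Finally, the Gamma distribution $\GRV{\theta}$ possesses a moment generating function on a neighborhood of the origin and is therefore moment-determinate, so the method of moments yields convergence in distribution $W^{\epsilon} \to Z$. Since $W^{\epsilon}$ and $Z$ are almost surely strictly positive, applying the continuous mapping theorem to $w \mapsto 1/w$ gives $Y^{\epsilon} \to 1/Z \sim \GRVI{\theta}$, as required. I do not anticipate a serious obstacle: the one nontrivial input is the classical limit $\Gamma_{q} \to \Gamma$, and everything else is bookkeeping with the $q$-binomial theorem.
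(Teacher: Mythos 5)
Your proof is correct. The paper does not supply its own argument for this lemma; it is cited to Corwin--Sepp\"al\"ainen--Shen, Lemma~2.1, so there is no in-paper proof to compare against. Your route---compute all moments of $W^{\epsilon}=\epsilon^{-1}q^{X^{\epsilon}}$ via the $q$-binomial theorem, identify the limits as the Gamma moments, use moment-determinacy of the Gamma law to get $W^{\epsilon}\Rightarrow\GRV{\theta}$, and then invert by the continuous mapping theorem (using that $w\mapsto 1/w$ is continuous off $\{0\}$, which has $\Gamma(\theta)$-measure zero)---is sound and self-contained. One small simplification: you never actually need the $q$-Gamma function, since for integer $k$ one has $(q^{\theta};q)_{\infty}/(q^{\theta+k};q)_{\infty}=(q^{\theta};q)_{k}=\prod_{i=0}^{k-1}(1-q^{\theta+i})$, a finite product, and $\epsilon^{-k}\prod_{i=0}^{k-1}(1-e^{-(\theta+i)\epsilon})\to\prod_{i=0}^{k-1}(\theta+i)=\Gamma(\theta+k)/\Gamma(\theta)$ directly, without invoking the uniform convergence $\Gamma_{q}\to\Gamma$.
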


\begin{lemma} \label{L2}  Let $n(\epsilon)$ be a function $\R_{\ge0} \to \Z_{\ge0}$, such that $$\displaystyle \lim_{\epsilon \to 0} \epsilon^{-1} \exp \{-\epsilon n(\epsilon)\} = \phi.$$ Let  $X^{\epsilon}$ be a $\Z_{\ge0}$-valued random variable with 
\begin{align*}
Prob(X^{\epsilon} = j) = \Om_{q, \alpha, 0}(j \mid n(\epsilon)) \quad \text{for $\alpha = e^{-\theta\epsilon}$ and $q = e^{-\epsilon}$.}
\end{align*} Then as $\epsilon \to 0$, $\epsilon^{-1} \exp \{ -\epsilon X^{\epsilon}\}$ converges in distribution to $\phi  \ +$ $\GRV{\theta}$. 

\end{lemma}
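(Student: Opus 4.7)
The plan is to prove the lemma by combining pointwise convergence of the appropriately rescaled probability mass function of $Y^\epsilon:=\epsilon^{-1}e^{-\epsilon X^\epsilon}$ to the density of $\phi+\GRV{\theta}$ with a tightness bound obtained from the first moment. It is convenient to reindex by $i:=n(\epsilon)-X^\epsilon$, so that $Y^\epsilon=\phi^\epsilon e^{\epsilon i}$ with $\phi^\epsilon:=\epsilon^{-1}e^{-\epsilon n(\epsilon)}\to\phi$; it then suffices to show that $T^\epsilon:=\phi^\epsilon(e^{\epsilon i}-1)$ converges in distribution to $\GRV{\theta}$. Using the definition of $\Om$ with $\eta=0$ together with $\binom{n}{m}_q=(q^{n-m+1};q)_m/(q;q)_m$, the pmf factors as
\[
P(i=m)=\Om_{q,\alpha,0}(n-m\mid n)=\alpha^{n-m}\cdot\frac{(\alpha;q)_m(q^{n-m+1};q)_m}{(q;q)_m}.
\]
For fixed $t>0$ let $m=m(\epsilon)$ be such that $t_m:=\phi^\epsilon(e^{\epsilon m}-1)\to t$; then $\epsilon m\to\log(1+t/\phi)$, the spacing is $t_{m+1}-t_m=\epsilon(\phi^\epsilon+t_m)(1+o(1))$, and the goal becomes to prove $P(i=m)/[\epsilon(\phi^\epsilon+t_m)]\to t^{\theta-1}e^{-t}/\Gamma(\theta)$.

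I would analyze the three factors separately. The first is exact: $e^{-\epsilon(n-m)}=e^{-\epsilon n}e^{\epsilon m}=\epsilon(\phi^\epsilon+t_m)$, so $\alpha^{n-m}=\bigl(\epsilon(\phi^\epsilon+t_m)\bigr)^\theta$. For the middle factor, every $q^{n-m+1+\ell}\sim\epsilon(\phi+t)e^{-\ell\epsilon}$ is uniformly small, and expanding $\log(1-x)\approx-x$ yields
\[
\log(q^{n-m+1};q)_m\sim -q^{n-m+1}\sum_{\ell=0}^{m-1}e^{-\ell\epsilon}\sim -\epsilon(\phi+t)\cdot\frac{1-\phi/(\phi+t)}{\epsilon}=-t,
\]
hence $(q^{n-m+1};q)_m\to e^{-t}$. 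For the third and most delicate factor
\[
\frac{(\alpha;q)_m}{(q;q)_m}=\prod_{k=0}^{m-1}\frac{1-e^{-(\theta+k)\epsilon}}{1-e^{-(k+1)\epsilon}},
\]
I would split the product at an intermediate scale $K=K(\epsilon)$ with $K\to\infty$, $K\epsilon\to 0$. For $k<K$ each factor is close to $(\theta+k)/(k+1)$ and the sub-product equals $\Gamma(\theta+K)/(\Gamma(\theta)K!)\sim K^{\theta-1}/\Gamma(\theta)$ by Stirling; for $K\le k<m$ the Taylor expansion $\log\frac{1-e^{-(\theta+k)\epsilon}}{1-e^{-(k+1)\epsilon}}=(\theta-1)\epsilon\frac{e^{-(k+1)\epsilon}}{1-e^{-(k+1)\epsilon}}+O(\epsilon^2)$ and a Riemann-sum estimate give $(\theta-1)\int_{K\epsilon}^{\epsilon m}\frac{e^{-u}}{1-e^{-u}}du=(\theta-1)[\log(1-e^{-\epsilon m})-\log(1-e^{-K\epsilon})]\to(\theta-1)\log(t/(\phi+t))-(\theta-1)\log(K\epsilon)$. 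The two $(\theta-1)\log K$ pieces cancel, leaving $(\alpha;q)_m/(q;q)_m\to \epsilon^{1-\theta}(t/(\phi+t))^{\theta-1}/\Gamma(\theta)$.

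Combining the three factors gives the desired $P(i=m)/[\epsilon(\phi^\epsilon+t_m)]\to t^{\theta-1}e^{-t}/\Gamma(\theta)$. Tightness of $\{T^\epsilon\}$ reduces to a one-line first-moment bound: substituting $\muq=\alpha$, $\eta=\alpha\omega$ in \eqref{Om_qmunu_sum} yields the identity $E[(\omega;q)_{X^\epsilon}]=(\alpha\omega;q)_n$, and differentiating at $\omega=0$ gives $E[q^{X^\epsilon}]=1-\alpha+\alpha q^n$, whence $E[T^\epsilon]=(1-\alpha)(1-\epsilon\phi^\epsilon)/\epsilon\le\theta$, so $P(T^\epsilon>M)\le\theta/M$ uniformly in $\epsilon$. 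Pointwise convergence of the rescaled pmf together with this uniform first-moment bound then yields convergence in distribution (via Scheff\'e's lemma applied to piecewise-constant interpolants of the pmf, the spacings tending to zero).

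The technical heart of the argument, and the main obstacle, is the precise asymptotic analysis of $(\alpha;q)_m/(q;q)_m$ sketched above. Each of the log-sums $\sum_k\log(1-e^{-(\theta+k)\epsilon})$ and $\sum_k\log(1-e^{-(k+1)\epsilon})$ is individually of order $\epsilon^{-1}$, so their $\epsilon^{-1}$ contributions must cancel to leading order, and recovering the correct $O(1)$ constant $-\log\Gamma(\theta)$ forces one to match a Stirling expansion of $\Gamma(\theta+K)/K!$ in the small-$k$ regime with a Riemann-sum estimate in the bulk while verifying that the artificial intermediate-scale parameter $K$ drops out of the final answer. Essentially this is a Stirling-type asymptotic for the $q$-Gamma function $\Gamma_q(x)$ in the regime where $x$ grows like $\epsilon^{-1}$ and $q^x$ tends to a point of $(0,1)$.
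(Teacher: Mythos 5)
Your plan is essentially the standard local-limit-theorem argument, and it is the right one for this lemma, whose limit is a non-degenerate continuous distribution (the paper delegates the proof of Lemma \ref{L2} to \cite{CorwinSeppalainenShen2014}; its own arguments for Lemmas \ref{L3}--\ref{L5}, which all have degenerate limits, use a Laplace/concentration style that would not apply here). The three-factor split of the pmf, the observation $\alpha^{n-m}=(\epsilon(\phi^\epsilon+t_m))^\theta$ exactly, the $\log(1-x)\approx -x$ treatment of $(q^{n-m+1};q)_m$, and the generating-function identity $\E[(\omega;q)_{X^\epsilon}]=(\alpha\omega;q)_n$ used for the tail bound are all correct and clean. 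Two minor remarks: (i) once you have pointwise convergence of the piecewise-constant interpolant $g^\epsilon$ to the $\GRV{\theta}$ density $g$, Scheff\'e's lemma alone already gives $\|g^\epsilon-g\|_{L^1}\to 0$ (both are honest probability densities), so the first-moment tightness bound is not strictly needed; (ii) the argument goes through for $\phi=0$ as well, provided one notes $\epsilon m\to\infty$ in that regime.

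There is one genuine (though easily repaired) gap. The stated condition $K\to\infty$, $K\epsilon\to 0$ on the intermediate scale is too weak for the small-$k$ block to converge to $\Gamma(\theta+K)/(\Gamma(\theta)K!)$ with vanishing error. Writing
\begin{equation*}
\log\frac{1-e^{-(\theta+k)\epsilon}}{1-e^{-(k+1)\epsilon}}
=\log\frac{\theta+k}{k+1}-\frac{(\theta-1)\epsilon}{2}+O\!\left(k^2\epsilon^2\right),
\end{equation*}
and summing over $k<K$, the accumulated error is $O(K\epsilon)+O(K^3\epsilon^2)$. The first piece vanishes under your hypothesis, but the second requires $K^3\epsilon^2\to 0$, i.e.\ $K=o(\epsilon^{-2/3})$; taking, say, $K=\lfloor\epsilon^{-1/2}\rfloor$ closes the gap. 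Similarly, in the bulk the remainder in Taylor's theorem is $O(\epsilon^2 e^{-k\epsilon}/(1-e^{-k\epsilon})^2)=O(1/k^2)$ rather than a uniform $O(\epsilon^2)$; its sum from $k=K$ to $m$ is $O(1/K)\to 0$, so this is fine, but the display in your sketch should be corrected to reflect the $k$-dependence of the error, since near $k=K$ the $O(\epsilon^2)$ form would not be integrable over the $O(\epsilon^{-1})$ terms in the range.
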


These two lemmas were both proven in \cite{CorwinSeppalainenShen2014}
(Lemma 2.1 and a part of proof of Theorem 1.4, respectively). 
In the next three lemmas, parameters of distributions which are not explicitly fixed 
are assumed to
depend on $\epsilon$, and sometimes might also be random themselves.

\begin{lemma} \label{L3} Fix $C$ and $0<\sigma<1$. Let $Y^{\epsilon}$ be a $\Z_{\ge0}$-valued random variable distributed according to $\Om_{q^{-1}, \muq, \eta}(\cdot \mid n)$ with $q = e^{-\epsilon}$, $\muq q^{-n} \to \sigma$, $n \geq \epsilon^{-1} \log \epsilon^{-1} - \epsilon^{-1}C$, and $\log \eta  + 2n\epsilon \leq \log \sigma$. Then as $\epsilon \to 0$, $\epsilon Y^{\epsilon} \to \log (1 + \sigma)$.

\end{lemma}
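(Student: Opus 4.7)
The plan is to establish the claimed convergence by locating the mode of $\Om_{q^{-1},\muq,\eta}(\cdot\mid n)$ asymptotically and then proving concentration at this mode via a ratio-test argument. Since the limit $\log(1+\sigma)$ is a deterministic constant, convergence in distribution reduces to convergence in probability, so it suffices to show $P(|\epsilon Y^\epsilon-\log(1+\sigma)|>\delta)\to 0$ for every $\delta>0$.

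The first step is to cancel $q^{-1}$-Pochhammer factors in \eqref{Om_qmunu_definition} to obtain the explicit ratio
\[
R_\epsilon(s):=\frac{\Om_{q^{-1},\muq,\eta}(s+1\mid n)}{\Om_{q^{-1},\muq,\eta}(s\mid n)}=\muq\cdot\frac{1-(\eta/\muq)q^{-s}}{1-\muq q^{-(n-s-1)}}\cdot\frac{1-q^{-(n-s)}}{1-q^{-(s+1)}}.
\]
Under the scaling $s=\epsilon^{-1}u$ with fixed $u>0$, each factor admits an explicit limit from the three hypotheses: the bound $\log\eta+2n\epsilon\le\log\sigma$ gives $(\eta/\muq)q^{-s}\to 0$; the relation $\muq q^{-n}\to\sigma$ yields $\muq q^{-(n-s-1)}\to\sigma e^{-u}$; the identity $q^{-(s+1)}\to e^u$ is immediate; and because $n\epsilon\ge\log\epsilon^{-1}-C\to\infty$, the factor $q^{-(n-s)}$ diverges, while the two divergent pieces combine as $\muq\cdot q^{-(n-s)}=(\muq q^{-n})q^s\to\sigma e^{-u}$. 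The resulting limit is
\[
R_\epsilon(s)\;\to\;F(u):=\frac{\sigma e^{-u}}{(1-\sigma e^{-u})(e^u-1)}=\frac{\sigma}{(e^u-\sigma)(e^u-1)},
\]
uniformly on compact subsets of $(0,\infty)$.

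A brief analysis of $F$ shows that it is continuous and strictly decreasing on $(0,\infty)$ (the denominator has $u$-derivative $e^u(2e^u-1-\sigma)>0$, since $\sigma<1$), with $F(0^+)=+\infty$ and $F(\infty)=0$. The equation $F(u^*)=1$ reduces to $e^{u^*}\bigl(e^{u^*}-(1+\sigma)\bigr)=0$, giving the unique positive root $u^*=\log(1+\sigma)$. Consequently $F>1$ on $(0,u^*)$ and $F<1$ on $(u^*,\infty)$. Fixing $\delta>0$, I would pick $c<1$ with $F(u^*+\delta/4)<c$ and $c'>1$ with $F(u^*-\delta/4)>c'$; uniform convergence then yields $R_\epsilon(s)\le c$ for $s$ with $\epsilon s\in[u^*+\delta/4,K]$, and $R_\epsilon(s)\ge c'$ for $\epsilon s\in[\delta'',u^*-\delta/4]$, for any fixed $\delta'',K$ and small enough $\epsilon$. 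Iterating these bounds in the form $\Om(s_0\pm k)\le\max(c,1/c')^{k}\Om(s_0)\le\max(c,1/c')^k$ forces the probability mass outside a $\delta$-window of $u^*$ (within the uniform-convergence regime) to decay exponentially in $\epsilon^{-1}$.

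The main obstacle will be the two boundary regimes that compact-set uniform convergence does not cover: $s$ with $\epsilon s$ near $0$, and $s$ with $\epsilon s$ exceeding any fixed constant $K$. For $s$ near $0$, I would compute $\Om(0)=(\muq;q^{-1})_n/(\eta;q^{-1})_n$ explicitly and apply the Riemann-sum asymptotic $\log(\muq;q^{-1})_n\sim-\epsilon^{-1}\mathrm{Li}_2(\sigma)$ to conclude that $\Om(0)$, and hence $\Om(s)$ for bounded $s$, is exponentially small in $\epsilon^{-1}$. For $s$ with $\epsilon s\ge K$, the rewriting $\muq^s q^{-s(n-s)}=\bigl(\muq q^{-(n-s)}\bigr)^s\le(\sigma e^{-\epsilon s})^s$ exhibits a Gaussian-type super-exponential decay that, once $K$ is chosen large enough, overwhelms the combinatorial factor $\binom{n}{s}_q\le\exp(\epsilon^{-1}\pi^2/6+O(1))$. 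Combining these boundary estimates with the ratio-test bounds on the intermediate range completes the concentration and yields $\epsilon Y^\epsilon\to\log(1+\sigma)$.
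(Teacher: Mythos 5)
Your proposal is essentially the paper's Laplace argument repackaged as a ratio test: $\log R_\epsilon(s)$ is the discrete derivative of $\log\Om_{q^{-1},\muq,\eta}(s\mid n)$, and your equation $F(u^*)=1$ is the same as the paper's $f'(\psi)=0$ — indeed $\log F(u)\equiv f'(u)$ for the paper's $f$, so both locate the critical point $\log(1+\sigma)$ by the same calculation. I verified the formula $R_\epsilon(s)=\muq\cdot\frac{1-(\eta/\muq)q^{-s}}{1-\muq q^{-(n-s-1)}}\cdot\frac{1-q^{-(n-s)}}{1-q^{-(s+1)}}$, the pointwise limit $F(u)=\sigma/\bigl((e^u-\sigma)(e^u-1)\bigr)$ (the three hypotheses are used exactly where you invoke them), the strict monotonicity of $F$ on $(0,\infty)$ (where $\sigma<1$ enters through $2e^u>1+\sigma$), and the far-tail bound via $\bigl(\muq q^{-(n-s)}\bigr)^{s}\le(\sigma e^{-\epsilon s})^{s}$ against $\binom ns_q\le 1/(q;q)_\infty=\exp\bigl(\epsilon^{-1}\pi^2/6+o(\epsilon^{-1})\bigr)$; all of these are sound.

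The one place that needs tightening is the left boundary. Uniform convergence $R_\epsilon(s)\to F(\epsilon s)$ is only available away from $u=0$, yet the left tail contains all $s$ with $\epsilon s\le u^*-\delta$. Showing $\Om_{q^{-1},\muq,\eta}(0\mid n)=(\muq;q^{-1})_n/(\eta;q^{-1})_n$ is exponentially small, and even that $\Om(s)$ for each fixed $s\le M$ is exponentially small, leaves the $\sim\delta''/\epsilon$ terms with $M<s\le \delta''/\epsilon$ uncontrolled (the ratio is of order $\epsilon^{-1}$ there, so $\Om$ climbs rapidly out of that basin, and ``bounded $s$'' does not reach $\epsilon s\le\delta''$). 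The clean fix is to drop the $\Om(0)$ detour and prove a uniform lower bound $R_\epsilon(s)\ge c'>1$ for \emph{all} $0\le\epsilon s\le u^*-\delta/4$ once $\epsilon$ is small: from the explicit four-factor formula, the numerator tends uniformly to $-\sigma e^{-\epsilon s}$ with magnitude at least $\sigma e^{-u^*+\delta/4}(1-o(1))$, the factor $1-\muq q^{-(n-s-1)}$ tends to $1-\sigma e^{-\epsilon(s+1)}\le 1-\sigma e^{-u^*+\delta/4-\epsilon}$, and $\bigl|1-q^{-(s+1)}\bigr|=e^{\epsilon(s+1)}-1\le e^{u^*-\delta/4+\epsilon}-1$, which together give $R_\epsilon(s)\ge F(u^*-\delta/4)(1-o(1))>1$ throughout this range (near $s=0$ the ratio is in fact of order $\sigma/\bigl((1-\sigma)\epsilon(s+1)\bigr)$, so there is plenty of room). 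With that uniform ratio bound the geometric iteration closes the left tail directly. This is a minor repair, not a wrong idea; the paper avoids it only because it works with a single Riemann-sum estimate for $\epsilon\log\Om(k)$ over all of $k\le T/\epsilon$, whereas your ratio formulation requires the matching uniform estimate to reach $s=0$.
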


\begin{proof}

The fact that $\Om_{q^{-1}, \muq, \eta}$ with such parameters is indeed a probability distribution for $\epsilon$ small enough follows from inequalities in the statement of the lemma. Let $A(\epsilon): = (e^{-\epsilon}; e^{-\epsilon})_{\infty}$.  
By \cite[Corollary 4.1.10]{BorodinCorwin2011Macdonald},  
\begin{align*}
	(e^{-\epsilon}; e^{-\epsilon})_{\lfloor \epsilon^{-1} \log \epsilon^{-1} - C \epsilon^{-1} \rfloor}  \leq A(\epsilon) C'
\end{align*}
for all $\epsilon$ small enough and some constant $C'$ that depends only on $C$.
As $\epsilon \to 0$, $$\displaystyle \epsilon\log (e^{-\epsilon}; e^{-\epsilon})_{\lceil r/ \epsilon \rceil} \to \int_{0}^{r} \log(1-e^{-x}) dx,$$ since the left-hand side is a Riemann sum for the right-hand side integral, hence we have $$\displaystyle \epsilon\log \frac{(e^{-\epsilon}; e^{-\epsilon})_{\infty}}{(e^{-r}; e^{-\epsilon})_{\infty}} \to \int_{0}^{r} \log(1-e^{-x}) dx.$$ 
(Note that
although this integral blows up at $0$, it is still finite and convergence holds.) 

Fix $\delta > 0$. For $\epsilon$ small enough,
\begin{align*}
 Prob(Y^{\epsilon} = k) & = (\muq q^{-n+k})^{k}\frac{({\eta}/{\muq}; q^{-1})_{k} (\muq; q^{-1})_{n-k}}{(\eta; q^{-1})_{n}} \binom nk_{q}  \leq \frac{(\muq q^{-n})^{k} e^{-\epsilon k^{2}}C'}{(e^{-\epsilon}; e^{-\epsilon})_{k}}
\\&  \leq \frac{(2\sigma)^{k} e^{-\epsilon k^{2}}C'}{(e^{-\epsilon}; e^{-\epsilon})_{k}} \leq C' \exp \left( k \log 2 \sigma  - \epsilon k^{2} -  \frac{1}{\epsilon}\int_{0}^{k \epsilon} \log(1 - e^{-x})dx \right)
\\& \leq e^{-T^{2}/2\epsilon} \quad \text{for $T$ large enough and $k \geq T/\epsilon$.} 
\end{align*}
Hence $$\displaystyle Prob(Y^{\epsilon} \geq T/\epsilon) \leq \sum_{k =\lceil T/\epsilon \rceil}^{\infty} e^{-\epsilon k^{2}/2} \leq e^{-T^{2}/2\epsilon}\sum_{i=0}^{\infty}  
e^{-Ti},$$ which can be made less than $\delta/2$ for all $\epsilon$ small enough by choosing sufficiently large $T$. 

% $$Prob(Y^{\epsilon} = k) = (\muq q^{-n})^{k} q^{k^{2}} \frac{(\muq q^{-n+k+1}; q)_{\infty}}{(q; q)_{k}} \frac{({\eta}/{\muq}; q^{-1})_{k}}{(\muq q; q)_{\infty}(\eta; q^{-1})_{n}} \frac{(q; q)_{n}}{(q; q)_{n-k}}.$$ 
Observe that for $k \leq T/\epsilon$ and $\epsilon$ small enough there is 
some constant $C_{0}$ that depends only on $C$ and $T$, such that
\begin{align*}
	C_{0}^{-1} \leq \frac{({\eta}/{\muq}; q^{-1})_{k}}{(\muq q; q)_{\infty}(\eta; q^{-1})_{n}} 
	\frac{(q; q)_{n}}{(q; q)_{n-k}} \leq C_{0}.
\end{align*}
Let $$f(\psi) := -\psi^{2} + (\log \sigma) \psi - \int_{0}^{\psi} \log(1-e^{-x}) dx - \int_{0}^{\psi - \log \sigma} \log(1-e^{-x}) dx$$ for $\psi \geq 0$. Then $$f'(\psi) = -2\psi + \log \sigma - \log(1 - e^{-\psi}) - \log(1 - e^{-\psi + \log \sigma}),$$ which is strictly decreasing, and $f'(\log (1 + \sigma)) = 0$.  Hence $f$ attains a unique maximum at $\log(1 + \sigma)$, so one can choose $M_{1} > M_{2} > M_{3} > M_{4}$ such that 
\begin{align*}
f(\psi) > M_{1} \quad \text{for $\psi \in (\log (1 + \sigma) - \delta /2,  \log (1 + \sigma) + \delta /2)$} 
\end{align*}
and
\begin{align*}
 f(\psi) < M_{4} \quad \text{for $\psi \notin  (\log (1 + \sigma) - \delta,  \log (1 + \sigma) + \delta)$,}
\end{align*}
and for $\epsilon$ small enough $$Prob(\epsilon Y^{\epsilon} \in (\log (1 + \sigma) - \delta /2,  \log (1 + \sigma) + \delta /2)) \geq C_{0}^{-1}\left(\frac{\delta}{\epsilon} - 1\right) A(\epsilon) e^{M_{2}/\epsilon}$$ and $$Prob(\epsilon Y^{\epsilon} \notin (\log (1 + \sigma) - \delta,  \log (1 + \sigma) + \delta) \cup [T, \infty)) \leq C_{0}\left(\frac{T - 2\delta}{\epsilon}+2\right) A(\epsilon) e^{M_{3}/\epsilon}.$$
Therefore, for $\epsilon$ small enough $$Prob(\epsilon Y^{\epsilon} \in (\log (1 + \sigma) - \delta,  \log (1 + \sigma) + \delta)) \geq 1 - \delta,$$ and this completes the proof.
\end{proof}

\begin{lemma} \label{L4} Fix $C$ and $0 \leq \sigma < 1$.  Let  $Y^{\epsilon}$ be a $\Z_{\ge0}$-valued random variable with 
\begin{align*}
Prob(Y^{\epsilon} = j) = \Om_{q, \alpha, 0}(j \mid n) 
\end{align*}
for $\alpha \to \sigma$ as $\epsilon \to 0$, $q = e^{-\epsilon}$ and $n \geq \epsilon^{-1} \log \epsilon^{-1} - \epsilon^{-1}C$. Then as $\epsilon \to 0$, $\epsilon Y^{\epsilon} \to -\log (1 - \sigma)$.
\end{lemma}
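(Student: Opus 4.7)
The strategy is a Laplace (saddle-point) analysis, patterned after the proof of Lemma \ref{L3}. Specializing \eqref{Om_qmunu_definition} to $\eta=0$ and using $(0;q)_{s}=1$ collapses the mass function to
\[
P(Y^{\epsilon}=k)=\alpha^{k}(\alpha;q)_{n-k}\binom{n}{k}_{q}=\alpha^{k}\frac{(\alpha;q)_{n-k}(q;q)_{n}}{(q;q)_{k}(q;q)_{n-k}}.
\]
Assume first that $\sigma\in(0,1)$. With the macroscopic variable $\psi:=k\epsilon$, the plan is to show that $\epsilon\log P(Y^{\epsilon}=k)$ converges locally uniformly on $[0,\infty)$ to a strictly concave rate function $f(\psi)$ attaining its unique maximum at $\psi^{*}=-\log(1-\sigma)$; once this is established, the concentration of $\epsilon Y^{\epsilon}$ at $\psi^{*}$ follows by a direct copy of the closing paragraph of the proof of Lemma \ref{L3}.

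To identify $f$, I use the Riemann sum asymptotic $\epsilon\log(e^{-\epsilon};e^{-\epsilon})_{\lceil r/\epsilon\rceil}\to\int_{0}^{r}\log(1-e^{-x})\,dx$ recalled in the proof of Lemma \ref{L3}, together with the observation that the hypothesis $n\geq\epsilon^{-1}\log\epsilon^{-1}-C\epsilon^{-1}$ forces $(n-k)\epsilon\to\infty$ uniformly for $k\epsilon$ bounded. Term by term this gives
\begin{align*}
\epsilon k\log\alpha &\longrightarrow \psi\log\sigma, \\
\epsilon\log(\alpha;q)_{n-k} &\longrightarrow -A(\sigma):=\int_{0}^{\infty}\log(1-\sigma e^{-x})\,dx, \\
\epsilon\log(q;q)_{k} &\longrightarrow E(\psi):=\int_{0}^{\psi}\log(1-e^{-x})\,dx, \\
\epsilon\bigl[\log(q;q)_{n}-\log(q;q)_{n-k}\bigr] &\longrightarrow 0,
\end{align*}
the last line because $\bigl|\log[(q;q)_{n}/(q;q)_{n-k}]\bigr|=\bigl|\sum_{i=n-k+1}^{n}\log(1-q^{i})\bigr|$ is $O(1)$ (the integrand decays exponentially beyond $(n-k)\epsilon$), hence vanishes after multiplying by $\epsilon$. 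Assembling gives
\[
f(\psi)=\psi\log\sigma-A(\sigma)-E(\psi),\qquad f'(\psi)=\log\sigma-\log(1-e^{-\psi}),
\]
which vanishes uniquely at $\psi^{*}=-\log(1-\sigma)$, with $f''(\psi)=-e^{-\psi}/(1-e^{-\psi})<0$.

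Two auxiliary estimates close the argument. First, for the upper-tail cutoff I combine $(\alpha;q)_{n-k}\leq 1$ with $\binom{n}{k}_{q}\leq 1/(q;q)_{\infty}$ (which follows from $(q;q)_{n}\leq(q;q)_{n-k}$ and $(q;q)_{k}\geq(q;q)_{\infty}$) to obtain
\[
P(Y^{\epsilon}\geq T/\epsilon)\leq\frac{\alpha^{T/\epsilon}}{(1-\alpha)\,(q;q)_{\infty}}.
\]
Since $(q;q)_{\infty}^{-1}=e^{(\pi^{2}/6+o(1))/\epsilon}$ while $\alpha\to\sigma<1$, any choice $T>\pi^{2}/(6\log\sigma^{-1})$ makes this tail smaller than any prescribed $\delta/2$ uniformly in small $\epsilon$. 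Second, on the compact window $k\epsilon\in[0,T]$, I will choose levels $M_{1}>M_{2}>M_{3}>M_{4}$ separating the values of $f$ inside and outside $(\psi^{*}-\delta,\psi^{*}+\delta)$, and the local uniform convergence just established yields $P(\epsilon Y^{\epsilon}\in(\psi^{*}-\delta,\psi^{*}+\delta))\to 1$. The degenerate case $\sigma=0$ is handled separately: $\alpha\to 0$ gives $P(Y^{\epsilon}=0)=(\alpha;q)_{n}\to 1$, so $\epsilon Y^{\epsilon}\to 0=-\log(1-0)$.

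The main technical obstacle will be making the convergence $\epsilon\log P(Y^{\epsilon}=k)\to f(\psi)$ locally uniform on $[0,T]$ with error bounds independent of $k$ near $k=0$, since the Riemann sum for $\epsilon\log(q;q)_{k}$ is delicate owing to the logarithmic singularity of $\log(1-e^{-x})$ at the origin. This is precisely the subtlety navigated in the proof of Lemma \ref{L3} via the separate bookkeeping of the normalizing constant $A(\epsilon)=(e^{-\epsilon};e^{-\epsilon})_{\infty}$, and I expect to reuse that device essentially verbatim.
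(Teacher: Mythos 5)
Your argument for $\sigma\in(0,1)$ follows the paper's proof essentially verbatim: the same Laplace-method decomposition into a geometric tail bound for $k\ge T/\epsilon$ and a rate-function concentration on the compact window $[0,T]$, with the $n$-dependent factor $(\alpha;q)_{n-k}(q;q)_n/(q;q)_{n-k}$ controlled uniformly in $k\le T/\epsilon$ by multiplicative constants times $(\alpha;e^{-\epsilon})_\infty$. Your tail bound via $\binom nk_q\le(q;q)_\infty^{-1}$ is a harmless cosmetic variant of the paper's; the subtlety you flag at the end (uniformity near $\psi=0$ against the logarithmic singularity of $\log(1-e^{-x})$) is indeed exactly what the paper's sandwich $C_0^{-1}(\alpha;e^{-\epsilon})_\infty\le\cdot\le C_0(\alpha;e^{-\epsilon})_\infty$ and the four thresholds $M_1>\cdots>M_4$ are installed to handle, so that part is sound.

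The degenerate case $\sigma=0$, however, has a genuine gap. You claim that $\alpha\to 0$ forces $P(Y^\epsilon=0)=(\alpha;q)_n\to 1$, but the lemma makes no assumption on the \emph{rate} at which $\alpha\to 0$, and the claim fails for slowly vanishing $\alpha$. Indeed,
\[
\log(\alpha;q)_n=\sum_{i=0}^{n-1}\log(1-\alpha q^i)\le-\alpha\sum_{i=0}^{n-1}q^i=-\alpha\,\frac{1-q^n}{1-q},
\]
and with $q=e^{-\epsilon}$ and $n\ge\epsilon^{-1}\log\epsilon^{-1}-C\epsilon^{-1}$ one has $(1-q^n)/(1-q)\sim\epsilon^{-1}$, so taking e.g.\ $\alpha=1/\log\log(1/\epsilon)\to 0$ gives $\log(\alpha;q)_n\lesssim-1/(\epsilon\log\log(1/\epsilon))\to-\infty$, whence $(\alpha;q)_n\to 0$ rather than $1$. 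Heuristically the mode of $Y^\epsilon$ is near $\epsilon^{-1}(-\log(1-\alpha))\approx\alpha/\epsilon$, which is macroscopic for slowly decaying $\alpha$, so $P(Y^\epsilon=0)$ need not approach $1$ even though $\epsilon Y^\epsilon\to 0$. What is actually required, and what the paper supplies, is a tail estimate: fix any $u>0$; bound $P(Y^\epsilon=k)\le\alpha^k/(q;q)_k\le\alpha^k/(q;q)_\infty$, use $\log\alpha\to-\infty$ to get $\alpha^k\le e^{-Uk}$ for any prescribed $U$ once $\epsilon$ is small, combine with $(q;q)_\infty^{-1}=e^{(\pi^2/6+o(1))/\epsilon}$, and conclude $P(\epsilon Y^\epsilon\ge u)\to 0$ when $Uu>\pi^2/3$. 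Your one-line $\sigma=0$ argument should be replaced by such a tail bound.
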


\begin{proof} Suppose $\sigma > 0$ and fix $\delta > 0$. We can write
\begin{align*} 
& Prob(Y^{\epsilon} = k) = \frac{\alpha^{k}}{(e^{-\epsilon}; e^{-\epsilon})_{k}} \frac{(\alpha; e^{-\epsilon})_{n-k}(e^{-\epsilon}, e^{-\epsilon})_{n}}{(e^{-\epsilon}; e^{-\epsilon})_{n-k}} \leq\frac{\alpha^{k}}{(e^{-\epsilon}; e^{-\epsilon})_{k}}
\\&\hspace{100pt} \leq \exp \left( \frac{1}{\epsilon}((\frac{1}{2} \log \sigma) T - \int_{0}^{T} \log (1-e^{-x})dx) \right) \leq e^{(\log \sigma) T/ 4 \epsilon}
\end{align*}
for $T$ large enough and $\epsilon$ small enough, such that 
$k \geq T/\epsilon$. Hence $$Prob(Y^{\epsilon} \geq T/\epsilon) \leq e^{(\log \sigma) T/ 4 \epsilon} \sum_{i=0}^{\infty} e^{\frac{i \log \sigma}{4}}$$ which is less than $\delta/2$ for $T$ large enough. 

For $\epsilon$ small enough, $k \leq T/\epsilon$, and some constant $C_{0}$ that depends only on $C$, $T$, and $\sigma$,
one can write $$C_{0}^{-1} (\alpha; e^{-\epsilon})_{\infty} \leq \frac{(\alpha; e^{-\epsilon})_{n-k}(e^{-\epsilon}; e^{-\epsilon})_{n}}{(e^{-\epsilon}; e^{-\epsilon})_{n-k}} \leq C_{0} (\alpha; e^{-\epsilon})_{\infty}.$$
Let $$f(\psi) := (\log \sigma) \psi - \int_{0}^{\psi} \log(1-e^{-x}) dx$$ for $\psi \geq 0$. Then $$f'(\psi) =  \log \sigma - \log(1 - e^{-\psi}),$$ which is strictly decreasing, and $f'(-\log (1 - \sigma)) = 0$.  Hence $f$ attains a unique maximum at $-\log(1 - \sigma)$, so one can choose $M_{1} > M_{2} > M_{3} > M_{4}$ such that
\begin{align*}
 f(\psi) > M_{1} \quad \text{for $\psi \in (-\log (1 - \sigma) - \delta /2,  -\log (1 - \sigma) + \delta /2)$}
\end{align*}
and
\begin{align*} 
f(\psi) < M_{4} \quad \text{for $\psi \notin  (-\log (1 - \sigma) - \delta,  -\log (1 - \sigma) + \delta)$,} 
\end{align*}
and for $\epsilon$ small enough $$Prob(\epsilon Y^{\epsilon} \in (-\log (1 - \sigma) - \delta /2,  -\log (1 - \sigma) + \delta /2)) \geq C_{0}^{-1}\left(\frac{\delta}{\epsilon} - 1\right)  (\alpha; e^{-\epsilon})_{\infty}e^{M_{2}/\epsilon}$$ and $$Prob(\epsilon Y^{\epsilon} \notin (-\log (1 - \sigma) - \delta,  -\log (1 - \sigma) + \delta) \cup [T, \infty)) \leq C_{0}\left(\frac{T - 2\delta}{\epsilon}+2\right) (\alpha; e^{-\epsilon})_{\infty} e^{M_{3}/\epsilon}.$$
Therefore, for $\epsilon$ small enough we can write 
$$Prob(\epsilon Y^{\epsilon} \in (-\log (1 - \sigma) - \delta,  -\log (1 - \sigma) + \delta)) \geq 1 - \delta,$$ and this completes the proof for the case $\sigma > 0$. 

If $\sigma = 0$, fix arbitrary $u > 0$. For all large enough $U$, such that $Uu >  \frac{1}{2}Uu  - \int_{0}^{\infty} \log(1-e^{-x}) dx$, $\epsilon$ small enough, and $k \geq \frac{u}{\epsilon}$,  $$Prob(Y^{\epsilon} = k)  \leq\frac{\alpha^{k}}{(e^{-\epsilon}; e^{-\epsilon})_{k}} \leq \exp \left(\frac{1}{\epsilon}(-Uk\epsilon - \int_{0}^{\infty} \log(1-e^{-x}) dx )  \right) \leq e^{-\frac{Uu}{2\epsilon}}.$$ So, $$Prob(\epsilon Y^{\epsilon} \geq u) \leq  e^{-\frac{Uu}{2\epsilon}} \sum_{i=0}^{\infty} e^{-Ui/2},$$ which can be made less than any given  $\delta > 0$ for $U$ large enough. Thus, we have convergence 
$\epsilon Y^{\epsilon} \to 0$ in distribution.
\end{proof}

\begin{lemma} \label{L5} Fix $\alpha, \sigma\in(0,1)$ such that $\alpha(1 + \sigma) < 1$.  Let  $Y^{\epsilon}$ 
be a $\Z_{\ge0}$-valued random variable with $Prob(Y^{\epsilon} = j) = \Om_{q^{-1}, \alpha, 0}(j \mid n) $ for ${n\epsilon \to \log(1 +\sigma)}$ as $\epsilon \to 0$, $q = e^{-\epsilon}$. Then as $\epsilon \to 0$, $\epsilon Y^{\epsilon} \to \log (1 + \alpha \sigma)$.
\end{lemma}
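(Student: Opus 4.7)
The strategy is to apply Laplace's method to the explicit product formula for $\Om_{q^{-1},\al,0}(\,\cdot\mid n)$, closely following the blueprint of Lemmas \ref{L3} and \ref{L4}. Since $\eta=0$, the definition simplifies to
\begin{align*}
\Om_{q^{-1},\al,0}(j\mid n) \;=\; \al^{j}(\al;q^{-1})_{n-j}\binom{n}{j}_{q^{-1}} \;=\; \al^{j}(\al;q^{-1})_{n-j}\,q^{-j(n-j)}\binom{n}{j}_{q},
\end{align*}
the second equality by \eqref{qqinverse}. Note that $0\le Y^{\epsilon}\le n$ with $\epsilon n\to N:=\log(1+\sigma)$, so there is no upper tail beyond a bounded interval to control; moreover the hypothesis $\al(1+\sigma)<1$ guarantees $\al e^{\epsilon(n-1)}<1$ for all small $\epsilon$, hence $(\al;q^{-1})_{n-j}>0$ for every $0\le j\le n$.

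Next I identify the limit-shape exponent. Setting $\psi:=\epsilon j$ and $q=e^{-\epsilon}$, the standard Riemann-sum approximations already used in Lemma \ref{L4}, namely
\begin{align*}
\epsilon\log(q;q)_{\lfloor r/\epsilon\rfloor}\;\longrightarrow\;\int_{0}^{r}\log(1-e^{-x})\,dx,\qquad \epsilon\log(\al;q^{-1})_{\lfloor r/\epsilon\rfloor}\;\longrightarrow\;\int_{0}^{r}\log(1-\al e^{x})\,dx
\end{align*}
(the second valid as long as $\al e^{r}<1$), give, uniformly on compact subsets of $(0,N)$, the convergence $\epsilon\log\Om_{q^{-1},\al,0}(\lfloor\psi/\epsilon\rfloor\mid n)\to f(\psi)$ modulo an additive constant coming from $(q;q)_{n}$, where
\begin{align*}
f(\psi) \;:=\; \psi\log\al &+ \int_{0}^{N-\psi}\log(1-\al e^{x})\,dx + \psi(N-\psi)\\
&- \int_{0}^{\psi}\log(1-e^{-x})\,dx - \int_{0}^{N-\psi}\log(1-e^{-x})\,dx.
\end{align*}

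Differentiating,
\begin{align*}
f'(\psi) \;=\; \log\al - \log(1-\al e^{N-\psi}) + (N-2\psi) - \log(1-e^{-\psi}) + \log(1-e^{-(N-\psi)}),
\end{align*}
and term-by-term differentiation yields
\begin{align*}
f''(\psi) \;=\; -\frac{\al e^{N-\psi}}{1-\al e^{N-\psi}}-2-\frac{e^{-\psi}}{1-e^{-\psi}}-\frac{e^{-(N-\psi)}}{1-e^{-(N-\psi)}}\;<\;0\qquad\text{on }(0,N).
\end{align*}
Combined with $f'(0^{+})=+\infty$ and $f'(N^{-})=-\infty$, this gives a unique critical point $\psi^{*}\in(0,N)$, necessarily the global maximum of $f$. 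Plugging $\psi^{*}=\log(1+\al\sigma)$ into $f'$ and using $e^{-N}=1/(1+\sigma)$, a direct cancellation shows $f'(\psi^{*})=0$; thus $\psi^{*}=\log(1+\al\sigma)$ is this maximizer.

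The concentration step is then verbatim the end of the proof of Lemma \ref{L3}: for each $\delta>0$ pick constants $M_{1}>M_{2}>M_{3}>M_{4}$ with $f(\psi)>M_{1}$ on $(\psi^{*}-\delta/2,\psi^{*}+\delta/2)$ and $f(\psi)<M_{4}$ outside $(\psi^{*}-\delta,\psi^{*}+\delta)$, combine with uniform comparability of the Pochhammer prefactors on compact $\psi$-subintervals (constants depending only on $\sigma,\al,\delta$), and use the normalization $\sum_{j=0}^{n}\Om_{q^{-1},\al,0}(j\mid n)=1$ to conclude $\prob(|\epsilon Y^{\epsilon}-\psi^{*}|<\delta)\to 1$. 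The only nontrivial ingredient is the algebraic verification $f'(\log(1+\al\sigma))=0$ in Step 3; strict concavity makes uniqueness automatic, and all remaining estimates are routine adaptations of those in Lemmas \ref{L3} and \ref{L4}.
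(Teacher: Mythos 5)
Your proof is correct and follows essentially the same route as the paper's: you set up the identical Laplace-method exponent (your form of $f$ agrees with the paper's after the change of variables $u=x+\log\alpha$ in the last Pochhammer integral), identify the same unique critical point $\psi^{*}=\log(1+\alpha\sigma)$, and close with the same concentration estimate. The only cosmetic difference is that you verify $f''<0$ directly, while the paper observes $f'$ is strictly decreasing.
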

\begin{proof} $\alpha(1+\sigma) < 1$ ensures that this is indeed a probability distribution for $\epsilon$ small enough.
This distribution looks as
$$Prob(Y^{\epsilon} = k) = \frac{(q; q)_{n}}{(q; q)_{k}(q; q)_{n-k}} (\alpha q^{k-n})^{k} (\alpha; q^{-1})_{n-k}.$$

Let 
\begin{align*}
f(\psi) &:= - \int_{0}^{\psi} \log(1-e^{-x})dx - \int_{0}^{\log(1+\sigma) -\psi} \log(1-e^{-x})dx + \psi \log \alpha - \psi^{2}+ \psi \log (1 + \sigma)
\\&  +  \int_{-\log \alpha - \log (1 + \sigma) + \psi}^{-\log \alpha} \log(1-e^{-x})dx \quad \text{for $\log (1+\sigma) \geq \psi \geq 0$.}
\end{align*} 
Then $$f'(\psi) = - \log(1 - e^{-\psi}) +  \log(1 - \frac{e^{\psi}}{1 + \sigma}) + \log \alpha - 2 \psi + \log(1 + \sigma) - \log(1 - \alpha (1+\sigma) e^{- \psi}),$$  which is strictly decreasing, and $f'(\log (1 + \alpha \sigma)) = 0$.  Hence $f$ attains a unique maximum at $\log(1 + \alpha \sigma)$, so one can choose $M_{1} > M_{2}$ such that
\begin{align*} 
f(\psi) > M_{1} \quad \text{for $\psi \in (\log (1 + \alpha \sigma) - \delta /2,  \log (1 + \alpha \sigma) + \delta /2)$}
\end{align*}
and
\begin{align*}
f(\psi) < M_{2} \quad \text{for $\psi \notin  (\log (1 + \alpha \sigma) - \delta,  \log (1 + \alpha \sigma) + \delta)$.}
\end{align*}
 Hence for $\epsilon$ small enough, $$Prob(\epsilon Y^{\epsilon} \in (\log (1 + \alpha \sigma) - \delta /2,  \log (1  + \alpha \sigma) + \delta /2)) \geq \left(\frac{\delta}{\epsilon} - 1\right)(e^{-\epsilon}; e^{-\epsilon})_{n}e^{M_{1}/\epsilon},$$ and $$Prob(\epsilon Y^{\epsilon} \notin (\log (1 + \alpha \sigma) - \delta,  \log (1 + \alpha \sigma) + \delta)) \leq \left(\frac{\log(1+ \sigma) - 2\delta}{\epsilon}+2\right) (e^{-\epsilon}; e^{-\epsilon})_{n} e^{M_{2}/\epsilon}.$$
Thus, for $\epsilon$ small enough, we have $$Prob(\epsilon Y^{\epsilon} \in (\log (1 + \alpha \sigma) - \delta,  \log (1 + \alpha \sigma) + \delta)) \geq 1 - \delta,$$ and this completes the proof.
\end{proof}

% subsection some_asymptotics_of_q_deformed_binomial_distributions (end)

\subsection{Proofs of Theorems \ref{thm:TR} and \ref{thm:TL}} % (fold)
\label{sub:proofs_of_theorems_thm:tr_and_thm:tl}

In our proofs, 
we denote by $$A(\epsilon), B(\epsilon), C(\epsilon), D(\epsilon), E(\epsilon), F(\epsilon)$$ 
possibly random positive valued functions tending to 
deterministic constants $A, B, C, D, E, F$, respectively, as $\epsilon \to 0$. 
This notation will be repeatedly used in the arguments below for defining conditional probabilities.

\subsubsection{Proof of Theorem \ref{thm:TR}} % (fold)
\label{ssub:proof_of_theorem_thm:tr}

We must show that 
for fixed $(t, k, j)$, such that $k \leq \min\{t, j\}$ and $j \leq n$,
the random variable
$\hat R^{j}_{k}(t, \epsilon)$
conditioned on 
$\hat R^{J}_{K}(T, \epsilon) \to X ^{J}_{K}(T)$ for all ${(T, K, J) < (t, k, j)}$ in the lexicographic 
order,\footnote{That is, $T < t$, or $T=t$ and $K < k$, or $T=t, K = k$ and $J < j$.} converges as $\epsilon \to 0$ 
to $\hat R^{j}_{k}(t)$ 
conditioned on $\hat R^{J}_{K}(T) = X^{J}_{K}(T)$ for all ${(T, K, J) < (t, k, j)}$ in the lexicographic order.
Here $X^{J}_{K}(T)$ are some fixed constants.
In the rest of the proof we will always assume this conditioning. 

\proofstep{Right edge ($k=1$)} The Markovian projection of the $\Qqarow$ dynamics on the right edge is the geometric 
$q$-PushTASEP (\S \ref{sub:geometric_q_pushtasep}), 
hence the proof of the theorem for the right edge is the same as showing that suitably rescaled positions of particles in the geometric $q$-PushTASEP converge to the partition functions of the log-Gamma polymer (Definition \ref{def:R_polymer}). 

\proofa{a} If $t=1$, then 
$r_{j, 1}(1, \epsilon) = r_{j-1, 1}(1, \epsilon) \  +$ an  independent random movement $d$ distributed according to $\Om_{q, a_{j}\alpha_{1}, 0}(d \mid \infty)$ (assume $r_{0, 1}(1, \epsilon) =0$ and $X^{0}_{1}(1)=1$). By Lemma \ref{L1},
\begin{align*}
 \log(\hat R^{j}_{1}(1, \epsilon))  &=  {r_{j, 1}(1, \epsilon)\epsilon - j \log \epsilon^{-1}} \\&=  {r_{j-1,1}(1, \epsilon)\epsilon - (j-1) \log \epsilon^{-1}} + d \epsilon - \log \epsilon^{-1}
\\& \to \log(X^{j-1}_{1}(1)) + \log(\Gamma)
\end{align*} 
for an independent random variable $\Gamma = a^{j}_{1}$ distributed according to $\GRVI{\theta_{j} + \hat \theta_{1}}$, which is consistent with $\hat R^{j}_{1}(1) = X^{j-1}_{1}(1)a^{j}_{1}$. 

\proofa{b} If $j=1$ and $t \geq 2$, then $r_{1, 1}(t, \epsilon) = {r_{1, 1}(t-1, \epsilon)} \  +$ an  independent random movement $d$ distributed according to $\Om_{q, a_{1}\alpha_{t}, 0}(d \mid \infty)$. By Lemma \ref{L1},
\begin{align*}
\log(\hat R^{1}_{1}(t, \epsilon))  &= r_{1,1}(t, \epsilon)\epsilon - t\log \epsilon^{-1} \\&= r_{1, 1}(t-1, \epsilon)\epsilon - (t-1)\log \epsilon^{-1} +  d \epsilon - \log \epsilon^{-1}  
\\& \to \log(X^{1}_{1}(t-1)) + \log(\Gamma)
\end{align*}  
for an independent random variable $\Gamma = a^{1}_{t}$ distributed according to $\GRVI{\theta_{1} + \hat \theta_{t}}$, which is consistent with $\hat R^{1}_{1}(t) = X^{1}_{1}(t-1)a^{1}_{t}$. 

\proofa{c} Assume $t \geq 2$ and $j \geq 2$. Condition on 
\begin{align*}
& r_{j, 1}(t-1, \epsilon) = (t+ j-2)\epsilon^{-1} \log \epsilon^{-1} + \epsilon^{-1} \log A(\epsilon), \quad \text {so $X^{j}_{1}(t-1)=A$};
\\& r_{j-1, 1}(t-1, \epsilon) = (t+ j-3)\epsilon^{-1} \log \epsilon^{-1} + \epsilon^{-1} \log B(\epsilon), \quad \text {so $X^{j-1}_{1}(t-1)=B$};
\\& r_{j-1, 1}(t, \epsilon) = (t+ j-2)\epsilon^{-1} \log \epsilon^{-1} + \epsilon^{-1} \log C(\epsilon), \quad \text{so $X^{j-1}_{1}(t)=C$}.
\end{align*}
The movement of the rightmost particle on the $j$-th level during the time step ${t-1 \to t}$ 
which happens due to the pushing
by the 
rightmost particle at level $j-1$
behaves as $\epsilon^{-1}\log (1 + \frac{C}{A})$ (by Lemma \ref{L3}). 
The independent movement of the rightmost particle on the $j$-th level 
behaves as $\epsilon^{-1}\log \Gamma+\epsilon^{-1}\log\epsilon^{-1}$
(by Lemma \ref{L1}),
for an independent random variable $\Gamma = a^{j}_{t}$ distributed according to $\GRVI{\theta_{j} + \hat \theta_{t}}$. 
Therefore, 
\begin{align*}
 	\log(\hat R^{j}_{1}(t, \epsilon)) \to \log A + \log (1 + \frac{C}{A}) + \log \Gamma = \log((A+C)\Gamma),
\end{align*}
which is consistent with $\hat R^{j}_{1}(t) = (X^{j}_{1}(t-1) + X^{j-1}_{1}(t))a^{j}_{t}$.   

\proofstep{$k$-th edge from the right for $k \geq 2$}

\proofa{a} Assume $t=k$. We have $r_{j, k}(k, \epsilon) = r_{j-1, k}(k, \epsilon) \  +$ a movement due to pulling from the 
\mbox{$(k-1)$-st} particle from the right on the $(j-1)$-st level (assume $r_{k-1, k}(k, \epsilon)  = 0$ and $X^{k-1}_{k}(k) = 1$).  Condition on
\begin{align*} 
& r_{j-1, k-1}(k-1, \epsilon) = (j- k+1)\epsilon^{-1} \log \epsilon^{-1} + \epsilon^{-1} \log D(\epsilon), \quad \text{so $X^{j-1}_{k-1}(k-1)=D$};
\\& r_{j-1, k-1}(k, \epsilon) = (j- k + 2)\epsilon^{-1} \log \epsilon^{-1} + \epsilon^{-1} \log E(\epsilon), \quad \text{so $X^{j-1}_{k-1}(k)=E$};
\\& r_{j, k-1}(k-1, \epsilon) = (j- k+2)\epsilon^{-1} \log \epsilon^{-1} + \epsilon^{-1} \log F(\epsilon), \quad \text{so $X^{j}_{k-1}(k-1)=F$}. 
\end{align*}  
 By Lemma \ref{L3}, this movement times $\epsilon$ and minus $\log \epsilon^{-1}$ converges to 
 ${\log ({E}/{D}) - \log (1 + {E}/{F})}$. Therefore, 
\begin{align*}
 \log(\hat R^{j}_{k}(k, \epsilon)) &= r_{j, k}(k, \epsilon)\epsilon - (j-k+1)\log \epsilon^{-1}  
 \\&\to \log(X^{j-1}_{k}(k)) + \log ({E}/{D}) - \log (1 + {E}/{F})
\\& = \log \left( \frac{X^{j-1}_{k}(k)EF}{(F+E)D} \right),
\end{align*}
which is consistent with  $$\hat R^{j}_{k}(k) = \frac{X^{j-1}_{k}(k)X^{j-1}_{k-1}(k)X^{j}_{k-1}(k-1)}{\left(X^{j}_{k-1}(k-1)\ + \ X^{j-1}_{k-1}(k)\right)X^{j-1}_{k-1}(k-1)}.$$
 
Indeed, if we insert (via the geometric row insertion) a nonempty word ${b = (b^{k}, \ldots, b^{n})}$ into the empty word $\lambda_{k} = (1, 0, \ldots, 0)$, where $b$ is itself the bottom output of the insertion of $a = (a^{k-1}, \ldots, a^{n})$ into $\lambda_{k-1} = (\lambda^{k-1}_{k-1}, \ldots, \lambda^{n}_{k-1}) = (X^{k-1}_{k-1}(k-1), \ldots, X^{n}_{k-1}(k-1))$, 
then  we get $$\nu^{j}_{k} = \nu^{j-1}_{k}b^{j} = \nu^{j-1}_{k} \frac{a^{j} \lambda_{k-1}^{j} \nu_{k-1}^{j-1}}{\lambda_{k-1}^{j-1} \nu_{k-1}^{j}} = {\nu^{j-1}_{k} \frac{\lambda_{k-1}^{j}}{\lambda_{k-1}^{j-1}} \cdot \frac{\nu^{j-1}_{k-1}}{\nu^{j-1}_{k-1} + \lambda^{j}_{k-1}}},$$ which is the same as the expression for $\hat R^{j}_{k}(k)$ above.

\proofa{b} Assume $t \geq k+1$ and $k < j \leq n$. Condition on  
\begin{align*}
& r_{j,k}(t-1, \epsilon) = (t+ j-2k)\epsilon^{-1} \log \epsilon^{-1} + \epsilon^{-1} \log A(\epsilon), \quad
\text{so $X^{j}_{k}(t-1)=A$};
\\& r_{j-1,k}(t-1, \epsilon) = (t+ j-2k-1)\epsilon^{-1} \log \epsilon^{-1} + \epsilon^{-1} \log B(\epsilon), \quad
\text{so $X^{j-1}_{k}(t-1)=B$};
\\& r_{j-1,k}(t, \epsilon) = (t+ j-2k)\epsilon^{-1} \log \epsilon^{-1} + \epsilon^{-1} \log C(\epsilon), \quad 
\text{so $X^{j-1}_{k}(t)=C$};
\\& r_{j-1,k-1}(t-1, \epsilon) = (t+ j-2k + 1)\epsilon^{-1} \log \epsilon^{-1} + \epsilon^{-1} \log D(\epsilon), \quad
\text{so $X^{j-1}_{k-1}(t-1)=D$};
\\& r_{j-1,k-1}(t, \epsilon) = (t+ j-2k + 2)\epsilon^{-1} \log \epsilon^{-1} + \epsilon^{-1} \log E(\epsilon), \quad
\text{so $X^{j-1}_{k-1}(t)=E$};
\\& r_{j,k-1}(t-1, \epsilon) = (t+ j-2k + 2)\epsilon^{-1} \log \epsilon^{-1} + \epsilon^{-1} \log F(\epsilon), \quad 
\text{so $X^{j}_{k-1}(t-1)=F$}.
\end{align*}
Movement of $r_{j,k}$ during time step ${t-1 \to t}$ due to pushing by the $k$-th particle from the right on the $(j-1)$-st level  behaves as $\epsilon^{-1}\log (1 + \frac{C}{A})$ (by Lemma \ref{L3}). 
The movement due to the pulling (by the $(k-1)$-st particle from the right on the $(j-1)$-st level) times $\epsilon^{-1}$ minus $\log \epsilon^{-1}$ by the same lemma tends to ${\log (\frac{E}{D}) - \log (1 + \frac{E}{F})}$. Hence we may write
\begin{align*}
\log(\hat R^{j}_{k}(t, \epsilon)) \to  \log A + \log \left(1 + \frac{C}{A}\right) + \log \left(\frac{E}{D}\right) - \log \left(1 + \frac{E}{F}\right) 
= \log \left(\frac{(A+C)EF}{(F+E)D}\right),
\end{align*} 
which is consistent with 
$$\hat R^{j}_{k}(t) = \frac{\left(X^{j}_{k}(t-1) \ + \ X^{j-1}_{k}(t)\right)X^{j-1}_{k-1}(t)X^{j}_{k-1}(t-1)}{\left(X^{j}_{k-1}(t-1)\ + \ X^{j-1}_{k-1}(t)\right)X^{j-1}_{k-1}(t-1)}.$$

Indeed, if we insert (via the geometric row insertion) a nonempty word ${b = (b^{k}, \ldots, b^{n})}$ into a nonempty word $\lambda_{k} = (\lambda^{k}_{k}, \ldots, \lambda^{n}_{k}) = (X^{k}_{k}(t-1), \ldots,  {X^{n}_{k}(t-1))}$, where $b$ is itself the bottom output of the insertion of $a = (a^{k-1}, \ldots, a^{n})$ into $\lambda_{k-1} = (\lambda^{k-1}_{k-1}, \ldots, \lambda^{n}_{k-1}) = (X^{k-1}_{k-1}(t-1), \ldots, X^{n}_{k-1}(t-1))$, then  we get $$\nu^{j}_{k} = (\nu^{j-1}_{k} + \lambda^{j}_{k})b^{j} = (\nu^{j-1}_{k} + \lambda^{j}_{k}) \frac{a^{j} \lambda_{k-1}^{j} \nu_{k-1}^{j-1}}{\lambda_{k-1}^{j-1} \nu_{k-1}^{j}} = {(\nu^{j-1}_{k} + \lambda^{j}_{k}) \frac{\lambda_{k-1}^{j}}{\lambda_{k-1}^{j-1}} \cdot \frac{\nu^{j-1}_{k-1}}{\nu^{j-1}_{k-1} + \lambda^{j}_{k-1}}},$$ which is the same as the expression for $\hat R^{j}_{k}(t)$ above.

\proofa{c} Finally, if $j=k$ and $t \geq k+1$, then the previous argument carries out with the exception that in this case the leftmost particle on the $j$-th level experiences only pulling of the leftmost particle on the $(j-1)$-st level, hence we should take $C=0$ in the formulas from part b).

This completes the proof of Theorem \ref{thm:TR}.

% subsubsection proof_of_theorem_thm:tr (end)

\subsubsection{Proof of Theorem \ref{thm:TL}} % (fold)
\label{ssub:proof_of_theorem_thm:tl}

We must show that 
for fixed $(t, k, j)$, such that $k \leq j \leq t+k-1$ and $j \leq n$, 
the random variable
$\hat L^{j}_{k}(t, \epsilon)$, conditioned on $\hat L^{J}_{K}(T, \epsilon) \to Y^{J}_{K}(T)$ 
for all ${(T, K, J) < (t, k, j)}$ in the lexicographic order, 
converges as $\epsilon \to 0$ to $\hat L^{j}_{k}(t)$, conditioned on $\hat L^{J}_{K}(T) = Y^{J}_{K}(T)$ for ${(T, K, J) < (t, k, j)}$ in the lexicographic order
(here $Y^{J}_{K}(T)$ are some fixed constants). 
In the rest of the proof we will always assume this conditioning. 

\proofstep{Left edge $(k=1)$.} The Markovian projection of the $\Qqacol$ dynamics on the left edge is the geometric $q$-TASEP
(\S \ref{sub:geometric_q_tasep}), 
hence the proof of the theorem for the left edge is the same as showing 
that suitably rescaled positions of particles in the geometric $q$-TASEP 
converge to the partition functions of the strict-weak polymer (Definition \ref{def:L_polymer}). 
This was already done in \cite{CorwinSeppalainenShen2014}, but we still include this part in 
the proof for the reader's convenience.

\proofa{a} If $t=j=1$, then by Lemma \ref{L1}, 
$ \log(\hat L^{1}_{1}(1, \epsilon)) = \log \epsilon^{-1} - \ell_{1,1}(1, \epsilon)\epsilon$ converges in distribution to $\log \Gamma$ for a random variable $\Gamma = a^{1}_{1}$ distributed according to  $\GRV{\theta_{1} + \hat \theta_{1}}$. 

\proofa{b} Assume $t=j > 1$. Then $m= \ell_{j,1}(j, \epsilon)$ 
is distributed according to $\Om_{q, a_{j}\alpha_{j}, 0}(m \mid \ell_{j-1, 1}(j-1, \epsilon))$. If we condition on 
\begin{align*}
 \ell_{j-1,1}(j-1, \epsilon) = \epsilon^{-1} \log \epsilon^{-1} - \epsilon^{-1} \log F(\epsilon), \quad \text{so $Y^{j-1}_{1}(j-1) = F$},\end{align*}
then by Lemma \ref{L2},  $$\log (\hat L^{j}_{1}(j, \epsilon)) = \log \epsilon^{-1} - \ell_{j,1}(j, \epsilon)\epsilon \to \log (F + \Gamma)$$ for an independent random variable $\Gamma = a_{j}^{j}$ distributed according to $\GRV{\theta_{j} + \hat \theta_{j}}$, which is consistent with $\hat L^{j}_{1}(j) = a^{j}_{j} + Y^{j-1}_{1}(j-1)$. 

\proofa{c} Let $t > j = 1$. By Lemma \ref{L1}, the quantities
$$ \log(\hat L^{1}_{1}(t, \epsilon)) - \log(\hat L^{1}_{1}(t-1, \epsilon)) = \log \epsilon^{-1} - \left(\ell_{1,1}(t, \epsilon) - \ell_{1,1}(t-1, \epsilon)\right)\epsilon$$ converge in distribution to $\log \Gamma$ for a random variable $\Gamma = a^{1}_{t}$ distributed according to  $\GRV{\theta_{1} + \hat \theta_{t}}$, which is consistent with $\hat L^{1}_{1}(t) = a^{1}_{t} Y^{1}_{1}(t-1)$.

\proofa{d} Assume $t > j >1$. Condition on 
\begin{align*}
& \ell_{j,1}(t-1, \epsilon)  = (t-j)\epsilon^{-1}\log \epsilon^{-1} - \epsilon^{-1}\log A(\epsilon), \quad \text{so $Y^{j}_{1}(t-1) = A$};
\\& \ell_{j-1, 1}(t-1, \epsilon)  = (t-j+1)\epsilon^{-1}\log \epsilon^{-1} - \epsilon^{-1} \log F(\epsilon), \quad \text{so $Y^{j-1}_{1}(t-1) = F$}.  
\end{align*}
By Lemma \ref{L2}, the movement of the leftmost particle on the $j$-th level during the time step $t-1 \to t$ times $\epsilon$ and minus $\log \epsilon^{-1}$ converges to $-\log(\frac{F}{A} + \Gamma)$ for an independent random variable  $\Gamma = a_{t}^{j}$ distributed according to $\GRV{\theta_{j} + \hat \theta_{t}}$. Therefore,  
$$\log(\hat L^{j}_{1}(t, \epsilon)) \to \log A + \log({F}/{A} + \Gamma) = \log(F + A\Gamma),$$ 
which is consistent with $\hat L^{j}_{1}(t) = Y^{j-1}_{1}(t-1) + a^{j}_{t} Y^{j}_{1}(t-1)$.

\proofstep{Second edge from the left $(k=2)$}
\proofa{a} If $j = 2$, $t=1$, then we have to look at $\ell_{2,2}(1, \epsilon)  = \ell_{1,1}(1, \epsilon) \ +$ a jump $m$ distributed according to $\Om_{q, a_{2}\alpha_{1}, 0}(m \mid \infty)$. By Lemma \ref{L1}, 
$$\log(\hat L^{2}_{2}(1)) - \log(\hat L^{1}_{1}(1)) = \log \epsilon^{-1} - m\epsilon \to  \log \Gamma,$$ 
where $\Gamma = a_{1}^{2}$ is an independent random variable distributed according to $\GRV{\theta_{2} + \hat \theta_{1}}$, which is consistent with $\hat L^{2}_{2}(1) = a_{1}^{2}Y^{1}_{1}(1)$.

\proofa{b} Assume $j > 2$, $t=j-1$. We have  $\ell_{j,2}(j-1, \epsilon)  = m_{1} + m_{2}$, where $m_1$ is an independent move distributed according to $\Om_{q, a_{j}\alpha_{j-1}, 0}(m_{1} \mid \ell_{j-1, 2}(j-2, \epsilon))$, and $m_2$ is the push from the leftmost particle on the $(j-1)$-st level distributed according to $$\Om_{q^{-1}, q^{\ell_{j-1,1}(j-1, \epsilon)}, q^{\ell_{j-1,2}(j-2, \epsilon)}}(\ell_{j-1,2}(j-2, \epsilon) - m_{1} -m_{2} \mid \ell_{j-1,2}(j-2, \epsilon) - m_{1}).$$
Condition on 
\begin{align*}
& \ell_{j-1,1}(j-1, \epsilon) =  \epsilon^{-1}\log \epsilon^{-1} - \epsilon^{-1} \log E(\epsilon), \quad \text{so $Y^{j-1}_{1}(j-1) = E$};
\\& \ell_{j-1,2}(j-2, \epsilon) =  2\epsilon^{-1}\log \epsilon^{-1} - \epsilon^{-1} \log F(\epsilon), \quad \text{so $Y^{j-1}_{2}(j-2) = F$}.
\end{align*}
By Lemma \ref{L2}, $\log \epsilon^{-1} - m_{1}\epsilon \to \log \Gamma$, where $\Gamma = a_{j-1}^{j}$ is an independent random variable distributed according to $\GRV{\theta_{j} + \hat \theta_{j-1}}$. By Lemma \ref{L3}, $$\log(\hat L^{j}_{2}(j-1, \epsilon)) = 2 \log \epsilon^{-1} -{\ell_{j,2}(j-1, \epsilon)}\epsilon \to \log F + \log (1 + \frac{E\Gamma}{F}) = \log (F + E\Gamma),$$ which is consistent with  $Y^{j}_{2}(j-1) = Y^{j-1}_{2}(j-2) + a^{j}_{j-1}Y^{j-1}_{1}(j-1)$.

\proofa{c} Let $j = 2$, $t > 1$. Then 
\begin{align*}
 	\ell_{2, 2}(t, \epsilon)  = \ell_{2,2}(t-1, \epsilon) + \ell_{1,1}(t, \epsilon) - \ell_{1,1}(t-1, \epsilon) +m,
\end{align*}
where the jump $m$ is distributed according to 
$$\Om_{q, a_{2}\al_{t} q^{\ell_{1,1}(t-1, \epsilon) - \ell_{2,1}(t, \epsilon)}, 0}(m \mid \infty).$$ 
Condition on 
\begin{align*}
&\ell_{2, 2}(t-1, \epsilon) = t\epsilon^{-1}\log \epsilon^{-1} - \epsilon^{-1} \log A(\epsilon), \quad \text{so $Y^{2}_{2}(t-1) = A$};
\\& \ell_{1,1}(t-1, \epsilon) = (t-1)\epsilon^{-1}\log \epsilon^{-1} - \epsilon^{-1} \log B(\epsilon), \quad \text {so $Y^{1}_{1}(t-1) = B$};
\\& \ell_{1, 1}(t, \epsilon) = t\epsilon^{-1}\log \epsilon^{-1} - \epsilon^{-1} \log C(\epsilon), \quad \text{so $Y^{1}_{1}(t) = C$};
\\& \ell_{2,1}(t, \epsilon) = (t-1)\epsilon^{-1}\log \epsilon^{-1} - \epsilon^{-1} \log E(\epsilon), \quad \text{so $Y^{2}_{1}(t) = E$}.
\end{align*}
By Lemma \ref{L4}, $m\epsilon \to -\log \left( 1 - \frac{B}{E}\right)$, hence 
$$\log(\hat L^{2}_{2}(t, \epsilon)) =\ell_{2,2}(t, \epsilon)\epsilon - (t+1)\log \epsilon^{-1} \to \log\left( A \left( 1- \frac{B}{E} \right) \frac{C}{B}\right),$$ which is consistent with 
 $\hat L^{2}_{2}(t) = Y^{2}_{2}(t-1)\left(1- \frac{Y^{1}_{1}(t-1)}{Y^{2}_{1}(t)}\right)\frac{Y^{1}_{1}(t)}{Y^{1}_{1}(t-1)}$. 
Indeed, we have
\begin{align*}
\hat L^{2}_{2}(t) &= \hat L^{2}_{2}(t-1)a^{2}_{t}\frac{\hat L^{2}_{1}(t-1) \hat L^{1}_{1}(t)}{\hat L^{1}_{1}(t-1) \hat L^{2}_{1}(t)}  = \hat L^{2}_{2}(t-1)\frac{\hat L^{2}_{1}(t) - \hat L^{1}_{1}(t-1)}{\hat L^{2}_{1}(t-1)} \frac{\hat L^{2}_{1}(t-1) \hat L^{1}_{1}(t)}{\hat L^{1}_{1}(t-1) \hat L^{2}_{1}(t)} 
\\& = \hat L^{2}_{2}(t-1) \left(1- \frac{\hat L^{1}_{1}(t-1)}{\hat L^{2}_{1}(t)}\right)\frac{\hat L^{1}_{1}(t)}{\hat L^{1}_{1}(t-1)}.
\end{align*}

\proofa{d} Assume $j > 2$, $t > j-1$.
Condition on 
\begin{align*}
& \ell_{j,2}(t-1, \epsilon)  = (t-j+2)\epsilon^{-1}\log \epsilon^{-1} - \epsilon^{-1}\log A(\epsilon), \quad \text{so $Y^{j}_{2}(t-1) = A$};
\\& \ell_{j-1,1}(t-1, \epsilon)  = (t-j+1)\epsilon^{-1}\log \epsilon^{-1} - \epsilon^{-1} \log B(\epsilon), \quad \text{so $Y^{j-1}_{1}(t-1) = B$};
\\& \ell_{j-1,1}(t, \epsilon)  = (t-j+2)\epsilon^{-1}\log \epsilon^{-1} - \epsilon^{-1} \log C(\epsilon), \quad \text{so $Y^{j-1}_{1}(t) = C$};  
\\& \ell_{j,1}(t, \epsilon)  = (t-j+1)\epsilon^{-1}\log \epsilon^{-1} - \epsilon^{-1} \log E(\epsilon), \quad \text{so $Y^{j}_{1}(t) = E$}; 
\\& \ell_{j-1,2}(t-1, \epsilon)  = (t-j+3)\epsilon^{-1}\log \epsilon^{-1} - \epsilon^{-1} \log F(\epsilon), \quad \text{so $Y^{j-1}_{2}(t-1) = F$}.
\end{align*}
Denote by $m$ the independent move of the particle which is the second from the left on the $j$-th level.
This move is distributed according to 
$$\Om_{q, a_{j}\alpha_{t} q^{\ell_{j-1,1}(t-1, \epsilon) - \ell_{j,1}(t, \epsilon)}, 0}(m \mid \ell_{j-1,2}(t-1, \epsilon) - \ell_{j,2}(t-1, \epsilon)).$$ 
As in the previous case, by Lemma \ref{L4}, $m\epsilon \to - \log  \left( 1 - \frac{B}{E}\right)$.

Thus, we see that $\ell_{j,2}(t, \epsilon) = \ell_{j-1,2}(t-1, \epsilon) - M$, where $M$ is distributed according to $$\Om_{q^{-1},  q^{\ell_{j-1,1}(t, \epsilon) - \ell_{j-1,1}(t-1, \epsilon)}, q^{\ell_{j-1, 2}(t-1, \epsilon) - \ell_{j-1,1}(t-1, \epsilon)}}(M \mid \ell_{j-1,2}(t-1, \epsilon) - \ell_{j,2}(t-1, \epsilon) - m).$$ Hence by Lemma \ref{L3}, $M\epsilon \to \left( 1 - \frac{B}{E}\right) \frac{1}{F} \frac{C}{B}A + 1$. Therefore,
$$\log(\hat L^{j}_{2}(t, \epsilon)) = \ell_{j,2}(t, \epsilon) - (t-j+3) \epsilon^{-1}\log \epsilon^{-1} \to \log \left( \left( 1 - \frac{B}{E}\right)\frac{C}{B}A + F\right),$$ which is consistent with  $\hat L^{j}_{2}(t) =  \left( 1 - \frac{Y^{j-1}_{1}(t-1)}{Y^{j}_{1}(t)}\right)\frac{Y^{j-1}_{1}(t)}{Y^{j-1}_{1}(t-1)}Y^{j}_{2}(t-1) + Y^{j-1}_{2}(t-1)$.
Indeed, one checks that
\begin{align*}
\hat L^{j}_{2}(t) &= \hat L^{j}_{2}(t-1)a^{j}_{t}\frac{\hat L^{j}_{1}(t-1) \hat L^{j-1}_{1}(t)}{\hat L^{j-1}_{1}(t-1) \hat L^{j}_{1}(t)} + \hat L^{j-1}_{2}(t-1) \\&= \hat L^{j}_{2}(t-1) \frac{\hat L^{j}_{1}(t) - \hat L^{j-1}_{1}(t-1)}{\hat L^{j}_{1}(t-1)} 
 \frac{\hat L^{j}_{1}(t-1)\hat L^{j-1}_{1}(t)}{\hat L^{j-1}_{1}(t-1) \hat L^{j}_{1}(t)} +  \hat L^{j-1}_{2}(t-1) 
 \\&= \hat L^{j}_{2}(t-1)\left(1- \frac{\hat L^{j-1}_{1}(t-1)}{\hat L^{j}_{1}(t)}\right) \frac{\hat L^{j-1}_{1}(t)}{\hat L^{j-1}_{1}(t-1)} +  \hat L^{j-1}_{2}(t-1).
\end{align*}

\medskip

\proofstep{k-th edge from the left for $k>2$}

\proofa{a} Start with assuming that $j =k$, $t=1$. 
We have 
$\ell_{k,k}(1, \epsilon)  = \ell_{k-1, k-1}(1, \epsilon) \ +$ a jump $m$ distributed according to $\Om_{q, a_{k}\alpha_{1}, 0}(m \mid \infty)$. By Lemma \ref{L1}, $$\log(\hat L^{k}_{k}(1)) - \log(\hat L^{k-1}_{k-1}(1)) = \log \epsilon^{-1} - m\epsilon \to  \log \Gamma,$$ where $\Gamma = a_{1}^{k}$ is an independent random variable distributed according to $\GRV{\theta_{k} + \hat \theta_{1}}$, which is consistent with $\hat L^{k}_{k}(1) = a_{1}^{k}Y^{k-1}_{k-1}(1)$.

\proofa{b} Let $j > k$, $t= j-k+1$. We have $\ell_{j,k}(j-k+1, \epsilon)  = m_{1} + m_{2}$, where $m_1$ is an independent move distributed according to $\Om_{q, a_{j}\alpha_{j-k+1}, 0}(m_{1} \mid \ell_{j-1, k}(j-k, \epsilon))$, and $m_2$ is the push from the $(k-1)$-st particle from the left on the $(j-1)$-st level distributed according to 
$$\Om_{q^{-1}, q^{\ell_{j-1, k-1}(j-k+1, \epsilon)}, q^{\ell_{j-1,k}(j-k)}} 
(\ell_{j-1, k}(j-k, \epsilon) - m_{1} - m_{2} \mid \ell_{j-1, k}(j-k, \epsilon) - m_{1}).$$
Condition on 
\begin{align*}
& \ell_{j-1,k-1}(j-k+1, \epsilon) =  (k-1)\epsilon^{-1}\log \epsilon^{-1} - \epsilon^{-1} \log E(\epsilon), \quad \text{so $Y^{j-1}_{k-1}(j-k+1) = E$};
\\& \ell_{j-1,k}(j-k, \epsilon) =  k\epsilon^{-1}\log \epsilon^{-1} - \epsilon^{-1} \log F(\epsilon), \quad \text{so $Y^{j-1}_{k}(j-k) = F$}.
\end{align*}
By Lemma \ref{L1}, $\log \epsilon^{-1} - m_{1}\epsilon \to \log \Gamma$, where $\Gamma = a_{j-k+1}^{j}$ is an independent random variable distributed according to $\GRV{\theta_{j} + \hat \theta_{j-k+1}}$. By Lemma \ref{L3}, 
$$\log(\hat L^{j}_{k}(j-k+1, \epsilon)) = k \log \epsilon^{-1} - \ell_{j, k}(j-k+1, \epsilon)\epsilon \to \log F + 
\log \left(1 + \frac{E\Gamma}{F}\right) = \log (F + E\Gamma),$$ 
which is consistent with  $Y^{j}_{k}(j-k+1) = Y^{j-1}_{k}(j-k) + a^{j}_{j-k+1}Y^{j-1}_{k-1}(j-k+1)$.

\proofa{c} Assume $j =k$, $t \geq  k$. Condition on (for $1 \leq i \leq k-1$)
\begin{align*}
& \ell_{k-1, i}(t-1, \epsilon)  = (t-k+2i-1)\epsilon^{-1}\log \epsilon^{-1} - \epsilon^{-1} \log B_{i}(\epsilon), \quad \text{so $Y^{k-1}_{i}(t-1) = B_{i}$};
\\& \ell_{k-1, i}(t, \epsilon)  = (t-k+2i)\epsilon^{-1}\log \epsilon^{-1} - \epsilon^{-1} \log C_{i}(\epsilon), \quad \text{so $Y^{k-1}_{i}(t) = C_{i}$};  
\\& \ell_{k,i}(t-1, \epsilon)  = (t-k+2i-2)\epsilon^{-1}\log \epsilon^{-1} - \epsilon^{-1} \log D_{i}(\epsilon), \quad \text{so $Y^{k}_{i}(t-1) = D_{i}$};
\\& \ell_{k, i}(t, \epsilon)  = (t-k+2i-1)\epsilon^{-1}\log \epsilon^{-1} - \epsilon^{-1} \log E_{i}(\epsilon), \quad \text{so $Y^{k}_{i}(t) = E_{i}$}; 
\end{align*}
By Lemma \ref{L4}, the independent move of the rightmost particle on the $k$-th level times $\epsilon$ converges to $0$, while the push from the previous particles times $\epsilon$ and minus $\log \epsilon^{-1}$ converges to 
\begin{align*}
	\displaystyle -\log \left(\frac{\prod_{i=2}^{k-1}D_{i}\prod_{i=1}^{k-1}C_{i}}{\prod_{i=2}^{k-1}E_{i}\prod_{i=1}^{k-1}B_{i}} \left( 1-\frac{B_{1}}{E_{1}} \right) \right),
\end{align*}
which is consistent with $$Y^{k}_{k}(t) = \hat L^{k}_{k}(t-1)\frac{\prod_{i=1}^{k-1}D_{i}\prod_{i=1}^{k-1}C_{i}}{\prod_{i=1}^{k-1}E_{i}\prod_{i=1}^{k-1}B_{i}} \cdot \frac{E_{1} - B_{1}}{D_{1}}.$$ For $t=k$ we take $D_{1} = 1$.

\proofa{d} Let $j =k$, $k > t > 1$. Make the same conditioning as in the previous part, 
but with different ranges of indices:
$k-t+1 \leq i \leq k-1$ for $D_i$, $ E_i$, and $k-t \leq i \leq k-1$ for $B_i$, $C_i$. 
Take $B_{k-t}=D_{k-t+1} = 1$. 
The independent move of the rightmost particle on the $k$-th level times $\epsilon$ converges to $0$, while the push from the previous particles times $\epsilon$ and minus $\log \epsilon^{-1}$ converges to 
\begin{align*}
	\displaystyle -\log \left(\frac{\prod_{i=k-t+1}^{k-1}D_{i}\prod_{i=k-t}^{k-1}C_{i}}{\prod_{i=k-t+1}^{k-1}E_{i}\prod_{i=k-t}^{k-1}B_{i}}a^{k}_{t} \right),
\end{align*}
which is consistent with $$Y^{k}_{k}(t) = \hat L^{k}_{k}(t-1)\frac{\prod_{i=k-t+1}^{k-1}D_{i}\prod_{i=k-t+1}^{k-1}C_{i}}{\prod_{i=k-t+1}^{k-1}E_{i}\prod_{i=k-t+1}^{k-1}B_{i}}(E_{k-t+1}-B_{k-t+1}).$$

\proofa{e} Let $j >k$, $t \geq j$. Condition on
\begin{align*}
& \ell_{j-1,i}(t-1, \epsilon)  = (t-j+2i-1)\epsilon^{-1}\log \epsilon^{-1} - \epsilon^{-1} \log B_{i}(\epsilon), \quad \text{so $Y^{j-1}_{i}(t-1) = B_{i}$ for $1 \leq i \leq k$};
\\& \ell_{j-1,i}(t, \epsilon)  = (t-j+2i)\epsilon^{-1}\log \epsilon^{-1} - \epsilon^{-1} \log C_{i}(\epsilon),\quad \text{so $Y^{j-1}_{i}(t) = C_{i}$ for $1 \leq i \leq k-1$};  
\\& \ell_{j, i}(t-1, \epsilon)  = (t-j+2i-2)\epsilon^{-1}\log \epsilon^{-1} - \epsilon^{-1} \log D_{i}(\epsilon), \quad \text{so $Y^{j}_{i}(t-1) = D_{i}$ for $1 \leq i \leq k$};
\\& \ell_{j, i}(t, \epsilon)  = (t-j+2i-1)\epsilon^{-1}\log \epsilon^{-1} - \epsilon^{-1} \log E_{i}(\epsilon), \quad \text{so $Y^{j}_{i}(t) = E_{i}$ for $1 \leq i \leq k-1$}.
\end{align*} 
The independent move of the $k$-th particle from the left on the $j$-th level times $\epsilon$ converges to $0$. Denote by $m_{1}$ the distance from this particle to $\ell_{j-1,k}(t-1, \epsilon)$ after the push from the $(k-1)$-st particle from the left on the $(j-1)$-st level. Let $m_2$ be this distance after pushes from other particles. 
By Lemma \ref{L3}, $ m_{1} \epsilon \to \log \left(1 + \frac{C_{k-1}D_{k}}{B_{k-1}B_{k}} \right)$.
By Lemma \ref{L5}, $$m_{2} \epsilon \to \log \left(1 + \frac{C_{k-1}D_{k}}{B_{k-1}B_{k}} \frac{\prod_{i=2}^{k-1}D_{i}\prod_{i=1}^{k-2}C_{i}}{\prod_{i=2}^{k-1}E_{i}\prod_{i=1}^{k-2}B_{i}} \left( 1-\frac{B_{1}}{E_{1}} \right)   \right).$$ 
This is consistent with $$\hat L^{j}_{k}(t) = B_{k} + Y^{j}_{k}(t-1) \frac{\prod_{i=1}^{k-1}D_{i}\prod_{i=1}^{k-1}C_{i}}{\prod_{i=1}^{k-1}E_{i}\prod_{i=1}^{k-1}B_{i}} \frac{E_{1} - B_{1}}{D_{1}}.$$

\proofa{f} Finally, assume that $j >k$, $j > t>j-k+1$. 
Make the same conditioning as in the previous part, but 
with different ranges of indices:
$j-t+1 \leq i \leq k$ for $D_i$, $D_{j-t+1} =1$, $j-t+1 \leq i \leq k-1$ for $E_i$, $j-t \leq i \leq k$ for $B_i$, $B_{j-t} =1$,
and $j-t \leq i \leq k-1$ for $C_i$.  The independent move of the $k$-th particle from the left on the $j$-th level times $\epsilon$ converges to $0$. Denote by $m_{1}$ the distance from this particle to $\ell_{j-1,k}(t-1, \epsilon)$ after the push from the $(k-1)$-st particle from the left on the $(j-1)$-st level. Let $m_2$ be 
this distance after pushes from other particles. By Lemma \ref{L3}, $ m_{1} \epsilon \to \log \left(1 + \frac{C_{k-1}D_{k}}{B_{k-1}B_{k}} \right)$. By lemma \ref{L5}, 
$$m_{2} \epsilon \to \log \left(1 + \frac{C_{k-1}D_{k}}{B_{k-1}B_{k}} \frac{\prod_{i=j-t+1}^{k-1}D_{i}\prod_{i=j-t}^{k-2}C_{i}}{\prod_{i=j-t+1}^{k-1}E_{i}\prod_{i=j-t}^{k-2}B_{i}} a^{j}_{t}   \right).$$ This is consistent with $$\hat L^{j}_{k}(t) = B_{k} + Y^{j}_{k}(t-1) \frac{\prod_{i=j-t+1}^{k-1}D_{i}\prod_{i=j-t+1}^{k-1}C_{i}}{\prod_{i=j-t+1}^{k-1}E_{i}\prod_{i=j-t+1}^{k-1}B_{i}} (E_{j-t+1} - B_{j-t+1}).$$

This completes the proof of Theorem \ref{thm:TL}.

% subsubsection proof_of_theorem_thm:tl (end)

% subsection proofs_of_theorems_thm:tr_and_thm:tl (end)

% section polymer_limit (end)

\bibliography{bib}
\bibliographystyle{plain}

\end{document}